\newtheorem{df}{Definition}[section]
\newtheorem{thm}[df]{Theorem}
\newtheorem{prop}[df]{Proposition}
\newtheorem{lemm}[df]{Lemma}
\newtheorem{cor}[df]{Corollary}
\newtheorem{rem}[df]{Remark}
\newcommand{\Gwenael}{Gw\'{e}na\"{e}l}
\newcommand{\homfly}{HOMFLY-PT\ }
\newcommand{\Q}{\mathbb{Q}}
\newcommand{\R}{\mathbb{R}}
\newcommand{\Z}{\mathbb{Z}}
\newcommand{\Zlarger}[1]{\mathbb{Z}_{\geq #1} }
\newcommand{\C}{\mathbb{C}}
\newcommand{\Liesl}[1]{\mathrm{sl}(#1)}
\newcommand{\surface}{\Sigma}
\newcommand{\geh}{\mathfrak{g}}
\newcommand{\cgeh}{\widehat{\mathfrak{g}}}
\newcommand{\filt}[1]{F^{ #1}}
\newcommand{\filtn}[1]{\{ #1 \}_{n \geq 0}}
\newcommand{\comp}[1]{\underleftarrow{\lim}_{#1 \rightarrow \infty}}
\newcommand{\GL}{\mathbb{Q} \zettaiti{{\pi}_1}}
\newcommand{\GLM}{\mathbb{Q} \pi_1 }
\newcommand{\cGL}{\widehat{\mathbb{Q} \lvert \pi_1 \rvert }}
\newcommand{\cGLM}{\widehat{\mathbb{Q} \pi_1}}
\newcommand{\Uh}{\mathcal{U}_h }
\newcommand{\cUh}{\widehat{\mathcal{U}_h}}
\newcommand{\Loc}{(\sum_{* \in \Zlarger{0}} \frac{1}{h^*} F^{3*})}
\newcommand{\cLoc}{\widehat{(\sum_{* \in \Zlarger{0}} \frac{1}{h^*} F^{3*})}}
\newcommand{\skein}{\mathcal{S}}
\newcommand{\cskein}{\widehat{\mathcal{S}} }
\newcommand{\hskein}{\mathcal{S}^{-1}}
\newcommand{\chskein}{\widehat{\mathcal{S}}^{-1}}
\newcommand{\tskein}{\mathcal{A}}
\newcommand{\ctskein}{\widehat{\mathcal{A}}}
\newcommand{\tzeroskein}{\mathcal{A}_0 }
\newcommand{\ctzeroskein}{\widehat{\mathcal{A}}_0}
\newcommand{\graphset}{\Upsilon}
\newcommand{\graphele}{\gamma}
\newcommand{\ginv}{\natural}
\newcommand{\MCG}{\mathcal{M}}
\newcommand{\torelli}{\mathcal{I}}
\newcommand{\jhom}[1]{\tau_{#1}}
\newcommand{\jkernel}{\mathcal{K}}
\newcommand{\jfd}[1]{\mathcal{I}^{#1} }
\newcommand{\Kfd}[1]{\mathcal{I}^{#1}_{
\mathcal{S}}}
\newcommand{\Hfd}[1]{\mathcal{I}^{#1}_{
\mathcal{A}} }
\newcommand{\zettaiti}[1]{\lvert #1 \rvert}
\newcommand{\shuugou}[1]{\{ #1 \}}
\newcommand{\PPsi}[2]{ \Psi_{#1}^{#2}}
\newcommand{\id}{\mathrm{id}}
\newcommand{\Diff}{\mathrm{Diff}}
\newcommand{\gyaku}[1]{ #1^{-1}}
\newcommand{\defeq}{\stackrel{\mathrm{def.}}{=}}
\newcommand{\tosha}[1]{\stackrel{ #1 }{\to}}
\newcommand{\Aut}{\mathrm{Aut}}
\newcommand{\bch}{\mathrm{bch}}
\renewcommand{\tilde}{\widetilde}
\begin{document}

\title[The Johnson homomorphisms and homology cylinders]{The total Johnson 
homomorphism on the homology cylinder and
the bracket-quantization
\homfly skein algebra}
\author{Shunsuke Tsuji}
\email{tsujish@kurims.kyoto-u.ac.jp}
\date{}
\maketitle 

\begin{abstract}

A homology cylinder of a surface induces an automorphism of the completed group
ring of the fundamental group of the surface.
We introduce a   new method of 
computing the automorphism by
using the Goldman Lie algebra of the surface
or some skein algebra.
In particular, we give a refinement of 
a formula by Kuno and Massuyeau \cite{KM2019}.

%A homology cylinder of a surface induces an automorphism of the completed group ring of the fundamental group of the surface. We introduce a new method of computing the automorphism by using the Goldman Lie algebra of the surface. In particular, we give a refinement of a formula by Kuno and Massuyeau.
\end{abstract}

\tableofcontents

\subsection*{Acknowledgment}

The author is grateful to Kazuo Habiro, Nariya Kawazumi, Yusuke Kuno, \Gwenael \ Massuyeau, Jun Murakami and Tomotada Ohtsuki for helpful comments and discussions. This work was partially supported by JSPS KAKENHI Grant Number 18J00305, the Research Institute for Mathematical Sciences and an International Joint Usage/Research Center located at Kyoto University.

\section{Introduction and the main result}

\subsection{Back ground}
\label{subsection_back_ground}
%\item 写像類群の定義と作用とジョンソン準同型の紹介(トレリ群）、Morita の研究成果を軽く説明

The mapping class group $\MCG (\surface)$ of a compact connected oriented surface $\surface$ with boundary is the set of the isotopy classes of diffeomorphism fixing the boundary $\partial \surface$ pointwise. The group $\MCG (\surface)$ acts naturally on the fundamental group $\pi_1 (\surface,*)$ of the surface $\surface$ with basepoint $* \in \partial \surface$. The action is faithful by a theorem of Dehn and Nielsen, and so provides us with an algebraic method of studying the mapping class group.

%The mapping class group of a compact connected surface with boundary is the set of the isotopy classes of diffeomorphism fixing the boundary pointwise. The group acts naturally on the fundamental group of the surface with basepoint X. The action is faithful by a theorem of Dehn and Nielsen, and so provides us with an algebraic method of studying the mapping class group.
%compct connected surfaceをcompact connected oriented surfaceに変える

We denote by $\surface_{g,1}$ a compact connected oriented surface of genus $g$ with one boundary component. In the study of the action of $\MCG (\surface_{g,1})$ on $\pi_1 (\surface_{g,1})$, Johnson introduced a series of homomorphisms, which we call the Johnson homomorphisms.
Morita \cite{Morita1989} \cite{Morita_Casson_core} discovered an explicit relationship between the Johnson homomorphisms and the Casson invariant, which is the finite type invariant of $3$-manifold of order $1$. Furthermore, Garoufalidis and Levine \cite{GL} made us understand the Johnson homomorphisms as the full tree parts of finite type invariants for ``homology cobordisms".

%We denote by S a compact connected oriented surface of genus g with one boundary component. In the study of the action of M on P, Johnson introduced a series of homomorphisms, which we call the Johnson homomorphisms. 
%Morita discovered an explicit relationship between the Johnson homomorphisms and the Casson invariant, which is the finite type invariant of 3-manifold of order 1. Furthermore, Garoufalidis and Levine made us understand the Johnson homomorphisms as the full tree parts of finite type invariants for ``homology cobordisms".
%切るかとうか微妙？つなげたけど・・・

%\item KKMTuの写像類群とゴールドマン・リー代数（ジョンソン準同型は違う見方も・・・）

Kawazumi, Kuno, Massuyeau, and Turaev \cite{Kawazumi} \cite{KK} \cite{MT} studied the Goldman Lie algebra using a grading defined by an augmentation ideal. The Torelli group $\torelli (\surface_{g,1})$ is the kernel of the action of $\MCG (\surface_{g,1})$ on $H_1 (\surface_{g,1}, \Z)$. Furthermore, Kawazumi and Kuno constructed an injective map from the Torelli group $\torelli (\surface_{g,1})$ to the completed Goldman Lie algebra in terms of the grading, which recovers the Johnson homomorphisms. In other words, we can understand the Johnson homomorphisms as the associated graded quotients.

%Kawazumi, Kuno, Massuyeau, and Turaev studied the Goldman Lie algebra using a grading defined by an augmentation ideal. The Torelli group is the kernel of the action of M on H. Furthermore, Kawazumi and Kuno constructed an injective map from the Torelli group to the completed Goldman Lie algebra in terms of the grading, which recovers the Johnson homomorphisms. In other words, we can understand the Johnson homomorphisms as the associated graded quotients.

%\item homology cylinder が導出する同型(homology cylinderの説明は後回しにすること)写像類群とホモロジーシリンダーは対応する。

We denote by $\mathcal{C} (\surface)$ the set of diffeomorphism classes of homology cobordisms of the surface, which will be defined later. By ``stacking sums", we may consider $\mathcal{C} (\surface)$ as a monoid.

%We denote by C the set of diffeomorphism classes of homology cobordisms of the surface, which will be defined later. By ``stacking sums", we may consider C as a monoid.

Let $\filtn{(\pi_1 (\surface_{g,1}))_n}$ be the lower central series of $\pi_1 (\surface_{g,1})$. For any $N \in \Zlarger{1}$, using the Stallings's theorem \cite{Stallings}, Garoufalidis and Levine \cite{GL} introduced a monoid homomorphism
\begin{equation*}
\Phi'_N:\mathcal{C} (\surface_{g,1}) \to \Aut (\pi_1 (\surface_{g,1})/((\pi_1 (\surface_{g,1}))_N).
\end{equation*}
 We can extend it to 
\begin{equation*}
\Phi =( \cdot )_*:\mathcal{C} (\surface_{g,1}) \to \Aut (\cGLM (\surface_{g,1})),
\end{equation*}
 where $\cGLM (\surface_{g,1})$ is the completed group ring of $\pi_1 (\surface_{g,1})$. The monoid homomorphisms provides us with an algebraic method of studying $\mathcal{C} (\surface_{g,1})$ in terms of the finite type invariants of 3-manifolds and the Goldman Lie algebra. We remark that we can also define a monoid homomorphism 
\begin{equation*}
\Phi =( \cdot )_*:\mathcal{C} (\surface) \to \Aut (\cGLM (\surface)),
\end{equation*}
for the surface $\surface$.

%Let P be the lower central series of R. For any N, using the Stallings's theorem, Garoufalidis and Levine introduced a monoid homomorphism of C on P. We can extend it to the one, where R is the completed group ring of P. The monoid homomorphisms provides us with an algebraic method of studying C in terms of the finite type invariants of 3-manifolds and the Goldman Lie algebra. We remark that we can also define a monoid homomorphism for the surface.

%\item homology cylinder が導出するジョンソン準同型(Garoufalidis Levine の説明、Kuno Massuyeauの公式(back ground))

In the case $\surface=\surface_{g,1}$, Garoufalidis and Levine \cite{GL} derived the Johnson homomorphisms of $\mathcal{C} (\surface_{g,1})$ from the action $\Phi'_N$. By their paper, we understand the Johnson homomorphisms as the full tree parts of finite type invariants for homology cobordisms. In terms of the Goldman Lie algebra, Kuno and Massuyeau \cite{KM2019} have recently obtained a formula for the Johnson homomorphisms of $\mathcal{C} (\surface_{g,1})$ using  `` generalized Dehn twists".

%In the case, Garoufalidis and Levine derived the Johnson homomorphisms of C from the action. By their paper, we understand the Johnson homomorphisms as the full tree parts of finite type invariants for homology cobordisms. In terms of the Goldman Lie algebra, Kuno and Massuyeau have recently obtained a formula for the Johnson homomorphisms of C using `` generalized Dehn twists".

%\item 新しい方法でhomology cylinderの同型を記述(KKMTuを使う)する。ジョンソン準同型に応用がある。

We can define a submonoid $\mathcal{IC} (\surface_{g,1})$ of $\mathcal{C} (\surface_{g,1})$ analogous to the Torelli group $\torelli (\surface_{g,1})$ (Definition \ref{df_homology_cylinder}) and construct a map from $\mathcal{IC} (\surface_{g,1})$ to the completed Goldman Lie algebra. The action $\Phi =( \cdot )_*:\mathcal{C} (\surface_{g,1}) \to \Aut (\cGLM (\surface_{g,1}))$ restricted to $\mathcal{IC} (\surface_{g,1})$ can be regarded as the exponential of the map to the completed Goldman Lie algebra (Theorem \ref{thm_main_hom_Johnson}). Furthermore, we can understand the Johnson homomorphisms as the associated graded quotients of the Goldman Lie algebra.

%We can define a submonoid D of C analogous to the Torelli group and construct a map from D to the completed Goldman Lie algebra. The homomorphism restricted to D can be regarded as the exponential of the map to the completed Goldman Lie algebra. Furthermore, we can understand the Johnson homomorphisms as the associated graded quotients of the Goldman Lie algebra.

%\item このセクションの構成。

In \S \ref{subsection_notation_for_the_Goldman}  and \S\ref{subsection_notation_for_homology_cylinders}, we introduce the notion of the Goldman Lie algebra and homology cylinders of the surface $\surface$. In \S \ref{subsection_main_result}, we present our main result. In \S 1.5, we explain that we understand the Johnson homomorphisms as the associated graded quotients of the Goldman Lie algebra.

%In section 1.2 and section 1.3, we recall the notion of the Goldman Lie algebra and homology cylinders of the surface. In section 1.4, we present our main result. In section 1.5, we explain that we understand the Johnson homomorphisms as the associated graded quotients of the Goldman Lie algebra.

\subsection{The Goldman Lie algebra}
\label{subsection_notation_for_the_Goldman}

%\item ゴールマン・リー代数を紹介する

We set up notation for the Goldman Lie algebra on the surface $\surface$. By using the Goldman Lie algebra, we will describe the action $\Phi$ of a homology cylinder of $\surface$ on the completed group ring in \S \ref{subsection_main_result}.

%We set up notation for the Goldman Lie algebra on the surface. By using the Goldman Lie algebra, we will describe the action of a homology cylinder of it on the completed group ring in section 1.4.

%\item フィルトレーションと完備化

Let $\zettaiti{\pi_1} (\surface)$ denote the set of conjugacy classes in $\pi_1 (\surface)$. Goldman defined a Lie bracket on the free $\Q$-vector space $\GL (\surface)$ over the set $\zettaiti{\pi_1} (\surface)$, which we call the Goldman Lie bracket. Kawazumi and Kuno \cite{Kawazumi} introduced Lie actions $\sigma:\GL (\surface) \times \GLM (\surface, *) \to \GLM (\surface, *)$ and $\sigma:\GL (\surface) \times \GLM (\surface, *_1, *_2) \to \GLM (\surface, *_1, *_2)$ for $*, *_1, *_2 \in \partial \surface$. The Lie bracket and the Lie actions satisfy the condition
\begin{align*}
&[ \zettaiti{ (\ker \varepsilon)^n},
\zettaiti{(\ker \varepsilon)^m}]
\subset \zettaiti{ (\ker \varepsilon)^{n+m-2}}, \\
&\sigma (\zettaiti{ (\ker \varepsilon)^n})
((\ker \varepsilon)^m \GLM (\surface, *_1, *_2))
\subset 
(\ker \varepsilon)^{n+m-2} \GLM (\surface, *_1, *_2),
\end{align*}
where $\zettaiti{\cdot} : \GLM (\surface) \to \GL (\surface)$
is the quotient map. Here, for any group $G$, $\varepsilon :\Q G \to
\Q$ is the augmentation map defined by $g\in G \mapsto 1$.

%Let P denote the set of conjugacy classes in D. Goldman defined a Lie bracket on the free vector space over the set P, which we call the Goldman Lie bracket. Kawazumi and Kuno introduced Lie actions. The Lie bracket and the Lie actions satisfy the conditions, where Z is the quotient map. Here, for any group, E is the augmentation map.

To define the logarithm and the exponential on the group ring, we define filtrations
\begin{align*}
&\filt{n} \GLM (\surface, *)
\defeq (\ker \varepsilon)^n, \\ 
&\filt{n} \GLM (\surface, *_1, *_2), 
\defeq (\ker \varepsilon)^n \GLM (\surface, *_1, *_2), %\ \mathrm{and}
 \\
&\filt{n} \GL (\surface) 
\defeq \zettaiti{ (\ker \varepsilon)^n}. 
\end{align*}
and completions
\begin{align*}
&\cGLM (\surface, *) \defeq
\comp{i} \GLM (\surface, *)/ \filt{i} \GLM (\surface,*), \\
&\cGLM (\surface, *_1, *_2) \defeq
\comp{i} \GLM (\surface, *_1, *_2)/ \filt{i}
\GLM (\surface, *_1, *_2), %\ \mathrm{and}
 \\
&\cGL (\surface ) \defeq
\comp{i} \GL (\surface )/
\filt{i} \GL (\surface ).
\end{align*}
The completions also have filtrations such that 
\begin{equation*}
F^n \widehat{V}= \ker ( \widehat{V} \to V/\filt{n} V)
\end{equation*}
where 
\begin{equation*}
V = \GLM (\surface, *), \GLM (\surface, *_1, *_2),  \ \mathrm{and} \
\GL (\surface)
\end{equation*}
 and $\widehat{V}$ is the completion of $V$. Using the Baker-Campbell-Hausdorff series, we consider $\filt{3} \cGL (\surface)$ as a group.

%To define the logarithm and the exponential on the group ring, we define filtrations and completions. We remark that the mapping class group acts on the completions. The completions also have filtrations such that the equation. Using the Baker-Campbell-Hausdorff series, we consider F as a group.

%\item 写像類群との関係
%\item もっと詳しい説明はセクション２

Using the filtrations and the completions, we can discuss a relationship between the Goldman Lie algebra and the mapping class group. Here we denote by $t_c$ the Dehn twist along a simple closed curve $c$. Let $\torelli' (\surface)$ be the subgroup of the mapping class group $\MCG (\surface)$ generated by 
\begin{align*}
\shuugou{t_{c_1} t_{c_2}^{-1}|
(c_1, c_2) \mathrm{\ bounds \ a  \ surface}} \ \mathrm{and} \ 
\shuugou{t_{c_0}|
c_0 \mathrm{\ bounds \ a \ surface}}.
\end{align*}
 Then there is an embedding $\zeta$ from $\torelli' (\surface)$ to $\filt{3} \cGLM (\surface)$. The map $\zeta$ satisfies
\begin{equation*}
\xi_* = \exp (\zeta (\xi)) \defeq \sum_{i=0}^\infty \frac{1}{i!} (\sigma (\zeta (\xi)))^i:
\cGLM (\surface, *_1, *_2) \to \cGLM (\surface, *_1, *_2).
\end{equation*}
For details, see \S \ref{section_Goldman} or \cite{Kawazumi} \cite{KK} \cite{MT}.

%Using the filtrations and the completions, we can discuss a relationship between the Goldman Lie algebra and the mapping class group. Here we denote by t the Dehn twist along a simple closed curve. Let X be the subgroup of the mapping class group generated by Y. Then there is an embedding from X to Z. The map satisfies the equation. For details, you can see section 2 or the papers.

\subsection{Homology cylinders}
\label{subsection_notation_for_homology_cylinders}

%\item 一般の曲面でhomology cobordismsの定義を復習

We recall the definition of homology cobordisms. Let $(M, \alpha)$ be a pair of a compact connected oriented $3$-manifold and a diffeomorphism $\partial (\surface \times I) \to \partial M$, where $I$ is the unit interval $[0,1]$. If the embedding maps
\begin{equation*}
\alpha_0:\surface \to M, p \mapsto \alpha (p,0) \ \mathrm{and} \ 
\alpha_1: \surface \to M, p \mapsto \alpha (p,1)
\end{equation*}
 induce the isomorphisms
\begin{equation*}
\alpha_{0*}: H_1 (\surface, \Z) \to H_1 (M, \Z) \ \mathrm{and} \
\alpha_{1*}: H_1 (\surface, \Z) \to H_1 (M, \Z),
\end{equation*}
we call it a homology cobordism.

%We recall the definition of homology cobordisms. Let X be a pair of a compact connected oriented manifold and a diffeomorphism where Y is the unit interval. If the embedding maps induce the isomorphisms, we call it a homology cobordism.

%\item 一般の曲面でhomology cobordismsを定義stacking sum

The set of the diffeomorphism classes of homology cobordisms
\begin{equation*}
\mathcal{C} (\surface) \defeq \shuugou{\mathrm{homology \ cobordisms}}/
\mathrm{diffeomorpic}
\end{equation*}
is a monoid by ``stacking sums". The action $\mathcal{C}(\surface) \to \Aut (\cGLM (\surface, *_1, *_2))$ and the Johnson homomorphisms introduced by Garoufalidis and Levine are monoid homomorphism. 

%The set of the diffeomorphism classes of homology cobordisms is a monoid by stacking sums. The action and the Johnson homomorphisms introduced by Garoufalidis and Levine are monoid homomorphisms.

%\item アクションの定義Stallingsの定理

We define the action $\Phi(\cdot) =( \cdot)_*:\mathcal{C} (\surface) \to \Aut (\cGLM (\surface, *_1, *_2))$ as follows. Let $(M, \alpha)$ be a homology cobordism of the surface
$\surface$. By Stallings' theorem, the embedding maps $\alpha_0, \alpha_1$ induce the isomorphisms
\begin{align*}
&\alpha_{0*}:\cGLM (\surface, *_1, *_2) \to \cGLM (M, \alpha (*_1,0), \alpha (*_2,0)), \\
&\alpha_{1*}:\cGLM (\surface, *_1, *_2) \to \cGLM (M, \alpha (*_1,1), \alpha (*_2,1)). \\
\end{align*}
 Here, for any $i=0,1$, we denote by $\cGLM (M, \alpha (*_1 ,i), \alpha (*_2 ,i))$ the completion
\begin{equation*}
\comp{n} \GLM (M, \alpha (*_1 ,i), \alpha (*_2 ,i))/
(\ker \varepsilon)^n \GLM (M, \alpha (*_1 ,i), \alpha (*_2 ,i)).
\end{equation*}
Let $\Diamond:\cGLM (M, \alpha (*_1,0), \alpha (*_2,0))
\to \cGLM (M, \alpha (*_1,1), \alpha (*_2,1))
$ be the automorphism defined as $\Diamond (\gamma) \defeq (\gamma_{*01})^{-1} \gamma 
\gamma_{*01}$, where the continuous map $I \to M, t \mapsto \alpha (*, t)$ represents the path
$\gamma_{*01}$. The composite $\alpha_{1*}^{-1} \circ  \Diamond \circ \alpha_{0*}$ defines a monoid homomorphism
\begin{equation*}
\Phi =( \cdot )_*:\mathcal{C} (\surface) \to \Aut (\cGLM (\surface, *_1, *_2)),
(M, \alpha) \mapsto (M, \alpha)_* \defeq \alpha_{1*}^{-1} \circ \Diamond \circ \alpha_{0*},
\end{equation*}
which we call the action $\Phi$ of $\mathcal{C} (\surface)$ on 
$\cGLM (\surface, *_1, *_2)$.

%We define the action as follows. Let M be a homology cobordism of the surface. By Stallings' theorem, the embedding maps induce the isomorphisms. Here, for II, we denote by X the completion. Let D be the automorphism defined as the equation, where the continuous map represents the path. The composite defines a monoid homomorphism, which we call the action P.

\begin{df}
\label{df_homology_cylinder}
We denote by $\mathcal{IC} (\surface)$ the subset of $\mathcal{C}(\surface)$ consisting of all elements $\Xi$ satisfying 
\begin{equation*}
(\id- \Xi_*)(\cGLM (\surface,*_1, *_2)) \subset
F^2 \cGLM (\surface, *_1, *_2)
\end{equation*}
for any $*_1, *_2 \in \partial \surface$.
We call a homology cobordism representing an element of 
$\mathcal{IC} (\surface)$ a homology cylinder.

%We denote by C the subset of D consisting of all elements satisfying the condition. We call a homology cobordism representing an element of C a homology cylinder.

\end{df}

%\item homology cylinder は次ですべて得られる。

To state our main result, we introduce a homology cylinder version of a Heegaard splitting of a 3-manifold. We choose disjoint closed disks $D_1, \cdots, D_N$ on $\surface$ where $N$ is at least $1$. Let $\surface_\mathrm{st}$ be the closure of $\surface \backslash (D_1 \sqcup \cdots \sqcup D_N)$. We denote the extended surface $\partial (\surface_\mathrm{st} \times I) \backslash (\partial \surface \times  (0,1)) = \surface_\mathrm{st} \times \shuugou{0,1} \cup (\partial (D_1 \sqcup \cdots \sqcup D_N) \times I)$ by $\tilde{\surface}_\mathrm{st}$. For an element $\xi \in \torelli' (\tilde{\surface}_\mathrm{st})$, we take a representative of $\xi$ and denote it by the same symbol $\xi$. Here $e_\mathrm{st}:\surface_\mathrm{st} \times I  \to \surface \times I,(p,t) \mapsto (p, \frac{1+t}{3})$ is the standard embedding map. We define a homology cylinder $(\surface \times I) (e_\mathrm{st}, \xi)$  by 
\begin{equation*}
( (\mathrm {the \ closure \ of \ }(\surface \times I \backslash
e_\mathrm{st} (\surface_\mathrm{st} \times I)))\cup_{e_\mathrm{st} \circ \xi} (\surface_\mathrm{st} \times I), \id_{\partial(\surface \times I)}).
\end{equation*}

%To state our main result, we introduce a homology cylinder version of a Heegaard splitting of a 3-manifold. We choose disjoint closed disks on the surface where N is at least 1. Let S be the closure of the surface. We denote the extended surface by T. For an element X, we take a representative of X and denote it by the same symbol X. Here E is the standard embedding. We define a homology cylinder by the equation.

\begin{prop}[Proposition \ref{prop_standard_embedding}]

For any homology cylinder $(M, \alpha)$ of the surface $\surface$, there exist some disjoint closed disks $D_1 , \cdots, D_N$ and an element $\xi \in \torelli' (\tilde{\surface_\mathrm{st}})$ such that the diffeomorphism class of $(M, \alpha)$ equals that of 
$(\surface \times I) (e_\mathrm{st}, \xi)$.

%For any homology cylinder of the surface, there exist some disjoint closed disks and an element X such that the diffeomorphism class of M equals that of N.
%\item prop( セクション３の最後)
\end{prop}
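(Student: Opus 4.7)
The plan is to represent $(M, \alpha)$ as a boundary regluing of the trivial cylinder along a subsurface: I look for a sub-$3$-manifold $W \subset M$ diffeomorphic to $\surface_{\mathrm{st}} \times I$ whose complement is identified via $\alpha$ with $\surface \times I \setminus e_{\mathrm{st}}(\surface_{\mathrm{st}} \times I)$, and then extract $\xi$ as the discrepancy between the product structure on $W$ and $e_{\mathrm{st}}$ on the common boundary $\tilde{\surface}_{\mathrm{st}}$.

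To construct $W$, I would take a relative handle decomposition of $M$ starting from the trivial collar $\alpha(\surface \times [0, 1/3])$. The homology cobordism hypothesis lets us cancel $0$- and $3$-handles, leaving only $1$- and $2$-handles, which I then isotope so that their attaching regions lie in small disjoint balls inside the middle slab $\alpha(\surface \times [1/3, 2/3])$. Choose disks $D_1, \ldots, D_N \subset \surface$ so that the projection to $\surface$ of every attaching ball lies in $\surface_{\mathrm{st}}$, with $N$ large enough both to allow extra cancelling $1/2$-handle pairs and to ensure $\tilde{\surface}_{\mathrm{st}}$ has genus at least $3$. Define $W$ to be $\alpha(\surface_{\mathrm{st}} \times [1/3, 2/3])$ together with all the attached handles; by construction, $M \setminus \mathrm{int}(W) \cong \surface \times I \setminus e_{\mathrm{st}}(\surface_{\mathrm{st}} \times I)$ via $\alpha$.

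Since $(M,\alpha)$ is a homology cylinder, the algebraic intersection pattern of the $1$- and $2$-handles inside $W$ is unimodular; after stabilization (adding cancelling pairs, whose feet are placed in the room provided by increasing $N$), these handles cancel geometrically and $W$ becomes diffeomorphic to $\surface_{\mathrm{st}} \times I$ rel its side boundary $\alpha(\partial \surface \times [1/3, 2/3])$. Comparing the resulting product structure with $e_{\mathrm{st}}$ on $\tilde{\surface}_{\mathrm{st}}$ defines a self-diffeomorphism $\xi$ of $\tilde{\surface}_{\mathrm{st}}$. The homology cylinder hypothesis forces $\xi$ to act trivially on $H_1(\tilde{\surface}_{\mathrm{st}})$, so $\xi$ belongs to the Torelli group, and since $\tilde{\surface}_{\mathrm{st}}$ has genus $\geq 3$, Johnson's generation theorem places $\xi$ in $\torelli'(\tilde{\surface}_{\mathrm{st}})$.

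The main obstacle is the upgrade from algebraic to geometric cancellation of the $1/2$-handle pairs inside $W$, so that $W$ genuinely becomes a product $\surface_{\mathrm{st}} \times I$ rel side boundary; this requires a careful use of stabilization, which is precisely why we need the freedom to enlarge $N$. A secondary delicate point is to match all the non-regluing boundary data (the top, bottom, and side faces) with $e_{\mathrm{st}}$ so that the entire nontrivial content of the construction is concentrated in $\xi$ on $\tilde{\surface}_{\mathrm{st}}$.
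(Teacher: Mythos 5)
Your route (a relative handle decomposition of $M$ and a direct Heegaard-type splitting) is genuinely different from the paper's, but it has a gap at its central step. You claim that, because the algebraic intersection pattern of the $1$- and $2$-handles is unimodular, after stabilization they ``cancel geometrically and $W$ becomes diffeomorphic to $\surface_{\mathrm{st}} \times I$ rel its side boundary.'' If the handle pairs inside $W$ really cancelled geometrically relative to the complement, then $W$ would be the untouched collar and $M$ would be diffeomorphic to $\surface \times I$; your mechanism, taken at face value, proves that every homology cylinder is trivial. Algebraic cancellation does not upgrade to geometric cancellation in dimension $3$, and adding cancelling $1/2$-pairs is not a known repair. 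What must actually happen is a handle trade: the $2$-handles of $M$ have to be matched against the \emph{new} $1$-handles created by deleting the disks $D_1,\dots,D_N$ (they become the tubes $\partial D_i \times I$ lying in the complementary piece), and it is precisely this rearrangement that pushes the nontrivial topology of $M$ into the regluing map $\xi$ on $\tilde{\surface}_{\mathrm{st}}$. Your proposal never carries out this step. A secondary gap: the assertion that the homology cylinder hypothesis ``forces $\xi$ to act trivially on $H_1(\tilde{\surface}_{\mathrm{st}})$'' is not immediate, since $\xi$ is only determined up to diffeomorphisms extending over the two sides and Mayer--Vietoris constrains $\xi_*$ only through its composition with the maps to $H_1(M)$; and the appeal to Johnson's generation theorem to conclude $\xi \in \torelli'(\tilde{\surface}_{\mathrm{st}})$ requires the version for surfaces with boundary together with the genus bound, neither of which you verify.

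For contrast, the paper sidesteps all of this. It quotes Habegger's theorem (Proposition \ref{prop_Habegger2000}) that every homology cylinder is Dehn surgery on a labeled boundary link $L(\lambda)$ in $\surface \times I$; it isotopes a Seifert surface of $L$ onto $\tilde{\surface}_{\mathrm{st}}$ for a suitable standard embedding (Lemma \ref{lemm_standard_embedding}, a diagrammatic isotopy argument); and it applies Lickorish's trick (Lemma \ref{lemm_Lickorish's_trick_standard}), which produces $\xi$ explicitly as $\prod_{[\partial]} t_{\tilde e(c_\partial)}^{-\lambda([\partial])}$, a product of Dehn twists along curves bounding surfaces and bounding pairs --- hence an element of $\torelli'(\tilde{\surface}_{\mathrm{st}})$ by definition, with no need for Johnson's generation theorem or any homological argument about $\xi$.
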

%\item もっと詳しい説明はセクション３
For details, see \S \ref{section_homology_cylinders}.

%\item $\iota$ $\kappa$ $v$ の説明
To state our main result, we need to define a map $v:\filt{3} \cGL (\tilde{\surface}_\mathrm{st}) \to \filt{3} \cGL (\surface_\mathrm{st})$ as follows. The maps
\begin{equation*}
\tilde{\surface_\mathrm{st}} \to \surface_\mathrm{st} \times I 
\twoheadrightarrow \surface_\mathrm{st}, \ 
\surface_\mathrm{st} \to \tilde{\surface}_\mathrm{st}, p \mapsto (p,1), \ \mathrm{and} \
\surface_\mathrm{st} \to \tilde{\surface}_\mathrm{st}, p \mapsto (p,0)
\end{equation*}
induce linear maps
\begin{align*}
&\kappa_*:\cGL (\tilde{\surface_\mathrm{st}}) \to \cGL (\surface_\mathrm{st}), \
\iota_{0*}: \cGL (\surface_\mathrm{st}) \to \cGL (\tilde{\surface}_\mathrm{st}), \\ 
&\mathrm{and} \
\iota_{1*}:\cGL (\surface_\mathrm{st}) \to \cGL (\tilde{\surface_\mathrm{st}}).
\end{align*} 
We remark that the second map is a Lie algebra homomorphisms, but the first one and the third one are not.

%To state our main result, we need to define a map V as follows. The maps induce linear maps. We remark that the second map is a Lie algebra homomorphisms, but the first one and the third one are not.

\begin{df}
For $x \in \filt{3} \cGL (\tilde{\surface}_\mathrm{st})$,
a sequence $\shuugou{v_n (x)}_{n \in \Zlarger{1}} \subset 
\filt{3} \cGL (\surface_\mathrm{st})$ is defined
by 
\begin{align*}
v_1 (x) \defeq \kappa_* (x), \ 
v_{n+1} (x) \defeq v_{n} (x) +\kappa_* (\bch (-\iota_{1*} (v_{n} (x)), x)).
\end{align*}
Furthermore, $v(x)$ is defined by
\begin{equation*}
v(x) \defeq \lim_{n \to \infty} v_n(x).
\end{equation*}
\end{df}
%\item Definition の説明

The element $v(x)$ is a unique one satisfying  $\kappa_* (\bch(-\iota_{1*} (v(x)), x))=0$. In other words, $v(x)$ is the unique solution of $\kappa_* (\bch(-\iota_{1*} ( \cdot), x))=0$. There is a topological definition of $v$ using some skein algebra in \S \ref{section_qbtskein}. Using this map, we state our main theorem proved in \S \ref{section_proof_main_theorem_qbtskein}.

%The element is a unique one satisfying the condition. In other words, V is the unique solution of the equation. There is a topological definition of V using some skein algebra in section 5. Using this map, we state our main theorem proved in section 4.3.

\subsection{Main result}
\label{subsection_main_result}

%\item この論文の目的

The paper aims to construct a monoid homomorphism 
\begin{equation*}
\tilde{\zeta}:\mathcal{IC} (\surface) \to \filt{3}\cGL (\surface)
\end{equation*}
 having the following property 
\begin{equation*}
\Phi (\cdot) =\exp (\sigma (\tilde{\zeta} (\cdot)))\in \Aut (\cGLM (\surface,*_1, *_2))
\end{equation*}
 as follows.

%The paper aims to construct a monoid homomorphism having the following property as follows.

\begin{thm}
\label{thm_main_hom_Johnson}
We have
\begin{equation*}
(\surface \times I )(e_\mathrm{st} , \xi)_*
=\exp (e'_*(v(\zeta  (\xi )))):
\cGLM (\surface, *_1, *_2) \to \cGLM (\surface, *_1, *_2)
\end{equation*}
for $*_1, *_2 \in \partial \surface$,
where $e'_* :\cGL (\surface_\mathrm{st}) \to \cGL (\surface)$
is induced by $\surface_\mathrm{st} \hookrightarrow \surface.$ 
Using the formula, the map
\begin{equation*}
\tilde{\zeta}:\mathcal{IC} (\surface) \to
(F^3 \cGL (\surface), \bch),
\end{equation*}
defined by
\begin{equation*}
(\surface \times I) (e_\mathrm{st} , \xi) \mapsto
e'_* (v (\zeta (\xi))),
\end{equation*}
is a well-defined monoid homomorphism.
%\item Main Thm
\end{thm}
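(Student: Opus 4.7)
The plan splits into two stages: first prove the exponential formula for a given representative $\xi\in\torelli'(\tilde{\surface}_{\mathrm{st}})$, then deduce from it that $\tilde\zeta$ is well-defined and a monoid homomorphism. For the formula, the starting point is the decomposition
\[
(\surface\times I)(e_{\mathrm{st}},\xi) \;=\; Y' \cup_{e_{\mathrm{st}}\circ\xi} (\surface_{\mathrm{st}}\times I),
\qquad Y' \defeq \overline{(\surface\times I)\setminus e_{\mathrm{st}}(\surface_{\mathrm{st}}\times I)},
\]
with the two pieces glued along the surface $e_{\mathrm{st}}(\tilde{\surface}_{\mathrm{st}})$ via $\xi$. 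Fix basepoints $*_1,*_2\in\partial\surface$. To compute $(\surface\times I)(e_{\mathrm{st}},\xi)_*$ on $\cGLM(\surface,*_1,*_2)$, I would lift each path representing an element of $\pi_1(\surface,*_1,*_2)$ from the bottom face, trace it up to the top face through the new manifold, and compare with the standard traversal in $\surface\times I$; the portions of each path lying in $Y'$ are unaffected, and all of the twisting is concentrated at the crossings of the gluing surface $e_{\mathrm{st}}(\tilde{\surface}_{\mathrm{st}})$.

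The algebraic content of this tracing procedure is supplied by the Kawazumi--Kuno--Massuyeau--Turaev formula $\xi_*=\exp(\sigma(\zeta(\xi)))$ on $\cGLM(\tilde{\surface}_{\mathrm{st}})$ together with the combinatorics of the maps $\iota_0,\iota_1,\kappa$. The inclusions $\iota_0,\iota_1:\surface_{\mathrm{st}}\hookrightarrow\tilde{\surface}_{\mathrm{st}}$ correspond to the bottom and top faces $\surface_{\mathrm{st}}\times\{0,1\}$ of the interior piece $\surface_{\mathrm{st}}\times I$, while the collapse $\kappa:\tilde{\surface}_{\mathrm{st}}\to\surface_{\mathrm{st}}$ encodes the identity-gluing benchmark relative to which $\xi$ is a perturbation. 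The characterizing identity $\kappa_*(\bch(-\iota_{1*}(v(x)),x))=0$ of $v(x)$ then says precisely that acting by $v(\zeta(\xi))$ on $\surface_{\mathrm{st}}$ reproduces, after top-lifting via $\iota_1$ and modulo $\ker\kappa_*$, the action of $\xi$ on $\tilde{\surface}_{\mathrm{st}}$. Promoting the answer to $\cGLM(\surface)$ via $e'_*$ should then yield $(\surface\times I)(e_{\mathrm{st}},\xi)_*=\exp(e'_*(v(\zeta(\xi))))$, and the most efficient route is likely to phrase the tracing step inside the skein-theoretic reformulation of $v$ announced in~\S\ref{section_qbtskein}.

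Granted the formula, stage two proceeds as follows. If two pairs $(D_\bullet^{(1)},\xi_1)$ and $(D_\bullet^{(2)},\xi_2)$ represent the same $\Xi\in\mathcal{IC}(\surface)$, the formula yields $\exp(\sigma(e'_*(v(\zeta(\xi_1)))))=\Xi_*=\exp(\sigma(e'_*(v(\zeta(\xi_2)))))$ on $\cGLM(\surface,*_1,*_2)$ for all $*_1,*_2\in\partial\surface$. Since both exponents lie in $\filt{3}\cGL(\surface)$, the corresponding derivations are topologically nilpotent and admit unique logarithms, so the two derivations $\sigma(e'_*(v(\zeta(\xi_i))))$ coincide. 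Faithfulness of $\sigma$ on $\filt{3}\cGL(\surface)$---which holds because the kernel of $\sigma$ is the span of the trivial loop class, of filtration degree zero---then gives $e'_*(v(\zeta(\xi_1)))=e'_*(v(\zeta(\xi_2)))$, establishing well-definedness. The monoid property follows because $\Phi$ is already a monoid homomorphism: combining the formula with the identity $\exp(\sigma(a))\circ\exp(\sigma(b))=\exp(\sigma(\bch(a,b)))$ (valid since $\sigma$ is a Lie homomorphism) and reapplying injectivity of $\sigma$ yields $\tilde\zeta(\Xi_1\Xi_2)=\bch(\tilde\zeta(\Xi_1),\tilde\zeta(\Xi_2))$.

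The main obstacle is the geometric proof of the formula itself. Matching the recursion $v_{n+1}(x)=v_n(x)+\kappa_*(\bch(-\iota_{1*}(v_n(x)),x))$ term by term with the algebraic effect of tracing loops through the cut-and-reglue geometry is delicate, because one must carefully account for every crossing of the gluing surface $e_{\mathrm{st}}(\tilde{\surface}_{\mathrm{st}})$, and the successive BCH corrections in the recursion have to match exactly the way iterated crossings interact under the action of $\xi$. I expect the skein-theoretic reformulation of $v$ from~\S\ref{section_qbtskein} to be the right organizing framework, reducing the verification to a skein-level identity rather than explicit bookkeeping at the level of individual $\pi_1$-elements.
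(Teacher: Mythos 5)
Your second stage (well-definedness and the monoid property) is sound and matches the paper: it rests on the uniqueness statement in Proposition \ref{proposition_center_of_the_Goldman} (injectivity of $\sigma$ on $\filt{1}\cGL(\surface)$), the fact that $\Phi$ is a monoid homomorphism, and $\exp(\sigma(a))\circ\exp(\sigma(b))=\exp(\sigma(\bch(a,b)))$. The gap is in the first stage. What you give there is an outline that correctly names the ingredients ($Y'\cup_{e_{\mathrm{st}}\circ\xi}(\surface_{\mathrm{st}}\times I)$, the maps $\iota_0,\iota_1,\kappa$, the defining equation $\kappa_*(\bch(-\iota_{1*}(v(x)),x))=0$) but never supplies the mechanism that converts "$v(\zeta(\xi))$ solves that equation" into "$\alpha_{1*}^{-1}\circ\Diamond\circ\alpha_{0*}=\exp(\sigma(e'_*(v(\zeta(\xi)))))$"; you yourself flag this as "the main obstacle." The paper's engine here is Proposition \ref{proposition_technical_homology_cylinder_GL}: first the identity
\begin{equation*}
\kappa'_*(\sigma(x)(y))=\kappa'_*\bigl(\sigma(x)(\iota'_{1*}\circ\kappa'_*(y))\bigr)+\kappa'_*\bigl(\sigma(\iota'_{0*}\circ\kappa'_*(x))(y)\bigr),
\end{equation*}
proved by isotoping over/under copies of the collar in the skein module $\tskein'$; then its consequence that $\kappa'_*\circ(\sigma(x))^n\circ\iota'_{0*}=\kappa'_*\circ(\sigma(x))^n\circ\iota'_{1*}$ whenever $\kappa'_*(x)=0$; and finally the BCH factorization $\zeta(\xi)=\bch(\iota_{1*}(v(\zeta(\xi))),v')$ with $\kappa_*(v')=0$, which lets one cancel the $v'$-part when $\exp(\sigma(-\zeta(\xi)))$ is applied to $\iota_{0*}(x)$ versus $\iota_{1*}(\exp(\sigma(v))(x))$. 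Without some substitute for these two identities, the "tracing loops through the cut-and-reglue geometry" step does not close, and the term-by-term matching with the recursion defining $v$ is exactly the part you have not carried out.

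A second, smaller omission: to pass from the identity on $\cGLM(\surface_{\mathrm{st}},\star_1,\star_2)$ to the claimed identity on $\cGLM(\surface,*_1,*_2)$ you need the surjectivity of $e'_{\mathrm{st}*}:\pi_1(\surface_{\mathrm{st}},\cdot,\cdot)\to\pi_1(\surface,\cdot,\cdot)$ and the isotopies identifying $\alpha_i\circ e_{\mathrm{st}}$ with $e^\flat_{\mathrm{st}}\circ\tilde\kappa\circ\chi^{-1}\circ\iota_i$ up to the basepoint-transport isomorphisms $\diamondsuit$; these are Lemma \ref{lemm_technical_homology_cylinder_GL}(1)--(2) and are needed to make your "portions of the path in $Y'$ are unaffected" heuristic rigorous.
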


%\item 証明の場所\ref{subsection_proof_of_the_main_theorem}、$\tilde{\zeta}$のトポロジカルな構成

We construct the map $\tilde{\zeta}$ in this section in an algebraic method. On the other hand, in \S \ref{section_qbtskein}, we reconstruct the map in $\tilde{\zeta}$ a topological one. In other words,  ``a surgery formula" of some skein algebra explained later defines $\tilde{\zeta}$ in a similar way to the WRT invariant. See, for details, Theorem \ref{thm_qbtskein_main_boundary_link}.

%We construct the map in this section in an algebraic method. On the other hand, in section 5.3, we reconstruct the map in a topological one. In other words, ``a surgery formula" of some skein algebra explained later defines it in a similar way to the WRT invariant.

\subsection{An application}
\label{subsection_an_application}

%\item このサブセクションの説明（recover Johnson homomorphism）、$\surface_{g,1}$の説明

In this subsection, we assume that $\surface = \surface_{g,1}$. By the monoid homomorphism
$\tilde{\zeta}: \mathcal{IC} (\surface_{g,1}) \to \filt{3} \cGL (\surface_{g,1})$, we recover the Johnson homomorphisms of homology cobordisms defined by Garoufalidis and Levine. 

%In this subsection, we assume that the surface is S. From the monoid homomorphism, we recover the Johnson homomorphisms of homology cobordisms defined by Garoufalidis and Levine. 

%\item Johnson-like filtration for $\mathcal{IC} (\surface_{g,1})$
%\item Johnson homomorphismをこの論文ではこれで定義する。もとの定義に一致する。
%\item セクション４．３セクション６で具体的な状況においてのジョンソン準同型を計算する。

We reformulate the Johnson-like filtration $\filtn{\filt{n} \mathcal{C} (\surface_{g,1})}$ of $\mathcal{C} (\surface_{g,1})$ introduced by Garoufalidis and Levine. The subset $\filt{n} \mathcal{C} (\surface_{g,1})$ consists of all elements $\Xi$ satisfying
\begin{equation*}
(\Xi_*- \id ) (\cGLM (\surface_{g,1})) \subset
\filt{n+1} \cGLM (\surface_{g,1}).
\end{equation*}
 Here we set the linear maps
\begin{align*}
\lambda_{n+2} : &\filt{n+2} \GL (\surface_{g,1})/\filt{n+3} \GL (\surface_{g,1})
\to (H_1 (\surface_{g,1}, \Q))^{\otimes n+2}, \\
&\zettaiti{(\gamma_1-1) \cdots (\gamma_{n+2}-1)} \mapsto c_{n+2} ([\gamma_1] \otimes \cdots
\otimes [\gamma_{n+2}]), \\
c_{n+2}: &(H_1 (\surface_{g,1}, \Q))^{\otimes n+2} \to (H_1 (\surface_{g,1}, \Q))^{\otimes n+2}, \\
&a_1 \otimes \cdots \otimes a_{n+2} \mapsto \sum_{i=1}^{n+2}
a_i \otimes \cdots \otimes a_{n+2} \otimes a_1 \otimes \cdots \otimes a_{i-1}.
\end{align*}
For any $n\in \Zlarger{1}$, the composite
\begin{equation*}
\filt{n}\mathcal{C} (\surface_{g,1}) \tosha{\tilde{\zeta}}
\filt{n+2}\GL (\surface_{g,1}) /\filt{n+3} \GL (\surface_{g,1}) \tosha{\lambda_{n+2}}
c_{n+2} (H_1 (\surface_{g,1}, \Q)^{\otimes n+2})
\end{equation*} 
is the $n$-th Johnson homomorphism introduced by Garoufalidis and Levine. For details, see \cite{GL}. Since we can recover each $\tau_n$ from $\tilde{\zeta}$, we call $\tilde{\zeta}$ the total Johnson homomorphism. 

%We reformulate the Johnson-like filtration of C introduced by Garoufalidis and Levine. The subset F consists of all elements satisfying the condition. Here we set the linear maps. For any n, the composite is the Johnson homomorphism introduced by Garoufalidis and Levine. For details, you can see the paper. Since we can recover each T from Z, we call Z the total Johnson homomorphism. In section 4.3 and section 6, we use the method recovering the Johnson homomorphisms.

\section{Review of the Goldman Lie algebra}
\label{section_Goldman}

%\item Goldman Lie algebra を紹介する。（写像類群）

We denote by $\surface$ a compact connected oriented surface as in \S \ref{subsection_notation_for_the_Goldman}. In this section, we review some facts on the Goldman Lie algebra of $\surface$. In particular, we discuss a relationship between the Goldman Lie algebra and the mapping class group. We use the propositions in this section to prove our main theorem in \S \ref{section_main_theorem_qbtskein}.

%We denote by S a compact connected oriented surface as in section 1. In this section, we review some facts on the Goldman Lie algebra of S. In particular, we discuss a relationship between the Goldman Lie algebra and the mapping class group. We use the propositions in this section to prove our main theorem.

%\item bracket の定義
%\item actionの定義

Now we recall the Lie bracket and the Lie action introduced by Goldman and Kawazumi-Kuno, respectively. Let $\delta$ be a free loop in $\surface$ and $\gamma$ a path from $*_1 \in \partial \surface$ to $*_2 \in \partial \surface$. We assume $\delta$ and $\gamma$ are in general position. For an intersection $p \in \delta \cap \gamma$, we denote 
\begin{itemize}
\item by $\gamma_{*,p}$ and $\gamma_{p,*}$ a path from $*$ to $p$
and one from $p$ to $*$ along $\gamma$.
\item by $\delta_p$ a based loop along $\delta$ whose basepoint is $p$, and
\item by $\epsilon (p, \delta, \gamma)$ the local intersection number of $\delta$ and $\gamma$
at $p$.
\end{itemize}
We set $\sigma (\delta )(\gamma)$ by
\begin{equation*}
\sigma (\delta) (\gamma) \defeq \sum_{p \in \delta \cap \gamma}
\epsilon (p, \delta, \gamma) \gamma_{*, p} \delta_p \gamma_{p,*}.
\end{equation*}
For $\delta \in \zettaiti{\pi_1} (\surface)$ and $\delta' \in \pi_1 (\surface, *)$, Goldman defined the bracket as $[ \delta, \zettaiti{\delta'}] \defeq \zettaiti{ \sigma (\delta) (\delta')}$. Kawazumi and Kuno proved that the action $\sigma$ makes $\GLM (\surface, *_1, *_2)$ a Lie module of the Goldman Lie algebra.

%Now we recall the Lie bracket and the Lie action introduced by Goldman and Kawazumi-Kuno, respectively. Let B be a free loop in S and D a path. We assume B and D are in general position. For an intersection, we denote as follows. We set S by the equation. For C and D, Goldman defined the bracket as follows. Kawazumi and Kuno proved that the action makes P a Lie module of the Goldman Lie algebra.

%\item フィルトレーションと完備化(セクション１．２)(augmentation idealを使う)

To discuss the relationship between the Goldman Lie algebra and the mapping class group, Kawazumi, Kuno, Massuyeau, and Turaev used the filtration and the completion of the Goldman Lie algebra defined as the equations in \S \ref{subsection_notation_for_the_Goldman}.
 They obtained a formula for the action of the Dehn twist $t_c$ along a simple closed curve $c$ in terms of the one of the Goldman Lie algebra. 

%To discuss the relationship between the Goldman Lie algebra and the mapping class group, Kawazumi, Kuno, Massuyeau, and Turaev used the filtration and the completion of the Goldman Lie algebra defined as the equations in section 1.4. They obtained a formula for the action of the Dehn twist along a simple closed curve in terms of the one of the Goldman Lie algebra. 

\begin{thm}[\cite{Kawazumi} \cite{KK} \cite{MT}]
\label{thm_Dehn_formula}
%\item Dehn twistの公式(Thm)

For a simple closed curve $c$, we set an element 
\begin{equation*}
L_{\GL} (c) \defeq \zettaiti{\frac{1}{2} (\log \gamma)^2} \in \cGL (\surface)
\end{equation*}
of the completed Goldman Lie algebra where $\gamma \in \pi_1 (\surface)$ satisfies $\zettaiti{\gamma} =c$. We have 
\begin{equation*}
t_{c*} = \exp (\sigma (L_{\GL} (c))) \defeq \sum_{i=0}^\infty \frac{1}{i!} (\sigma (L_{\GL} (c)))^i:
\cGLM (\surface, *_1, *_2) \to \cGLM (\surface, *_1, *_2)
\end{equation*}
 for any $*_1, *_2 \in \partial \surface$.

%For a simple closed curve, we set an element of the completed Goldman Lie algebra where R satisfies the equation. We have the equation for any S and T.
\end{thm}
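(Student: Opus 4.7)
The plan is to reduce the identity to a local computation near $c$ and to verify both sides on paths meeting $c$ in a single transverse point. The key initial observation is that both $t_{c*}$ and $\exp(\sigma(L_{\GL}(c)))$ are algebra automorphisms of the completed groupoid algebra on $\surface$: the Dehn twist respects concatenation of paths (being induced by a diffeomorphism), while $\sigma(L_{\GL}(c))$ is a derivation, as is any Kawazumi--Kuno Lie action $\sigma(\cdot)$, so its exponential is multiplicative. Any path $\delta : *_1 \to *_2$ can be isotoped into general position with $c$ and decomposed at its intersections with $c$ into subpaths crossing $c$ at most once, so it suffices to verify the identity when $\delta \cap c = \shuugou{p}$ with sign $\epsilon = \epsilon(p, c, \delta) = \pm 1$.

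In this one-intersection case, write $\delta = \delta_{*_1, p} \cdot \delta_{p, *_2}$. A direct inspection in a tubular neighborhood of $c$ gives $t_{c*}(\delta) = \delta_{*_1, p} \cdot \gamma_p^{\epsilon} \cdot \delta_{p, *_2}$, where $\gamma_p$ denotes the based loop at $p$ along $c$. For the right-hand side, I would first compute $\sigma(\zettaiti{\gamma^m})(\delta)$ from Goldman's definition using a transverse representative of $\zettaiti{\gamma^m}$ obtained by a small perturbation of the $m$-fold iterate of $c$. This representative meets $\delta$ in $m$ points clustered near $p$, each contributing the same sign $\epsilon$ and the same based loop $\gamma_p^m$, giving
\[
\sigma(\zettaiti{\gamma^m})(\delta) \;=\; m\epsilon \cdot \delta_{*_1, p} \cdot \gamma_p^m \cdot \delta_{p, *_2}.
\]
Extending by linearity and continuity to formal power series,
\[
\sigma(\zettaiti{f(\gamma)})(\delta) \;=\; \epsilon \cdot \delta_{*_1, p} \cdot \bigl(\gamma_p \, f'(\gamma_p)\bigr) \cdot \delta_{p, *_2};
\]
specializing to $f(\gamma) = \tfrac{1}{2}(\log \gamma)^2$, for which $\gamma f'(\gamma) = \log \gamma$, yields
\[
\sigma(L_{\GL}(c))(\delta) \;=\; \epsilon \cdot \delta_{*_1, p} \cdot \log \gamma_p \cdot \delta_{p, *_2}.
\]

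The same analysis applied to the modified path $\delta_{*_1, p} \cdot X \cdot \delta_{p, *_2}$ (still a single-crossing path, for any power series $X$ in $\gamma_p$) gives $\sigma(L_{\GL}(c))(\delta_{*_1, p} X \delta_{p, *_2}) = \epsilon \delta_{*_1, p} X \log \gamma_p \delta_{p, *_2}$. Induction then yields $\sigma(L_{\GL}(c))^k(\delta) = \epsilon^k \delta_{*_1, p} (\log \gamma_p)^k \delta_{p, *_2}$, and summing the exponential series gives
\[
\exp(\sigma(L_{\GL}(c)))(\delta) \;=\; \delta_{*_1, p} \cdot \exp(\epsilon \log \gamma_p) \cdot \delta_{p, *_2} \;=\; \delta_{*_1, p} \cdot \gamma_p^{\epsilon} \cdot \delta_{p, *_2} \;=\; t_{c*}(\delta),
\]
which is the desired identity on single-crossing paths.

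The main obstacle is the intersection-counting identity $\sigma(\zettaiti{\gamma^m})(\delta) = m\epsilon \cdot \delta_{*_1, p} \gamma_p^m \delta_{p, *_2}$: the factor $m$ is essential, since it is precisely what lets the identity $\gamma f'(\gamma) = \log \gamma$ collapse the double $\log$ in $L_{\GL}(c)$ into a single $\log \gamma_p$, which then exponentiates to $\gamma_p^{\epsilon}$. Establishing this requires a careful choice of transverse representative of $\zettaiti{\gamma^m}$ and the verification that all $m$ passages near $p$ are counted with the same sign and yield the same based loop $\gamma_p^m$; the simplicity of $c$ is crucial here to rule out spurious self-intersections corrupting the count.
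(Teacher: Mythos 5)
The paper does not prove this statement: it is quoted from \cite{Kawazumi}, \cite{KK}, \cite{MT} and used as a black box, so there is no in-paper argument to compare yours against. On its own terms, your sketch is essentially the standard Kawazumi--Kuno proof: localize to an annulus neighborhood of $c$, compute $\sigma(\zettaiti{\gamma^m})$ on a once-crossing path by taking a transverse representative of the $m$-fold iterate (the factor $m$ coming from the $m$ intersection points, all of the same sign), package this as $\sigma(\zettaiti{f(\gamma)})(\delta)=\epsilon\,\delta_{*_1,p}\,\gamma_p f'(\gamma_p)\,\delta_{p,*_2}$, and observe that $f=\tfrac12(\log)^2$ is exactly the potential whose Euler derivative is $\log$. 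That core computation is correct and is the heart of the matter.

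Two points deserve more care than you give them. First, convergence of $\exp(\sigma(L_{\GL}(c)))$ is not automatic: for a general simple closed curve $c$ one only has $L_{\GL}(c)\in \filt{2}\cGL(\surface)$, and since $\sigma(\filt{2})(\filt{m})\subset\filt{m}$ the operator $\sigma(L_{\GL}(c))$ merely preserves the filtration, so the exponential series does not converge for formal reasons. It is precisely your identity $\sigma(L_{\GL}(c))^k(\delta)=\epsilon^k\,\delta_{*_1,p}(\log\gamma_p)^k\delta_{p,*_2}$, with $\log\gamma_p\in\filt{1}$, that supplies convergence on once-crossing paths, and the Leibniz rule then handles general paths; you should state this explicitly rather than treat $\exp$ as given. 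Second, your reduction cuts $\delta$ at interior points of $\surface$ and invokes multiplicativity of both sides there, but the action $\sigma$ recorded in this paper is only set up for basepoints on $\partial\surface$, and the cut points must in any case be chosen off $c$ (cutting at the intersection points $p_i\in c$ themselves destroys general position). Choosing the cut points just outside the annulus, where $t_c$ acts as the identity, and appealing to the groupoid-with-interior-basepoints version of $\sigma$ from \cite{KK} repairs this, but as written the reduction step is not supported by the structures the paper actually defines.
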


%\item 主定理の証明に次の３つのpropを使う。

We need the following two propositions Proposition \ref{proposition_center_of_the_Goldman} and
Proposition \ref{prop_zeta} to state our main theorem Theorem \ref{thm_main_hom_Johnson}. We use the first one to prove that the homomorphism $\tilde{\zeta}$ is well-defined. We describe the automorphism induced by a homology cylinder using the second one.

%We need the following two propositions to state our main theorem. We use the first one to prove that the homomorphisms are well-defined. We describe the automorphism induced by a homology cylinder using the second one.

%\item 作用の中心のpropの説明(KKの証明、スケインの証明)

We can prove the following proposition by ``the symplectic expansion". For details, see \cite{Kawazumi} \cite{KK} \cite{MT}. In this paper, we use it without proof.

%We can prove the following proposition by ``the symplectic expansion". For details, you can see some papers. In this paper, we use it without proof.

\begin{prop}
\label{proposition_center_of_the_Goldman}
%\item prop 作用の中心
\begin{enumerate}
\item For $x \in \filt{1} \cGL (\surface)$, if $\sigma (x) (\cGLM (\surface, *_1, *_2)) =\shuugou{0}$
for any $*_1, *_2 \in \partial \surface$, we have $x=0$.
\item We fix a subset $B\subset \partial \surface$ such that the map
$B \to \pi_0 (\partial \surface)$ is bijective.  Let $\xi$ be a bijection $\coprod_{\bullet, * \in B} 
\cGLM (\surface, \bullet, *)\to \coprod_{\bullet, * \in B} 
\cGLM (\surface, \bullet, *)
$ satisfying  the following
\begin{itemize}
\item For any $\bullet, * \in B$, $\xi (\cGLM (\surface, \bullet, *))
=\cGLM (\surface, \bullet, *)$.
\item For any $\star_1, \star_2, \star_3 \in B$ and $x \in \cGLM (\surface, \star_1, \star_2)$,
$x' \in \cGLM (\surface, \star_2, \star_3)$, $\xi (xx') =\xi (x) \xi (x')$.
\item  For any $\bullet, * \in B$,
\begin{align*}
& \xi (I_{\GLM (\surface, \bullet)}^n \cGLM (\surface, \bullet,*))
=I_{\GLM (\surface, \bullet)}^n \cGLM (\surface, \bullet,*), \\
&(\xi-\id )(\cGLM (\surface,\bullet,*)) \subset
I_{\GLM (\surface, \bullet)}^2 \cGLM (\surface, \bullet,\star).
\end{align*}
\item For any $\gamma \in \pi_1 (\surface, \bullet, *)$ represented by
a continuous map $I \to \partial \surface$,
\begin{equation*}
\xi (\gamma)=\gamma
\end{equation*}
\end{itemize}
Then there exists a unique element $\zeta_{\Aut} (\xi) \in \filt{3} \cGL (\surface)$
such that 
\begin{equation*}
\xi =\exp (\zeta_{\Aut} (\xi)):
\cGLM (\surface, \star, \star') \to \cGLM (\surface, \star, \star')
\end{equation*}
for any $\star,\star' \in \partial \surface$.

%We fix a subset such that the map is bijective. Let X be a bijection satisfying the following. Then there exists a unique element such that  the equation holds.
\end{enumerate}
\end{prop}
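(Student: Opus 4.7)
The plan is to deduce both parts from the symplectic expansion machinery of Kawazumi--Kuno--Massuyeau--Turaev. Fix a symplectic expansion $\theta\colon \cGLM(\surface) \to \widehat{T}(H)$ where $H = H_1(\surface;\Q)$, so that $\theta$ is a filtered algebra isomorphism which, at the associated graded level, identifies the induced map $\cGL(\surface) \to \widehat{T}(H)_{\mathrm{cyc}}$ with the projection onto cyclic tensors and intertwines the Kawazumi--Kuno action $\sigma$ with the natural action of cyclic tensors on $\widehat{T}(H)$ by symplectic double derivations.

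For part (1), I would argue by contraposition. Suppose $x \in \filt{1} \cGL(\surface)$ is nonzero, and let $n \geq 1$ be the smallest integer with $x \notin \filt{n+1} \cGL(\surface)$, so the leading term $\bar{x} \in \filt{n}/\filt{n+1}$ is nonzero. Under $\theta$ this corresponds to a nonzero cyclic tensor of degree $n$ in $H^{\otimes n}$, and a direct pairing computation (using a dual basis of $H$ realized by arcs in $\surface$) produces a path $\gamma$ from some $*_1$ to some $*_2$ on which the leading symbol of $\sigma(\bar{x})$ acts nontrivially modulo higher filtration. Hence $\sigma(x)(\gamma) \neq 0$, contradicting the hypothesis, so $x = 0$.

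For part (2), the hypotheses on $\xi$ say exactly that the collection $\xi - \id$ is multiplicative in the groupoid sense, preserves the augmentation-ideal filtration, shifts it by at least one step, and kills boundary paths. Set
\[
D \defeq \log (\xi) = \sum_{n \geq 1} \frac{(-1)^{n+1}}{n} (\xi - \id)^n,
\]
which converges in the $\filt{\bullet}$-adic topology on each $\cGLM(\surface, \bullet, *)$ and satisfies $D(\filt{n}) \subset \filt{n+1}$. From the multiplicativity of $\xi$ one checks by induction on $n$ that $(\xi - \id)^n$ satisfies a signed Leibniz formula on composable paths, from which a standard manipulation of the log-series shows $D$ is a genuine groupoid derivation; moreover $D$ vanishes on every $\gamma$ represented by a path in $\partial \surface$. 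One then invokes the Kawazumi--Kuno structure theorem: via the symplectic expansion, $\sigma$ restricts to a bijection between $\filt{3} \cGL(\surface)$ and the space of groupoid derivations of $\cGLM(\surface)$ that raise the $\filt{\bullet}$-filtration by at least one and vanish on $\pi_1(\partial \surface)$. This produces a unique $\zeta_{\Aut}(\xi) \in \filt{3} \cGL(\surface)$ with $\sigma(\zeta_{\Aut}(\xi)) = D$, whence $\xi = \exp(\sigma(\zeta_{\Aut}(\xi)))$ on every $\cGLM(\surface, \star, \star')$. Uniqueness is immediate from part (1) applied to the difference of two candidates.

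The main obstacle is the surjectivity half of the Kawazumi--Kuno identification, namely realizing the derivation $D$ as $\sigma$ of a cyclic element in degree $\geq 3$. The injectivity of $\sigma$ follows from part (1), but surjectivity onto the prescribed class of derivations requires the explicit inversion formula coming from the symplectic expansion: under $\theta$, such boundary-fixing filtration-raising derivations correspond to symplectic derivations of $\widehat{T}(H)$ of positive order, and these are precisely hamiltonian with respect to cyclic words of degree $\geq 3$. Granting this classical computation, the verification that $D$ lies in the image (via the condition that $\xi$ fixes the boundary loops and the multiplicativity above) becomes routine, and the proposition follows.
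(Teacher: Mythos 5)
The paper does not actually prove this proposition: it states explicitly that it can be proved by ``the symplectic expansion,'' cites \cite{Kawazumi}, \cite{KK}, \cite{MT}, and uses the statement without proof. Your sketch follows exactly the route the paper points to — part (1) by pairing the nonzero leading cyclic tensor against a suitable path, and part (2) by taking $D=\log\xi$, checking it is a filtration-raising groupoid derivation vanishing on boundary paths, and invoking the Kawazumi--Kuno characterization of the image of $\sigma$ to produce and uniquely determine $\zeta_{\Aut}(\xi)\in \filt{3}\cGL(\surface)$. The one genuinely nontrivial input, the surjectivity of $\sigma$ onto such derivations via the symplectic expansion, is deferred to the literature in your write-up, which is the same level of detail as the paper itself; so your proposal is consistent with, and no less complete than, the paper's treatment.
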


%\item 二つ目$\mathcal{I}'$の説明

We use Theorem \ref{thm_Dehn_formula}  and Proposition \ref{proposition_center_of_the_Goldman} to prove the following.

%We use Theorem 2.1 and Proposition 2.2 to prove the following.

\begin{prop}
\label{prop_zeta}
%\item prop $\mathcal{I}'$とGoldman Lie algebra 証明
The homomorphism $\zeta: \torelli' (\surface) \to (F^3 \cGL (\surface), \bch)$
is defined by 
\begin{align*}
\zeta (t_{c_0}) = L (c_0) \ \mathrm{and} \ 
\zeta (t_{c_1} t_{c_2}^{-1}) = L_{\GL} (c_1)- L_{\GL} (c_2)
\end{align*}
where a simple closed curve $c_0$ and the pair $(c_1,c_2)$ of simple closed curves
bound surfaces. Then the homomorphism is well-defined. Furthermore, $\zeta$ satisfies
the property
\begin{equation*}
\xi_* = \exp (\sigma (\zeta (\xi))):
\cGLM (\surface, *_1, *_2) \to \cGLM (\surface, *_1, *_2)
\end{equation*}
for any $\xi \in \torelli' (\surface)$ and $*_1, *_2 \in \partial \surface$.

%We define the homomorphism by the following. Then the homomorphism is well-defined. Furthermore, it satisfies the property.
\end{prop}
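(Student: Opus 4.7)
The plan is to construct $\zeta$ as the composition of $\Phi$ restricted to $\torelli'(\surface)$ with the logarithm map $\zeta_{\Aut}$ supplied by Proposition \ref{proposition_center_of_the_Goldman}(2), and then to verify that the resulting homomorphism takes the prescribed values on generators. In this way, well-definedness and the formula $\xi_* = \exp(\sigma(\zeta(\xi)))$ come for free from Proposition \ref{proposition_center_of_the_Goldman}, and the task is reduced to two local computations at the generators.

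First, I would check that for every $\xi \in \torelli'(\surface)$, the automorphism $\xi_*$ satisfies the four bullet conditions of Proposition \ref{proposition_center_of_the_Goldman}(2). The first three are automatic because $\xi$ is a diffeomorphism fixing $\partial \surface$ pointwise. The key condition $(\xi_* - \id)(\cGLM) \subset F^2 \cGLM$ is equivalent to $\xi$ acting trivially on $H_1(\surface, \Z)$, which holds on each defining generator: a Dehn twist along a bounding curve fixes homology because the curve is null-homologous, while $t_{c_1} t_{c_2}^{-1}$ fixes homology because $[c_1] = [c_2]$ in $H_1$. Setting $\zeta(\xi) := \zeta_{\Aut}(\xi_*) \in \filt{3} \cGL(\surface)$ then gives a homomorphism into $(\filt{3} \cGL(\surface), \bch)$ by uniqueness of the logarithm.

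Second, I would match $\zeta$ with the prescribed values on generators. For a bounding simple closed curve $c_0$, Theorem \ref{thm_Dehn_formula} already reads $t_{c_0*} = \exp(\sigma(L_{\GL}(c_0)))$, and since $c_0$ is null-homologous any representative $\gamma_0 \in \pi_1(\surface)$ lies in the commutator subgroup, so $\log \gamma_0 \in F^2 \cGLM$ and therefore $L_{\GL}(c_0) \in F^4 \cGL \subset \filt{3} \cGL$. Proposition \ref{proposition_center_of_the_Goldman}(1) then forces $\zeta(t_{c_0}) = L_{\GL}(c_0)$. For a cobounding pair $(c_1, c_2)$, applying Theorem \ref{thm_Dehn_formula} twice and the Baker-Campbell-Hausdorff formula gives
\begin{equation*}
(t_{c_1} t_{c_2}^{-1})_* = \exp\bigl(\sigma(\bch(L_{\GL}(c_1), -L_{\GL}(c_2)))\bigr),
\end{equation*}
and the identity $(\log \gamma_1)^2 - (\log \gamma_2)^2 = \log \gamma_1 (\log \gamma_1 - \log \gamma_2) + (\log \gamma_1 - \log \gamma_2) \log \gamma_2$ together with $\log \gamma_1 - \log \gamma_2 \in F^2 \cGLM$ shows $L_{\GL}(c_1) - L_{\GL}(c_2) \in \filt{3} \cGL$.

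The hard step will be verifying that the BCH series collapses, i.e.\ that $[L_{\GL}(c_1), L_{\GL}(c_2)] = 0$ in $\cGL(\surface)$ whenever $c_1, c_2$ cobound a subsurface. My approach is to choose disjoint transverse representatives $\gamma_1, \gamma_2$ (which is possible because cobounding curves are disjoint) and then to apply Goldman's topological formula for the bracket: $[|(\log \gamma_1)^2|, |(\log \gamma_2)^2|]$ is a signed sum over intersection points of $\gamma_1 \cap \gamma_2$, all of which vanish when the two curves are disjoint. Once this is established the BCH series reduces to the plain difference and one more application of Proposition \ref{proposition_center_of_the_Goldman}(1) yields $\zeta(t_{c_1} t_{c_2}^{-1}) = L_{\GL}(c_1) - L_{\GL}(c_2)$, completing the proof.
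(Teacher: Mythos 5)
Your proof is correct, but it is organized in the opposite direction from the paper's. The paper works bottom-up: it defines $\zeta'$ on the generating set, extends to an arbitrary word $\xi_1\cdots\xi_j$ by the iterated Baker--Campbell--Hausdorff series, shows via Theorem \ref{thm_Dehn_formula} that $(\xi_1\cdots\xi_j)_* = \exp(\sigma(\bch(\zeta'(\xi_1),\ldots,\zeta'(\xi_j))))$, and then deduces independence of the chosen word from the injectivity statement, Proposition \ref{proposition_center_of_the_Goldman}(1). You instead work top-down: you invoke Proposition \ref{proposition_center_of_the_Goldman}(2) to obtain the logarithm $\zeta_{\Aut}(\xi_*)$ of an arbitrary $\xi\in\torelli'(\surface)$ all at once (after checking its hypotheses, which indeed propagate from the generators to their products), so well-definedness and the homomorphism property come for free, and the only remaining work is to identify this logarithm on the generators --- which again reduces to Theorem \ref{thm_Dehn_formula} together with part (1). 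The two routes consume the same external inputs; yours trades the word-independence argument for the hypothesis-checking of part (2), which the paper's proof never uses. A point in your favour: you explicitly isolate and justify the vanishing $[L_{\GL}(c_1), L_{\GL}(c_2)]=0$ for a cobounding (hence disjoint) pair, which is needed to convert $t_{c_1*}\circ t_{c_2*}^{-1}$ into $\exp(\sigma(L_{\GL}(c_1)-L_{\GL}(c_2)))$; the paper's Step 1 uses this silently.
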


\begin{proof}
%\item two step で証明する。
Let 
\begin{equation*}
\zeta': \shuugou{t_{c_0}|c_0 \mathrm{ \ bounds \ a \ surface.}} \cup
\shuugou{t_{c_1} t_{c_2}^{-1}| (c_1, c_2) \mathrm{ \ bpounds \ s \ surface.}}
\to \filt{3} \cGL (\surface)
\end{equation*}
be a map defined by 
\begin{equation*}
t_{c_0} \mapsto  L_{\GL} (c_0) \mathrm{ \ and \ } t_{c_1}t_{c_2}^{-1} \mapsto L_{\GL}(c_1) -L_{\GL}(c_2).
\end{equation*}
 We prove the proposition by two steps. We use Theorem \ref{thm_Dehn_formula} and Proposition \ref{proposition_center_of_the_Goldman} in the first and second steps, respectively.

%Let Z be a map defined by the following. We prove the proposition by two steps. We use Theorem 2.1 and Proposition 2.2 in the first and second steps, respectively.

%\item first step: about action

In the first step, we prove that, for any 
\begin{equation*}
\xi_1, \cdots, \xi_j \in \shuugou{t_{c_0}|c_0 \mathrm{ \ bounds \ a \ surface.}} \cup
\shuugou{t_{c_1} t_{c_2}^{-1}| (c_1, c_2) \mathrm{ \ bpounds \ s \ surface.}},
\end{equation*}
\begin{equation*}
(\xi_1 \cdots \xi_j)_* =\exp (\sigma (\bch (\zeta' (\xi_1), \cdots, \zeta' (\xi_j)))):
\cGLM (\surface, *_1, *_2) \to \cGLM (\surface, *_1, *_2)
\end{equation*}
holds. Here we denote $\bch (x_1, \bch (x_2, \cdots, \bch(x_{n-1}, x_n)))$ by
$\bch (x_1, \cdots, x_n)$ for any $x_1, \cdots, x_n \in 
\filt{3} \cGL (\surface)$.
By Theorem \ref{thm_Dehn_formula}, we have 
\begin{align*}
&\exp (\sigma (\bch (\zeta' (\xi_1), \cdots, \zeta' (\xi_j)))) \\ 
&=\exp (\sigma (\zeta' (\xi_1))) \circ \cdots \circ \exp (\sigma (\zeta' (\xi_j))) \\
&=\xi_{1*} \circ \cdots \circ \xi_{j*} =(\xi_1 \cdots \xi_j)_*.
\end{align*}

%In the first step, we prove the statement. By Theorem2.1, we have the equation. 

%\item second step: about well-defined 

In the second step, we prove that, for any 
\begin{equation*}
\xi_1, \cdots, \xi_j, \xi'_1, \cdots, \xi'_{j'} \in \shuugou{t_{c_0}|c_0 \mathrm{ \ bounds \ a \ surface.}} \cup
\shuugou{t_{c_1} t_{c_2}^{-1}| (c_1, c_2) \mathrm{ \ bpounds \ s \ surface.}},
\end{equation*}
if 
\begin{equation*}
\xi =\xi_1 \cdots \xi_j = \xi'_1 \cdots \xi'_{j'},
\end{equation*}
\begin{equation*}
\bch (\zeta' (\xi_1), \cdots, \zeta' (\xi_j))=\bch (\zeta' (\xi'_1), \cdots, \zeta'
(\xi'_{j'}))
\end{equation*}
holds. By the first statement, we have
\begin{align*}
&\sigma (\bch (\zeta' (\xi_1), \cdots, \zeta' (\xi_j))
=\sigma (\bch (\zeta' (\xi'_1), \cdots, \zeta'
(\xi'_{j'}))) = \sum_{i=1}^\infty \frac{(-1)^i}{i} (\xi_*-\id )^i \\
&:\cGLM (\surface, *_1, *_2) \to \cGLM (\surface, *_1, *_2)
\end{align*}
for any $*_1, *_2 \in \partial \surface$. Using it, by Proposition \ref{proposition_center_of_the_Goldman}, we obtain
\begin{equation*}
\bch (\zeta' (\xi_1), \cdots, \zeta' (\xi_j))=\bch (\zeta' (\xi'_1), \cdots, \zeta'
(\xi'_{j'})).
\end{equation*}

%In the second step, we prove the statement. We have the following equation. Using it, by Proposition 2.2, we obtain the equation. 

The above two statements prove the proposition.

%The above two statements prove the proposition.

\end{proof}

\section{The bracket-quantization \homfly skein algebra}
\label{section_qbtskein}
%本節ではあるスケイン加群を定義する。
%この節で定義するスケイン加群を
%the bracket-quantization \homfly skein module と呼ぶ。
%また、曲面と単位区間$I$の積多様体の基点がないスケイン加群には、
%代数構造を入れ、その代数を
%the bracket-quantization \homfly skein algebra と呼ぶ。

In this section, we set a skein module $\tskein' (M)$ of a oriented 3-manifold $M$. We call $\tskein'(M)$ the bracket-quantization HOMFLY-PT skein module in this paper. If the 3-manifold is a cylinder $S \times I$ of a compact oriented surface $S$, we consider an algebraic structure of the module and call $\tskein' (S) \defeq \tskein' (S \times I)$ the quantum bracket-quantization HOMFLY-PT skein module.

For a compact oriented 3-manifold $M$, let $\mathcal{E}' (M)$ be a set consisting of all embeddings $E:(\coprod S^1) \to M$ satisfying the condition. 
\begin{itemize}
\item $ E(\coprod (S^1))
\subset M \backslash \partial M.$
\end{itemize}
For two elements $E_1$ and $E_2$ of $\mathcal{E}' (M)$, $E_1$ and $E_2$ are isotopic, if and only if there exists an isotopy $Y:(\coprod S^1) \times I \to M$ having the properties. 
\begin{itemize}
\item For  any $t \in I$, $Y ( \cdot, t) \in \mathcal{E}' (M).$
\item $E_0=Y (\cdot, 0)$, $E_1= Y( \cdot, 1)$.
\end{itemize}
We denote by $\mathcal{T}'(M)$ the set of all tangles, which are isotopy classes of elements of $\mathcal{E}' (M)$.

In this paper, we also use tangles. For two points $\star_0, \star_1 \in \partial \surface$, let $\mathcal{E}' (M,\star_1,\star_2)$ be a set consisting of all embeddings $E:I \sqcup (\coprod S^1) \to M$ satisfying the condition.
\begin{enumerate}
\item $E((0,1) \sqcup \coprod (S^1))
\subset M \backslash \partial M.$
\item $E (\shuugou{0 \in I}) = \shuugou{\star_0}$.
\item $E (\shuugou{1 \in I})=\shuugou{\star_1}$.
\end{enumerate}
 For two elements $E_1$ and $E_2$ of $\mathcal{E}' (M,\star_1,\star_2)$, $E_1$ and $E_2$ are isotopic, if and only if there exists an isotopy $Y: (I \sqcup  \coprod (S^1)) \times I$ having the properties. 
\begin{itemize}
\item For any $t \in I$, $Y ( \cdot, t) \in \mathcal{E}' (M,\star_0,\star_1).$
\item $E_0=Y (\cdot, 0)$, $E_1= Y( \cdot, 1)$.
\end{itemize}
We denote by $\mathcal{T}' (M, \star_0, \star_1)$ the set of all tangles in $M$ with basepoint set $\star_0, \star_1$, which are isotopy classes of $\mathcal{E}' (M, \star_0, \star_1)$.

To define the quantum-bracket skein module, we will consider a Conway triple who are elements of $\mathcal{T}' (M)$ or $\mathcal{T}' (M, \star_0, \star_1)$ and maps
\begin{align*}
&\zettaiti{ \cdot}:
\mathcal{T}' (M, \star_0, \star_1) \to \Zlarger{0},
T \mapsto (\mathrm{the \ number \ of  \ components \ of \ }T), \\
&\zettaiti{ \cdot }:
\mathcal{T}' (M) \to \Zlarger{0},
L \mapsto (\mathrm{the \ number \ of  \ components \ of \ }L). \\
\end{align*}
 Conway triples need to define skein relations. There exist two types of the skein relations of the quantum-bracket skein module in this paper.

%For a compact oriented 3-manifold, let E be a set consisting of all embeddings satisfying the condition. For two elements of E, they are isotopic, if and only if there exists an isotopy having the properties. We denote by T the set of all tangles in M, which are isotopy classes of elements of E.

%In this paper, we also use tangles. For two points, let E be a set consisting of all embeddings satisfying the condition. For two elements of E, they are isotopic, if and only if there exists an isotopy having the properties. We denote by T the set of all tangles in M with basepoint set B, which are isotopy classes of E.

%To define the quantum-bracket skein module, we will consider a Conway triple who are elements of T and maps. Conway triples need to define skein relations. There exist two types of the skein relations of the quantum-bracket skein module in this paper.

\begin{df}
%埋め込み
%$(E_{(C+)}, E_{(C-)}, E_{(C0)}) \in \mathcal{E}'(M, \star_1, \star_2)^{\times 3} 
%\mathrm{\ or \ } \mathcal{E}'(M)^{\times 3}$を
%次を満たすようにとる。
%\begin{itemize}
%\item $E_{(C+)}, E_{(C-)}, E_{(C0)}$の像は$M$の
%ある閉球を除き一致する。
%\item その閉球の中ではそれぞれ、
%Figure\ref{fig_conway_plus_unframed}、
%Figure\ref{fig_conway_minus_unframed}、
%Figure\ref{fig_conway_zero_unframed}
%となる。
%\end{itemize}
%このとき、$E_{(C+)}, E_{(C-)}, E_{(C0)}$が代表する
%$\mathcal{T}' (M, J^+, J^-)$の３対
%$(T_{(C+)}, T_{(C-)}, T_{(C0)})$を
%Conway triple と呼ぶ。

Let $(E_{(C+)}, E_{(C-)}, E_{(C0)}) \in \mathcal{E}'(M, \star_0, \star_1)^{\times 3} 
\mathrm{\ or \ } \mathcal{E}'(M)^{\times 3}$ be embeddings having the two properties.
\begin{itemize}
\item The image of $E_{(C+)}, E_{(C-)}, E_{(C0)}$ are equals except for a closed ball
as oriented submanifolds.
\item In the ball, the images of them are two lines presented by Figure\ref{fig_conway_plus_unframed},
Figure\ref{fig_conway_minus_unframed},
and Figure\ref{fig_conway_zero_unframed}.
\end{itemize}
Then the three elements 
$(T_{(C+)}, T_{(C-)}, T_{(C0)}) \in \mathcal{T}' (M, \star_0, \star_1)^{\times 3}
\mathrm{\ or \ } \mathcal{T}'(M)^{\times 3}$
represented by $(E_{(C+)}, E_{(C-)}, E_{(C0)})$ is a Conway triple.

%Let E be embeddings having the two properties. The first one is that the images of them are equals except for a closed ball as oriented submanifolds. The second one is that, in the ball, the images of them are two lines presented by F1, F2, and F3, respectively. Then the three elements represented by E is a Conway triple.

\end{df}

	\begin{figure}[htbp]
	\begin{tabular}{ccc}
	\begin{minipage}{0.3\hsize}
\begin{center}
\begin{picture}(20,20)
\put(0,0){%WinTpicVersion4.32a
{\unitlength 0.1in%
\begin{picture}(3.2000,3.2400)(1.1800,-4.4000)%
% CIRCLE 2 0 3 0 Black White  
% 4 278 276 118 276 118 276 118 276
% 
\special{pn 8}%
\special{ar 278 276 160 160 0.0000000 6.2831853}%
% LINE 2 0 3 0 Black White  
% 8 160 280 200 240 160 280 200 320 280 160 240 200 280 160 320 200
% 
\special{pn 8}%
\special{pa 160 280}%
\special{pa 200 240}%
\special{fp}%
\special{pa 160 280}%
\special{pa 200 320}%
\special{fp}%
\special{pa 280 160}%
\special{pa 240 200}%
\special{fp}%
\special{pa 280 160}%
\special{pa 320 200}%
\special{fp}%
% LINE 2 0 3 0 Black White  
% 8 278 116 278 116 278 120 278 440 438 280 318 280 238 280 118 280
% 
\special{pn 8}%
\special{pa 278 116}%
\special{pa 278 116}%
\special{fp}%
\special{pa 278 120}%
\special{pa 278 440}%
\special{fp}%
\special{pa 438 280}%
\special{pa 318 280}%
\special{fp}%
\special{pa 238 280}%
\special{pa 118 280}%
\special{fp}%
\end{picture}}%}
\end{picture}
\end{center}
\caption{$T_{(C+)}$}
\label{fig_conway_plus_unframed}
	\end{minipage}
	\begin{minipage}{0.3\hsize}
\begin{center}
\begin{picture}(20,20)
\put(0,0){%WinTpicVersion4.32a
{\unitlength 0.1in%
\begin{picture}(3.2000,3.2400)(1.1800,-4.4000)%
% CIRCLE 2 0 3 0 Black White  
% 4 278 276 118 276 118 276 118 276
% 
\special{pn 8}%
\special{ar 278 276 160 160 0.0000000 6.2831853}%
% LINE 2 0 3 0 Black White  
% 8 160 280 200 240 160 280 200 320 280 160 240 200 280 160 320 200
% 
\special{pn 8}%
\special{pa 160 280}%
\special{pa 200 240}%
\special{fp}%
\special{pa 160 280}%
\special{pa 200 320}%
\special{fp}%
\special{pa 280 160}%
\special{pa 240 200}%
\special{fp}%
\special{pa 280 160}%
\special{pa 320 200}%
\special{fp}%
% LINE 2 0 3 0 Black White  
% 8 278 120 278 120 278 120 278 240 278 320 278 440 438 280 118 280
% 
\special{pn 8}%
\special{pa 278 120}%
\special{pa 278 120}%
\special{fp}%
\special{pa 278 120}%
\special{pa 278 240}%
\special{fp}%
\special{pa 278 320}%
\special{pa 278 440}%
\special{fp}%
\special{pa 438 280}%
\special{pa 118 280}%
\special{fp}%
\end{picture}}%}
\end{picture}
\end{center}
\caption{$T_{(C-)}$}
\label{fig_conway_minus_unframed}
	\end{minipage}
	\begin{minipage}{0.3\hsize}
\begin{center}
\begin{picture}(20,20)
\put(0,0){%WinTpicVersion4.32a
{\unitlength 0.1in%
\begin{picture}(3.2200,3.2400)(1.1800,-4.4000)%
% CIRCLE 2 0 3 0 Black White  
% 4 278 276 118 276 118 276 118 276
% 
\special{pn 8}%
\special{ar 278 276 160 160 0.0000000 6.2831853}%
% CIRCLE 2 0 3 0 Black White  
% 4 200 360 280 360 280 360 200 280
% 
\special{pn 8}%
\special{ar 200 360 80 80 4.7123890 6.2831853}%
% CIRCLE 2 0 3 0 Black White  
% 4 360 200 280 200 280 200 360 280
% 
\special{pn 8}%
\special{ar 360 200 80 80 1.5707963 3.1415927}%
% LINE 2 0 3 0 Black White  
% 10 280 200 280 200 280 120 280 200 360 280 440 280 280 360 280 440 200 280 120 280
% 
\special{pn 8}%
\special{pa 280 200}%
\special{pa 280 200}%
\special{fp}%
\special{pa 280 120}%
\special{pa 280 200}%
\special{fp}%
\special{pa 360 280}%
\special{pa 440 280}%
\special{fp}%
\special{pa 280 360}%
\special{pa 280 440}%
\special{fp}%
\special{pa 200 280}%
\special{pa 120 280}%
\special{fp}%
% LINE 2 0 3 0 Black White  
% 8 160 280 200 240 160 280 200 320 280 160 240 200 280 160 320 200
% 
\special{pn 8}%
\special{pa 160 280}%
\special{pa 200 240}%
\special{fp}%
\special{pa 160 280}%
\special{pa 200 320}%
\special{fp}%
\special{pa 280 160}%
\special{pa 240 200}%
\special{fp}%
\special{pa 280 160}%
\special{pa 320 200}%
\special{fp}%
\end{picture}}%}
\end{picture}
\end{center}
\caption{$T_{(C0)}$}
\label{fig_conway_zero_unframed}
	\end{minipage}
	\end{tabular}
\end{figure}

\begin{df}
%スケイン加群$\tskein' (M)$とは
%次の関係式で割った自由加群$\Q [h] 
%\mathcal{T}' (M )$の商加群とする。
%\begin{itemize}
%\item ３対$(T_{(C+)}, T_{(C-)}, T_{(C0)}) \in \mathcal{T}' (M)^{ \times 3}$を
%\begin{equation*}
%\zettaiti{T_{(C+)}}=\zettaiti{T_{(C-)}}=\zettaiti{T_{C0}}-1
%\end{equation*}
%を満たすConway tripleとして
%\begin{equation*}
%T_{(C+)}-T_{(C-)}=h T_{(C0)}.
%\end{equation*}
%\item ３対$(T_{(C+)}, T_{(C-)}, T_{(C0)}) \in \mathcal{T}' (M)^{ \times 3}$を
%\begin{equation*}
%\zettaiti{T_{(C+)}}=\zettaiti{T_{(C-)}}=\zettaiti{T_{C0}}+1
%\end{equation*}
%を満たすConway tripleとして
%\begin{equation*}
%T_{(C+)}-T_{(C-)}=0.
%\end{equation*}
%\end{itemize}
%さらに、
%スケイン加群$\tskein' (M,\star_1, \star_2)$とは
%次の関係式で割った自由加群$\Q [h] 
%\mathcal{T}' (M ,\star_1, \star_2)$の商加群とする。
%\begin{itemize}
%\item ３対$(T_{(C+)}, T_{(C-)}, T_{(C0)}) \in \mathcal{T}' (M)^{ \times 3}$を
%\begin{equation*}
%\zettaiti{T_{(C+)}}=\zettaiti{T_{(C-)}}=\zettaiti{T_{C0}}-1
%\end{equation*}
%を満たすConway tripleとして
%\begin{equation*}
%T_{(C+)}-T_{(C-)}=h T_{(C0)}.
%\end{equation*}
%\item ３対$(T_{(C+)}, T_{(C-)}, T_{(C0)}) \in \mathcal{T}' (M)^{ \times 3}$を
%\begin{equation*}
%\zettaiti{T_{(C+)}}=\zettaiti{T_{(C-)}}=\zettaiti{T_{C0}}+1
%\end{equation*}
%を満たすConway tripleとして
%\begin{equation*}
%T_{(C+)}-T_{(C-)}=0.
%\end{equation*}
%\end{itemize}
%
%簡単に書くと次の関係式
%\begin{center}
%\input{fig_tskein_prime_easy_relation}
%\end{center}
%を考えている。

For a compact oriented 3-manifold $M$ and $\star_0, \star_1 \in \partial M$, we set 
$\tskein' (M)$ and $\tskein' (M, \star_0, \star_1)$ as the quotients of the free $\Q [h]$-modules $\Q [h] \mathcal{T}' (M)$ and $\Q [h]\mathcal{T}' (M, \star_0, \star_1)$ by the relations
\begin{itemize}
\item For a Conway triple $(T_{(C+)}, T_{(C-)}, T_{(C0)}) \in \mathcal{T}' (M)^{ \times 3}
\mathrm{ \ or \ } \mathcal{T}' (M, \star_0, \star_1)^{\times 3}$
such that
\begin{equation*}
\zettaiti{T_{(C+)}}=\zettaiti{T_{(C-)}}=\zettaiti{T_{C0}}-1,
\end{equation*}
we consider the relation
\begin{equation*}
T_{(C+)}-T_{(C-)}=h T_{(C0)}.
\end{equation*}
\item  For a Conway triple $(T_{(C+)}, T_{(C-)}, T_{(C0)}) \in \mathcal{T}' (M)^{ \times 3}
\mathrm{ \ or \ } \mathcal{T}' (M, \star_0, \star_1)^{\times 3}$
such that
\begin{equation*}
\zettaiti{T_{(C+)}}=\zettaiti{T_{(C-)}}=\zettaiti{T_{C0}}+1,
\end{equation*}
we consider the relation
\begin{equation*}
T_{(C+)}-T_{(C-)}=0.
\end{equation*}
\end{itemize}
 In other words, the figure visualizes relations defining the skein modules
$\tskein' (M)$ and $\tskein' (M, \star_0, \star_1)$.

\begin{center}
\input{fig_tskein_prime_easy_relation}
\end{center}

%For a compact oriented 3-manifold and points in the boundary, we set S and S' as the quotients of the free modules by the relations. In other words, the figure visualizes relations defining the skein modules.
\end{df}

Let $S$ be a compact oriented surface. We denote by $\tskein' (S)$ and $\tskein' (S, \star_0, \star_1)$ the skein modules $\tskein( (\surface \times I)$ and $\tskein' (\surface \times I, (\star_0,1), (\star_1, 0))$ for $\star_0, \star_1 \in \partial \surface$. For a tangle $T \in \mathcal{T}' (S \times I, (\star_0,1), (\star_1,0))$ represented by $E_\gamma \sqcup E:I \sqcup  \coprod (S^1) \to S \times I$ and $t_0,t_1 \in I$, we set $E_{t_0 \cdot \gamma \cdot t_1}$ as 
\begin{equation*}
E_{t_0 \cdot \gamma \cdot t_1}(t) 
\begin{cases}
(\star_0,t_0(1-3t)+3t) \mathrm{ \ if \ } t \in [0,\frac{1}{3}] \\
E_{\gamma} (3t-1) \mathrm{ \ if \ } t \in [\frac{1}{3}, \frac{2}{3}] \\
(\star_1, t_1(3t-2) \mathrm{\ if \ } t \in [\frac{2}{3}, 1]
\end{cases}
\end{equation*}
 and $\diamondsuit_{(1,0)}^{(t_0,t_1)}(T)$ as an element of $\mathcal{T}' (S \times I,(\star_0,t_0),(\star_1, t_1))$ represented by one of $\mathcal{E}' (S \times I,(\star_0,t_0),(\star_1, t_1))$ isotopic to the embedding $E_{t_0, \gamma, t_1} \sqcup E$. The map induces a bijection $\diamondsuit_{(1,0)}^{(t_0,t_1)}: \tskein (S, \star_0, \star_1) \to \tskein (S\times I, (\star_0, t_0), (\star_1, t_1))$, whose reverse map $\diamondsuit_{(t_0,t_1)}^{(1,0)}$ denotes. We will define a multiple of $\tskein' (S)$ and right and left actions of $\tskein' (S)$ on $\tskein' (S, \star_0,\star_1)$. We set two embeddings $e_\mathrm{over},e_\mathrm{under}
: S \times I \to S \times I$ as 
\begin{equation*}
e_\mathrm{over} (p,t)=(p,\frac{1+t}{2}), \
e_\mathrm{under} (p,t)=(p,\frac{t}{2}).
\end{equation*}
Let $L_1$ and $L_2$ be two links in $S\times I$ and $T$ be a tangle represented by $E_1$, $E_2$, and $E$. We set the multiple $L_1 L_2$ of $L_1$ and $L_2$, the right and left actions $L_1 T,TL_1$ of $L_1$ on $T$ as the following. The multiple $L_1 L_2$ is a link represented by the embedding $e_\mathrm{over} \circ E_1 \sqcup e_\mathrm{under} \circ E_2$. The right and left actions $L_1 T, T L_1$ are the tangles as
\begin{equation*}
L_1 T \defeq \diamondsuit_{(\frac{1}{2},0)}^{(1,0)} ((L_1 T)'), \ 
T L_1 \defeq \diamondsuit_{(1, \frac{1}{2})}^{(1,0)}((TL_1)')
\end{equation*}
 where the embeddings $e_\mathrm{over} \circ E_1 \sqcup e_\mathrm{under} \circ
E$ and $e_\mathrm{over} \circ E \sqcup e_\mathrm{under} \circ
E_1$ represent $(L_1T)'$ and $(TL_1)'$.

Furthermore, we consider a Lie bracket $[ \cdot, \cdot]:\tskein' (\surface) \times \tskein' (\surface) \to \tskein' (\surface)$ and a Lie action $\sigma ( \cdot )(\cdot ): \tskein' (\surface) \times \tskein' (\surface, \star_0, \star_1)$ as the following. By Theorem \ref{thm_psi_Uh_tskein'}, the maps
\begin{align*}
&h (\cdot):\tskein' (\surface) \to \tskein' (\surface),x \mapsto h x \\
&h (\cdot):\tskein' (\surface, \star_1, \star_2) \to \tskein' (\surface
, \star_1, \star_2),y \mapsto h y \\
\end{align*}
 are injective, and by the skein relation, we have 
\begin{align*}
&x_1x_2-x_2x_1 \in h \tskein' (\surface), \\
&x_1 y-yx_1 \in h \tskein' (\surface, \star_1, \star_2). \\
\end{align*}
for $x_1,x_2 \in \tskein' (\surface)$ and $y \in \tskein' (\surface ,\star_1, \star_2).$
  We can define 
\begin{align*}
&[x_1, x_2] \defeq \frac{1}{h} (x_1 x_2-x_2x_1), \\
&\sigma (x_1)(y) \defeq \frac{1}{h} (x_1 y-yx_1). \\
\end{align*}

Let $\surface$ be a compact connected oriented surface. We set $\Uh (\surface)$ as the quotient of the tensor algebra $\oplus_{i=0}^\infty (\Q [h] 
\zettaiti{\pi_1} (\surface))^{\otimes_{\Q [h]} i}$ by the relation
\begin{itemize}
\item For any $x, y \in \GL (\surface)$,
\begin{equation*}
x \otimes y - y \otimes x =h [x,y].
\end{equation*}
\end{itemize}
We define the Lie bracket $[ \cdot, \cdot]:\Uh (\surface) \times \Uh (\surface) \to \Uh (\surface)$ by the Leibniz rule, which means that we have
\begin{align*}
[x_1\otimes \cdots \otimes x_i, y_1 \otimes \cdots \otimes y_j]
&= \\
\sum_{i' \in \shuugou{1, \cdots, i}, j' \in \shuugou{1, \cdots, j}}
&x_1 \otimes \cdots \otimes x_{i'-1} \otimes 
y_1 \otimes \cdots \otimes y_{j'-1}  \\
&\otimes[x_{i'}, y_{j'}] \otimes y_{j'+1} \otimes \cdots \otimes y_j \otimes
x_{i'+1} \otimes \cdots \otimes x_i
\end{align*}
for $x_1 \otimes \cdots \otimes x_i \in (\Q [h] \zettaiti{\pi_1} (\surface))^{\otimes i}$
and
$y_1 \otimes \cdots \otimes y_j \in (\Q
 [h] \zettaiti{\pi_1} (\surface))^{\otimes j}$. 
It satisfies the formula
\begin{equation*}
[x,y]=\frac{1}{h} (x \otimes y -y \otimes x)
\end{equation*}
for $x$ and $y \in \Uh (\surface)$. 
We denote by $\PPsi{\GL}{\Uh}$ the natural injection $\GL (\surface) \to \Uh (\surface)$
and by $\PPsi{\Uh}{\GL}$ the natural surjection
\begin{equation*}
\Uh (\surface) \to \GL (\surface), x \mapsto
\begin{cases}
x \mathrm{ \ if \ } x \in \GL (\surface)\\
0 \mathrm{ \ if \ } x \in \GL (\surface)^{\otimes k}, k \neq 1.
\end{cases}
\end{equation*}

We also set $\Uh (\surface, \star_1, \star_2)$ as the quotient of the tensor module 
$\Uh (\surface) \otimes_{\Q [ h]}\Q [h] \pi_1 (\surface, \star_1, \star_2)\otimes_{\Q [h]}\Uh (\surface)$
by the relation
\begin{itemize}
\item For $x_1, \cdots,  x_i \in \Q [h] \zettaiti{\pi_1 } (\surface)$
and $y \in \Q[h] \pi_1 (\surface, \star_1, \star_2)$,
\begin{align*}
&x_1 \otimes  \cdots \otimes x_{i'} \otimes y \otimes 
x_{i'+1} \otimes \cdots \otimes x_{i} \\
&-
x_1 \otimes  \cdots \otimes x_{i'-1} \otimes y \otimes 
x_{i'} \otimes \cdots \otimes x_{i} \\
&=x_1 \otimes  \cdots \otimes x_{i'-1} \otimes \sigma (x_{i'})(y) \otimes 
x_{i'+1} \otimes \cdots \otimes x_{i}
\end{align*}
\end{itemize}
 We define the Lie action $\sigma ( \cdot) ( \cdot ): \Uh (\surface) \to \Uh (\surface, \star_1, \star_2)$ by the Leibniz rule, which means that we have 
\begin{align*}
&\sigma (x_1 \otimes \cdots \otimes x_i) (x' \otimes y \otimes x'')
=[x_1 \otimes \cdots \otimes x_i, x']\otimes y \otimes x'' \\
&+\sum_{i' \in \shuugou{1, \cdots, i}} x' \otimes x_1 \otimes  \cdots \otimes
x_{i-1} \otimes \sigma (x_i) (y)
\otimes x_{i'+1} \otimes \cdots \otimes x_i \otimes x \\
&+x' \otimes y \otimes [x_1 \otimes \cdots \otimes x_i, x'']
\end{align*}
 for $x_1, \cdots, x_i \in \GL (\surface)$, $x,x' \in \Uh (\surface)$, and $y \in \GLM (\surface)$. It also satisfies 
\begin{equation*}
\sigma (x)(y)=\frac{1}{h} (x \otimes y-y \otimes x).
\end{equation*}
 We denote by $\PPsi{\GLM}{\Uh}$ the natural injection 
$\GLM (\surface, \star_1, \star_2) \to \Uh (\surface, \star_1, \star_2)$
and by $\PPsi{\Uh}{\GLM}$ the natural surjection
\begin{align*}
&\Uh (\surface, \star_1, \star_2) \to \GLM (\surface, \star_1, \star_2), \\
&x \otimes y \otimes x' \mapsto
\begin{cases}
(x)(x')y \mathrm{ \ if \ } x,x' \in \GL (\surface)^{\otimes 0} =\Q \\
0 \mathrm{ \ if \ } y \in \GL (\surface)^{\otimes k},y' \in \GL (\surface)^{\otimes k'}, k+k' \geq 1
\end{cases}
\end{align*}
for $y \in\GLM (\surface, \star_1, \star_2)$.

%We also set U1 as the quotient of the tensor module by the relation. We define the Lie action by the Leibniz rule, which means that we have the equation for X and Y. It also satisfies the formula. We denote by P the natural injection and by P the natural surjection.

%$\Q [h]$代数準同型
%$\PPsi{\Uh}{\tskein'}: \Uh (\surface ) \to \tskein' (\surface)$
%を$\zettaiti{r}\in \zettaiti{\pi_1} (\surface) \mapsto K_{\zettaiti{r}}$により定義する。
%ここで$K_{\zettaiti{r}}$を$\surface \times I$におけるホモトピー類が$\zettaiti{r}$
%となるknotとする。
%また、$\Q [h]$加群準同型
%$\PPsi{\Uh}{\tskein'}:\Uh (\surface, \star_1, \star_2) \to
%\tskein' (\surface, \star_1, \star_2)$
%を$x,x' \in \Uh (\surface), \gamma \in \pi_1 (\surface, \star_1, \star_2)$
%について、
%$\PPsi{\Uh}{\tskein'} (x \otimes \gamma \otimes x')
%=\PPsi{\Uh}{\tskein'} (x) T_\gamma \PPsi{\Uh}{\tskein'} (x')$
%により定義する。
%ここで$T_\gamma$はコンポーネント数が一つでホモトピー類が$\gamma$になる
%タングルとする。
%次の定理は、\cite{Turaev}のTheorem 3.3の証明とほぼ同じように証明ができる。

We set the $\Q [h]$-algebra homomorphism $ \PPsi{\Uh}{\tskein'}$ 
and the $\Q [h]$-module one $\PPsi{\Uh}{\tskein}$ by
\begin{align*}
&\PPsi{\Uh}{\tskein'}: \Uh (\surface ) \to \tskein' (\surface),
\zettaiti{\gamma}\in \zettaiti{\pi_1} (\surface) \mapsto K_{\zettaiti{\gamma}}, \\
&\PPsi{\Uh}{\tskein'}:\Uh (\surface, \star_1, \star_2) \to
\tskein' (\surface, \star_1, \star_2),
x \otimes \gamma \otimes x' \mapsto \PPsi{\Uh}{\tskein'} (x)T_\gamma \PPsi{\Uh}{\tskein'} (x)
\end{align*}
for $\gamma \in \pi_1 (\surface)$ and $x,x' \in \Uh (\surface)$,
where $K_{\zettaiti{\gamma}}$ and $T_\gamma$ are a knot and a a tangle whose homotopy class
is $\zettaiti{\gamma}$ and $\gamma$, respectively.
We can prove the following theorem in the same way as the proof of Turaev
\cite[Theorem 3.3]{Turaev}.

%We set the algebra homomorphism and the module one by the equations, where K and T are a knot and a tangle whose homotopy class is R and R', respectively. We can prove the following theorem in the same way as the proof of Turaev's paper.

\begin{thm}
\label{thm_psi_Uh_tskein'}
For a compact connected oriented surface $\surface$ and $\star_1, \star_2 \in \partial \surface$, the algebra homomorphism 
$\PPsi{\Uh}{\tskein'}: \Uh (\surface ) \to \tskein' (\surface)$
and module one
$\PPsi{\Uh}{\tskein'}:\Uh (\surface, \star_1, \star_2) \to
\tskein' (\surface, \star_1, \star_2)$
 are bijective. Furthermore, we have 
\begin{align*}
&[\PPsi{\Uh}{\tskein'} (x), \PPsi{\Uh}{\tskein'}(x')]
=\PPsi{\Uh}{\tskein'} ([x,x']), \\
&\sigma (\PPsi{\Uh}{\tskein'} (x))(\PPsi{\Uh}{\tskein'} (y))
=\PPsi{\Uh}{\tskein'} (\sigma (x)(y)) \\
\end{align*}
 for $x, x' \in \Uh (\surface)$ and $y \in \Uh (\surface, \star_1, \star_2)$.

%For a compact connected surface and points, the algebra homomorphism and module one are bijective. Furthermore, we have the equations for X, X', and Y.
%We need an orientation.
\end{thm}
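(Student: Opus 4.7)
The plan is to follow Turaev's strategy (\cite{Turaev}, Theorem 3.3) for the analogous HOMFLY-type skein algebra, adapting it to the Conway-type relations used here. The argument has three logically separate parts: well-definedness of $\PPsi{\Uh}{\tskein'}$, bijectivity, and compatibility with the brackets and Lie actions.

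First, I check that $\PPsi{\Uh}{\tskein'}$ respects the defining relation $x \otimes y - y \otimes x = h[x,y]$ of $\Uh(\surface)$. Given conjugacy classes $\zettaiti{\alpha}, \zettaiti{\beta} \in \zettaiti{\pi_1}(\surface)$ represented by loops in general position on $\surface$, the stacked diagrams $K_\alpha K_\beta$ and $K_\beta K_\alpha$ differ at each transverse intersection point $p \in \alpha \cap \beta$ by a local crossing change, and nowhere else. Applying the appropriate skein relation at each such point in turn (using whichever of the two types applies, according to the local component count of the oriented smoothing) and summing with the signs $\epsilon(p, \alpha, \beta)$ produces $h$ times the Goldman bracket $[\zettaiti{\alpha}, \zettaiti{\beta}]$ in $\tskein'(\surface)$. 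The analogous local computation for a path paired with a loop yields the relation defining $\Uh(\surface, \star_1, \star_2)$. Once $\PPsi{\Uh}{\tskein'}$ is known to be bijective, these same identities immediately give the bracket and action compatibility asserted at the end of the theorem, since $[x,y] = \frac{1}{h}(xy-yx)$ holds tautologically on both sides.

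For surjectivity, any tangle projects to a diagram on $\surface$ with finitely many crossings. At each crossing the appropriate skein relation rewrites the tangle as a $\Q[h]$-linear combination of tangles whose complexity is strictly lower in the lexicographic order (crossing number, then word length). Iterating, every element of $\tskein'(\surface)$ is a $\Q[h]$-linear combination of ordered products of knots $K_{\zettaiti{\gamma}}$, which lies in the image of $\PPsi{\Uh}{\tskein'}$; the module case is handled identically with an inserted path $T_\gamma$. For injectivity, I argue by an $h$-adic Poincar\'e--Birkhoff--Witt reduction. Setting $h=0$, both skein relations collapse to $T_{(C+)} = T_{(C-)}$, so crossing changes become free; consequently $\tskein'(\surface)/h\tskein'(\surface)$ is a free $\Q$-module on finite multisets of free homotopy classes of oriented loops on $\surface$. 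On the other side, $\Uh(\surface)/h\Uh(\surface)$ is the symmetric algebra on $\Q\zettaiti{\pi_1}(\surface)$, which has the same basis, and the mod-$h$ reduction of $\PPsi{\Uh}{\tskein'}$ is precisely the identification of these two bases. Granted the injectivity of $h \cdot (-)$ on both sides, a standard $h$-adic filtration argument lifts this to the claimed $\Q[h]$-linear isomorphism.

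The main obstacle is the injectivity of multiplication by $h$ on $\tskein'(\surface)$, equivalently the $\Q$-linear independence of the multicurve spanning set of $\tskein'(\surface)/h\tskein'(\surface)$. This is Turaev's most delicate point, established in \cite{Turaev} by a state-sum / separating-pairing technique that transfers verbatim to our Conway-type relations since they coincide with his modulo $h$; no further surface-specific input is required.
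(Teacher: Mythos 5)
Your proposal takes essentially the same route as the paper, which offers no independent argument for this theorem but simply appeals to Turaev's proof of \cite[Theorem 3.3]{Turaev}: your three steps (well-definedness via the local crossing computation recovering the Goldman bracket, surjectivity by resolving crossings, and injectivity via the mod-$h$ reduction to the symmetric algebra combined with $h$-torsion-freeness of $\tskein'(\surface)$, correctly flagged as the one delicate point deferred to Turaev) are precisely the content of that proof.
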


%以下、$(\PPsi{\Uh}{\tskein'})^{-1}$を$\PPsi{\tskein'}{\Uh}$とかく。
%さらに、合成を
%\begin{align*}
%&\PPsi{\GLM}{\tskein'} \defeq \PPsi{\GLM}{\Uh} \circ \PPsi{\Uh}{\tskein'} \\
%&\PPsi{\GL}{\tskein'} \defeq \PPsi{\GL}{\Uh} \circ \PPsi{\Uh}{\tskein'} \\
%&\PPsi{\tskein'}{\GLM} \defeq \PPsi{\tskein'}{\Uh} \circ \PPsi{\Uh}{\GLM} \\
%&\PPsi{\tskein'}{\GL} \defeq \PPsi{\tskein'}{\Uh} \circ \PPsi{\Uh}{\GL} \\
%\end{align*}
%と書く。

We denote by $\PPsi{\tskein'}{\Uh}$ the reverse map of $\PPsi{\Uh}{\tskein'}$, and consider the composites
\begin{align*}
&\PPsi{\GLM}{\tskein'} \defeq \PPsi{\GLM}{\Uh} \circ \PPsi{\Uh}{\tskein'}, \ 
\PPsi{\GL}{\tskein'} \defeq \PPsi{\GL}{\Uh} \circ \PPsi{\Uh}{\tskein'}, \\
&\PPsi{\tskein'}{\GLM} \defeq \PPsi{\tskein'}{\Uh} \circ \PPsi{\Uh}{\GLM}, \ 
\PPsi{\tskein'}{\GL} \defeq \PPsi{\tskein'}{\Uh} \circ \PPsi{\Uh}{\GL}. \\
\end{align*}

Let $\filtn{\filt{n} \Uh (\surface)}$ and $\filtn{\filt{n} \Uh (\surface, \star_1, \star_2)}$ be the filtrations of $\Uh (\surface)$ and $\Uh (\surface, \star_1, \star_2)$ such as 
\begin{align*}
&\filt{n} \Uh (\surface) 
\defeq \sum_{2i_0+i_1 +\cdots +i_j \geq n}
h^{i_0} (\filt{i_1} \GL (\surface)) \otimes \cdots \otimes 
(\filt{i_j} \GL (\surface)), \\
&\filt{n} \Uh (\surface, \star_1, \star_2) \defeq
\sum_{i_1+i_2+i_3 \geq n}
\filt{i_1}\Uh (\surface) \otimes
(\filt{i_2} \GLM (\surface, \star_1, \star_2))
\otimes \filt{i_3} \Uh (\surface). \\
\end{align*}
 We consider the completed algebra and module 
\begin{align*}
&\cUh (\surface) \defeq \comp{i} \Uh (\surface )/
\filt{i} \Uh (\surface), \\
&\cUh (\surface, \star_1, \star_2) \defeq
\comp{i} \Uh (\surface, \star_1, \star_2)/
\filt{i} \Uh (\surface, \star_1, \star_2)
\end{align*}
of $\Uh (\surface)$ and $\Uh (\surface, \star_1, \star_2)$ in terms of the filtrations. Furthermore, they also define
$\filtn{\filt{n} \tskein' (\surface)}$
 and $\filtn{\filt{n} \tskein' (\surface, \star_1, \star_2)}$
of $\tskein' (\surface)$ and $\tskein' (\surface, \star_1, \star_2)$ such as 
\begin{align*}
&\filt{n} \tskein' (\surface) 
\defeq \PPsi{\Uh}{\tskein'} (\filt{n} \Uh (\surface)), \\
&\filt{n} \Uh (\surface, \star_1, \star_2) \defeq
\PPsi{\Uh}{\tskein'} (\filt{n} (\Uh (\surface , \star_1, \star_2)). \\
\end{align*}
We also consider the completed algebra and module
\begin{align*}
&\ctskein' (\surface) \defeq \comp{i} \tskein' (\surface )/
\filt{i} \tskein' (\surface), \\
&\ctskein' (\surface, \star_1, \star_2) \defeq
\comp{i} \tskein' (\surface, \star_1, \star_2)/
\filt{i} \tskein' (\surface, \star_1, \star_2),
\end{align*}
 of $\tskein' (\surface)$ and $\tskein' (\surface, \star_1, \star_2)$. For a compact oriented surface $S$ having components $\surface^{(1)}, \cdots, \surface^{(k)}$, using the isomorphism
\begin{equation*}
e_{\coprod_{i} \surface^{(i)}, S*}:
\tskein' (\surface^{(1)}) \otimes \cdots \otimes \tskein' (\surface^{(k)})
\to
\tskein' (S),
\end{equation*}
 we define the filtration 
$\filtn{\filt{n} \tskein' (S)}$
as 
\begin{equation*}
\filt{n} \tskein' (S) \defeq \sum_{i_1+ \cdots+i_k \geq n}
e_{\coprod_{i} \surface^{(i)}, S*}(\filt{i_1}\tskein' (\surface^{(1)}) \otimes \cdots \otimes \filt{i_k}
(\surface^{(i_k)})).
\end{equation*}
We also set the filtrations $\filtn{\filt{n} \widehat{V}}$ of completions by
\begin{equation*}
\filt{n} \widehat{V} \defeq \ker  (\widehat{V} \to V/\filt{n} V)
\end{equation*}
for $V=\Uh (\surface),\Uh (\surface, \star_1, \star_2), \tskein' (\surface),
\tskein' (\surface, \star_1, \star_2)$ and $\tskein' (S)$.
 The filtration has the following property.

%Let F and F' be the filtrations of U and U' such as the equations. We consider the completed algebra and module of U and U' in terms of the filtrations. Furthermore, they also define the ones of S and S' such as the equations. We also consider the completed algebra and module of S and S'. For a compact oriented surface having components, using the isomorphism, we define the filtration as the equation. We also set the filtrations of completions by the conditions. The filtration has the following property. 

\begin{prop}
%$S', S$をコンパクトで向きづけられた曲面とし、
%埋め込み$e :S' \times I \to S \times I$が誘導する
%$e_*:\tskein' (S') \to \tskein' (S)$は
%任意の$N \in \Zlarger{0}$で
%\begin{equation*}
%e_* (\filt{N} \tskein' (S')) \subset
%\filt{N} \tskein' (S)
%\end{equation*}
%が成り立つ。

For compact connected surfaces $S$ and $S'$, the induced map 
$e_*:\tskein' (S') \to \tskein' (S)$
of an embedding 
$e :S' \times I \to S \times I$
satisfies 
\begin{equation*}
e_* (\filt{N} \tskein' (S')) \subset
\filt{N} \tskein' (S)
\end{equation*}
 for any $N \in \Zlarger{0}.$

%For compact connected surfaces, the induced map of an embedding satisfies the equation for any N.
\end{prop}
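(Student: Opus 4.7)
The plan is to transfer the statement to $\Uh$ via the algebra isomorphism of Theorem \ref{thm_psi_Uh_tskein'}, where the filtration is defined directly from the augmentation ideal on $\Q \pi_1$, and then to exploit the functoriality of the augmentation ideal under group homomorphisms.

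First, I would analyze the induced map on the Goldman Lie algebra side. The embedding $e : S' \times I \to S \times I$, together with the identifications $\pi_1 (S' \times I) \cong \pi_1 (S')$ and $\pi_1 (S \times I) \cong \pi_1 (S)$ and a choice of basepoints and connecting path, yields a group homomorphism $\phi_e : \pi_1 (S') \to \pi_1 (S)$. Its $\Q$-linear extension is a ring homomorphism $\Q \pi_1 (S') \to \Q \pi_1 (S)$ that preserves the augmentation, hence sends $(\ker \varepsilon)^n$ into $(\ker \varepsilon)^n$. Passing to conjugacy classes kills the dependence on the basepoint choices and produces a $\Q$-linear map $e_*^{\GL} : \GL (S') \to \GL (S)$ satisfying $e_*^{\GL} (\filt{n} \GL (S')) \subset \filt{n} \GL (S)$. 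Under $\PPsi{\Uh}{\tskein'}$, a basic skein element $K_{\zettaiti{\gamma}}$ for $\zettaiti{\gamma} \in \zettaiti{\pi_1} (S')$ is sent by $e_*$ to $K_{e_*^{\GL} \zettaiti{\gamma}} \in \tskein' (S)$, so the map induced by $e_*$ on the $\GL$-summand of $\Uh$ is precisely $e_*^{\GL}$ and therefore preserves $\filt{n} \GL \subset \filt{n} \Uh$.

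Next, I would handle general filtration generators of the form $h^{i_0} x_1 \otimes \cdots \otimes x_j$ with $x_k \in \filt{i_k} \GL (S')$ and $2 i_0 + \sum_k i_k \geq N$. By $\Q [h]$-linearity together with the algebra isomorphism, this element corresponds to $h^{i_0} K_{x_1} \cdots K_{x_j} \in \tskein' (S')$, and $e_*$ sends it to $h^{i_0}$ times the image link in $S \times I$. The image link generally does not lie in standard stacked form of $\tskein' (S)$; bringing it into such form requires an isotopy whose crossings between different components are resolved via the first skein relation $T_{(C+)} - T_{(C-)} = h T_{(C0)}$. Each such resolution corresponds to the algebraic identity $\alpha \otimes \beta - \beta \otimes \alpha = h [\alpha, \beta]$ in $\Uh (S)$. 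Because the filtration on $\Uh$ is multiplicative ($\filt{m} \Uh \cdot \filt{n} \Uh \subset \filt{m+n} \Uh$ directly from the definition) and the Goldman bracket bound $[\filt{a} \GL, \filt{b} \GL] \subset \filt{a+b-2} \GL$ from \S \ref{subsection_notation_for_the_Goldman} lifts to $\Uh$ via its defining relation, the filtration weight is preserved under each basic resolution: a weight-$2$ factor $h$ combines with a bracket of weight $\geq a + b - 2$ to give total weight $\geq a + b$, matching $\alpha \otimes \beta$.

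The main technical obstacle is the filtration bookkeeping in this last step: verifying that any sequence of isotopy moves and crossing resolutions bringing $e_* (K_{x_1} \cdots K_{x_j})$ into standard stacked form can be carried out while uniformly maintaining filtration weight $\geq N$ at each intermediate stage. This reduces to the interplay between the two grading conventions — $h$ carrying weight $2$ in $\Uh$, and the Goldman bracket dropping $\GL$-degree by at most $2$ — which by design cancel so as to preserve the $\Uh$-filtration under the skein move $T_{(C+)} - T_{(C-)} = h T_{(C0)}$.
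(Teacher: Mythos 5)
Your approach is the same as the paper's: the paper's proof of this proposition simply defers to Theorem \ref{thm_qbtskein_filtration_assumption} applied with the trivial weight sequence $\mathbf{b}=\shuugou{0,0,0,\cdots}$, and your outline --- augmentation-ideal functoriality of $\Q\pi_1(S')\to\Q\pi_1(S)$ for a single knot, then $h$-weighted commutator corrections for stacked products, with the degree count $2+(a+b-2)=a+b$ --- is exactly how that theorem is proved (Lemmas \ref{lemm_qbtskein_filtration_N1}, \ref{lemm_qbtskein_filtration_bracket}, \ref{lemm_qbtskein_filtration_embedding_2}, \ref{lemm_qbtskein_filtration_NN1}).

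One point in your last step needs care. The bound $[\filt{a}\GL,\filt{b}\GL]\subset\filt{a+b-2}\GL$ holds only for the full Goldman bracket, i.e.\ the signed sum over all intersection points; an individual smoothed term $\zettaiti{x_p y_p}$ arising from resolving one crossing need not lie in $\filt{a+b-2}$. So ``each basic resolution'' must mean a complete over/under exchange of two whole components --- a full commutator computed inside a sub-cylinder containing both of them --- rather than a crossing-by-crossing resolution. Moreover, the correction term produced by such an exchange is $h$ times a configuration with one fewer component, in which two of the original classes have been merged into their bracket; bounding its filtration in $\tskein'(S)$ requires the statement for configurations with fewer components. The argument therefore has to be closed by an induction on the number of components, which is precisely the structure of Lemma \ref{lemm_qbtskein_filtration_embedding_2} and Lemma \ref{lemm_qbtskein_filtration_NN1}; you flag this bookkeeping as the remaining obstacle but do not set up that induction, and without it the step as literally stated does not go through.
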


\begin{proof}
%\cite[\S 4]{TsujiHOMFLY-PTskein} の方法で
%示すことができるが、 
%この論文の Lemma \ref{thm_qbtskein_filtration_assumption}
%の簡単な場合（非負整数の非増加数列が$\shuugou{0,0, \cdots}$のとき）を使えば容易に示せる。

We can prove the statement in the same way as our paper \cite[\S 4]{TsujiHOMFLY-PTskein}. Furthermore, it is easy to prove it by Lemma \ref{thm_qbtskein_filtration_assumption} in this paper in case the sequence is trivial, which means $\mathbf{b}=\shuugou{0,0,0, \cdots }$.

%We can prove the statement in the same way as our paper. Furthermore, it is easy to prove it by the lemma in this paper in case the sequence is trivial.
\end{proof}

\section{Homology cylinders}
\label{section_homology_cylinders}

%$\surface$を境界$\partial \surface$が空でない
%コンパクトで連結な向きづけられた曲面とする。
%ホモロジー・シリンダーは$\surface_{g,1}$で定義されることが多いが
%ここでは一般のコンパクトな境界のある曲面で
%ホモロジー・シリンダーの定義を行う。

Let $\surface$ be a compact connected oriented surface with boundary. We define a homology cobordism or cylinder not only in the case that $\surface$ has one boundary component but in the general case.

%Let S be a compact oriented surface with boundary having a component. We define a homology cobordism or cylinder not only in the case that S has one boundary component but in the general case.

\begin{df}
%コンパクトで連結な$3$次元多様体$M$
%と微分同相写像$\alpha:\partial (\surface \times I) \to
%\partial M $の対$(M, \alpha)$で
%埋め込み
%\begin{align*}
%&\alpha_{0}: \surface \to M, p \mapsto \alpha (p,0), \\
%&\alpha_{1}: \surface \to M, p \mapsto \alpha (p,1)
%\end{align*}
%がそれぞれ整係数ホモロジー群の同型写像
%$\alpha_{0*}: H_ 1(\surface, \Z) \to H_1 (M , \Z)$と
%$\alpha_{1*}: H_1 (\surface, \Z) \to H_1 (M, \Z)$
%を誘導するとき、
%$(M, \alpha)$を
%曲面$\surface$のホモロジー・コボルディズムと呼ぶ。

Let $M$ be a compact connected oriented 3-manifold and
 $\alpha:\partial (\surface \times I) \to
\partial M $ 
a diffeomorphism. We call the pair $(M, \alpha )$ a homology cylinder, if and only if the embeddings 
\begin{align*}
&\alpha_{0}: \surface \to M, p \mapsto \alpha (p,0), \ 
\alpha_{1}: \surface \to M, p \mapsto \alpha (p,1)
\end{align*}
induce isomorphisms 
$\alpha_{0*}: H_ 1(\surface, \Z) \to H_1 (M , \Z)$ and
$\alpha_{1*}: H_1 (\surface, \Z) \to H_1 (M, \Z)$
in homology groups over the integers.

%Let M be a compact connected oriented 3-manifold and B a diffeomorphism. We call the pair a homology cylinder, if and only if the embeddings induce isomorphisms in homology groups over the integers.

\end{df}

%$\surface$のホモロジー・コボルディズム
%$(M^1,\alpha^1),$ $(M^2, \alpha^2)$が微分同相であるとは、
%ある微分同相写像$\chi: M^1 \to M^2$が
%$\chi \circ \alpha^1= \alpha^2$となるように存在すること
%とする。$\mathcal{C} (\surface)$
%を$\surface$のホモロジー・コボルディズムの微分同相類の集合
%とする。

Let $(M^1,\alpha^1)$ and $(M^2, \alpha^2)$ be two homology cobordisms. If there exists a diffeomorphism $\chi: M^1 \to M^2$ satisfying $\chi \circ \alpha^1= \alpha^2$, such pairs are called diffeomorphic. We denote by $\mathcal{C}(\surface)$ the set of diffeomorphic classes of homology cobordisms of the surface $\surface$.

%Let M and M' be two homology cobordisms. If there exists a diffeomorphism satisfying the equation, such pairs are called diffeomorphic. We denote by C the set of diffeomorphic classes of homology cobordisms of the surface.

%$\mathcal{C} (\surface)$のstackin sumを以下のように
%定義し、stacking sum により$\mathcal{C} (\surface)$は
%モノイド(monoid)になる。
%$(M^1, \alpha^1)$と$(M^2, \alpha^2)$を$\surface$の
%ホモロジー・コボルディズムとする。
%$M^1$と$M^2$を写像
%\begin{equation*}
% (\alpha^2)_1 \circ(\alpha^1)_0^{-1}:\alpha^1 (\surface \times \shuugou{0}) \to
%\alpha^2 (\surface \times \shuugou{1})
%\end{equation*}
%で張り付けた多様体を
%\begin{equation*}
%M^1 \circ_{\alpha^1, \alpha^2} M^2  \defeq M^1 \sqcup M^2/ (\alpha^1 (p,0 )\sim 
%\alpha^2 (p,1))
%\end{equation*}
%とかく。さらに微分同相写像
%\begin{equation*}
%\alpha^1 \sqcup \alpha^2:\partial (\surface \times I)
%\to \partial (M^1 \circ_{\alpha_1, \alpha_2} M^2)
%\end{equation*}
%を
%\begin{equation*}
%(\alpha^1 \sqcup \alpha^2)(p,t)=
%\begin{cases}
%\alpha^1 (p,1) \ & \\
%\alpha^1 (p, 2t-1) \ & \mathrm{if} \ t \in [\frac{1}{2},1] \\
%\alpha^2 (p, 2t) \ & \mathrm{if} \ t \in [0, \frac{1}{2}] \\
%\alpha^2 (p,0) \ &
%\end{cases}
%\end{equation*}
%により定義する。
%このとき、対$(M^1 \circ_{\alpha^1, \alpha^2} M^2, \alpha^1 \sqcup \alpha^2)$
%はホモロジー・コボルディズムになる。
%演算
%\begin{equation*}
%( \cdot ) \circ (\cdot):\mathcal{C} (\surface) \times \mathcal{C} (\surface)
%\to \mathcal{C} (\surface),
%((M^1, \alpha^1),(M^2, \alpha^2)) \mapsto
%(M^1 \circ_{\alpha^1, \alpha^2} M^2, \alpha^1 \sqcup \alpha^2)
%\end{equation*}
%により、
%$\mathcal{C} (\surface)$はモノイドになり、
%この演算をstacking sumと呼ぶ。

We consider the stacking sum of homology cobordisms defined as follows, which makes $\mathcal{C} (\surface)$ a monoid. For two homology cobordisms $(M^1, \alpha^1)$ and $(M^2, \alpha^2)$, we set a $3$-manifold $M^1 \circ_{\alpha^1, \alpha^2} M^2$ and a diffeomorphism
$\alpha^1 \sqcup \alpha^2: \partial (\surface \times I) \to \partial (M^1 \circ_{\alpha^1, \alpha^2} M^2)$
as the following. The $3$-manifold $M^1 \circ_{\alpha^1, \alpha^2} M^2$ is the quotient of 
$M^1 \sqcup M^2$
 by the relation
\begin{equation*}
\alpha^1_0 (p) \sim \alpha^2_0 (p) \mathrm{ \ for \ }
p \in \surface.
\end{equation*}
 We define the diffeomorphism $\alpha^1 \sqcup \alpha^2$ as 
\begin{equation*}
(\alpha^1 \sqcup \alpha^2)(p,t)=
\begin{cases}
\alpha^1 (p,1) \ & \\
\alpha^1 (p, 2t-1) \ & \mathrm{if} \ t \in [\frac{1}{2},1] \\
\alpha^2 (p, 2t) \ & \mathrm{if} \ t \in [0, \frac{1}{2}] \\
\alpha^2 (p,0). \ &
\end{cases}
\end{equation*}
 Then the pair $(M^1 \circ_{\alpha^1, \alpha^2} M^2, \alpha^1 \sqcup \alpha^2)$ is also a homology cobordism. The operator
\begin{equation*}
( \cdot ) \circ (\cdot):\mathcal{C} (\surface) \times \mathcal{C} (\surface)
\to \mathcal{C} (\surface),
((M^1, \alpha^1),(M^2, \alpha^2)) \mapsto
(M^1 \circ_{\alpha^1, \alpha^2} M^2, \alpha^1 \sqcup \alpha^2)
\end{equation*}
 makes $\mathcal{C} (\surface)$ a monoid, and we call it the stacking sum.

We will consider the action 
\begin{equation*}
\Phi: \mathcal{C} (\surface) \to
\Aut (\cGLM (\surface, \bullet, \star))
\end{equation*}
of $\mathcal{C}(\surface)$ on $\GLM (\surface, \star_1, \star_2)$
 as follows. We set isomorphisms 
\begin{align*}
&\diamondsuit_{0}^{1} :\pi_1 (M, \alpha (\bullet,0), \alpha (*,0)) \to 
\pi_1 (M, \alpha (\bullet ,1), \alpha(* ,1)),
\gamma \mapsto \gamma_{10\bullet} \gamma \gamma_{01*}, \\
&\diamondsuit_{1}^{0} :\pi_1 (M, \alpha (\bullet ,1), \alpha(* ,1)) \to 
\pi_1 (M, \alpha (\bullet,0), \alpha (*,0)),
\gamma \mapsto \gamma_{01\bullet} \gamma \gamma_{10*}, \\
\end{align*}
where the continuous maps 
\begin{align*}
&[0,1] \to M, t \mapsto \alpha (\star ,t), \ \  
[0,1] \to M, t \mapsto \alpha (\star, 1-t), \\
\end{align*}
represent the paths
$\gamma_{01 \star} \in \pi_1 (M, \alpha (\star ,0), \alpha(\star ,1)),$
$\gamma_{10 \star} \in \pi_1 (M, \alpha (\star ,1), \alpha(\star ,0))$
, respectively, for any $\star \in \partial \surface$. By Stallings's theorem \cite{Stallings}, the embeddings $\alpha_0$ and $\alpha_1$ induced isomorphisms
\begin{align*}
&\alpha_{0*}:\cGLM (\surface, \bullet , *)
\to \cGLM (M,\pi_1 (M, \alpha (\bullet,0), \alpha (*,0)), \\
&\alpha_{1*}:\cGLM (\surface, \bullet , *)
\to \cGLM (M, \alpha (\bullet ,1), \alpha(* ,1)). \\
\end{align*}
 Then the action 
$\Phi ((M, \alpha))=(M, \alpha)_* \in \Aut (\cGLM (\surface, \bullet, *))$
is defined as the composite map
\begin{equation*}
\Phi ((M, \alpha))=(M, \alpha)_* \defeq \alpha_{1 *}^{-1} \circ
\diamondsuit_{0}^{1} \circ \alpha_{0*}.
\end{equation*}
 The map 
\begin{equation*}
\Phi=
( \cdot )_*: \mathcal{C} (\surface)
\to \Aut (\cGLM (\surface, \bullet, *))
\end{equation*}
 is a monoid homomorphism.

%We will consider the action of C on the completed fundamental groupoid as follows. We set isomorphisms as the equations, where the continuous maps represent the paths, respectively. By Stallings's theorem, the embeddings induced isomorphisms. Then the action is defined as the composite map. The map P is a monoid homomorphism.

%ホモロジー・コボルディズム$(M, \alpha)$が
%ホモロジー・シリンダーとなる条件を定義したいが、
%次の条件が同値であることが確認できる。

To make the definition of homology cylinders, we will check that the two conditions in the following proposition are equivalent.

%To make the definition of homology cylinders, we will check that the two conditions in the following proposition are equivalent.

\begin{prop}
\label{prop_jouken_homology_cylinder}
%ホモロジー・コボルディズム$(M,\alpha)$について、次の条件は互いに同値である。
%\begin{enumerate}
%\item $\iota: \partial (\surface \times I) \to \surface \times I$
%を自然な埋め込み写像とし、$\iota_*$と$\alpha_*$を
%$\iota$と$\alpha$から誘導される群準同型写像
%$H_1 (\partial (\surface \times I) , \Z) \to
%H_1 (\surface \times I, \Z)$、
%$H_1 (\partial (\surface \times I), \Z) \to
%H_1 (M,\Z)$とする。このとき
%\begin{equation*}
%\ker ( \iota_*: H_1 (\partial (\surface \times I) , \Z) \to
%H_1 (\surface \times I, \Z))
%= \ker (\alpha_*:H_1 (\partial (\surface \times I), \Z) \to
%H_1 (M,\Z)).
%\end{equation*}
%\item 任意の境界の点$\star_1, \star_2 \in \partial \surface$で、
%ホモロジー・コボルディズム$(M, \alpha)$の作用$(M, \alpha)_* \in \Aut (
%\cGLM (\surface, \star_1 ,\star_2))$が
%\begin{equation*}
%((M, \alpha)_*- \id_{\cGLM (\surface, \bullet, *)} )
% ( \cGLM (\surface , \star_1, \star_2)) \subset
%\filt{2}
%\cGLM (\surface , \star_1, \star_2)
%\end{equation*}
%を満たす。
%\end{enumerate}

For a homology cobordism $(M,\alpha)$, the following two conditions are equivalent.
\begin{enumerate}
\item We have 
\begin{equation*}
\ker ( \iota_*: H_1 (\partial (\surface \times I) , \Z) \to
H_1 (\surface \times I, \Z))
= \ker (\alpha_*:H_1 (\partial (\surface \times I), \Z) \to
H_1 (M,\Z)),
\end{equation*}
 where the natural embedding $\iota: \partial (\surface \times I) \to \surface \times I$
and $\alpha:\partial (\surface \times I) \to M$ induce the group homomorphisms
\begin{align*}
&\iota_*: H_1 (\partial (\surface \times I) , \Z) \to
H_1 (\surface \times I, \Z), \ 
\alpha_*:H_1 (\partial (\surface \times I), \Z) \to
H_1 (M,\Z).
\end{align*}
\item For any $\star_1, \star_2 \in \partial \surface$, the action $(M,\alpha)_*$
satisfies
\begin{equation*}
((M, \alpha)_*- \id_{\cGLM (\surface, \bullet, *)} )
 ( \cGLM (\surface , \star_1, \star_2)) \subset
\filt{2}
\cGLM (\surface , \star_1, \star_2).
\end{equation*}

\end{enumerate}

%For a homology cobordism, the following two conditions are equivalent. The first one is that we have the equation, where the natural embedding and B induce the group homomorphisms. The second one is that, for any base points, the action satisfies the equation. 

\end{prop}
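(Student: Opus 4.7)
The plan is to show that (1) and (2) are both equivalent to the single homological condition $\alpha_*(\ker \iota_*) = 0$ in $H_1(M;\Z)$, from which the equivalence of (1) and (2) follows at once. For the reformulation of (1), I first observe that both $\ker \iota_*$ and $\ker \alpha_*$ are direct summands of $H_1(\partial(\surface\times I);\Z)$ of the same rank (half the rank of $H_1(\partial(\surface\times I))$), by the half-lives-half-dies principle (Poincar\'e--Lefschetz duality) applied to $\surface\times I$ and to $M$ respectively; this last step uses that $(M,\alpha)$ is a homology cobordism. Consequently (1) is equivalent to the single inclusion $\alpha_*(\ker \iota_*) = 0$.

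To analyze (2), I would work basepoint by basepoint. The filtration quotient $\filt{1}\cGLM(\surface,\star)/\filt{2}$ is canonically $H_1(\surface;\Q)$, and, after fixing a base path $\gamma_0$, the quotient $\cGLM(\surface,\star_1,\star_2)/\filt{2}$ is rank-one over $\cGLM(\surface,\star_1)/\filt{2}$ with basis $\gamma_0$; in particular the conjugation $\diamondsuit$ appearing in $(M,\alpha)_*$ acts trivially modulo $\filt{2}$ because $H_1$ is abelian. In the loop case $\star_1=\star_2$, condition (2) reduces to $\alpha_{0*} = \alpha_{1*}$ on $H_1(\surface;\Z)$, equivalently to $\alpha_*((a,0) - (a,1)) = 0$ for every $1$-cycle $a$ in $\surface$. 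In the path case $\star_1\neq \star_2$, unwinding $(M,\alpha)_*(\gamma) \equiv \gamma \pmod{\filt{2}}$ produces the $1$-cycle
\begin{equation*}
\alpha_0(\gamma)\cdot v_{\star_2}\cdot \alpha_1(\gamma)^{-1}\cdot v_{\star_1}^{-1} \subset M,
\end{equation*}
where $v_{\star_i} := \alpha(\{\star_i\}\times I) \subset \partial M$, and condition (2) requires this class to vanish in $H_1(M;\Z)$. A chain-level check identifies this class with $\alpha_*(z_\gamma)$, where $z_\gamma := \partial(\gamma\times I)$ is a $1$-cycle in $\partial(\surface\times I)$ that visibly lies in $\ker \iota_*$ because it bounds the $2$-chain $\gamma\times I$ in $\surface\times I$.

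Finally, a Mayer--Vietoris computation for $\partial(\surface\times I)$ shows that, as $a$ ranges over $H_1(\surface;\Z)$ and $\gamma$ over paths joining distinct boundary components of $\surface$, the loop-differences $(a,0) - (a,1)$ and the path classes $z_\gamma$ together generate $\ker \iota_*$ over $\Z$; this matches condition (2) across all basepoints exactly with the single statement $\alpha_*(\ker \iota_*) = 0$, completing the equivalence with (1). The main obstacle lies in the chain-level computation in the path case, where one matches the algebraic obstruction $(M,\alpha)_*(\gamma) - \gamma \bmod \filt{2}$ with the topological cycle $\alpha_*(z_\gamma)$: this requires carefully tracking the isomorphism $\alpha_{1*}^{-1}$ from Stallings's theorem and the conjugation $\diamondsuit$, and using that in $\cGLM(\surface,\star_1,\star_2)/\filt{2}$, once a base path is fixed, the deviation $(M,\alpha)_*(\gamma)\gamma^{-1} - 1$ depends only on the $H_1(\surface;\Q)$-class of the loop it represents.
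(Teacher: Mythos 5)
Your proposal is correct and follows essentially the same route as the paper: both identify $\ker\iota_*$ as the subgroup generated by the classes of $\partial(\gamma\times I)$ (the paper's $V$), show that condition (2) is exactly the vanishing of $\alpha_*$ on these generators via the identification $F^1/F^2\cong H_1(\,\cdot\,;\Q)$, and then upgrade the inclusion $\ker\iota_*\subseteq\ker\alpha_*$ to an equality using the homology cobordism hypothesis. The only minor differences are that you separate the loop and path cases (the paper treats them uniformly through the single loop $\iota'_{0*}(\gamma)\gamma^{\partial(\surface\times I)}_{01\star_2}\iota'_{1*}(\bar{\gamma})\gamma^{\partial(\surface\times I)}_{10\star_1}$) and that you deduce the final equality of kernels by a half-lives-half-dies rank count, where the paper instead uses the explicit splitting $H_1(\partial(\surface\times I),\Z)=V\oplus\iota'_{1*}(H_1(\surface,\Z))$ together with the injectivity of $\alpha_*\circ\iota'_{1*}$.
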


%%%%%%%%%%%%
%証明をする

\begin{proof}

%次のノーテーションを準備する。
%境界の点$\star \in \partial \surface$をとり、
%基本亜群の元
%\begin{align*}
%&\gamma^{\partial (\surface \times I)}_{01\star} \in \pi_1 (\partial (\surface \times I), (\star, 0), (\star,1)), 
%&\gamma^{\partial (\surface \times I)}_{10\star} \in \pi_1 (\partial (\surface \times I), (\star, 1), (\star,0)), \\ 
%\end{align*}
%を連続写像
%\begin{align*}
%&I \to \partial (\surface \times I), t \mapsto (\star, t),
%&I \to \partial (\surface \times I), t \mapsto (\star, 1-t), \\
%\end{align*}
%が代表とする元とする。
%埋め込み$\iota'_0, \iota'_1 :\surface \to \partial (\surface \times I)$
%を
%\begin{align*}
%\iota'_0: \surface \to \partial (\surface \times I), p \mapsto (p,0), \\
%\iota'_1: \surface \to \partial (\surface \times I), p \mapsto (p,1) \\
%\end{align*}
%とする。これらの埋め込みは任意の境界の点$\star_1, \star_2 \in \partial \surface$で
%\begin{align*}
%&\iota'_{0*}: \pi_1 (\surface, \star_1, \star_2)
%\to \pi_1 (\partial (\surface \times I), (\star_1, 0), (\star_2, 0)),  \\
%&\iota'_{1*}: \pi_1 (\surface, \star_1, \star_2)
%\to \pi_1 (\partial (\surface \times I), (\star_1, 1), (\star_2, 1)),  \\
%\end{align*}
%を誘導する。

To prove it, we need some notation. We choose some base points $\star, \star_1,\star_2 \in \partial \surface$. Let 
\begin{align*}
&\gamma^{\partial (\surface \times I)}_{01\star} \in \pi_1 (\partial (\surface \times I), (\star, 0), (\star,1)), 
&\gamma^{\partial (\surface \times I)}_{10\star} \in \pi_1 (\partial (\surface \times I), (\star, 1), (\star,0)), \\ 
\end{align*}
be the paths represented by the continuous maps
\begin{align*}
&I \to \partial (\surface \times I), t \mapsto (\star, t), \ 
&I \to \partial (\surface \times I), t \mapsto (\star, 1-t). 
\end{align*}
 We set the embeddings
$\iota'_0, \iota'_1 :\surface \to \partial (\surface \times I)$
 as 
\begin{align*}
\iota'_0: \surface \to \partial (\surface \times I), p \mapsto (p,0), \ 
\iota'_1: \surface \to \partial (\surface \times I), p \mapsto (p,1). 
\end{align*} 
They induce 
\begin{align*}
&\iota'_{0*}: \pi_1 (\surface, \star_1, \star_2)
\to \pi_1 (\partial (\surface \times I), (\star_1, 0), (\star_2, 0)),  \\
&\iota'_{1*}: \pi_1 (\surface, \star_1, \star_2)
\to \pi_1 (\partial (\surface \times I), (\star_1, 1), (\star_2, 1)).  \\
\end{align*}

First, we prove $(1)\Rightarrow (2)$. Let $\gamma $ be an element of  $\pi_1 (\surface, \star_1, \star_2)$, and $\bar{\gamma}$ the reverse of $\gamma$. Then, the homology class represented by the path 
\begin{equation*}
\iota'_{0*} (\gamma) \gamma^{\partial (\surface \times I)}_{01\star_2}
\iota'_{1, *} (\bar{\gamma}) \gamma^{\partial (\surface \times I)}_{10 \star_1}
\end{equation*}
satisfies 
\begin{equation*}
\iota_*([\iota'_{0*} (\gamma) \gamma^{\partial (\surface \times I)}_{01\star_2}
\iota'_{1, *} (\bar{\gamma}) \gamma^{\partial (\surface \times I)}_{10 \star_1}])
=0
\in H_1 (\partial (\surface \times I), \Z).
\end{equation*}
 By the first statement, we have 
\begin{align*}
&\alpha_*([\iota'_{0*} (\gamma) \gamma^{\partial (\surface \times I)}_{01\star_2}
\iota'_{1, *} (\bar{\gamma}) \gamma^{\partial (\surface \times I)}_{10 \star_1}])
=0
\in H_1 (M, \Z), \\
&\alpha_{0*} (\gamma) \gamma_{01\star_2} \alpha_{1*} (\bar{\gamma}) \gamma_{10\star_1}
\in [ \pi_1 (M , \alpha (\star_1,0)), \pi_1 (M, \alpha (\star_1, 0))]
\subset 1+\filt{2} \GLM (M, \alpha (\star_1,0)).
\end{align*}
 So we have
\begin{align*}
&\alpha_{0*} (\gamma)-\gamma_{01\star_1} \alpha_{1*} ({\gamma}) \gamma_{10\star_2} \\
&=(\alpha_{0*} (\gamma) \gamma_{01\star_2} \alpha_{1*} (\bar{\gamma}) \gamma_{10\star_1}-1)\gamma_{01\star_1} \alpha_{1*} ({\gamma}) \gamma_{10\star_2}\\
&\in \filt{2} \GLM (M , \alpha (\star_1,0), \alpha (\star_2,0))
\end{align*}
 Hence we obtain $(1)\Rightarrow (2)$.

%First, we prove that, if the first statement is true, the second one holds. Let R be an element of P, and R' the reverse of R. Then, the homology class represented by the path satisfies the condition. By the first statement, we have the formula. So we have the equation. Hence we obtain the second statement.

%次に$(2) \Rightarrow (1)$を示したい。
%次の5 stepsで示す。
%\begin{itemize}
%\item[(Step 1)] 任意の二つの基点$\star_1, \star_2 \in \partial \surface$と
%パス$\gamma\in \pi_1 (\surface,\star_1, \star_2)$とその逆向きのパス$\bar{\gamma}$で閉じたループ
%\begin{equation*}
%\iota'_0 (\gamma) \gamma_{01 \star_2}^{\partial (\surface \times I)} \iota'_1 (\bar{\gamma})
%\gamma_{10 \star_1}^{\partial (\surface \times I)}
%\end{equation*}
%で生成される部分$\Z$加群を
%$V \subset H_1 (\surface, \Z)$とする。
%このとき
%\begin{equation*}
%H_1 (\partial (\surface \times I), \Z ) = 
%V \oplus \iota'_{1*} (H_1 (\surface, \Z))
%\end{equation*}
%である。
%\item[(Step 2)] $\ker \iota_* =V.$
%\item[(Step 3)] $\alpha_* \circ \iota'_{1*}:H_1 (\surface ,\Z)\to H_1 (M, \Z )$は同型写像である。
%\item[(Step 4)] $\alpha_* (V)=\shuugou{0}.$
%\item[(Step 5)] $\ker \alpha_* =V$.
%\end{itemize}

Next, we prove $(2) \Rightarrow (1)$. We will prove it in five steps. 
\begin{itemize}
\item[(Step 1)] The homology group $H_1 (\partial (\surface \times I), \Z )$ is the direct sum $V \oplus \iota'_{1*} (H_1 (\surface, \Z))$, where, for a path  $\gamma\in \pi_1 (\surface,\star_1, \star_2)$ and the reverse path $\bar{\gamma}$, the homology classes of
\begin{equation*}
\iota'_0 (\gamma) \gamma_{01 \star_2}^{\partial (\surface \times I)} \iota'_1 (\bar{\gamma})
\gamma_{10 \star_1}^{\partial (\surface \times I)}
\end{equation*}
 generate $V$.
\item[(Step 2)] We have $\ker \iota_* =V.$
\item[(Step 3)] The composite $\alpha_* \circ \iota'_{1*}:H_1 (\surface ,\Z)\to H_1 (M, \Z )$ is an isomorphism.
\item[(Step 4)] We have $\alpha_* (V)=\shuugou{0}.$
\item[(Step 5)] We  have $\ker \alpha_* =V$.
\end{itemize}

%Next, we prove that, if the second statement is true, the first one holds. We will prove it in five steps. The first one is that the homology group is the direct sum of the subspaces, where, for a path and the reverse path, the homology classes generate V. The second one is that the equation. The third one is that the composite is an isomorphism. The fourth one is that the linear space is trivial. Finally, the fifth one is that we obtain the formula.

%(Step 1)（証明はホモロジー理論の基礎的な演習問題だけど）
%$H_1 (\partial (\surface \times I), \Z)$は$\Z$自由加群になり下図のように
%基底を$\iota'_{1*} (H_1(\surface, \Z))$と$V$からとれる。このことから、Step 1
%の主張が示せる。

(Step 1) The basis in the figure generates the homology group $H_1 (\partial (\surface \times I), \Z)$ as a free $\Z$-module. Some set of it generates $\iota'_{1*} (H_1(\surface, \Z))$, and one of the others $V$. This proves (Step 1).

\begin{center}
\input{fig_partial_surface_times_I_homology_basis}
\end{center}

%The basis in the figure generates the homology group as a free module. Some set of it generates V', and one of the others V. This proves the statement in the first step.

%(Step 2)上の基底をみると$\iota_* (V) =\shuugou{0}$とわかり、
%$\iota_* \circ \iota'_{1*} :H_1 (\surface, \Z) \to H_1 (\surface, \Z)$は恒等写像とわかる。
%これより、$\ker \iota_* =V$とわかる。

(Step 2) Using the above basis, we have $\iota_* (V) =\shuugou{0}$ and
$\iota_* \circ \iota'_{1*}=\id_{H_1 (\surface, \Z)} :H_1 (\surface, \Z) \to H_1 (\surface, \Z)$. So we obtain (Step 2).

%Using the above basis, we have the formulas. So we obtain the property as desired.

%(Step 3)$(M, \alpha )$はホモロジー・コボルディズムのため
%$\alpha_{1*} : H_1 (\surface, \Z) \to H_1 (M, \Z)$は同型写像である。
%また、$\alpha_{1} = \alpha \circ \iota'_1$であるので、
%$\alpha_* \circ \iota'_{1*}$も同型写像である。

(Step 3) By definition of homology cobordisms, the homomorphism 
$\alpha_{1*} : H_1 (\surface, \Z) \to H_1 (M, \Z)$
is an isomorphism. Since $\alpha_{1} = \alpha \circ \iota'_1$, we have (Step 3).

%By definition of homology cobordisms, the homomorphism is an isomorphism. Since the map is the composite, we have the statement in the third step.

%(Step 4)ここで、$(2)$を使う。基本亜群の元
%$\gamma \in \pi_1 (\surface, \star_1, \star_2)$
%と逆のパス$\bar{\gamma} \in \pi_1 (\surface, \star_2, \star_1)$をとる。
%$(2)$より、
%\begin{equation*}
%\alpha_{0*}(\gamma)- \gamma_{01\star_1}\alpha_{1*} (\gamma) \gamma_{10 \star_2}
%\in \filt{2} 
%\GLM (M, \star_{1M,0}, \star_{2M,0})
%\end{equation*}
%とわかる。
%これより、
%\begin{equation*}
%\alpha_{0*}(\gamma)\gamma_{10\star_2}\alpha_{1*} (\bar{\gamma}) \gamma_{10 \star_1}-1
%\in \filt{2}\GLM (M, \star_{1M0})
%\end{equation*}
%が得られる。
%いま、$I_{\GLM (M, \star_{1M0})} \defeq \shuugou{\sum_{g \in G} q_g g|\sum_{g \in G} q_g=0}$
%として、
%\begin{align*}
%I_{\GLM (M, \star_{1M0})}/ (I_{\GLM (M ,\star_{1M0})})^2 &\simeq 
%H_1 (M, \Q ),
%x-1 \mapsto [x]
%\end{align*}
%であるので、
%$\alpha_{0*}(\gamma)\gamma_{10\star_2}\alpha_{1*} (\bar{\gamma}) \gamma_{10 \star_1}$
%が代表するホモロジー群は$0$であることがわかる。
%以上より$\alpha_* (V)=0$であることが示せた。

(Step 4)
Here we use (2). By it, for a path 
$\gamma \in \pi_1 (\surface, \star_1, \star_2)$
 and the reverse $\bar{\gamma}$, we have 
\begin{equation*}
\alpha_{0*}(\gamma)- \gamma_{01\star_1}\alpha_{1*} (\gamma) \gamma_{10 \star_2}
\in \filt{2} 
\GLM (M, \star_{1M,0}, \star_{2M,0}).
\end{equation*}
 So we obtain 
\begin{equation*}
\alpha_{0*}(\gamma)\gamma_{10\star_2}\alpha_{1*} (\bar{\gamma}) \gamma_{10 \star_1}-1
\in \filt{2}\GLM (M, \star_{1M0}).
\end{equation*}
 Since 
\begin{align*}
I_{\GLM (M, \star_{1M0})}/ (I_{\GLM (M ,\star_{1M0})})^2 &\simeq 
H_1 (M, \Q ),
x-1 \mapsto [x],
\end{align*}
 where $I_{\GLM (M, \star_{1M0})} \defeq \shuugou{\sum_{g \in G} q_g g|\sum_{g \in G} q_g=0}$, the homology class 
\begin{equation*}
[\alpha_{0*}(\gamma)\gamma_{10\star_2}\alpha_{1*} (\bar{\gamma}) \gamma_{10 \star_1}]
\end{equation*}
equals $0$ as desired.

%Here we use the second statement. By it, for a path and the reverse, we have the equation. So we obtain the condition. Since the map is an isomorphism, where II is the set, the homology class equals 0 as desired.

%(Step 5)$v_1 \in V$と$v_2 \in H_1 (\surface, \Z)$について、
%$\alpha_* (v_1 + \iota'_{1*} (v_2))= 0$とすると、(Step 4)より、
%$\alpha_* (v_1 + \iota'_{1*} (v_2))=\alpha_* (\iota'_{1*} (v_2))=0$となる。
%(Step 3)より、$v_2=0$とわかる。
%これと(Step 1)より、$\ker \alpha_* =V$とわかる。

(Step 5)
The statement (Step 4) gives $\alpha_* (v_1 + \iota'_{1*} (v_2))=\alpha_* (\iota'_{1*} (v_2))$ for two elements $v_1 \in V$ and $v_2 \in H_1 (\surface, \Z)$. By (Step 3), if $\alpha_* (v_1 + \iota'_{1*} (v_2))=0$, $v_2$ equals $0$. Using (Step 1), we obtain $\ker \alpha_* =V$ as desired.

%The statement in the fourth step gives the equation for two elements. By the third step, if it is 0, V equals 0. Using the first step, we obtain the property as desired. 

%(Step 2) と(Step 5)より、$(2) \Rightarrow (1)$が示せた。

By the second and fifth steps, we obtain $(2) \Rightarrow (1)$.

%By the second and fifth steps, we obtain the first statement.

\end{proof}

%いらんとおもうけど・・・・
\begin{rem}
%一般の群$G$で$\Q G$で$I_{\Q G} \defeq \shuugou{\sum_{g \in G} q_g g|\sum_{g \in G} q_g=0}$
%とする。
%このとき、$[G, G] \subset 1+ I_{\Q G}^2$であることは、
%$\gamma_{11}, \gamma_{21}, \cdots, \gamma_{k1},$
%$\gamma_{12}, \gamma_{22}, \cdots ,\gamma_{k2} \in G$で
%\begin{equation*}
%\prod_{i=1}^k [\gamma_{i1}, \gamma_{i2}]=
%\prod_{i=1}^k (((\gamma_{i1}-1)(\gamma_{i2}-1)-(\gamma_{i2}-1)(\gamma_{i1}-1))\gamma_{i1}^{-1} \gamma_{i2}^{-1}+1)
%\end{equation*}
%となることからわかる。また、これより、$\Q$線型写像
%\begin{align*}
%\Q \otimes G/[G,G]  \to I_{\Q G}/ (I_{\Q G})^2,
%[x]\mapsto x-1
%\end{align*}
%がwell defined な写像であることがわかる。
%逆写像は
%\begin{equation*}
%(x-1)(y-1)= (xy-1)-(x-1)-(y-1) \mapsto [xy]-[x]-[y]=0
%\end{equation*}
%より、well-definedであることがわかり
%\begin{align*}
%\Q \otimes G/[G,G]  \simeq I_{\Q G}/ (I_{\Q G})^2
%\end{align*}
%であることがわかる。

For a group $G$, let $I_{\Q G}$ be the augmentation ideal
$\shuugou{\sum_{g \in G} q_g g|\sum_{g \in G} q_g=0}$.
 We can prove $[G, G] \subset 1+ I_{\Q G}^2$ by
\begin{equation*}
\prod_{i=1}^k [\gamma_{i1}, \gamma_{i2}]=
\prod_{i=1}^k (((\gamma_{i1}-1)(\gamma_{i2}-1)-(\gamma_{i2}-1)(\gamma_{i1}-1))\gamma_{i1}^{-1} \gamma_{i2}^{-1}+1)
\end{equation*} 
for elements $\gamma_{11}, \gamma_{21}, \cdots, \gamma_{k1},$
$\gamma_{12}, \gamma_{22}, \cdots ,\gamma_{k2} \in G$.
 So, the linear map 
\begin{align*}
\Q \otimes G/[G,G]  \to I_{\Q G}/ (I_{\Q G})^2,
[x]\mapsto x-1
\end{align*}
is well-defined. It is bijection because 
\begin{equation*}
(x-1)(y-1)= (xy-1)-(x-1)-(y-1).
\end{equation*}
 Hence we have the isomorphism
\begin{align*}
\Q \otimes G/[G,G]  \simeq I_{\Q G}/ (I_{\Q G})^2.
\end{align*}

%For a group, let II be the augmentation ideal. We can prove the formula by the equation for elements of G. So, the linear map is well-defined. It is bijection because the equation holds. Hence we have the isomorphism.
\end{rem}

\bigskip

%ホモロジー・コボルディズム$(M, \alpha)$が
%Proposition \ref{prop_jouken_homology_cylinder}
%の(1)と(2)のうちのどちらか、すなわち両方をみたすとき
%$(M, \alpha )$をホモロジー・シリンダーと呼ぶ。 
%以下、ホモロジー・シリンダーのを構成する方法を２つ紹介する。
%１つ目はboundary linkを使った方法を紹介する。
%２つ目はハンドルボディの埋め込みをstandardにとったもので
%本理論の中心になる。これらの構成は本質的に同じもので
%さらに、すべてのホモロジー・シリンダーを得ることができる。
%ここで、$\surface$のホモロジー・シリンダーの微分同相類の集合を
%$\mathcal{IC} (\surface)$とする。
%ただし、$(M_1, \alpha_1)$と$(M_2, \alpha_2)$が微分同相であるとは、
%$\chi_{|\partial M_1} \circ \alpha_1 =\alpha_2$を満たす
%微分同相写像$\chi: M_1 \to M_2$が存在することを意味する。

For a homology cobordism $(M, \alpha)$, if it has one of the properties in Proposition \ref{prop_jouken_homology_cylinder}, we call it a homology cylinder. Then, it has both of them. 

We introduce two ways to construct homology cylinders. The first way is to use boundary links. The second way is to use standard embeddings, which is an analogy of Heegaard splittings. Both are equivalent to each other, and we can obtain any diffeomorphic class of a homology cylinder in one of both.

%For a homology cobordism, if it has one of the properties in the proposition, we call it a homology cylinder. Then, it has both of them. 
%We introduce two ways to construct homology cylinders. The first way is to use boundary links. The second way is to use standard embeddings, which is an analogy of Heegaard splittings. Both are equivalent to each other, and we can obtain any diffeomorphic class of a homology cylinder in one of both.

\subsection*{``A construction using a boundary link''}

%初めに、向きづけられた３次元多様体でのboundary linkを定義する。

First, we prepare notations about boundary links.

%First, we prepare notations about boundary links.

\begin{df}
%コンパクトな$3$次元多様体$M$の
%フレームのついた向きづけられていないリンク$L$が、
%boundary link であるとは、次を満たす$M$に
%埋め込まれたコンパクトな向きづけられた
%曲面$S$が存在することである。
%\begin{itemize}
%\item 曲面の連結成分は境界のコンポーネント数が$1$か$2$である。
%\item $S$の境界の管状近傍の$M$への埋め込みが$L$と一致する。
%\end{itemize}
%また、この$S$をboundary link $L$の
%Seifert surfaceと呼ぶ。
%Seifert surface の境界の$\mathrm{label}$ $\lambda$とは、次を満たす
%$\pi_0 (\partial S)$から$\shuugou{-1, +1}$への写像とする。
%\begin{itemize}
%\item $S$の境界成分$\partial_1,\partial_2$が同じ曲面の部分集合
%であるとき
%\begin{equation*}
%[\partial_1 ] \neq [ \partial_2 ] \Rightarrow
%\lambda ([\partial_1]) \lambda ([\partial_2])=-1
%\end{equation*}
%となる。
%\end{itemize}
%Seifert surface $S$の境界の$\mathrm{label}$ $\lambda$によりboundary link $L$
%にフレームを付けたリンク$L(\lambda)$とは次を満たすフレームのついた
%向きのついていないリンクである。
%\begin{itemize}
%\item フレームの考えない$L$と$L (\lambda)$は一致する。
%\item $S$の境界成分$\partial$について$\lambda ([\partial])=1$のとき、
%$\partial$に対応する$L (\lambda)$のコンポネーントは
%$\partial$に対応する$L$のコンポーネントにposititive twistを加えたknotとなる。
%\item $S$の境界成分$\partial$について$\lambda ([\partial])=-1$のとき、
%$\partial$に対応する$L (\lambda)$のコンポネーントは
%$\partial$に対応する$L$のコンポーネントにnegative twistを加えたknotとなる。
%\end{itemize}

For a framed unoriented link $L$ in $\surface \times I$, we call it a boundary link if and only if there exists an embedded surface $S$ satisfying the two conditions. 
\begin{itemize}
\item Any component of $S$ has one or two boundary components.
\item The tubular neighborhood of the boundary is isotopic to $L$.
\end{itemize}
 Here we call $S$ a Seifert surface. A label of $S$ is a map $\pi_0 (\partial S)\to \shuugou{-1, +1}$ satisfying 
\begin{equation*}
\partial_1, \partial_2 \subset (\mathrm{a \ component \ of \ } S),
[\partial_1 ] \neq [ \partial_2 ] \Rightarrow
\lambda ([\partial_1]) \lambda ([\partial_2])=-1.
\end{equation*}
 Let a labeled boundary link $L(\lambda)$ be a framed unoriented link having the two properties.
\begin{itemize}
\item The unframed link of $L$ equals that of $L(\lambda)$. 
\item If $l$ and $l'$ are knot components of $L$ and $L(\lambda)$ corresponding to $[\partial] \in \pi_0 (\partial S)$, respectively, we have 
\begin{equation*}
w (l')-w(l)=\lambda ([\partial]).
\end{equation*}
 Here $w (\cdot)$ is the framing number, which is the difference in the number of positive crossings and that of negative ones.
\end{itemize}

%For a framed unoriented link in a cylinder, we call it a boundary link if and only if there exists an embedded surface satisfying the two conditions. The first is that any component of it has one or two boundary components. The second is that the tubular neighborhood of the boundary is isotopic to the link. Here we call it a Seifert surface. A label of the Siefert surface is a map satisfying the equation. Let a labeled boundary link be a framed unoriented link having the two properties. The first is that the unframed link of L equals that of L'. The second is that, if l and l' are knot components of L and L' corresponding to P, respectively, we have the equation. Here W is the framing number, which is the difference in the number of positive crossings and that of negative ones.

\end{df}

%$\surface \times I$の中に、boundary link $L$と
%$L$のSeifert surface $S$の境界の$\mathrm{label}$ $\lambda:\pi_0 ( \partial \surface) \to
%\shuugou{ -1, +1}$をとる。
%$L( \lambda)$のDehn手術により得られる微分同相類を代表する
%３次元多様体を$(\surface \times I) (L(\lambda ))$と書く。
%このとき対$( (\surface \times I)(L (\lambda )), \id_{\partial (\surface \times I)})$は
%ホモロジー・シリンダーとなる。この微分同相類を$(\surface \times I)(L (\lambda))$
%と書く。
%この方法で得られる$\mathcal{IC} (\surface)$の部分集合
%を$\mathcal{IC}_1 (\surface)$とする。

Let $L$ be a boundary link in $\surface \times I$ and $\lambda$ a label of a Seifert surface of $L$. We choose a 3-manifold representing the diffeomorphic class obtained by the Dehn surgery of $L (\lambda)$ and denote it by $(\surface \times I)(L (\lambda))$. Then the pair $((\surface \times I) (L(\lambda)), \id_{\partial (\surface \times I)})$ is a homology cylinder. Here we write the same symbol $(\surface \times I)(L(\lambda))$ for the diffeomorphic class of $((\surface \times I) (L(\lambda)), \id_{\partial (\surface \times I)})$. Let $\mathcal{IC}_1 (\surface)$ be the subset of $\mathcal{IC} (\surface)$ consisting of all elements obtained in this way.

%Let L be a boundary link in the cylinder and l a label of a Seifert surface of L. We choose a 3-manifold representing the diffeomorphic class obtained by the Dehn surgery of Ll and denote it by M. Then the pair is a homology cylinder. Here we write the same symbol for the diffeomorphic class of it. Let IC' be the subset of IC consisting of all elements obtained in this way.

\begin{prop}[\cite{Habegger2000} p.4 Theorem 2.5, p.17 Lemma 6.1]
\label{prop_Habegger2000}
%boundary link を使った上記のホモロジー・シリンダーの構成ですべての
%$\mathcal{IC} (\surface)$の元が得られる。
%言い換えると、任意の$\Xi \in \mathcal{IC} (\surface)$で、
%あるboundary link $L$と$L$のSeifert surfaceのlabel
%$\lambda$が存在して$\Xi= (\surface \times I )(L(\lambda ))$となる。
%つまり、$\mathcal{IC}_1 (\surface) =\mathcal{IC} (\surface)$である。

We have $\mathcal{IC}_1 (\surface) =\mathcal{IC} (\surface)$. In other words, for any homology cylinder $(M, \alpha)$, there exists a boundary link $L\subset \surface \times I$ and exists a label $\lambda$ of a Seifert surface of $L$ such that the diffeomorphic class 
$(\surface \times I)(L (\lambda))$
is that of $(M, \alpha)$.

%We have IC equals IC'. In other words, for any homology cylinder, there exists a boundary link and exists a label of a Seifert surface of L such that the diffeomorphic class is that of the pair.

\end{prop}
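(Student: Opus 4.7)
The plan is to follow Habegger's argument in three stages. First, any homology cobordism $(M,\alpha)$ admits a surgery presentation by a framed link in $\surface \times I$. Consider the relative cobordism $W$ from $\surface \times I$ to $M$ obtained by gluing the two along their identified boundaries; after standard cancellation of $0$-, $1$-, and (dualized) $3$-handles one may assume $W$ consists only of $2$-handles, which shows that $M$ is obtained from $\surface \times I$ by Dehn surgery on some framed link $L \subset \mathrm{int}(\surface \times I)$ with boundary identification $\id_{\partial (\surface \times I)}$.

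Second, refine $L$ to a boundary link. The homology-cylinder condition from Proposition \ref{prop_jouken_homology_cylinder} forces each component of $L$ to be $\Z$-nullhomologous in $\surface \times I$ and the off-diagonal linking numbers to vanish (diagonal contributions being absorbed into the framings). Using handle slides and stabilizations on the surgery link, which preserve the resulting $3$-manifold, one arranges that the components of $L$ collectively bound an embedded surface $S$ in $\surface \times I$ each of whose components has one or two boundary circles, exactly matching the definition of a Seifert surface of a boundary link.

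Third, adjust framings. The surgery framing of each component of $L$ differs from the Seifert framing induced by $S$ by an integer; after further stabilization (band-sum with a small $\pm 1$-framed unknot bounding a disk disjoint from $S$) one may assume that every such integer lies in $\shuugou{-1, +1}$, and this choice of signs is recorded by a map $\lambda : \pi_0(\partial S) \to \shuugou{-1, +1}$. The compatibility constraint $\lambda([\partial_1])\lambda([\partial_2])=-1$ whenever $\partial_1, \partial_2$ lie in the same two-boundary component of $S$ reflects the fact that a cylindrical Seifert piece induces opposite Seifert framings on its two boundary knots.

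The main obstacle is the second stage: converting the algebraic splitness of the linking data into the geometric existence of disjoint Seifert surfaces \emph{inside} $\surface \times I$ (rather than merely inside $M$) requires a delicate Whitney-trick-style manipulation of intersections combined with compatible Kirby moves on $L$, and this is the technical heart of \cite{Habegger2000}. Once the geometric boundary-link structure is produced, the first and third stages are essentially formal.
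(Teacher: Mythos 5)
The paper does not prove this proposition at all: it is imported verbatim from Habegger's preprint (his Theorem~2.5 and Lemma~6.1) and used as a black box, so there is no in-paper argument to measure your proposal against. Judged on its own terms, your outline has the right global shape — surgery presentation via a relative $4$-dimensional cobordism with only $2$-handles, upgrading the surgery link to a boundary link, then normalizing framings to $\pm 1$ relative to the Seifert framing — and stages one and three are indeed essentially formal once stage two is in hand.

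The genuine gap is stage two, and you have in effect conceded it by deferring it back to \cite{Habegger2000}. Vanishing of the off-diagonal linking numbers (algebraic splitness) is far weaker than the existence of disjoint Seifert surfaces in $\surface \times I$ whose components have one or two boundary circles; boundary links are a strictly smaller class than algebraically split links, and no amount of framing bookkeeping closes that distance. Moreover the mechanism you invoke — a ``Whitney-trick-style manipulation of intersections'' — is not available here: the Whitney trick fails in ambient dimension $3$ and is obstructed in dimension $4$, which is exactly why this step is hard. The actual route (in Habegger, and in the parallel treatments via Matveev's theorem and the Goussarov--Habiro clasper calculus) is to first show that any homology cylinder is obtained from $\surface \times I$ by $Y$-surgeries, and then to convert each $Y$-surgery into surgery on an explicit small boundary link with the required $\pm 1$ labels; alternatively one performs a careful sequence of handle slides and stabilizations in the $4$-dimensional cobordism guided by the triviality of the linking pairing. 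Without one of these inputs your second stage is an assertion of the theorem rather than a proof of it, so the proposal should be read as a correct plan whose key lemma remains exactly the cited result.
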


\subsection*{``A construction using a standard embedding''}

%はじめに、次の定義を準備する。

We introduce the notion of standard embeddings.

%We introduce the notion of standard embeddings.

\begin{df}
\label{df_standard_embedding}
%埋め込み$e_\mathrm{st}: \surface_\mathrm{st} \times I
%\hookrightarrow \surface \times I$がスタンダードな埋め込みであるとは、
%次を満たす埋め込み$e'_\mathrm{st}: \surface_\mathrm{st} \hookrightarrow
%\surface$が存在することである。
%\begin{itemize}
%\item $\surface \backslash e'_\mathrm{st} (\surface_\mathrm{st})$
%の閉包はいくつかの閉ディスクである。
%\item $\partial \surface \subset e' (\partial \surface_\mathrm{st}).$ 
%\item 任意の$p \in \surface_\mathrm{st}, t\in I$で、
%$e_\mathrm{st} (p,t)=(e'_\mathrm{st} (p), \frac{1+t}{3}).$
%\end{itemize}

We call $e_\mathrm{st}: \surface_\mathrm{st} \times I
\hookrightarrow \surface \times I$ a standard embedding if and only if there exists an embedding $e'_\mathrm{st}: \surface_\mathrm{st} \hookrightarrow
\surface$ satisfying the two properties.
\begin{itemize}
\item The image $e'_\mathrm{st} (\surface)$ is the closure of $\surface$ except for closed disks and includes $\partial \surface$.
\item We have $e_\mathrm{st} (p,t)=(e'_\mathrm{st} (p), \frac{1+t}{3})$ for $p \in \surface_\mathrm{st}$ and $t\in I$.
\end{itemize}

%We call E a standard embedding if and only if there exists an embedding satisfying the two properties. The first is that the image is the closure of the surface except for closed disks and includes the boundary. The second is that we have the equation for any p and t.
\end{df}

%埋め込み$e_\mathrm{st}: \surface_\mathrm{st} \times I \hookrightarrow
%\surface \times I$をある埋め込み$e'_\mathrm{st}; \surface_\mathrm{st} \hookrightarrow
%\surface$で、$e(p,t)=e' (p,\frac{1+t}{3})$となるスタンダードな埋め込みとする。
%このとき、$\tilde{\surface}_\mathrm{st}$を
%$\partial (\surface_\mathrm{st} \times I) \backslash
%e_\mathrm{st}^{-1} (\partial \surface \times I)$の閉包
%$\overline{\partial (\surface_\mathrm{st} \times I) \backslash
%e_\mathrm{st}^{-1} (\partial \surface \times I)}$とする。

Let $e'_\mathrm{st}; \surface_\mathrm{st} \hookrightarrow
\surface$ be an embedding such that $e_\mathrm{st}: \surface_\mathrm{st} \times I \hookrightarrow
\surface \times I,e(p,t)=e' (p,\frac{1+t}{3})$
 is a standard embedding. Here we denote by $\tilde{\surface}_\mathrm{st}$ the closure of the subsurface $\partial (\surface_\mathrm{st} \times I) \backslash
e_\mathrm{st}^{-1} (\partial \surface \times I)$.

For a diffeomorphism 
$\chi' \in \Diff^+ (\tilde{\surface}_\mathrm{st}, 
\partial \tilde{\surface}_\mathrm{st})$, 
the quotient space of the set
\begin{equation*}
(\mathrm{the \ closure \ of \ }
(\surface \times I) \backslash e ({\surface}_\mathrm{st} \times I)) \sqcup
{\surface}_\mathrm{st} \times I
\end{equation*}
 by the relation 
\begin{equation*}
e_\mathrm{st} \circ \chi'(p) \sim p \mathrm{ \ for \ } p\in \tilde{\surface}_\mathrm{st}
\end{equation*}
is a 3-manifold. We denote by $(\surface \times I) (e_\mathrm{st}, \chi')$ the 3-manifold. We recall that $\torelli' (\tilde{\surface}_\mathrm{st}) \subset 
\MCG (\tilde{\surface}_\mathrm{st})$ is a subgroup generated 
\begin{align*}
\shuugou{t_{c_1} \circ t_{c_2}^{-1}|(c_1,c_2) \mathrm{\ bounds \ a \ surface.}}
\cup
\shuugou{t_c|c \mathrm{\ bounds \ a \ surface.}}.
\end{align*}
 If $\chi$ represents an element $\xi$ of $\torelli' (\surface)$, the pair $((\surface \times I) (e_\mathrm{st}, \chi'), \alpha^{e_\mathrm{st}})$ is a homology cylinder, where we set the diffeomorphism 
$\alpha^{e_\mathrm{st}}:\partial (\surface \times I)
\to \partial ((\surface \times I)(e_\mathrm{st}, \chi'))$
as
\begin{equation*}
\alpha^{e_\mathrm{st}} (p,t)=
\begin{cases}
(p,t) \mathrm{\ if \ }p \in \surface, t \in \shuugou{0,1} \\
(p,t) \mathrm{\ if \ }p\in \partial \surface, t \in [0, \frac{1}{3}] \cup
[\frac{2}{3},1] \\
e_\mathrm{st}^{-1} (p,3t-1) \mathrm{\ if \ }
p \in \partial \surface , t \in [\frac{1}{3}, \frac{2}{3}]
\end{cases}
\end{equation*}
 We write $(\surface \times I)(e_\mathrm{st},\xi)$ for the diffeomorphic class of  $((\surface \times I) (e_\mathrm{st}, \chi'), \alpha^{e_\mathrm{st}})$. Let $\mathcal{IC}_2 (\surface)$ be the subset of $\mathcal{IC} (\surface)$ consisting of all elements obtained in this way. To prove $\mathcal{IC}_2 (\surface) = \mathcal{IC}_1 (\surface)$, we need the following lemma.

%For a diffeomorphism, the quotient space of the set by the relation is a 3-manifold. We denote by M the 3-manifold. We recall that II is a subgroup generated by the following. If it represents an element of II, the pair is a homology cylinder, where we set the diffeomorphism as the equation. We write M for the diffeomorphic class of it. Let IC' be the subset of IC consisting of all elements obtained in this way. To prove that IC equals IC', we need the following lemma.

\begin{lemm}
\label{lemm_standard_embedding}
%(連結でないかもしれない)コンパクトな曲面
%$S$から$\surface \times I$への埋め込み$e$について、
%あるスタンダードな埋め込み$e_\mathrm{st}: \surface_\mathrm{st} \times I
%\to \surface \times I$と
%埋め込み$\tilde{e} :S \to \tilde{\surface}_\mathrm{st}
%\defeq
%\overline{\partial (\surface_\mathrm{st} \times I) \backslash
%e_\mathrm{st}^{-1} (\partial \surface \times I)}$
%が$e$と$e_\mathrm \circ \tilde{e}$がイソトピックになるように
%存在する。

Let $S$ be a compact oriented surface and $e$ an embedding from $S$ to $\surface \times I$. Then, there exists a standard embedding $e_\mathrm{st}: \surface_\mathrm{st} \times I
\to \surface \times I$ and exists an embedding 
$\tilde{e} :S \to \tilde{\surface}_\mathrm{st}$
satisfying that the composite 
$e_\mathrm \circ \tilde{e}$
is isotopic to $e$.

%Let S be a compact oriented surface and E an embedding from S to M. Then, there exists a standard embedding and exists an embedding satisfying that the composite is isotopic to the first one.
\end{lemm}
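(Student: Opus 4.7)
The plan is to put $e$ into a normal form adapted to the height function $h := \mathrm{pr}_I \circ e : S \to I$, from which the required disks $D_1, \ldots, D_N$ can be read off. I would first compose $e$ with an ambient isotopy of $\surface \times I$ that compresses the $I$-factor, obtaining an isotopic embedding whose image lies in $\surface \times (1/3, 2/3)$. After a small generic perturbation, $h$ is a Morse function with distinct critical values; applying standard critical-point rearrangement (Cerf-theoretic) techniques, I further isotope $e$ so that all index-$0$ critical points of $h$ sit at a level just above $1/3$, all index-$2$ critical points at a level just below $2/3$, and the index-$1$ (saddle) critical points lie at intermediate levels.

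I would then reshape $e(S)$ by ambient isotopy so that the neighborhoods of minima and maxima become small horizontal disks at levels $1/3$ and $2/3$ respectively, the regions between adjacent critical levels become vertical cylinders $\gamma_i \times [a_i, b_i]$ over disjoint embedded simple closed curves $\gamma_i \subset \surface$, and the saddle neighborhoods become horizontal pair-of-pants/strip pieces at levels $1/3$ or $2/3$. An additional isotopy arranges each $\gamma_i$ to bound a small disk $D_i \subset \surface$, with the $D_i$ pairwise disjoint and disjoint from $\partial \surface$. Setting $\surface_\mathrm{st} \defeq \surface \setminus (D_1 \sqcup \cdots \sqcup D_N)$ and $e_\mathrm{st}(p,t) \defeq (p, (1+t)/3)$, the normalized $e$ factors as $e_\mathrm{st} \circ \tilde{e}$ for an embedding $\tilde{e} : S \to \tilde{\surface}_\mathrm{st}$.

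The hard part is ensuring that each $\gamma_i$ bounds a disk in $\surface$: a priori a cross-section curve of a vertical cylinder could be essential in $\surface$. To handle this, whenever an essential $\gamma_i$ appears I would stabilize the Morse function by introducing a canceling min/max pair near the cylinder (an elementary isotopy of $e$), which replaces the essential cross-section with a contractible one at the cost of additional disks $D_i$. With this book-keeping in place, every reshaping step above is realized by an ambient isotopy of $\surface \times I$, so the final embedding $e_\mathrm{st} \circ \tilde{e}$ is isotopic to the original $e$, as required.
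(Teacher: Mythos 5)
Your route --- putting $e$ into a Morse-theoretic normal form for the height function $S \to I$ --- is genuinely different from the paper's proof, which instead splits $S$ into $0$- and $1$-handles, projects the $1$-handles to a band diagram on $\surface$, removes one small disk $D_i$ at each \emph{crossing} of that diagram, and resolves each crossing by routing the over-band up the tube $\partial D_i \times I$ to the other copy of $\surface_\mathrm{st}$. As written, however, your argument has genuine gaps. The main one is the stabilization step: introducing a canceling pair of critical points is a local move that only adds a small null-homotopic circle to the level sets; it does not change the free homotopy class of the existing cross-section, so an essential vertical cylinder stays essential. Such a cylinder can never sit inside a tube $\partial D_i \times I$; the only way to accommodate it is to tilt it into a horizontal annulus in $\surface_\mathrm{st} \times \{1/3\}$ or $\surface_\mathrm{st} \times \{2/3\}$, and that is a global rearrangement that drags the adjacent pieces of $S$ along and is exactly the step your proposal omits.

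A second gap: since $\partial S$ is carried into the interior of $\surface \times I$, generic level sets of the height function contain arcs as well as circles, so the pieces of $S$ between critical levels include vertical bands over arcs, not only cylinders over closed curves; such bands do not lie in $\tilde{\surface}_\mathrm{st}$ at all. This is where the actual difficulty of the lemma lives: two bands whose projections to $\surface$ cross must be separated by sending one of them over a tube $\partial D_i \times I$ placed at the crossing, and your normal form never confronts how overlapping projections of the horizontal and vertical pieces are to be made compatible with a single choice of disjoint disks $D_1,\dots,D_N$ avoided by \emph{all} the horizontal pieces. The paper's diagrammatic construction is built precisely around this crossing resolution, so I would either adopt that mechanism or supply the missing flattening and crossing arguments explicitly.
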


\begin{proof}
%次のstepsで$\tilde{e}$を構成する。
%\begin{itemize}
%\item 曲面$S$のコンポーネント成分を図
%\ref{fig_lemm_BL_standard_step1}のように$0$-handleと$1$-handleにわける。
%
%\begin{figure}
%\begin{picture}(300,180)
%\put(-40,0){\input{fig_lemm_BL_standard_step1}}
%\end{picture}
%\caption{A $0$-handle and $1$-handles} 
%\label{fig_lemm_BL_standard_step1}
%\end{figure}
%
%\item 曲面$S$の埋め込み$e$を$1$-handle を tangle だとみなして tangle diagram
%で表す。例えば図
%\ref{fig_lemm_BL_standard_step2}のように表される。
%
%\begin{figure}
%\input{fig_lemm_BL_standard_step2}
%\input{fig_lemm_BL_standard_step2_2}
%\caption{A diagram}
%\label{fig_lemm_BL_standard_step2}
%\end{figure}
%
%
%\item tangle diagram 全ての交点の角(angle)に対応する閉ディスクを４つづつ除いた曲面の閉包を
%$\surface_{\mathrm{st}}$ とする。
%例えば図\ref{fig_lemm_BL_standard_step3}
%のように$\surface_{\mathrm{st}}$をとる。
%
%\begin{figure}
%\input{fig_lemm_BL_standard_step3}
%\caption{$\surface_\mathrm{st}$}
%\label{fig_lemm_BL_standard_step3}
%\end{figure}
%
%\item crossings の近傍以外ではdiagramに沿って埋め込みを定義して、crossingの近傍では
%図\ref{fig_lemm_BL_standard_step4}
%のように$\tilde{e} :S \to \partial (\surface_\mathrm{st} \times I) \backslash
%e_\mathrm{st}^{-1} (\partial \surface \times I)$をとる。
%
%\begin{figure}
%\input{fig_lemm_BL_standard_step4}
%\caption{$\tilde{e} :S \to \partial (\surface_\mathrm{st} \times I) \backslash
%e_\mathrm{st}^{-1} (\partial \surface \times I)$}
%\label{fig_lemm_BL_standard_step4}
%\end{figure}
%\end{itemize}
%このようにとると$e$と$e_\mathrm \circ \tilde{e}$がイソトピックになる。
%これより補題が示せた。

First, we construct a standard embedding $e_\mathrm{st}: \surface_\mathrm{st} \times I
\to \surface \times I$
by the three steps.
\begin{itemize}
\item We split $S$ into $0$-handles and 1-handles. For example, see Figure
\ref{fig_lemm_BL_standard_step1}. 

\begin{figure}
\begin{picture}(300,180)
\put(-40,0){\input{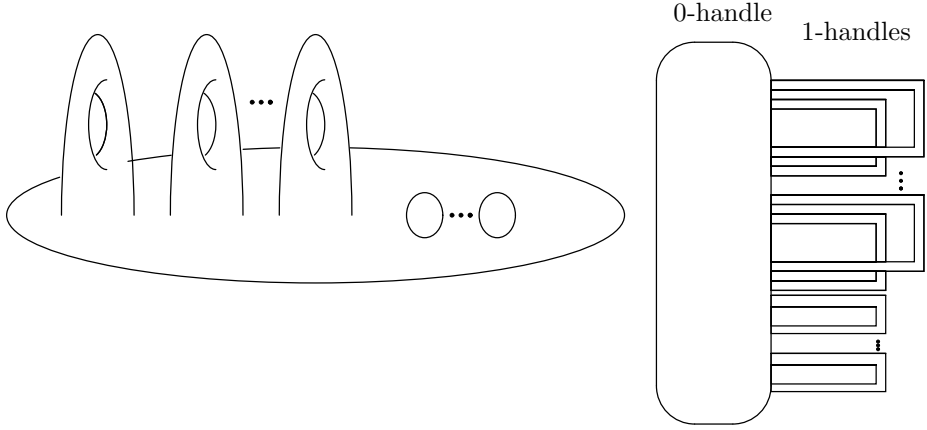}}
\end{picture}
\caption{A $0$-handle and $1$-handles} 
\label{fig_lemm_BL_standard_step1}
\end{figure}

\item We consider $1$-handles as tangles and illustrate a diagram presenting the isotopy type of the embedding $e$ similar to a tangle diagram. For example, see Figure \ref{fig_lemm_BL_standard_step2}.

\begin{figure}
\input{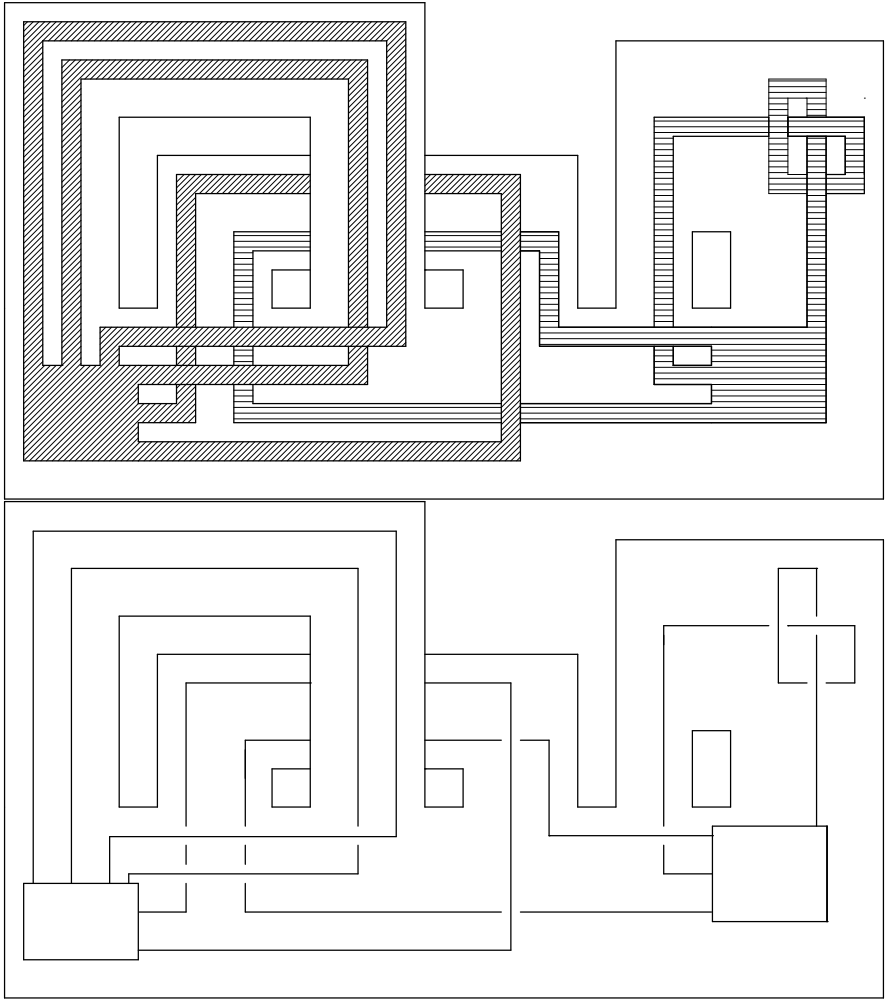}
%WinTpicVersion4.32a
{\unitlength 0.1in%
\begin{picture}(46.0000,26.0000)(4.0000,-30.0000)%
% LINE 2 0 3 0 Black White  
% 56 400 400 2600 400 2600 1800 2600 1800 2600 400 2600 2000 400 2000 400 400 1000 2000 1000 1000 1000 1000 2000 1000 2000 1000 2000 2000 1200 2000 1200 1200 1200 1200 2000 1200 2600 1200 3400 1200 3400 1200 3400 2000 2800 2000 2800 1800 2600 1800 2800 1800 2000 1800 1800 1800 1800 1800 1800 2000 3600 2000 3600 600 3600 600 5000 600 5000 600 5000 3000 5000 3000 400 3000 400 3000 400 2000 1000 2000 1200 2000 1800 2000 2000 2000 2600 2000 2800 2000 3400 2000 3600 2000 4000 2000 4000 1600 4200 1600 4000 1600 4000 2000 4200 2000 4200 2000 4200 1600
% 
\special{pn 8}%
\special{pa 400 400}%
\special{pa 2600 400}%
\special{fp}%
\special{pa 2600 1800}%
\special{pa 2600 1800}%
\special{fp}%
\special{pa 2600 400}%
\special{pa 2600 2000}%
\special{fp}%
\special{pa 400 2000}%
\special{pa 400 400}%
\special{fp}%
\special{pa 1000 2000}%
\special{pa 1000 1000}%
\special{fp}%
\special{pa 1000 1000}%
\special{pa 2000 1000}%
\special{fp}%
\special{pa 2000 1000}%
\special{pa 2000 2000}%
\special{fp}%
\special{pa 1200 2000}%
\special{pa 1200 1200}%
\special{fp}%
\special{pa 1200 1200}%
\special{pa 2000 1200}%
\special{fp}%
\special{pa 2600 1200}%
\special{pa 3400 1200}%
\special{fp}%
\special{pa 3400 1200}%
\special{pa 3400 2000}%
\special{fp}%
\special{pa 2800 2000}%
\special{pa 2800 1800}%
\special{fp}%
\special{pa 2600 1800}%
\special{pa 2800 1800}%
\special{fp}%
\special{pa 2000 1800}%
\special{pa 1800 1800}%
\special{fp}%
\special{pa 1800 1800}%
\special{pa 1800 2000}%
\special{fp}%
\special{pa 3600 2000}%
\special{pa 3600 600}%
\special{fp}%
\special{pa 3600 600}%
\special{pa 5000 600}%
\special{fp}%
\special{pa 5000 600}%
\special{pa 5000 3000}%
\special{fp}%
\special{pa 5000 3000}%
\special{pa 400 3000}%
\special{fp}%
\special{pa 400 3000}%
\special{pa 400 2000}%
\special{fp}%
\special{pa 1000 2000}%
\special{pa 1200 2000}%
\special{fp}%
\special{pa 1800 2000}%
\special{pa 2000 2000}%
\special{fp}%
\special{pa 2600 2000}%
\special{pa 2800 2000}%
\special{fp}%
\special{pa 3400 2000}%
\special{pa 3600 2000}%
\special{fp}%
\special{pa 4000 2000}%
\special{pa 4000 1600}%
\special{fp}%
\special{pa 4200 1600}%
\special{pa 4000 1600}%
\special{fp}%
\special{pa 4000 2000}%
\special{pa 4200 2000}%
\special{fp}%
\special{pa 4200 2000}%
\special{pa 4200 1600}%
\special{fp}%
% LINE 2 0 3 0 Black White  
% 8 1100 2400 500 2400 500 2400 500 2800 500 2800 1100 2800 1100 2400 1100 2800
% 
\special{pn 8}%
\special{pa 1100 2400}%
\special{pa 500 2400}%
\special{fp}%
\special{pa 500 2400}%
\special{pa 500 2800}%
\special{fp}%
\special{pa 500 2800}%
\special{pa 1100 2800}%
\special{fp}%
\special{pa 1100 2400}%
\special{pa 1100 2800}%
\special{fp}%
% LINE 2 0 3 0 Black White  
% 10 550 2400 550 555 550 555 2450 555 2450 555 2450 2155 2450 2155 950 2155 950 2155 950 2400
% 
\special{pn 8}%
\special{pa 550 2400}%
\special{pa 550 555}%
\special{fp}%
\special{pa 550 555}%
\special{pa 2450 555}%
\special{fp}%
\special{pa 2450 555}%
\special{pa 2450 2155}%
\special{fp}%
\special{pa 2450 2155}%
\special{pa 950 2155}%
\special{fp}%
\special{pa 950 2155}%
\special{pa 950 2400}%
\special{fp}%
% LINE 2 0 3 0 Black White  
% 12 750 2400 750 750 750 750 2250 750 2250 750 2250 2100 2250 2200 2250 2350 2250 2350 1050 2350 1050 2350 1050 2400
% 
\special{pn 8}%
\special{pa 750 2400}%
\special{pa 750 750}%
\special{fp}%
\special{pa 750 750}%
\special{pa 2250 750}%
\special{fp}%
\special{pa 2250 750}%
\special{pa 2250 2100}%
\special{fp}%
\special{pa 2250 2200}%
\special{pa 2250 2350}%
\special{fp}%
\special{pa 2250 2350}%
\special{pa 1050 2350}%
\special{fp}%
\special{pa 1050 2350}%
\special{pa 1050 2400}%
\special{fp}%
% LINE 2 0 3 0 Black White  
% 18 1100 2750 3050 2750 3050 2750 3050 1350 3050 1350 2600 1350 2000 1350 2000 1350 2000 1350 1350 1350 1350 1350 1350 2100 1350 2200 1350 2300 1350 2400 1350 2550 1350 2550 1105 2550
% 
\special{pn 8}%
\special{pa 1100 2750}%
\special{pa 3050 2750}%
\special{fp}%
\special{pa 3050 2750}%
\special{pa 3050 1350}%
\special{fp}%
\special{pa 3050 1350}%
\special{pa 2600 1350}%
\special{fp}%
\special{pa 2000 1350}%
\special{pa 2000 1350}%
\special{fp}%
\special{pa 2000 1350}%
\special{pa 1350 1350}%
\special{fp}%
\special{pa 1350 1350}%
\special{pa 1350 2100}%
\special{fp}%
\special{pa 1350 2200}%
\special{pa 1350 2300}%
\special{fp}%
\special{pa 1350 2400}%
\special{pa 1350 2550}%
\special{fp}%
\special{pa 1350 2550}%
\special{pa 1105 2550}%
\special{fp}%
% LINE 2 0 3 0 Black White  
% 14 4705 2100 4105 2100 4105 2100 4105 2600 4105 2600 4705 2600 4705 2600 4705 2100 4705 2600 4705 2600 4705 2600 4705 2100 4705 2100 4705 2600
% 
\special{pn 8}%
\special{pa 4705 2100}%
\special{pa 4105 2100}%
\special{fp}%
\special{pa 4105 2100}%
\special{pa 4105 2600}%
\special{fp}%
\special{pa 4105 2600}%
\special{pa 4705 2600}%
\special{fp}%
\special{pa 4705 2600}%
\special{pa 4705 2100}%
\special{fp}%
\special{pa 4705 2600}%
\special{pa 4705 2600}%
\special{fp}%
\special{pa 4705 2600}%
\special{pa 4705 2100}%
\special{fp}%
\special{pa 4705 2100}%
\special{pa 4705 2600}%
\special{fp}%
% LINE 2 0 3 0 Black White  
% 22 4100 2550 3100 2550 3000 2550 1660 2550 1660 2400 1660 2550 1660 2300 1660 2200 1660 2100 1660 1700 1660 1650 2000 1650 2600 1650 3000 1650 3100 1650 3250 1650 3250 2150 3250 2150 3250 2150 3250 1650 3250 2150 4110 2150
% 
\special{pn 8}%
\special{pa 4100 2550}%
\special{pa 3100 2550}%
\special{fp}%
\special{pa 3000 2550}%
\special{pa 1660 2550}%
\special{fp}%
\special{pa 1660 2400}%
\special{pa 1660 2550}%
\special{fp}%
\special{pa 1660 2300}%
\special{pa 1660 2200}%
\special{fp}%
\special{pa 1660 2100}%
\special{pa 1660 1700}%
\special{fp}%
\special{pa 1660 1650}%
\special{pa 2000 1650}%
\special{fp}%
\special{pa 2600 1650}%
\special{pa 3000 1650}%
\special{fp}%
\special{pa 3100 1650}%
\special{pa 3250 1650}%
\special{fp}%
\special{pa 3250 2150}%
\special{pa 3250 2150}%
\special{fp}%
\special{pa 3250 2150}%
\special{pa 3250 1650}%
\special{fp}%
\special{pa 3250 2150}%
\special{pa 4110 2150}%
\special{fp}%
% LINE 2 0 3 0 Black White  
% 2 3850 1050 3850 1150
% 
\special{pn 8}%
\special{pa 3850 1050}%
\special{pa 3850 1150}%
\special{fp}%
% LINE 2 0 3 0 Black White  
% 2 1660 1650 1660 1850
% 
\special{pn 8}%
\special{pa 1660 1650}%
\special{pa 1660 1850}%
\special{fp}%
% LINE 2 0 3 0 Black White  
% 24 3850 2350 3850 2200 3850 2100 3850 1100 3850 1050 4400 1050 4500 1050 4500 1050 4500 1050 4850 1050 4850 1350 4700 1350 4600 1350 4450 1350 4450 1350 4450 750 4650 750 4650 750 4450 750 4650 750 4650 750 4650 1000 4650 1100 4650 2100
% 
\special{pn 8}%
\special{pa 3850 2350}%
\special{pa 3850 2200}%
\special{fp}%
\special{pa 3850 2100}%
\special{pa 3850 1100}%
\special{fp}%
\special{pa 3850 1050}%
\special{pa 4400 1050}%
\special{fp}%
\special{pa 4500 1050}%
\special{pa 4500 1050}%
\special{fp}%
\special{pa 4500 1050}%
\special{pa 4850 1050}%
\special{fp}%
\special{pa 4850 1350}%
\special{pa 4700 1350}%
\special{fp}%
\special{pa 4600 1350}%
\special{pa 4450 1350}%
\special{fp}%
\special{pa 4450 1350}%
\special{pa 4450 750}%
\special{fp}%
\special{pa 4650 750}%
\special{pa 4650 750}%
\special{fp}%
\special{pa 4450 750}%
\special{pa 4650 750}%
\special{fp}%
\special{pa 4650 750}%
\special{pa 4650 1000}%
\special{fp}%
\special{pa 4650 1100}%
\special{pa 4650 2100}%
\special{fp}%
% LINE 2 0 3 0 Black White  
% 4 4105 2350 3850 2350 4850 1350 4850 1050
% 
\special{pn 8}%
\special{pa 4105 2350}%
\special{pa 3850 2350}%
\special{fp}%
\special{pa 4850 1350}%
\special{pa 4850 1050}%
\special{fp}%
\end{picture}}%
\caption{A diagram}
\label{fig_lemm_BL_standard_step2}
\end{figure}

\item We set $\surface_\mathrm{st}$ as the closure of $\surface$ except for closed disks corresponding to angles for any crossing of this diagram. For example, see Figure \ref{fig_lemm_BL_standard_step3}.

\begin{figure}
\input{fig_lemm_BL_standard_step3}
\caption{$\surface_\mathrm{st}$}
\label{fig_lemm_BL_standard_step3}
\end{figure}

\end{itemize}

Furthermore, we set $\tilde{e}: S \to \tilde{\surface}_\mathrm{st}$ as this diagram except for the neighborhood of the crossings, where we set $\tilde{e}$ as Figure \ref{fig_lemm_BL_standard_step4}. Then the composite 
$e_\mathrm{st} \circ \tilde{e}$
is isotopic to $e$. This proves the lemma.

\begin{figure}
\input{fig_lemm_BL_standard_step4}
\caption{$\tilde{e} :S \to \partial (\surface_\mathrm{st} \times I) \backslash
e_\mathrm{st}^{-1} (\partial \surface \times I)$}
\label{fig_lemm_BL_standard_step4}
\end{figure}

%First, we construct a standard embedding by the three steps. First, we split S into 0-handles and 1-handles. For example, see Figure. Second, we consider 1-handles as tangles and illustrate a diagram presenting the isotopy type of the embedding similar to a tangle diagram. For example, see Figure. Third, we set S' as the closure of the surface except for closed disks corresponding to angles for any crossing of this diagram. For example, see Figure.
%Furthermore, we set E as this diagram except for the neighborhood of the crossings, where we set E as Figure. Then the composite is isotopic to E. This proves the lemma.
\end{proof}

%さらに、Lickorish's trick \cite{Lickorish1962}と呼ばれれる次の補題も
%$\mathcal{IC} (\surface)=\mathcal{IC}_2 (\surface)$
%を示すために使う。

To prove $\mathcal{IC} (\surface)=\mathcal{IC}_2 (\surface)$, we need the following lemma, which we call Lickorish's trick \cite{Lickorish1962}.

%To prove the equation, we need the following lemma, which we call Lickorish's trick.

\begin{lemm}
\label{lemm_Lickorish's_trick_standard}
%$S$を連結でないかもしれないコンパクトな曲面とする。
%また、$S$の連結成分の境界成分は一つまたは二つとする。
%さらに、$\tilde{e}:S \to \tilde{\surface}_\mathrm{st}$を
%埋め込みとし、$L_{e_\mathrm{st}\circ \tilde{e}(S)}$
%を$e_\mathrm{st}\circ \tilde{e}(S)$をSeifert surface
%とするboundary link とし、$\lambda: \pi_0 (\partial S)=
%\pi_0 (\partial e_\mathrm{st}\circ \tilde{e}(S))
%\to \shuugou{\pm 1}$を境界の label とする。
%$[\partial ] \in \pi_0 (\partial S)$について、
%$c_\partial \subset S$を$\partial $に平行な$S$
%の単純閉曲線として、
%$\xi \in \torelli' (\tilde{\surface}_\mathrm{st})$
%を
%\begin{equation*}
%\xi \defeq \prod_{[\partial] \in \pi_0 (\partial S)}
%t_{\tilde{e} (c_\partial)}^{-\lambda ([\partial])}
%\end{equation*}
%としたとき、
%\begin{equation*}
%(\surface \times I)( e_\mathrm{st}, \xi)=
%(\surface \times I)(L_{e_\mathrm{st}\circ \tilde{e}(S)}(\lambda))
%\end{equation*}
%となる。

Let $S$ be a compact oriented surface such that each component of $S$ has one or two boundary components, $e_\mathrm{st}:\surface_\mathrm{st} \times I \to\surface \times I$ a standard embedding, $\tilde{e}:S \to \tilde{\surface}_\mathrm{st}$ an embedding, and $L_{e_\mathrm{st}\circ \tilde{e}(S)}$ a boundary link whose Seifert surface is $e\circ \tilde{e} (S)$. We write $c_\partial$ for a closed curve in $S$ parallel to a boundary $\partial$. For a label $\lambda: \pi_0 (\partial S)=
\pi_0 (\partial e_\mathrm{st}\circ \tilde{e}(S))
\to \shuugou{\pm 1}$ of $e\circ \tilde{e} (S)$, we have
\begin{equation*}
(\surface \times I)( e_\mathrm{st}, \prod_{[\partial] \in \pi_0 (\partial S)}
t_{\tilde{e} (c_\partial)}^{-\lambda ([\partial])})=
(\surface \times I)(L_{e_\mathrm{st}\circ \tilde{e}(S)}(\lambda)).
\end{equation*}

%Let S be a compact oriented surface such that each component of it has one or two boundary components, E a standard embedding, E' an embedding, and L a boundary link whose Seifert surface is the surface. We write c for a closed curve in S parallel to a boundary. For a label of S, we have the equation.
\end{lemm}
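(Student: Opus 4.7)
The strategy is to reduce to a local surgery description and invoke Lickorish's classical surgery trick, which identifies $\pm 1$-framed Dehn surgeries on knots bounding Seifert surfaces with cut-and-reglue operations along those surfaces.

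First, I would observe that the statement is additive over connected components of $S$. Distinct components of $S$ are disjoint, so are their images under $\tilde{e}$ and under $e_\mathrm{st}\circ\tilde{e}$. In particular, the Dehn twists $t_{\tilde{e}(c_\partial)}$ for boundaries $\partial$ lying on different components of $S$ have disjoint supports and therefore commute, and the knot components of the boundary link $L_{e_\mathrm{st}\circ\tilde{e}(S)}$ are disjoint, so the Dehn surgeries on them can be performed independently. Hence it suffices to prove the lemma under the additional assumption that $S$ is connected; by hypothesis, $S$ then has either one or two boundary components.

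Second, I would unwind the construction of $(\surface\times I)(e_\mathrm{st},\chi')$ to rephrase it as a cut-and-reglue along a tubular neighborhood of $e_\mathrm{st}\circ\tilde{e}(S)$. Outside a regular neighborhood $N$ of $\tilde{e}(S)\subset\tilde{\surface}_\mathrm{st}$ the product of Dehn twists $\prod_\partial t_{\tilde{e}(c_\partial)}^{-\lambda([\partial])}$ is isotopic to the identity rel.~boundary; hence the cut-and-reglue along $e_\mathrm{st}(\tilde{\surface}_\mathrm{st})$ using this diffeomorphism is equivalent to a cut-and-reglue along a bicollared tubular neighborhood of $e_\mathrm{st}\circ\tilde{e}(S)$ inside $\surface\times I$, with all the nontrivial gluing data supported near the link $L_{e_\mathrm{st}\circ\tilde{e}(S)}$.

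Third, I would invoke the classical Lickorish trick locally. When $S$ is connected with one boundary component, the trick directly identifies $(-\lambda)$-framed surgery on the boundary knot, measured against the Seifert framing induced by $e_\mathrm{st}\circ\tilde{e}(S)$, with the result of cutting along a tubular neighborhood of that Seifert surface and re-gluing via a Dehn twist of exponent $-\lambda$ along a boundary-parallel curve; this is exactly the prescription of the lemma. When $S$ is connected with two boundary components $\partial_1,\partial_2$, the label condition $\lambda(\partial_1)\lambda(\partial_2)=-1$ guarantees that the two Dehn twists combine as a relative Lickorish twist along an annular (more precisely, genus-$g$ with two boundary components) Seifert surface connecting the two knot components, yielding a $(+1,-1)$-labelled surgery description of the same $3$-manifold. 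In both cases the gluing diffeomorphism prescribed by the product of Dehn twists produces a $3$-manifold diffeomorphic to the surgered one.

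The main obstacle is the careful bookkeeping of signs and framing conventions: one must verify that the exponent $-\lambda([\partial])$ of the Dehn twist corresponds precisely to the shift $\lambda([\partial])$ of the framing number built into the definition of $L(\lambda)$ as a labelled boundary link, and that the handedness of each twist matches the orientation conventions fixed on $\surface\times I$, on $\surface_\mathrm{st}\times I$ and on the Seifert surface $S$. This is a standard but delicate verification, treated for instance in Rolfsen's \emph{Knots and Links}; once the one-component case is pinned down with correct signs, the two-component case follows by performing the one-component trick twice locally, and the general case follows from the decomposition into connected components established in the first step.
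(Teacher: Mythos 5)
The paper does not actually prove this lemma: it is stated without proof and attributed to Lickorish's trick, with a citation to Lickorish's 1962 paper, so there is no internal argument to compare yours against. Your sketch supplies the standard proof that the citation is standing in for, and its three reductions are sound: disjointness of the components of $S$ lets you treat each component separately; since $\prod_{[\partial]}t_{\tilde{e}(c_\partial)}^{-\lambda([\partial])}$ is supported in disjoint annular neighborhoods of the curves $\tilde{e}(c_\partial)\subset\tilde{\surface}_\mathrm{st}$, the regluing of $\surface_\mathrm{st}\times I$ along $\tilde{\surface}_\mathrm{st}$ differs from the trivial regluing only inside collars of those annuli, so the construction localizes to a cut-and-reglue along annuli pushed into the interior of $\surface\times I$; and cut-and-reglue along an annulus by a Dehn twist of its core is exactly $\mp1$-surgery on the core with the annulus framing. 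The one point worth making explicit, which you only gesture at under ``framing conventions,'' is why the annulus framing induced by $\tilde{\surface}_\mathrm{st}$ agrees with the Seifert-surface framing used in the definition of the labelled link $L(\lambda)$ (where $w(l')-w(l)=\lambda([\partial])$ is measured against the framing coming from $e_\mathrm{st}\circ\tilde{e}(S)$): this holds because each $c_\partial$ is boundary-parallel in $S$, so its annular neighborhood in $\tilde{e}(S)$ coincides with its annular neighborhood in $\tilde{\surface}_\mathrm{st}$ near the curve. With that observed, the exponent $-\lambda([\partial])$ on the twist matches the framing shift $+\lambda([\partial])$ on the surgery, the two-boundary case is just two applications of the one-boundary case, and the argument is complete.
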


\begin{prop}
\label{prop_standard_embedding}
%上記のスタンダードな埋め込みをを用いた方法を使って
%全ての$\mathcal{IC} (\surface)$の元が得られる。言い換えると、
%任意の$\Xi \in \mathcal{IC} (\surface)$で、
%あるスタンダードな埋め込み$e_\mathrm{st}:\surface_\mathrm{st} \times I \to \surface \times I$
%と$\xi \in \torelli' ( \tilde{\surface}_\mathrm{st})$が
%$\Xi = (\surface \times I)(e_\mathrm{st}, \xi)$となるように存在する。
%つまり、$\mathcal{IC} (\surface)=\mathcal{IC}_2 (\surface)$
%である。

We have $\mathcal{IC} (\surface)=\mathcal{IC}_2 (\surface)$. In other words, for any homology cylinder $(M,\alpha)$, there exists a standard embedding 
$e_\mathrm{st}:\surface_\mathrm{st} \times I \to \surface \times I$
and exists an element $\xi$ of $\torelli' ( \tilde{\surface}_\mathrm{st})$ such that the diffeomorphic class $(\surface \times I)(e_\mathrm{st}, \xi)$ is that of the pair
$(M, \alpha)$.

%We have IC equals IC'. In other words, for any homology cylinder, there exists a standard embedding and exists an element of II such that the diffeomorphic class is that of the pair.
\end{prop}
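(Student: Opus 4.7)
The plan is to reduce the statement to the boundary-link description of homology cylinders already available, and then translate that description into a standard embedding via Lickorish's trick.

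First, by Proposition \ref{prop_Habegger2000} we have $\mathcal{IC}(\surface) = \mathcal{IC}_1(\surface)$, so given a homology cylinder $(M,\alpha)$ there exist a boundary link $L \subset \surface \times I$, a Seifert surface $S \subset \surface \times I$ for $L$, and a label $\lambda : \pi_0(\partial S) \to \{\pm 1\}$ such that $(M,\alpha)$ is diffeomorphic to $(\surface \times I)(L(\lambda))$. By the definition of a Seifert surface, each component of $S$ has one or two boundary components, which places us exactly in the hypothesis of Lemma \ref{lemm_Lickorish's_trick_standard}.

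Next, apply Lemma \ref{lemm_standard_embedding} to the embedding $S \hookrightarrow \surface \times I$: this produces a standard embedding $e_\mathrm{st} : \surface_\mathrm{st} \times I \hookrightarrow \surface \times I$ and an embedding $\tilde{e} : S \to \tilde{\surface}_\mathrm{st}$ such that $e_\mathrm{st} \circ \tilde{e}$ is isotopic to the given embedding of $S$. In particular, the boundary link $L_{e_\mathrm{st} \circ \tilde{e}(S)}$ is isotopic to $L$ as framed links, and the induced label on $\pi_0(\partial(e_\mathrm{st}\circ\tilde{e}(S)))$ corresponds to $\lambda$ under the identification with $\pi_0(\partial S)$. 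Setting
\begin{equation*}
\xi \defeq \prod_{[\partial] \in \pi_0(\partial S)} t_{\tilde{e}(c_\partial)}^{-\lambda([\partial])} \in \MCG(\tilde{\surface}_\mathrm{st}),
\end{equation*}
Lemma \ref{lemm_Lickorish's_trick_standard} yields the diffeomorphism
\begin{equation*}
(\surface \times I)(e_\mathrm{st}, \xi) = (\surface \times I)(L_{e_\mathrm{st}\circ\tilde{e}(S)}(\lambda)) = (\surface \times I)(L(\lambda)),
\end{equation*}
so the diffeomorphism class of $(M,\alpha)$ is of the form $(\surface \times I)(e_\mathrm{st}, \xi)$ as required.

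The remaining point, and the main thing to check carefully, is that $\xi \in \torelli'(\tilde{\surface}_\mathrm{st})$. For a component $S^{(i)}$ of $S$ with a single boundary $\partial$, the curve $\tilde{e}(c_\partial)$ bounds the subsurface $\tilde{e}(S^{(i)}) \subset \tilde{\surface}_\mathrm{st}$, so $t_{\tilde{e}(c_\partial)}^{\pm 1}$ lies in $\torelli'(\tilde{\surface}_\mathrm{st})$. For a component with two boundaries $\partial_1, \partial_2$, the label condition forces $\lambda([\partial_1])\lambda([\partial_2]) = -1$, so the contribution to $\xi$ from this component is $t_{\tilde{e}(c_{\partial_1})}^{\mp 1} t_{\tilde{e}(c_{\partial_2})}^{\pm 1}$, i.e.\ a product of a Dehn twist and an inverse Dehn twist along a pair of curves that cobound $\tilde{e}(S^{(i)})$; this again lies in $\torelli'(\tilde{\surface}_\mathrm{st})$. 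Combining these factors (after choosing a fixed order of the components and absorbing signs) exhibits $\xi$ as a product of the declared generators of $\torelli'(\tilde{\surface}_\mathrm{st})$, which completes the proof.
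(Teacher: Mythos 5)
Your proposal is correct and follows essentially the same route as the paper: reduce to $\mathcal{IC}_1(\surface)\subset\mathcal{IC}_2(\surface)$ via Proposition \ref{prop_Habegger2000}, push the Seifert surface into $\tilde{\surface}_\mathrm{st}$ with Lemma \ref{lemm_standard_embedding}, and conclude with Lickorish's trick (Lemma \ref{lemm_Lickorish's_trick_standard}). Your explicit check that $\xi$ lies in $\torelli'(\tilde{\surface}_\mathrm{st})$ — using that single-boundary components give bounding curves and that the label condition on two-boundary components produces twist/inverse-twist pairs along cobounding curves — is a detail the paper leaves implicit, and it is correctly handled.
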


\begin{proof}
%Proposition \ref{prop_Habegger2000} より、
%$\mathcal{IC}_1(\surface) \subset \mathcal{IC}_2 (\surface)$
%を示せればよい。$L$を$\surface \times I$のboundary linkとし
%埋め込み$e:S \to \surface \times I$の像を$L$の
%Seifert surface とし、$\lambda :\pi_0 (\partial S)=
%\pi_0 (\partial e(S)) \to \shuugou{\pm 1}$を$e(S)$の境界のlabelとする。
%Lemma \ref{lemm_standard_embedding} より、
%あるスタンダードな埋め込み$e_\mathrm{st}: \surface_\mathrm{st} \times I \to
%\surface \times I$とある埋め込み$e_S:S \to \tilde{\surface}_\mathrm{st} =
%\overline{\partial (\surface_\mathrm{st}
%\times I) \backslash e^{-1} (\partial \surface \times I)}$
%が$e$と$e_\mathrm{st} \circ e_S$がイソトピックになるように存在する。
%$\xi \in \torelli' (\tilde{\surface}_\mathrm{st})$
%を
%\begin{equation*}
%\xi \defeq \prod_{[\partial] \in \pi_0 (\partial S)}
%t_{e_S (c_\partial)}^{-\lambda ([\partial ])}
%\end{equation*}
%とする。
%ただし、
%$[\partial ] \in \pi_0 (\partial S)$について、
%$c_\partial \subset S$を$\partial $に平行な$S$
%の単純閉曲線とする。
%Lemma \ref{lemm_Lickorish's_trick_standard}
%より、$(\surface \times I)(L (\lambda))=(\surface \times I) (e_\mathrm{st}, \xi)$
%となる。これより、この命題が示せた。

By Proposition \ref{prop_Habegger2000}, it is enough to show that $\mathcal{IC}_1(\surface) \subset \mathcal{IC}_2 (\surface)$. Let $L$ be a boundary link in $\surface \times I$ and $e:S\to \surface \times I$ a Seifert surface of $L$. By Lemma \ref{lemm_standard_embedding}, there exists an embedding 
$e_S:S \to \tilde{\surface}_\mathrm{st} =
\overline{\partial (\surface_\mathrm{st}
\times I) \backslash e^{-1} (\partial \surface \times I)}$
 such that the composite
$e_\mathrm{st} \circ e_S$
 is isotopic to this embedding $e$. By Lemma \ref{lemm_Lickorish's_trick_standard}, for a label
$\lambda:\pi_0 (\partial S) \to \shuugou{ \pm 1}$
 of $S$, we have 
\begin{equation*}
(\surface \times I)( e_\mathrm{st}, \prod_{[\partial] \in \pi_0 (\partial S)}
t_{e_S (c_\partial)}^{-\lambda ([\partial])})=
(\surface \times I)(L(\lambda)),
\end{equation*}
 where we denote by $c_\partial$ the simple closed curve parallel to the boundary $\partial$. Hence we obtain $(\surface \times I)(L(\lambda)) \in \mathcal{IC}_2 (\surface)$ as desired.

%By the proposition, it is enough to show that IC equals IIC. Let L be a boundary link in the manifold and E a Seifert surface of L. By the first lemma, there exists an embedding such that the composite is isotopic to this embedding. By the second lemma, for a label of S, we have the equation, where we denote by c the curve parallel to the boundary. Hence we obtain the equation as desired.

\end{proof}

\section{The main result}
\label{section_main_theorem_qbtskein}

%$\bullet, * \in \partial \surface$について、
%ホモロジーシリンダーの微分同相類の集合$\mathcal{IC} (\surface)$
%の$\GLM (\surface, \bullet, *)$への作用を考える。
%いま、任意の$\Xi \in \mathcal{IC} (\surface)$で
%\begin{align*}
%& \Xi_* (I_{\GLM (\surface, \bullet)}^n \cGLM (\surface, \bullet,*))
%=I_{\GLM (\surface, \bullet)}^n \cGLM (\surface, \bullet,*), \\
%&(\Xi_*-\id )(\cGLM (\surface,\bullet,*)) \subset
%I_{\GLM (\surface, \bullet)}^2 \cGLM (\surface, \bullet,\star)
%\end{align*}
%ある。このためProp \ref{proposition_center_of_the_Goldman}より、
%ある$\zeta_{\GL } (\Xi)\in \filt{3} \cGLM (\surface)$
%が任意の$\bullet', *' \in \partial \surface$
%で
%\begin{equation*}
%\exp (\sigma (\tilde{\zeta}_{\GL} (\Xi)))=
%\Xi_*: \cGLM  (\surface,\bullet, *) \to \cGLM (\surface, \bullet, *)
%\end{equation*}
%を満たすように存在する。
%この写像$\tilde{\zeta}_{\GL}:\mathcal{IC} (\surface) \to
%\filt{3} \cGL (\surface)$は、
%$\mathcal{IC} (\surface)$の不変量と思うことができる。
%この$\tilde{\zeta}_{\GL}$の計算方法を
%紹介する。一つ目の方法は Goldman Lie algebra の元
%の列を用いて計算する方法である。 
%二つ目の方法では$\tilde{\zeta}_{\GL}$をスケイン代数$\tskein'$を使って計算する方法を紹介する。

For $\bullet, * \in \partial \surface$, we consider the action
$\Phi:\mathcal{IC} (\surface) \to \Aut (\cGLM (\surface, \bullet, *))$.
We obtain 
\begin{align*}
& \Xi_* (I_{\GLM (\surface, \bullet)}^n \cGLM (\surface, \bullet,*))
=I_{\GLM (\surface, \bullet)}^n \cGLM (\surface, \bullet,*), \\
&(\Xi_*-\id )(\cGLM (\surface,\bullet,*)) \subset
I_{\GLM (\surface, \bullet)}^2 \cGLM (\surface, \bullet,\star)
\end{align*}
 for any $\Xi$. By Prop \ref{proposition_center_of_the_Goldman}, there exists a unique element 
$\tilde{\zeta} (\Xi)$
of $\filt{3} \cGL (\surface)$ satisfying 
\begin{equation*}
\exp (\sigma (\tilde{\zeta}_{\GL} (\Xi)))=
\Xi_*: \cGLM  (\surface,\bullet, *) \to \cGLM (\surface, \bullet, *).
\end{equation*}
 In this section, we introduce two ways to compute $\tilde{\zeta}:\mathcal{IC} (\surface) \to \filt{3} \cGL (\surface)$. The first way is to use a sequence of the Goldman Lie algebra. The second one is to use the skein algebra $\tskein'$.

To state the main theorems, we introduce an extension of $\tskein' (\surface)$ and set the filtration of it as 
\begin{align*}
&\Loc \tskein' (\surface)\defeq
 \sum_{* \in \Zlarger{0}}
\frac{1}{h^*}F^{3*} \tskein' (\surface)
\subset \Q [h, \frac{1}{h}] \otimes_{\Q}
\tskein' (\surface), \\
&\filt{n} \Loc \tskein' (\surface)=
\sum_{* \in \Zlarger{0}}
\frac{1}{h^*}F^{2*+\max (n ,*)} \tskein' (\surface). \\
\end{align*}
 We consider the completion 
\begin{equation*}
\cLoc \tskein' (\surface)
\defeq
\comp{n} \Loc \tskein' (\surface)/
\filt{n} \Loc \tskein' (\surface)
\end{equation*}
in terms of the filtration. Using this extension, we can obtain a formula about the Baker-Campbel-Hausdorff series as follows.

\begin{rem}
Let $(R, \filtn{\filt{n}R})$ be a filtered completed ring satisfying $\filt{j}R \filt{k} R \subset \filt{j+k} R$. In this paper, we set the bracket $\shuugou{x,y}$ by $xy-yx$. Then, for $x_0, x_1 \in \filt{1}R$, $\bch' (x_0,x_1)\defeq \log (\exp x_0, \exp x_1)$ equals 
\begin{equation*}
%\bch' (x_0,x_1)\defeq \log (\exp x_0, \exp x_1)=
\sum_{j} \sum_{\epsilon_1, \cdots, \epsilon_j \in \shuugou{0,1}}
b_{\epsilon_1, \cdots, \epsilon_j} \{ x_{\epsilon_1}, \{ x_{\epsilon_2}, \cdots
\{ x_{\epsilon_{j-1}},x_{\epsilon_j} \} \} \},
\end{equation*}
 where $b_{\epsilon_1, \cdots, \epsilon_j}$ is a rational number. We return to the skein algebra $\tskein' (\surface)$. We set $[x,y]$ as $\frac{1}{h} (xy-yx)$. Hence we obtain 
\begin{align*}
&\bch (x_0,x_1) \\
&=\sum_{j} \sum_{\epsilon_1, \cdots, \epsilon_j \in \shuugou{0,1}}
b_{\epsilon_1, \cdots, \epsilon_j} [ x_{\epsilon_1}, [ x_{\epsilon_2}, \cdots
[ x_{\epsilon_{j-1}},x_{\epsilon_j} ]]] \\
&=h\sum_{j} \sum_{\epsilon_1, \cdots, \epsilon_j \in \shuugou{0,1}}
b_{\epsilon_1, \cdots, \epsilon_j} \{ \frac{1}{h}x_{\epsilon_1}, \{ \frac{1}{h}
x_{\epsilon_2}, \cdots
\{ \frac{1}{h}x_{\epsilon_{j-1}},\frac{1}{h}x_{\epsilon_j} \} \} \} \\
&=h\bch' (\frac{1}{h}x_0,\frac{1}{h}x_1)
=h \log  (\exp (\frac{1}{h}x_0 \frac{1}{h}x_1))
\end{align*}
 for any two elements $x_0,x_1 \in \filt{3} \ctskein' (\surface)$. In other words, we have 
\begin{align*}
\exp (\frac{1}{h} \bch (x_0, x_1))
= \exp (\frac{1}{h} x_0) \exp (\frac{1}{h}x_1).
\end{align*}
\end{rem}

%To state the main theorems, we introduce an extension of the skein algebra and set the filtration of it as the equation. We consider the completion in terms of the filtration. Using this extension, we can obtain a formula about the Baker-Campbel-Hausdorff series as follows.
%Let R be a filtered completed ring satisfying the condition. In this paper, we set the bracket by the equation. Then, for elements, BCH equals the summation of series, where B is a rational number. We return to the skein algebra. We set the square bracket as the equation. Hence we obtain the first formula for any two elements X and X'. In other words, we have the second formula.

\subsection*{``A construction using a standard embedding''}

%ここでの主定理の計算方法自体はスケイン代数を使わずにできる。
%主定理の他のタイプの主張はここでの主張を用いて示される。
%$e_\mathrm{st}: \surface_\mathrm{st} \times I \to
%\surface \times I$をスタンダードな埋め込みとし、
%$\tilde{\surface}_\mathrm{st}$を$\partial (\surface_\mathrm{st} \times I)
%\backslash e^{-1} (\partial \surface)$の
%閉包$\overline{\partial (\surface_\mathrm{st} \times I)
%\backslash e^{-1} (\partial \surface \times I)}$とする。

We will introduce how to compute $\tilde{\zeta}:\mathcal{IC} (\surface) \to \filt{3} \cGL (\surface)$ using a sequence of $\cGL (\surface)$. We fix a standard embedding 
$e_\mathrm{st}:\surface_\mathrm{st} \times I \to \surface \times I$ and denote by 
$\tilde{\surface}_\mathrm{st}$
 the closure of $\partial (\surface_\mathrm{st} \times I)
\backslash e^{-1} (\partial \surface \times I)$.

The embeddings
\begin{equation*}
\iota_1:\surface_\mathrm{st}  \to \tilde{
\surface}_\mathrm{st},p \mapsto (p,0), \ 
\iota_0:\surface_\mathrm{st}  \to \tilde{
\surface}_\mathrm{st},p \mapsto (p,1), 
\end{equation*}
 induce linear maps
\begin{equation*}
\iota_{1*}: \GL (\surface_\mathrm{st}) \to
\GL(\tilde{\surface}_\mathrm{st}), \
\iota_{0*}: \GL (\surface_\mathrm{st}) \to
\GL(\tilde{\surface}_\mathrm{st}). \
\end{equation*} 
The first embedding $\iota_1$ is an orientation preserving map, but the second one 
$\iota_0$ an orientation reversing one. So the first linear map
$\iota_{1*}: \GL (\surface_\mathrm{st}) \to
\GL(\tilde{\surface}_\mathrm{st})$
 is a Lie algebra homomorphism, but the second one 
$\iota_{0*}: \GL (\surface_\mathrm{st}) \to
\GL(\tilde{\surface}_\mathrm{st})$
is not. The embeddings
\begin{align*}
&\iota_1
:\surface_\mathrm{st}  \times I \to \tilde{
\surface}_\mathrm{st} \times I,(p,t) \mapsto ((p,0),t), \\
&\iota_0
:\surface_\mathrm{st} \times I \to \tilde{
\surface}_\mathrm{st} \times I ,(p,t) \mapsto ((p,1),1-t) \\ 
\end{align*}
 also induce linear maps
\begin{equation*}
\iota_{1*}: \tskein' (\surface_\mathrm{st}) \to
\tskein' (\tilde{\surface}_\mathrm{st}), \
\iota_{0*}: \tskein'  (\surface_\mathrm{st}) \to
\tskein' (\tilde{\surface}_\mathrm{st}). \
\end{equation*}
 Then we have 
\begin{align*}
&\PPsi{\GL}{\tskein'} \circ \iota_{1*} 
=\iota_{1*} \circ \PPsi{\GL}{\tskein'}, \\
&\PPsi{\GL}{\tskein'} \circ \iota_{0*} 
=\iota_{0*} \circ \PPsi{\GL}{\tskein'}. \\
\end{align*} 
 We can extend the linear maps to the completions.

We set the linear map
$\kappa_*:\GL (\tilde{\surface}_\mathrm{st} )\to \GL (\surface_\mathrm{st})$
 as the composite of 
$\GL (\tilde{\surface}_\mathrm{st} ) \to \GL (\surface_\mathrm{st} \times I)$
 induced by the embedding 
$\tilde{\surface}_\mathrm{st} \to
\surface_\mathrm{st} \times I$
and the natural isomorphism
$\GL (\surface_\mathrm{st} \times I) \to 
\GL (\surface_\mathrm{st}).$
 It is not a Lie algebra homomorphism. 
Let 
$\kappa : \tilde{\surface}_\mathrm{st} \times I \to \surface_\mathrm{st} \times I$
 be the tubular neighborhood satisfying the two conditions. 
\begin{itemize}
\item For any $p \in \tilde{\surface}_\mathrm{st}$,
$e_\partial (p,1)=p.$
\item For any $p \in \tilde{\surface}_\mathrm{st} \cap
e_\mathrm{st}^{-1} (\partial \surface \times I)$,
\begin{align*}
&e_\mathrm{st} \circ e_\partial ((p,1),t)=
(e'_\mathrm{st} (p), \frac{2-\epsilon +\epsilon t}{3}) \\
&e_\mathrm{st} \circ e_\partial ((p,0),t)=
(e'_\mathrm{st} (p), \frac{1 +\epsilon-\epsilon t}{3}). \\
\end{align*}
\end{itemize}
The embedding $\kappa$ induces the linear map
$\kappa_* : \tskein' (\tilde{\surface}_\mathrm{st}) \to
\tskein' (\surface_\mathrm{st}).$
Then we have 
\begin{equation*}
\kappa_* \circ \PPsi{\GL}{\tskein'}
=\PPsi{\GL}{\tskein'} \circ \kappa_*
:\GL (\tilde{\surface}_\mathrm{st}) \to
\tskein' (\surface_\mathrm{st}).
\end{equation*}
 We can extend the linear maps to the completion.

%We set the linear map as the composite of that induced by the embedding and the natural one. It is not a Lie algebra homomorphism. Let K be the tubular neighborhood satisfying the two conditions. The embedding induces the linear map. Then we have the following. We can extend the linear maps to the completion.

%さらに、$\id_{\surface \times I}$と$\kappa \circ \iota_{0}$
%と$\kappa\circ \iota_1$はイソトピックであるので、
%$\id_{\GL (\surface_\mathrm{st})}=
%\kappa_* \circ \iota_{0*}=\kappa \circ \iota_{1*}$
%と
%$\id_{\tskein' (\surface_\mathrm{st})}=
%\kappa_* \circ \iota_{0*}=\kappa \circ \iota_{1*}$
%が成り立つ。

Furthermore, since the embeddings 
$\id_{\surface \times I}$, $\kappa \circ \iota_{0}$, and $\kappa\circ \iota_1$
are isotopic, we have 
$\id_{\GL (\surface_\mathrm{st})}=
\kappa_* \circ \iota_{0*}=\kappa \circ \iota_{1*}$
and
$\id_{\tskein' (\surface_\mathrm{st})}=
\kappa_* \circ \iota_{0*}=\kappa \circ \iota_{1*}$. 

%Furthermore, since the embeddings are isotopic, we have the equations. 

\begin{df}
%$x \in  \filt{3}\cGL (\tilde{\surface}_\mathrm{st} )$について、
%列$\shuugou{v_n (x)}_{n \in \Zlarger{1}} \subset \filt{3} \cGL (\surface_\mathrm{st})$
%を 
%\begin{align*}
%v_1 (x) \defeq \kappa (x), \ 
%v_{n+1} (x) \defeq v_{n} (x) +\kappa_* (\bch (-\iota_{1*} (v_{n} (x)), x)).
%\end{align*}
%と定義する。さらに、 $v(x)$を
%\begin{equation*}
%v(x) \defeq \lim_{n \to \infty} v_n(x)
%\end{equation*}
%とする。

For an element $x \in  \filt{3}\cGL (\tilde{\surface}_\mathrm{st} )$,
 we define a sequence 
$\shuugou{v_n (x)}_{n \in \Zlarger{1}} \subset \filt{3} \cGL (\surface_\mathrm{st})$
by
\begin{align*}
v_1 (x) \defeq \kappa (x), \ 
v_{n+1} (x) \defeq v_{n} (x) +\kappa_* (\bch (-\iota_{1*} (v_{n} (x)), x))
\end{align*}
 and an element $v (x) \in \filt{3} \cGL (\surface_\mathrm{st})$
as 
\begin{equation*}
v(x) \defeq \lim_{n \to \infty} v_n(x).
\end{equation*}

%For an element X, we define a sequence by the equation and an element as the completion.
\end{df}

%次の命題より、$v(x)$は、well-defined な
%$\cGL (\surface_\mathrm{st})$の元になる。
%また、$v(x)$は、スケイン代数$\tskein' $を使って定義することもできる。
%さらに、$v(x)$は、方程式$\kappa_* (\bch (-\iota_{1*} (\cdot),x))=0$
%のユニークな解となる。

By the following proposition, $v(x)$ is well-defined as an element of the completed Goldman Lie algebra. We can define $v(x)$ using a computation of the skein algebra $\tskein'$. Furthermore, $v(x)$ is a unique solution to $\kappa_* (\bch (-\iota_{1*} (\cdot),x))=0$.

%By the following proposition, v is well-defined as an element of the completed Goldman Lie algebra. We can define v using a computation of the skein algebra. Furthermore, v is a unique solution to the equation.

\begin{prop}
\label{proposition_definition_v}
%$e_\mathrm{st}: \surface_\mathrm{st} \times I \to
%\surface \times I $をスタンダードな埋め込みとし、
%$\tilde{\surface}_\mathrm{st}$を$\partial (\surface_\mathrm{st} \times I)
%\backslash e^{-1} (\partial \surface \times I)$の
%閉包$\overline{\partial (\surface_\mathrm{st} \times I)
%\backslash e^{-1} (\partial \surface \times I)}$とする。
%\begin{enumerate}
%\item $v_{n+1}(x)-v_n (x) =\kappa_* (\bch ( -\iota_{1*} (v_n(x)),x))
%\in \filt{2+n} \cGL (\surface_\mathrm{st})$.
%\item $x \in \filt{3} \cGL (\tilde{\surface}_\mathrm{st})$と
%$y \in \filt{3} \cGL (\surface_\mathrm{st})$について、
%$\kappa_* (\bch (-\iota_{1*} (y), x))=0$のとき
%\begin{equation*}
%\exp (\frac{\PPsi{\GL}{\tskein'} (y)}{h})
%= \kappa_* (\exp (\frac{\PPsi{\GL}{\tskein'} (x)}{h}))
%\in \cLoc \tskein' (\surface_\mathrm{st})
%\end{equation*}
%となる。別の言い方をすると、
%\begin{align*}
%&\PPsi{\GL}{\Uh} (y)=
%\PPsi{\tskein'}{\Uh}( h\log (\kappa_* (\exp (\frac{\PPsi{\GL}{\tskein'} (x)}{h})))) \\
%&\in  \cLoc \Uh (\surface)
%\end{align*}
%となる。
%\end{enumerate}
%
%これより、
%\begin{itemize}
%\item[(a)]
%$v(x)$は、well-defined な$\cGL (\surface_\mathrm{st})$の元になる。
%\item[(b)]
%$v(x)$は、方程式$\kappa_* (\bch (-\iota_{1*} (\cdot),x))=0$
%のユニークな解となる。
%\end{itemize}

Let 
$e_\mathrm{st}: \surface_\mathrm{st} \times I \to
\surface \times I $
 be a standard embedding and 
$\tilde{\surface}_\mathrm{st}$
 be the closure of 
$\partial (\surface_\mathrm{st} \times I)
\backslash e^{-1} (\partial \surface \times I).$
 Then we have the following. 
\begin{enumerate}
\item $v_{n+1}(x)-v_n (x) =\kappa_* (\bch ( -\iota_{1*} (v_n(x)),x))
\in \filt{2+n} \cGL (\surface_\mathrm{st})$.
\item For $x \in \filt{3} \cGL (\tilde{\surface}_\mathrm{st})$ and
$y \in \filt{3} \cGL (\surface_\mathrm{st})$ satisfying 
$\kappa_* (\bch (-\iota_{1*} (y), x))=0$, we have
\begin{equation*}
\exp (\frac{\PPsi{\GL}{\tskein'} (y)}{h})
= \kappa_* (\exp (\frac{\PPsi{\GL}{\tskein'} (x)}{h}))
\in \cLoc \tskein' (\surface_\mathrm{st}).
\end{equation*}
\end{enumerate}
Using them, we obtain two statements. 
\begin{itemize}
\item[(a)]
$v(x)$ is well-defined.
\item[(b)]
$v(x)$ is a unique solution to $\kappa_* (\bch (-\iota_{1*} (\cdot),x))=0$.
\end{itemize}

%Let e be a standard embedding and S be the closure of the subsurface. Then we have the following. Using them, we obtain two statements. The first one is that v is well-defined. The second one is v is a unique solution to the equation.
\end{prop}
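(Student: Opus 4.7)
My plan is to prove the filtration estimate (1) by induction, deduce (a) and (b) as formal consequences via a contraction-mapping argument in $F^3 \cGL(\surface_\mathrm{st})$, and then treat (2) by transporting the Lie-algebra hypothesis into $\tskein'$ via $\PPsi{\GL}{\tskein'}$ and running the same iteration inside the skein algebra. The central bridge throughout is the identity $\kappa_* \circ \iota_{1*} = \id$ (already established before the proposition), together with the BCH identity $\exp(\bch(a,b)/h) = \exp(a/h)\exp(b/h)$ from the remark preceding this subsection.

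For (1), the key input is $\kappa_* \circ \iota_{1*} = \id$ on $\cGL(\surface_\mathrm{st})$. Expanding
\[
\bch(-\iota_{1*}(v), x) - \bch(-\iota_{1*}(v'), x) = -\iota_{1*}(v - v') + (\text{iterated brackets involving } v - v'),
\]
and noting that the Goldman Lie bracket sends $F^3 \times F^3$ into $F^4$, the bracket corrections lie in $F^{k+1}$ whenever $v - v' \in F^k$ (for $k \geq 3$). After applying $\kappa_*$, the leading linear term cancels against $v - v'$, so the map $g(v) := v + \kappa_*(\bch(-\iota_{1*}(v), x))$ satisfies $g(v) - g(v') \in F^{k+1}$ whenever $v - v' \in F^k$. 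The base case $v_2 - v_1 = g(v_1) - v_1 \in F^4$ follows from the same cancellation at $v_1 = \kappa_*(x)$. Iterating $v_{n+1} = g(v_n)$ then yields $v_{n+1} - v_n \in F^{n+2}$, which is (1). Statement (a) is immediate: $\{v_n(x)\}$ is Cauchy in the complete filtered space $F^3 \cGL(\surface_\mathrm{st})$, so $v(x) := \lim v_n(x)$ exists and satisfies $\kappa_*(\bch(-\iota_{1*}(v(x)), x)) = 0$ by continuity. For (b), any two fixed points $y, y'$ of $g$ satisfy $y - y' = g(y) - g(y') \in F^{k+1}$ whenever $y - y' \in F^k$; since $y - y' \in F^3$ initially, the Hausdorff property of the filtration forces $y = y'$.

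For (2), I push the hypothesis into the skein via $\PPsi{\GL}{\tskein'}$, which commutes with $\iota_{1*}, \kappa_*$ and (being a Lie-algebra homomorphism by Theorem \ref{thm_psi_Uh_tskein'}) with $\bch$; this gives $\kappa_*(\bch(-\iota_{1*}(\tilde y), \tilde x)) = 0$ in $\tskein'(\surface_\mathrm{st})$ with $\tilde x := \PPsi{\GL}{\tskein'}(x)$ and $\tilde y := \PPsi{\GL}{\tskein'}(y)$. Because $\iota_{1}$ comes from a cylinder embedding preserving stacking, $\iota_{1*}$ is a ring homomorphism on skein modules, so $\iota_{1*}(\exp(\tilde y/h)) = \exp(\iota_{1*}(\tilde y)/h)$; combined with the BCH identity this yields the skein-level factorisation
\[
\exp(\tilde x/h) = \iota_{1*}(\exp(\tilde y/h)) \cdot \exp\bigl(\bch(-\iota_{1*}(\tilde y), \tilde x)/h\bigr)
\]
in $\cLoc \tskein'(\tilde{\surface}_\mathrm{st})$. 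Applying $\kappa_*$ and running the parallel iteration $v_n \to v$ from (1) inside $\cLoc \tskein'$, using the filtration bound $\kappa_*(\bch(-\iota_{1*}(\tilde v_n), \tilde x)) = \tilde v_{n+1} - \tilde v_n \in F^{n+2}$ to kill the error at each stage, should force $\kappa_*(\exp(\tilde x/h)) = \exp(\tilde y/h)$ in the limit.

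The main obstacle is precisely this last step: since $\kappa_*$ is not multiplicative, the Lie-level vanishing of $\kappa_*(\bch(-\iota_{1*}(\tilde y), \tilde x))$ does \emph{not} pass termwise through the exponential, and the higher powers of the BCH argument can a priori carry nontrivial information under $\kappa_*$. The resolution has to exploit the specific form of the argument as a BCH series in the image of $\iota_{1*}$, together with the combined $h$-adic and $F$-adic filtration on $\cLoc \tskein'(\surface_\mathrm{st})$ which makes the approximation errors summable in the completion. I expect this filtration bookkeeping, rather than a new conceptual ingredient, to be where most of the work of (2) lies.
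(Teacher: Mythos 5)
Your handling of (1), (a), and (b) is correct and, for (1) and (a), essentially the paper's own argument: the contraction estimate $g(v)-g(v')\in \filt{k+1}\cGL(\surface_\mathrm{st})$ whenever $v-v'\in\filt{k}\cGL(\surface_\mathrm{st})$, driven by $\kappa_*\circ\iota_{1*}=\id$ and $[\filt{3},\filt{k}]\subset\filt{k+1}$, is exactly the induction in the paper. Your proof of (b) is a genuinely different (and more direct) route: the paper instead deduces uniqueness from statement (2) together with the injectivity of $\PPsi{\GL}{\Uh}$, whereas you get it for free from the contraction property; both work.

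The genuine gap is in (2), and you have located it yourself but proposed the wrong repair. After the factorisation $\exp(\tilde x/h)=\iota_{1*}(\exp(\tilde y/h))\cdot\exp(w/h)$ with $w=\PPsi{\GL}{\tskein'}(\bch(-\iota_{1*}(y),x))$ and $\kappa_*(w)=0$, the missing ingredient is not filtration bookkeeping but the exact annihilation property
\begin{equation*}
\kappa_*(x')=0\ \Longrightarrow\ \kappa_*(x''x')=0
\qquad\text{for } x',x''\in\tskein'(\tilde{\surface}_\mathrm{st}),
\end{equation*}
which is what the paper invokes at precisely this step. Granting it, every term $Uw^k/(k!h^k)$ with $k\ge 1$ in the expansion of $UV$, where $U=\exp(\iota_{1*}\PPsi{\GL}{\tskein'}(y)/h)$, is killed by $\kappa_*$, so $\kappa_*(\exp(\tilde x/h))=\kappa_*(U)=\kappa_*\circ\iota_{1*}(\exp(\tilde y/h))=\exp(\tilde y/h)$ with no estimates whatsoever. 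This property is a topological fact about the closure map $\kappa$ (the under factor $x'$ of a product can be isotoped onto the top sheet, giving $\kappa_*(x''x')=\kappa_*(x''\cdot\iota_{1*}\kappa_*(x'))$; the relevant isotopies are exhibited in the proof of Proposition \ref{proposition_technical_homology_cylinder_GL}). Your proposed route via summability in the combined $h$-adic and $F$-adic filtration cannot close the argument: those estimates show only that the series $\sum_k\kappa_*(Uw^k)/(k!h^k)$ converges in $\cLoc\tskein'(\surface_\mathrm{st})$, not that its $k\ge1$ part vanishes, and the hypothesis $\kappa_*(w)=0$ is an exact identity that gives no control over $\kappa_*$ of products unless one has the annihilation property.
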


\begin{proof}
%はじめに(1)を帰納法でしめす。
%$n=1$では、
%\begin{align*}
%&v_2(x)-v_1 (x)
%=\kappa_* (\bch (-\iota_{1*} (v_1(x)),x)) \\
%&=\kappa_* (-\iota_{1*} (v_1(x))+x) \filt{4} \cGL (\surface_\mathrm{st}) \\
%&=-\kappa_* \circ \iota_{1*} \kappa_* (x)+
%\kappa_* (x)=0
%\end{align*}
%となり成り立つ。
%$v_{k+1} (x)-v_k (x) =\kappa_* (\bch (-\iota_{1*} (v_k(x)), x))
%\in\filt{2+k} \cGL (\surface_\mathrm{st} )$
%と仮定する。
%今、$[\filt{3} \cGL (\surface_\mathrm{st}), \filt{m} \cGL (\surface_\mathrm{st})]
%\subset \filt{m+1} \cGL (\surface_\mathrm{st})$
%となることを使い、$z,w, w' \in \filt{3} \cGL (\surface_\mathrm{st})$
%を$w-w' \in \filt{m} \cGL (\surface_\mathrm{st})$となるようにとり、
%\begin{equation*}
%-z-w +\bch (z, w)=
%-z-w'+ \bch (z, w')\mod \filt{m+1} 
%\cGL (\surface_\mathrm{st})
%\end{equation*}
%となる。
%これを使い、
%\begin{align*}
%&v_{k+2} (x)-v_{k+1} (x)=\kappa_* (\bch (-\iota_{1*} (v_{k+1} (x)), x)) \\
%&=-\kappa_* \circ \iota_{1*} (v_{k+1} (x))+ \kappa_* (x)
%+(\kappa_* \circ \iota_{1*} (v_{k+1} (x))-
%\kappa_* (x)+\kappa_* (\bch (-\iota_{1*} (v_{k+1} (x)), x))) \\
%&=-\kappa_* \circ \iota_{1*} (v_{k+1} (x))+ \kappa_* (x) \\
%&+(\kappa_* \circ \iota_{1*} (v_{k} (x))-
%\kappa_* (x)+\kappa_* (\bch (-\iota_{1*} (v_{k} (x)), x)))
%\mod \filt{3+k} \cGL (\surface_\mathrm{st}) \\
%&=-v_{k+1} (x)+\kappa_* ( x)
%+(v_{k} (x)-\kappa_* (x)+\kappa (\bch (-\iota_{1*} (v_{k} (x)), x))) 
% \\
%&=-v_{k+1} (x)+v_k (x) +\kappa (\bch (-\iota_{1*} (v_k(x)), x))=0
%\end{align*}
%これより、(1)が示せた。

First, we prove the statement $(1)$ by induction. If $n=1$, 
\begin{align*}
&v_2(x)-v_1 (x)
=\kappa_* (\bch (-\iota_{1*} (v_1(x)),x)) \\
&=\kappa_* (-\iota_{1*} (v_1(x))+x) \filt{4} \cGL (\surface_\mathrm{st}) \\
&=-\kappa_* \circ \iota_{1*} \kappa_* (x)+
\kappa_* (x)=0
\end{align*}
 holds. We assume 
\begin{equation*}
v_{k+1} (x)-v_k (x) =\kappa_* (\bch (-\iota_{1*} (v_k(x)), x))
\in\filt{2+k} \cGL (\surface_\mathrm{st} ).
\end{equation*}
 For $z,w, w' \in \filt{3} \cGL (\surface_\mathrm{st})$ satisfying
$w-w' \in \filt{m} \cGL (\surface_\mathrm{st})$,
 we have 
\begin{equation*}
-z-w +\bch (z, w)=
-z-w'+ \bch (z, w')\mod \filt{m+1} 
\cGL (\surface_\mathrm{st})
\end{equation*}
 by $[\filt{3} \cGL (\surface_\mathrm{st}), \filt{m} \cGL (\surface_\mathrm{st})]
\subset \filt{m+1} \cGL (\surface_\mathrm{st})$. Using it, we have 
\begin{align*}
&v_{k+2} (x)-v_{k+1} (x)=\kappa_* (\bch (-\iota_{1*} (v_{k+1} (x)), x)) \\
&=-\kappa_* \circ \iota_{1*} (v_{k+1} (x))+ \kappa_* (x)
+(\kappa_* \circ \iota_{1*} (v_{k+1} (x))-
\kappa_* (x)+\kappa_* (\bch (-\iota_{1*} (v_{k+1} (x)), x))) \\
&=-\kappa_* \circ \iota_{1*} (v_{k+1} (x))+ \kappa_* (x) \\
&+(\kappa_* \circ \iota_{1*} (v_{k} (x))-
\kappa_* (x)+\kappa_* (\bch (-\iota_{1*} (v_{k} (x)), x)))
\mod \filt{3+k} \cGL (\surface_\mathrm{st}) \\
&=-v_{k+1} (x)+\kappa_* ( x)
+(v_{k} (x)-\kappa_* (x)+\kappa (\bch (-\iota_{1*} (v_{k} (x)), x))) 
 \\
&=-v_{k+1} (x)+v_k (x) +\kappa (\bch (-\iota_{1*} (v_k(x)), x))=0
\end{align*}
 and prove (1).

%First, we prove the first statement by induction. If n equals one, it holds. We assume the equation. For any elements satisfying the condition, we have the computation by the formula. Using it, we have the equation and prove the first statement.

%(1)を使い、(a)が示せる。(1)より、
%$K \geq k$で
%$v_K (x)-v_k (x) \in \filt{k+2} \cGL (\surface)$
%とわかる。これより、$\lim_{n \to \infty } v_n (x)$がwell-defined になる。

By (1), we have 
\begin{equation*}
K \geq k \Rightarrow
v_K (x)-v_k (x) \in \filt{k+2} \cGL (\surface),
\end{equation*}
 which means that $\lim_{n \to \infty } v_n (x)$
 is well-defined. In other words, the statement (a) holds.

%By the first statement, we have the equation, which means that v is well-defined. In other words, the third statement holds.

%次に(2)を示す。
%$\bch$の定義より、
%\begin{align*}
%&\kappa_* (\exp (\frac{1}{h} \PPsi{\GL}{\tskein'} (x))) \\
%&=\kappa_* (\exp (\frac{1}{h} \bch (\iota_{1*} (\PPsi{\GL}{\tskein'}(y)),
%\bch (-\iota_{1*} (\PPsi{\GL}{\tskein'}(y)),\PPsi{\GL}{\tskein'}(x))))) \\
%&=\kappa_* (\exp (\frac{1}{h} \iota_{1*} (\PPsi{\GL}{\tskein'}(y)))
%\exp (\frac{1}{h}\PPsi{\GL}{\tskein'}(\bch (-\iota_{1*} (y),x)))) \\
%\end{align*}
%が得られる。
%今、$x', x'' \in \tskein' (\tilde{\surface}_\mathrm{st})$で
%$\kappa_* (x')=0 \Rightarrow \kappa_* (x'' x')=0$なので、
%\begin{align*}
%&\kappa_* (\exp (\frac{1}{h} \PPsi{\Uh}{\tskein'} (x))) \\
%&=\kappa_* (\exp (\frac{1}{h} \iota_{1*} (\PPsi{\Uh}{\tskein'}(y)))) \\
%\end{align*}
%となる。$\iota_{1*}$は代数準同型より、
%\begin{align*}
%&\kappa_* (\exp (\frac{1}{h} \PPsi{\GL}{\tskein'} (x))) \\
%&=\kappa_* \circ \iota_{1*}(\exp (\frac{1}{h}  \PPsi{\GL}{\tskein'}(y))) 
%=\exp (\frac{1}{h}  \PPsi{\GL}{\tskein'}(y)) \\
%\end{align*}
%となり、(2)が示せた。

We will prove the statement (2). By definition of $\bch$, we have 
\begin{align*}
&\kappa_* (\exp (\frac{1}{h} \PPsi{\GL}{\tskein'} (x))) \\
&=\kappa_* (\exp (\frac{1}{h} \bch (\iota_{1*} (\PPsi{\GL}{\tskein'}(y)),
\bch (-\iota_{1*} (\PPsi{\GL}{\tskein'}(y)),\PPsi{\GL}{\tskein'}(x))))) \\
&=\kappa_* (\exp (\frac{1}{h} \iota_{1*} (\PPsi{\GL}{\tskein'}(y)))
\exp (\frac{1}{h}\PPsi{\GL}{\tskein'}(\bch (-\iota_{1*} (y),x)))). \\
\end{align*}
 Since 
\begin{equation*}
\kappa_* (x')=0 \Rightarrow \kappa_* (x'' x')=0
\end{equation*}
 for $x', x'' \in \tskein' (\tilde{\surface}_\mathrm{st})$, we have 
\begin{align*}
&\kappa_* (\exp (\frac{1}{h} \PPsi{\Uh}{\tskein'} (x))) 
=\kappa_* (\exp (\frac{1}{h} \iota_{1*} (\PPsi{\Uh}{\tskein'}(y)))). \\
\end{align*} 
Since $\iota_{1*}$ is an algebra homomorphism, we have 
\begin{align*}
&\kappa_* (\exp (\frac{1}{h} \PPsi{\GL}{\tskein'} (x))) 
=\kappa_* \circ \iota_{1*}(\exp (\frac{1}{h}  \PPsi{\GL}{\tskein'}(y))) 
=\exp (\frac{1}{h}  \PPsi{\GL}{\tskein'}(y)) \\
\end{align*} 
as desired.

%We will prove the second statement. By definition of BCH, we have the computation. Since the equation holds for any element X and X', we have the formula. Since II is an algebra homomorphism, we have the equation as desired.

%最後に(b)を示す。(1)より、
%$\kappa_* (\bch (-\iota_{1 *} (v(x)),x)) \in \cap_{i=4}^\infty \filt{i}
%\cGL (\surface)= \shuugou{0}$とわかり、
%$v(x)$が方程式$\kappa_* (\bch (-\iota_{1*} (\cdot), x))=0$
%の解であることが分かる。(2)
%の二つ目の式から、
%\begin{equation*}
%\PPsi{\GL}{\Uh} (v(x))= 
%\PPsi{\tskein'}{\Uh}( h\log (\kappa_* (\exp (\frac{\PPsi{\GL}{\tskein'} (x)}{h}))))
%\in  \cLoc \Uh (\surface)
%\end{equation*}
%となることがわかる。ユニークな解であることは、
%$\PPsi{\GL}{\Uh}$は単射であることかわかる。
%以上よりこの命題が示せた。

Finally, we prove the statement (b). By (1), we have 
$\kappa_* (\bch (-\iota_{1 *} (v(x)),x)) \in \cap_{i=4}^\infty \filt{i}
\cGL (\surface)= \shuugou{0}$, which means that $v(x)$ is a solution to $\kappa_* (\bch (-\iota_{1*} (\cdot), x))=0$. By (2), we have 
\begin{equation*}
\PPsi{\GL}{\Uh} (v(x))= 
\PPsi{\tskein'}{\Uh}( h\log (\kappa_* (\exp (\frac{\PPsi{\GL}{\tskein'} (x)}{h}))))
\in  \cLoc \Uh (\surface).
\end{equation*}
 Since $\PPsi{\GL}{\Uh}$ is an injection, $v(x)$ is a unique solution.

The above statements prove the proposition.

%Finally, we prove the fourth statement. By the first one, we have the computation, which means that v is a solution to the equation. By the second one, we have the property. Since P is an injection, v is a unique solution.
%The above statements prove the proposition.

\end{proof}

%コンパクトで向き付けられた連結な曲面$\surface'$に対し、
%$\torelli' (\surface')$は
%\begin{align*}
%\shuugou{t_{c_1} \circ t_{c_2}^{-1}|(c_1,c_2) \mathrm{\ bounds \ a \ surface.}}
%\cup
%\shuugou{t_c|c \mathrm{\ bounds \ a \ surface.}}
%\end{align*}
%から生成される部分群とする。$\zeta_{\GL}:\torelli' (\surface')
%\to (\filt{3} \cGL (\surface'),\bch )$を
%\begin{align*}
%&t_{c_1} \circ t_{c_2}^{-1}\in 
%\shuugou{t_{c_1} \circ t_{c_2}^{-1}|(c_1,c_2) \mathrm{\ bounds \ a \ surface.}}
%\mapsto
%L_{\GL} (c_1)-L_{\GL} (c_2) \\
%&t_c \in 
%\shuugou{t_c|c \mathrm{\ bounds \ a \ surface.}}
%\mapsto 
%L_{\GL} (c)
%\end{align*}
%と定義した。
%（もう定義したしいらないかな）

%これらの定義を使い次の定理を主張する。

We state Theorem \ref{thm_qbtskein_main_standard_embedding} using the definition.

%We state the theorem using the definition.

\begin{thm}
\label{thm_qbtskein_main_standard_embedding}
%埋め込み$e_\mathrm{st}: \surface_\mathrm{st} \times I \hookrightarrow
%\surface \times I$をある埋め込み$e'_\mathrm{st}; \surface_\mathrm{st} \hookrightarrow
%\surface$で、$e(p,t)=e' (p,\frac{1+t}{3})$となる
%スタンダードな埋め込みとし、
%$\tilde{\surface}_\mathrm{st}$を$\partial (\surface_\mathrm{st} \times I)
%\backslash e^{-1} (\partial \surface)$の
%閉包$\overline{\partial (\surface_\mathrm{st} \times I)
%\backslash e^{-1} (\partial \surface)}$とする。
%$\xi \in \torelli' (\tilde{\surface}_\mathrm{st})$で
%\begin{equation*}
%\tilde{\zeta}_{\GL} ((\surface \times I) (e_\mathrm{st} , \xi))
%=e'_{\mathrm{st} *} (v (\zeta_\GL (\xi)))
%\in \filt{3} \cGL (\surface)
%\end{equation*}
%である。別の言い方をすると、
%$\star_1, \star_2  \in \partial \surface$で
%\begin{equation*}
%(\surface \times I) (e_\mathrm{st} , \xi)_*
%= \exp (\sigma (e'_{\mathrm{st} *} (v (\zeta_\GL (\xi)))):
%\cGLM (\surface, \star_1, \star_2)
%\to \cGLM (\surface, \star_1, \star_2)
%\end{equation*}
%を満たす。

Let $e'_\mathrm{st}; \surface_\mathrm{st} \hookrightarrow
\surface$ be an embedding such that the map 
$e_\mathrm{st}: \surface_\mathrm{st} \times I \hookrightarrow
\surface \times I,(p,t) \mapsto (e' (p),\frac{1+t}{3})$
is a standard embedding and $\tilde{\surface}_\mathrm{st}$ the closure of
$\partial (\surface_\mathrm{st} \times I)
\backslash e^{-1} (\partial \surface \times I).$ For an element $\xi \in \torelli' (\tilde{\surface}_\mathrm{st})$, we have 
\begin{equation*}
\tilde{\zeta}_{\GL} ((\surface \times I) (e_\mathrm{st} , \xi))
=e'_{\mathrm{st} *} (v (\zeta_\GL (\xi)))
\in \filt{3} \cGL (\surface).
\end{equation*}
 In other words, for $\star_1, \star_2  \in \partial \surface$, we have 
\begin{equation*}
(\surface \times I) (e_\mathrm{st} , \xi)_*
= \exp (\sigma (e'_{\mathrm{st} *} (v (\zeta_{\GL} (\xi)))):
\cGLM (\surface, \star_1, \star_2)
\to \cGLM (\surface, \star_1, \star_2).
\end{equation*}

%Let E be an embedding such that the map is a standard embedding and S the closure of the subsurface. For an element X of II, we have the equation. In other words, for points, we have the formula.
\end{thm}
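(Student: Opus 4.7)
The plan is to identify the automorphism $(\surface \times I)(e_\mathrm{st}, \xi)_*$ explicitly via the skein algebra $\tskein'$ and then read off the logarithm using the uniqueness clause of Proposition \ref{proposition_center_of_the_Goldman}.

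First I would translate the cut-and-reglue construction into the skein calculus. By Proposition \ref{prop_zeta} the action of $\xi \in \torelli'(\tilde{\surface}_\mathrm{st})$ on $\cGLM(\tilde{\surface}_\mathrm{st})$ equals $\exp(\sigma(\zeta(\xi)))$. Via the algebra isomorphism $\PPsi{\Uh}{\tskein'}$ from Theorem \ref{thm_psi_Uh_tskein'}, this action is realized inside $\ctskein'(\tilde{\surface}_\mathrm{st})$ as conjugation by $\exp\bigl(\tfrac{1}{h}\PPsi{\GL}{\tskein'}(\zeta(\xi))\bigr)$. Geometrically, performing the cut-reglue that produces $(\surface \times I)(e_\mathrm{st}, \xi)$ amounts to inserting this exponential into $e_\mathrm{st}(\surface_\mathrm{st} \times I)$ along the top copy (the one selected by $\iota_1$) and then collapsing back to $\surface_\mathrm{st}$ via $\kappa_*$; the result is carried into $\surface$ by the embedding $e'_{\mathrm{st}*}$.

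Next I would apply Proposition \ref{proposition_definition_v}(2) with $x = \zeta(\xi)$ and $y = v(\zeta(\xi))$. The defining identity $\kappa_*(\bch(-\iota_{1*}(v(\zeta(\xi))), \zeta(\xi))) = 0$ yields
$$\kappa_*\!\left(\exp\!\left(\tfrac{1}{h}\,\PPsi{\GL}{\tskein'}(\zeta(\xi))\right)\right) = \exp\!\left(\tfrac{1}{h}\,\PPsi{\GL}{\tskein'}(v(\zeta(\xi)))\right)$$
inside $\cLoc \tskein'(\surface_\mathrm{st})$. Combined with the first step and the push-forward by $e'_{\mathrm{st}*}$, this produces the equality of automorphisms
$$(\surface \times I)(e_\mathrm{st}, \xi)_* = \exp\bigl(\sigma(e'_{\mathrm{st}*}(v(\zeta(\xi))))\bigr)$$
on $\cGLM(\surface, \star_1, \star_2)$.

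Finally, because $(\surface \times I)(e_\mathrm{st}, \xi) \in \mathcal{IC}(\surface)$, its action satisfies the hypotheses of Proposition \ref{proposition_center_of_the_Goldman}(2), whose uniqueness clause forces the element $e'_{\mathrm{st}*}(v(\zeta(\xi))) \in \filt{3}\cGL(\surface)$ to coincide with $\tilde{\zeta}_{\GL}((\surface \times I)(e_\mathrm{st}, \xi))$. The principal obstacle lies in the first step: one must rigorously identify the cut-reglue action on curves in $\surface$ with insertion of the skein exponential followed by $\kappa_* \circ \iota_{1*}$, carefully handling generic position of arbitrary curves with respect to the cutting locus and verifying that the skein relations produce exactly the expected composition. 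This is precisely the content of the surgery formula foreshadowed by Theorem \ref{thm_qbtskein_main_boundary_link}.
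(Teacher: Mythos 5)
Your skeleton is right — identify the action via $v$, use its defining equation $\kappa_*(\bch(-\iota_{1*}(v(\zeta(\xi))),\zeta(\xi)))=0$, and finish with the uniqueness clause of Proposition \ref{proposition_center_of_the_Goldman} — but the step you flag as ``the principal obstacle'' is not a technicality to be handled later; it is the entire content of the theorem, and your heuristic for it (``insert the skein exponential along the top copy and collapse via $\kappa_*$'') does not by itself produce the claimed equality of automorphisms of $\cGLM(\surface,\star_1,\star_2)$. The paper's proof of this step (Proposition \ref{proposition_technical_homology_cylinder_GL} together with Lemma \ref{lemm_technical_homology_cylinder_GL}) runs as follows. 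One first proves a splitting formula
$\kappa'_*(\sigma(x)(y)) = \kappa'_*(\sigma(x)(\iota'_{1*}\circ\kappa'_*(y))) + \kappa'_*(\sigma(\iota'_{0*}\circ\kappa'_*(x))(y))$
by exhibiting explicit isotopies of the over/under embeddings in the skein module; from it one deduces by induction that any $x$ with $\kappa'_*(x)=0$ satisfies $\kappa'_*((\sigma(x))^n(\iota'_{0*}(y)))=\kappa'_*((\sigma(x))^n(\iota'_{1*}(y)))$. Writing $\zeta=\bch(\iota'_{1*}(v),v')$ with $v'=\bch(-\iota'_{1*}(v),\zeta)$, so that $\kappa'_*(v')=0$, this yields $\kappa'_*\circ\chi_*^{-1}\circ\iota'_{0*}(x)=\kappa'_*\circ\chi_*^{-1}\circ\iota'_{1*}(\exp(\sigma(v(\zeta)))(x))$: the part of $\zeta$ killed by $\kappa'_*$ acts identically on the two boundary copies, and only $v$ survives. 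One then needs the isotopy identifications $(\alpha_i\circ e_\mathrm{st})_*=\diamondsuit\circ(e^\flat_\mathrm{st}\circ\tilde\kappa\circ\chi^{-1}\circ\iota_i)_*$ and the surjectivity of $e'_{\mathrm{st}*}$ on fundamental groupoids to lift an arbitrary $x\in\cGLM(\surface,\star_1,\star_2)$ into $\surface_\mathrm{st}$ before any of this applies — a lifting step your proposal omits entirely.

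A secondary misdirection: you invoke Proposition \ref{proposition_definition_v}(2) as the bridge, but that identity lives in $\cLoc\tskein'(\surface_\mathrm{st})$ and concerns $\kappa_*$ of an exponential of skein elements; it is what the paper uses later for the boundary-link formulation (Lemma \ref{lemm_qbtskein_main_boundary_link} and Theorem \ref{thm_qbtskein_main_boundary_link}), not for the present theorem. For the standard-embedding statement the defining equation of $v$ enters through the $\sigma$-action machinery above, not through that exponential identity. So the proposal is not wrong in outline, but the argument it defers is the proof, and the deferred step cannot be closed by the route you sketch without essentially reconstructing Proposition \ref{proposition_technical_homology_cylinder_GL}.
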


%\S \ref{section_proof_main_theorem_qbtskein}でこの定理の証明をする。

We will prove the theorem in \S \ref{section_proof_main_theorem_qbtskein}.

%We will prove the theorem in Section 6.

\subsection*{``A construction using a boundary link''}

%$\surface \times I$のboundary link$L$と
%Seifert surface $S$と
%境界の$\mathrm{label}$ $\lambda$について、
%\begin{equation*}
%L_{\tskein'} (S, \lambda)
%\defeq 
%-\sum_{[\partial] \in \pi_0 (\partial S)}
%\lambda ([\partial]) L_{\tskein'} (c_\partial)
%\end{equation*}
%と$L_{\tskein'} (S, \lambda) \in \ctskein' (S)$
%を定義する。 ただし$c_\partial$は境界成分$\partial$に
%平行な単純閉曲線とする。
%ここで、$L_{\tskein'} (c)$
%を$\PPsi{\GL}{\tskein'} (L_{\GL} (c))
%=\PPsi{\GL}{\tskein'} (\zettaiti{\frac{1}{2} \log (\gamma)^2})$
%て定義したことを注意しておく。
%ただし、$\gamma$は共役類が
%$c$と一致する基本群の元である。

For a boundary link $L$ in $\surface \times I$ and a label $\lambda$ of a Seifert surface of $L$, we set an element $L_{\tskein'} (S, \lambda) \in \ctskein' (S)$ as 
\begin{equation*}
L_{\tskein'} (S, \lambda)
\defeq 
-\sum_{[\partial] \in \pi_0 (\partial S)}
\lambda ([\partial]) L_{\tskein'} (c_\partial),
\end{equation*}
where 
\begin{equation*}
L_{\tskein'} (c)
\defeq \PPsi{\GL}{\tskein'} (L_{\GL} (c))
=\PPsi{\GL}{\tskein'} (\zettaiti{\frac{1}{2} \log (\gamma_c)^2})
\end{equation*}
 for any simple closed curve $c$.
 Here we use two notations as follows. 
\begin{itemize}
\item  For a simple close curve $c$, $\gamma_c$ is an element, where the conjugacy class $\zettaiti{\gamma_c}$ equals $c$. 
\item  The simple closed curve $c_\partial$ is parallel to the boundary component $\partial$.
\end{itemize}
 The Seifert surface $S$ defines the embedding
$e_S :S \times I \to\surface \times I$
 inducing the module homomorphism
$e_{S*}:\ctskein' (S) \to \ctskein' (\surface)$. 

%For a boundary link in M and a label of a Seifert surface of it, we set an element of the skein algebra as the equation, where L is the element for any simple closed curve. Here we use two notations as follows. The first one is that L' is an element, where the conjugacy class equals C. The second one is that the simple closed curve C' is parallel to the boundary component. The Seifert surface defines the embedding inducing the module homomorphism.

\begin{thm}
\label{thm_qbtskein_main_boundary_link}
%$\surface \times I$のboundary link $L$と
%Seifert surface $S$と
%境界の$\mathrm{label}$ $\lambda$について、
%\begin{align*}
%&\exp (\frac{1}{h}
%\PPsi{\GL}{\tskein'} (\tilde{\zeta}_{\GL} ((\surface \times I) (L(\lambda)))))
%=e_{S*} (\exp (\frac{1}{h}L_{\tskein'} (S, \lambda))) \\
%&=\sum_{i=0}^\infty \frac{1}{i!h^i}
%e_{S*} (L_{\tskein'} (S, \lambda)^i) 
%\in \cLoc \tskein' (\surface)
%\end{align*}
%が成り立つ。
%言い換えると
%\begin{equation*}
%\PPsi{\tskein'}{\Uh} (h \log (e_{S*} (\exp (\frac{1}{h}L_{\tskein'} (S, \lambda)))))
%=\PPsi{\GL}{\Uh} \tilde{\zeta}_{\GL} ((\surface \times I) (L(\lambda)))
%\in \cUh (\surface) 
%\end{equation*}
%が成り立つ。

For a boundary link $L$ in $\surface \times I$ and a label $\lambda$ of a Seifert surface $S$ of $L$, we have
\begin{align*}
&\exp (\frac{1}{h}
\PPsi{\GL}{\tskein'} (\tilde{\zeta}_{\GL} ((\surface \times I) (L(\lambda)))))
=e_{S*} (\exp (\frac{1}{h}L_{\tskein'} (S, \lambda))) \\
&=\sum_{i=0}^\infty \frac{1}{i!h^i}
e_{S*} (L_{\tskein'} (S, \lambda)^i) 
\in \cLoc \tskein' (\surface).
\end{align*}
 In other words, we have
\begin{equation*}
\PPsi{\tskein'}{\Uh} (h \log (e_{S*} (\exp (\frac{1}{h}L_{\tskein'} (S, \lambda)))))
=\PPsi{\GL}{\Uh} \tilde{\zeta}_{\GL} ((\surface \times I) (L(\lambda)))
\in \cUh (\surface).
\end{equation*}

%For a boundary link in M and a label of a Seifert surface of it, we have the equation. In other words, we have the formula.
\end{thm}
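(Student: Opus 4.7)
The plan is to reduce the boundary-link construction to the standard-embedding one via Lickorish's trick, apply Theorem \ref{thm_qbtskein_main_standard_embedding}, and convert the resulting $v$-expression into a skein computation using Proposition \ref{proposition_definition_v}(2). First, we apply Lemma \ref{lemm_standard_embedding} to the Seifert surface embedding $e_S$ to obtain a standard embedding $e_\mathrm{st}:\surface_\mathrm{st}\times I\hookrightarrow\surface\times I$ and an embedding $\tilde{e}:S\to\tilde{\surface}_\mathrm{st}$ with $e_\mathrm{st}\circ\tilde{e}$ isotopic to $e_S$. Lemma \ref{lemm_Lickorish's_trick_standard} then identifies the homology cylinder as $(\surface\times I)(e_\mathrm{st},\xi)$ with
$$\xi\defeq\prod_{[\partial]\in\pi_0(\partial S)}t_{\tilde{e}(c_\partial)}^{-\lambda([\partial])}\in\torelli'(\tilde{\surface}_\mathrm{st}),$$
and Theorem \ref{thm_qbtskein_main_standard_embedding} gives $\tilde{\zeta}_{\GL}((\surface\times I)(L(\lambda)))=e'_{\mathrm{st}*}(v(\zeta_{\GL}(\xi)))$.

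Next we compute $\zeta_{\GL}(\xi)$. Since the curves $\tilde{e}(c_\partial)$ are pairwise disjoint in $\tilde{\surface}_\mathrm{st}$, the Goldman brackets $[L_{\GL}(\tilde{e}(c_\partial)),L_{\GL}(\tilde{e}(c_{\partial'}))]$ vanish, so the Baker--Campbell--Hausdorff series in Proposition \ref{prop_zeta} collapses to an ordinary sum and
$$\zeta_{\GL}(\xi)=-\sum_{[\partial]}\lambda([\partial])L_{\GL}(\tilde{e}(c_\partial)).$$
The corresponding skein elements $L_{\tskein'}(\tilde{e}(c_\partial))$ also pairwise commute in $\ctskein'(\tilde{\surface}_\mathrm{st})$ (disjoint knots multiply commutatively via the $\tskein'$-skein relations), so their exponentials multiply without BCH correction. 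Applying Proposition \ref{proposition_definition_v}(2) with $x=\zeta_{\GL}(\xi)$ and $y=v(\zeta_{\GL}(\xi))$, pushing forward by $e'_{\mathrm{st}*}$ (an algebra homomorphism, since it is induced by a surface embedding), and using naturality of $\PPsi{\GL}{\tskein'}$, we obtain
$$\exp\bigl(\tfrac{1}{h}\PPsi{\GL}{\tskein'}\tilde{\zeta}_{\GL}((\surface\times I)(L(\lambda)))\bigr) = e'_{\mathrm{st}*}\kappa_*\Bigl(\prod_{[\partial]}\exp\bigl(-\tfrac{\lambda([\partial])}{h}L_{\tskein'}(\tilde{e}(c_\partial))\bigr)\Bigr).$$

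On the other side, disjointness of the $c_\partial$ in $S$ factors $\exp(L_{\tskein'}(S,\lambda)/h)=\prod_{[\partial]}\exp(-\lambda([\partial])L_{\tskein'}(c_\partial)/h)$ in $\ctskein'(S)$, so it remains to verify that the pushforwards $e_{S*}$ and $e'_{\mathrm{st}*}\circ\kappa_*\circ\tilde{e}_*$ agree on this particular product. This follows once we upgrade the isotopy of Lemma \ref{lemm_standard_embedding} between $e_\mathrm{st}\circ\tilde{e}$ and $e_S$ (as embeddings of the $2$-surface $S$) to a framed isotopy of tubular neighborhoods $e_S:S\times I\to\surface\times I$ and $e'_\mathrm{st}\circ\kappa\circ(\tilde{e}\times\id_I)$, which is possible because $S$ is orientable and its normal line bundle in an oriented $3$-manifold is canonically trivialized. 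The main obstacle is precisely this last step: $\kappa_*$ is not an algebra homomorphism in general, so multiplicativity on arbitrary products fails, and one must exploit that the factors of the product above are supported in pairwise disjoint product sub-regions of $\tilde{\surface}_\mathrm{st}\times I$ whose $\kappa$-images are pairwise disjoint sub-regions of $\surface_\mathrm{st}\times I$; on this very specific configuration the composite $e'_{\mathrm{st}*}\circ\kappa_*\circ\tilde{e}_*$ coincides with $e_{S*}$, yielding the theorem.
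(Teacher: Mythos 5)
Your proposal is correct and follows essentially the same route as the paper: Lemma \ref{lemm_standard_embedding} and Lemma \ref{lemm_Lickorish's_trick_standard} to realize $(\surface\times I)(L(\lambda))$ as $(\surface\times I)(e_\mathrm{st},\xi)$ with $\xi$ a product of boundary-parallel (inverse) Dehn twists, disjointness collapsing the BCH series for $\zeta_{\GL}(\xi)$ to a sum, and Theorem \ref{thm_qbtskein_main_standard_embedding} combined with Proposition \ref{proposition_definition_v}(2) (which the paper packages as Lemma \ref{lemm_qbtskein_main_boundary_link}) followed by the identification $e_{\mathrm{st}*}\circ\kappa_*\circ e'_*=e_{S*}$. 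Your closing worry about the non-multiplicativity of $\kappa_*$ is unnecessary: since $e'_*$ is an algebra homomorphism one pulls the whole exponential inside it, and then $\kappa_*$ is applied as a single linear map to one element, so only the isotopy of the composite $3$-dimensional embeddings with $e_S$ is needed.
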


%\S \ref{section_proof_main_theorem_qbtskein}でこの定理の証明をする。

We will prove the theorem in \S \ref{section_proof_main_theorem_qbtskein}.

%We will prove the theorem in Section 6.

%この主定理は２つ目の式を見れば$\tilde{\zeta}_{\GL} (\Xi)$の計算方法を
%示唆していることがわかる。１つ目の式を見ればスケイン代数の不変量
%と同じ方法でホモロジー・シリンダーの不変量が得られているとみなすこともできる。
%(いらんか）

\section{Proof of the main result}
\label{section_proof_main_theorem_qbtskein}

%この節では
%\S \ref{section_main_theorem_qbtskein}の定理の証明をする。
%次の命題の(3)が証明でのkey になる。

In this section, we will prove the theorems in \S \ref{section_main_theorem_qbtskein}. The third statement in Proposition \ref{proposition_technical_homology_cylinder_GL} plays an important role.

%In this section, we will prove the theorems in Section 5. The third statement in the proposition plays an important role.

\begin{prop}
\label{proposition_technical_homology_cylinder_GL}
%コンパクトで向きづけられた連結な曲面$\surface$をとり、
%$M^{(1)}$を連結な$\partial \surface$の部分集合とする。
%$\star_1, \star_2 \in M^{(1)} $と基点をとる。
%$\tilde{\surface}=\surface \times I \backslash
%(M^{(1)} \times (0,1))$
%とする。$i=1,2$で、
%$\kappa'_*$を埋め込み$\tilde{\surface}
%\hookrightarrow \surface \times I$
%が誘導する$\GL (\tilde{\surface})
%\to \GL (\surface \times I)$
%または、$\GL (\tilde{\surface}, (\star_1,i),(\star_2,i))
%\to \GL (\surface \times I,(\star_1,i),(\star_2,i))$
%と自然な同型$\GL (\surface \times I)
%\to \GL (\surface)$
%または、$\GLM (\surface \times I, (\star_1,i),(\star_2,i))
%\to \GLM (\surface , \star_1, \star_2)$
%の合成
%\begin{align*}
%&\kappa'_*:
%\GL (\tilde{\surface})
%\to \GL (\surface) \\
%&\kappa'_*:
%\GLM (\tilde{\surface}, (\star_1, i), (\star_2,i))
%\to \GLM (\surface, \star_1,\star_2) \\
%\end{align*}
%とする。埋め込み
%$\iota_0,\iota_1: \surface \to \tilde{\surface}$を
%$\iota_0 (p)=\iota (p,0)$と
%$\iota_1 (p)=\iota (p,1)$により定義する。 
%このとき、次が成り立つ。
%これらは、$i \in \shuugou{0,1}$で
%\begin{align*}
%&\iota'_{i*}:
%\GL (\surface) \to
%\GL (\tilde{\surface}), \\
%&\iota'_{i*}:
%\GLM (\surface, \star_1, \star_2) \to
%\GLM (\surface, (\star_1, i), (\star_2,i))\\
%\end{align*}
%を誘導する。
%$\kappa'_*,\iota'_{i*}$は、完備化でも定義する。

Let $\surface$ be a compact connected oriented surface and $M^{(1)}$ a submanifold of $\partial \surface$. We choose base points $\star_1, \star_2 \in M^{(1)}$ and denote by $\tilde{\surface}$ the closure of $\partial (\surface \times I) \backslash
(M^{(1)} \times I)$. We consider linear maps 
\begin{align*}
&\kappa'_*:
\GL (\tilde{\surface})
\to \GL (\surface), \ 
\kappa'_*:
\GLM (\tilde{\surface}, (\star_1, i), (\star_2,i))
\to \GLM (\surface, \star_1,\star_2) \mathrm{\ for \ } i=0,1,\\
&\iota'_{0*}:
\GL (\surface) \to
\GL (\tilde{\surface}), \ 
\iota'_{0*}:
\GLM (\surface, \star_1, \star_2) \to
\GLM (\tilde{\surface}, (\star_1, 0), (\star_2,0)),\\
&\iota'_{1*}:
\GL (\surface) \to
\GL (\tilde{\surface}), \ 
\iota'_{1*}:
\GLM (\surface, \star_1, \star_2) \to
\GLM (\tilde{\surface}, (\star_1, 1), (\star_2,1)). \\
\end{align*}
The homomorphisms $\kappa'_*:
\GL (\tilde{\surface})
\to \GL (\surface)$ and
 $\kappa'_*:
\GLM (\tilde{\surface}, (\star_1, i), (\star_2,i))
\to \GLM (\surface, \star_1,\star_2)$
 are composites of the ones induced by the embedding $\tilde{\surface} \hookrightarrow \surface \times I$ and the natural maps
\begin{align*}
&\GL (\surface \times I)
\to \GL (\surface), \\
&\GLM (\surface \times I, (\star_1,i),(\star_2,i))
\to \GLM (\surface , \star_1, \star_2).
\end{align*}
 The embeddings 
\begin{align*}
\iota_1: \surface \to \tilde{\surface}, p \mapsto (p,1)
\iota_0: \surface \to \tilde{\surface}, p \mapsto (p,0) 
\end{align*}
induce $\iota_{1*}, \iota{0*}$. We can extend them to completions. Then we have the three statements.

%Let S be a compact connected surface and M a 1-dimensional submanifold of the boundary. We choose base points in M and denote by S the closure of S. We consider linear maps. The two of them are composites of the ones induced by the embedding and the natural maps. The embeddings induce the other four of them. We can extend them to completions. Then we have the three statements.
\begin{enumerate}
\item
%$i =0,1$とする。
%$x \in \GL (\tilde{\surface})$
%と$y \in \GLM (\tilde{\surface}, (\star_1, i), (\star_2,i))$
%とし、
%\begin{equation*}
%\kappa'_* (\sigma (x)(y)) =
%\kappa'_* (\sigma (x)(\iota'_{1*} \circ \kappa'_* (y)))
%+\kappa' (\sigma (\iota'_{0*} \circ\kappa'_* (x))(y))
%\end{equation*}
%となる。

For $i=0,1$, $x \in \GL (\tilde{\surface})$, and $y \in \GLM (\tilde{\surface}, (\star_1, i), (\star_2,i))$, we have 
\begin{equation*}
\kappa'_* (\sigma (x)(y)) =
\kappa'_* (\sigma (x)(\iota'_{1*} \circ \kappa'_* (y)))
+\kappa' (\sigma (\iota'_{0*} \circ\kappa'_* (x))(y)).
\end{equation*}

%For II, X, and Y, We have the equation.
\item 
%$x \in \GL (\tilde{\surface})$
%と$y \in \GLM (\surface,\star_1, \star_2)$
%とし、$\kappa'_* (x)=0$のとき、任意の$n \in \Zlarger{0}$
%\begin{equation*}
%\kappa'_* ((\sigma (x))^n (\iota'_{0*}(y)))=
%\kappa'_* ((\sigma (x))^n (\iota'_{1*}(y)))
%\end{equation*}
%となる。

For $x \in \GL (\tilde{\surface})$ and $y \in \GLM (\surface,\star_1, \star_2)$ satisfying $\kappa'_* (x)=0$, we have 
\begin{equation*}
\kappa'_* ((\sigma (x))^n (\iota'_{0*}(y)))=
\kappa'_* ((\sigma (x))^n (\iota'_{1*}(y)))
\end{equation*}
for any $n \in \Zlarger{0}$.

%For X and Y satisfying the equation, we have the formula for any N.
\item
%$\chi \in \Diff (\tilde{\surface}, \partial \tilde{\surface})$
%を$\torelli' (\tilde{\surface})$の元を代表するようにとる。
%$\zeta_{\GL} (\chi)\in \filt{3} \cGL (\tilde{\surface})$
%を任意の$\star'_1, \star'_2 \in \partial \tilde{\surface}$で
%\begin{equation*}
%\chi_*=\exp (\sigma (\zeta_{\GL} (\chi))):
%\cGLM (\tilde{\surface}, \star'_1, \star'_2) \to
%\cGLM (\tilde{\surface}, \star'_1, \star'_2)
%\end{equation*}
%を満たすようにとる。
%さらに、$v(\zeta_{\GL} (\chi)) \in \cGL (\surface)$
%を
%\begin{equation*}
%\kappa'_* (\bch (-\iota'_{1*} (v(\zeta_{\GL} (\chi))), 
%\zeta_{\GL} (\chi)))=0
%\end{equation*}
%と仮定する。このとき、
%任意の$x \in  \cGLM (\surface, \star_1, \star_2)$
%で、
%\begin{equation*}
%\kappa'_* \circ \chi_*^{-1} \circ \iota'_{0*} (x)
%=\kappa'_* \circ \chi_*^{-1} \circ \iota'_{1*} 
%(\exp (\sigma (v(\zeta_{\GL} (\chi))))(x))
%\end{equation*}
%となる。

We choose a diffeomorphism 
$\chi \in \Diff (\tilde{\surface}, \partial \tilde{\surface})$
representing an element of $\torelli' (\tilde{\surface})$. Let $\zeta_{\GL} (\chi)$ and $v(\zeta_{\GL} (\chi))$ be elements of $\filt{3} \cGL (\tilde{\surface})$ and $\filt{3} \cGL (\surface)$, respectively, satisfying 
\begin{align*}
&
\chi_*=\exp (\sigma (\zeta_{\GL} (\chi))):
\cGLM (\tilde{\surface}, \star'_1, \star'_2) \to
\cGLM (\tilde{\surface}, \star'_1, \star'_2),\\
&
\kappa'_* (\bch (-\iota'_{1*} (v(\zeta_{\GL} (\chi))), 
\zeta_{\GL} (\chi)))=0.
\end{align*}
 Then we have 
\begin{equation*}
\kappa'_* \circ \chi_*^{-1} \circ \iota'_{0*} (x)
=\kappa'_* \circ \chi_*^{-1} \circ \iota'_{1*} 
(\exp (\sigma (v(\zeta_{\GL} (\chi))))(x))
\end{equation*}
 for any $x \in  \cGLM (\surface, \star_1, \star_2)$.

%We choose a diffeomorphism representing an element of the Torelli group. Let X and Y be elements of V and V', respectively, satisfying the conditions. Then we have the equation for any X'.
\end{enumerate}
\end{prop}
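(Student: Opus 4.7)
My plan is to establish the three statements in order, since (1) and (2) feed directly into (3). For (1), I would give a direct proof using the intersection-theoretic definition $\sigma(x)(y)=\sum_{p\in x\cap y}\epsilon(p,x,y)\,y_{\star,p}\,x_p\,y_{p,\star}$, choosing representatives of $x$ and $y$ in general position with respect to the decomposition $\tilde{\surface}=\iota_1(\surface)\cup\overline{((\partial\surface\setminus M^{(1)})\times I)}\cup\iota_0(\surface)$. The asymmetry between the two correction terms on the right-hand side reflects the orientations: $\iota_1$ is orientation-preserving so $\iota'_{1*}$ is a Lie algebra homomorphism, while $\iota_0$ is orientation-reversing so $\sigma(\iota'_{0*}a)(\iota'_{0*}b)=-\iota'_{0*}(\sigma(a)(b))$. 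A useful reduction is to write $x=\iota'_{0*}\kappa'_*(x)+x_0$ and $y=\iota'_{1*}\kappa'_*(y)+y_0$ with $x_0,y_0\in\ker\kappa'_*$; after expanding bilinearly, the identity reduces to $\kappa'_*(\sigma(x_0)(y_0))=0$, which is established by noting that classes in $\ker\kappa'_*$ admit representatives whose intersections in $\tilde\surface$ cancel in pairs after projection (e.g.\ using the spanning family $\iota'_{1*}(a)-\iota'_{0*}(a)$ and an analogous decomposition for the cylinder part).

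Statement (2) then follows from (1) by induction on $n$. With $\kappa'_*(x)=0$, the second correction term in (1) vanishes and $\kappa'_*(\sigma(x)(y))=\kappa'_*(\sigma(x)(\iota'_{1*}\kappa'_*(y)))$; hence the operator $y\mapsto\kappa'_*(\sigma(x)(y))$ factors through $\kappa'_*$, and iterating shows the same for $\sigma(x)^n$. Since $\kappa'_*\iota'_{0*}(y_0)=y_0=\kappa'_*\iota'_{1*}(y_0)$, the two sides of (2) coincide.

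For (3), set $w:=\zeta_{\GL}(\chi)$, $v:=v(w)$, and $u:=\bch(-\iota'_{1*}(v),w)$. The hypothesis gives $\kappa'_*(u)=0$, and rearranging yields $w=\bch(\iota'_{1*}(v),u)$, so because $\sigma$ is a Lie algebra homomorphism
\[
\chi_*^{-1}=\exp(-\sigma(w))=\exp(-\sigma(u))\circ\exp(-\sigma(\iota'_{1*}(v))).
\]
The geometric core of the argument is the vanishing $\sigma(\iota'_{1*}(v))(\iota'_{0*}(x))=0$: for a single conjugacy class representatives of $\iota'_{1*}(v)$ can be chosen in the interior of $\iota_1(\surface)$ and of $\iota'_{0*}(x)$ in $\iota_0(\surface)$, which are disjoint in $\tilde\surface$, so the intersection sum is empty; this extends to all $v\in\filt{3}\cGL(\surface)$ by linearity and continuity. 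Consequently $\sigma(\iota'_{1*}(v))^n(\iota'_{0*}(x))=0$ for every $n\ge 1$, so $\exp(-\sigma(\iota'_{1*}(v)))\iota'_{0*}(x)=\iota'_{0*}(x)$, and the left-hand side of (3) becomes $\kappa'_*(\exp(-\sigma(u))\iota'_{0*}(x))$, which by (2) equals $\kappa'_*(\exp(-\sigma(u))\iota'_{1*}(x))$. For the right-hand side, the intertwining $\iota'_{1*}\exp(\sigma(v))=\exp(\sigma(\iota'_{1*}(v)))\iota'_{1*}$ (from $\iota'_{1*}$ being a Lie algebra homomorphism) gives
\[
\kappa'_*\chi_*^{-1}\iota'_{1*}\exp(\sigma(v))(x)=\kappa'_*\exp(-\sigma(u))\exp(-\sigma(\iota'_{1*}(v)))\exp(\sigma(\iota'_{1*}(v)))\iota'_{1*}(x)=\kappa'_*\exp(-\sigma(u))\iota'_{1*}(x),
\]
matching the left-hand side.

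The main obstacle is the intersection bookkeeping in (1), in particular verifying $\kappa'_*(\sigma(x_0)(y_0))=0$ for $x_0,y_0\in\ker\kappa'_*$ without circular appeal to (1) itself; once that is settled, (2) and (3) are essentially algebraic manipulations combining Baker--Campbell--Hausdorff decomposition with the Lie-homomorphism property of $\iota'_{1*}$ and the disjoint-representatives vanishing.
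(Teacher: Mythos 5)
Your treatments of (2) and (3) coincide with the paper's: (2) is the same induction on $n$, using (1) and the vanishing of the second correction term when $\kappa'_*(x)=0$; and (3) is exactly the paper's computation, namely writing $\zeta_{\GL}(\chi)=\bch(\iota'_{1*}(v),v')$ with $v'=\bch(-\iota'_{1*}(v),\zeta_{\GL}(\chi))$, absorbing $\exp(-\sigma(\iota'_{1*}(v)))$ on $\iota'_{0*}(x)$ by disjointness of $\surface\times\{1\}$ and $\surface\times\{0\}$, using that $\iota'_{1*}$ is a Lie algebra homomorphism on the other side, and finishing with (2).

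The gap is in (1), and you have flagged it yourself. Two points. First, your decomposition $y=\iota'_{1*}\kappa'_*(y)+y_0$ only makes sense for $i=1$; for $y\in\GLM(\tilde\surface,(\star_1,0),(\star_2,0))$ you must instead write $y=\iota'_{0*}\kappa'_*(y)+y_0$, and then the difference of the two sides of (1) is not just $\kappa'_*(\sigma(x_0)(y_0))$ but also contains $\kappa'_*(\sigma(x_0)(\iota'_{0*}(z)))-\kappa'_*(\sigma(x_0)(\iota'_{1*}(z)))$ with $z=\kappa'_*(y)$ and $x_0\in\ker\kappa'_*$ --- which is precisely statement (2) with $n=1$, so it cannot be borrowed from (2) without circularity. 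Second, and more seriously, the remaining claim $\kappa'_*(\sigma(x_0)(y_0))=0$ for $x_0,y_0\in\ker\kappa'_*$ is essentially the entire content of (1): such classes are not supported away from each other (e.g.\ $x_0=\iota'_{1*}(a)-\iota'_{0*}(a)$ genuinely meets a path that travels between the two faces through $(\partial\surface\setminus M^{(1)})\times I$ in both copies of $\surface$), and the asserted ``cancellation in pairs after projection'' is exactly what must be proved; no argument is given. The paper avoids this bookkeeping entirely by transporting the problem to the skein module via $\PPsi{\Uh}{\tskein'}$: there $h\,\kappa'_*(\sigma(x)(y))=\kappa'_*(xy-yx)$, and the identities $\kappa'_*(xy)=\kappa'_*(x\,\iota'_{1*}\kappa'_*(y))$ and $\kappa'_*(yx)=\kappa'_*(y\,\iota'_{0*}\kappa'_*(x))$ (and their companions for $\kappa'_*(x)\kappa'_*(y)$) hold because the relevant stacked embeddings are isotopic rel $\{\star_1,\star_2\}\times I$ --- one slides the over/under factor to the $t=1$ or $t=0$ face before projecting. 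Subtracting, dividing by $h$ (legitimate since the skein module is free over $\Q[h]$ by Theorem \ref{thm_psi_Uh_tskein'}), and applying $\PPsi{\tskein'}{\GLM}$ yields (1) uniformly in $i$. If you want a purely Goldman-Lie-algebra proof you must supply the pair-cancellation argument explicitly; otherwise the skein route is the clean way through.
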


\begin{proof}

%(1)直接示すことも可能であるが、ここでは$\tskein'$を
%使って示す。$i=0,1$で$\tskein'$で準同型
%を定義するために
%次の埋め込みを定義する。
%$\kappa' :\tilde{\surface} \times I \to
%\surface \times I$を
%十分小さい正の実数$\epsilon$で、
%\begin{itemize}
%\item 任意の$p \in \tilde{\surface}$で
%$\kappa' (p,1)=p$.
%\item $p \in M^{(1)}$で、
%\begin{equation*}
%\kappa' ( (p,1),t)=(p,1-\epsilon+\epsilon t), \ 
%\kappa' ((p,0),t)=(p,\epsilon-\epsilon t).
%\end{equation*}
%\end{itemize}
%を満たすようにとる。
%また、
%埋め込み$\iota'_1:\surface \times I \to \tilde{\surface} \times I$
%と$\iota'_0: \surface \times I \to \tilde{\surface} \times I$
%を
%\begin{align*}
%&\iota'_1 (p,t) = ((p,1),t), \\
%&\iota'_0 (p,t)=((p,0),1-t)
%\end{align*}
%により定義する。
%$i=0,1$で埋め込み$\iota'_0,\iota_1,\kappa'$
%と$t_0,t_1 \in I$と$\surface'=\surface, \tilde{\surface}$
%と$\star, \star' \in \partial \surface'$で
%同型写像$\diamondsuit_{(t_0,t_1)}^{(1,0)}:
%\tskein' (\surface' \times I, (\star,t_0), (\star', t_1))
%\to \tskein' (\surface',\star,\star')$
%は、
%\begin{align*}
%&\kappa'_*:
%\tskein' (\tilde{\surface})
%\to \tskein' (\surface) \\
%&\kappa'_*:
%\tskein' (\tilde{\surface}, (\star_1, i), (\star_2,i))
%\to \tskein' (\surface, \star_1,\star_2), \\
%&\iota'_{i*}:
%\tskein' (\surface) \to
%\tskein' (\tilde{\surface}), \\
%&\iota'_{i*}:
%\tskein' (\surface, \star_1, \star_2) \to
%\tskein' (\surface, (\star_1, i), (\star_2,i))\\
%\end{align*}
%を誘導する。

(1)We can prove the statement directly. In this paper, we use the skein algebra $\tskein'$ to prove it. We consider three embeddings
$\kappa' :\tilde{\surface} \times I \to
\surface \times I$,
$\iota'_1:\surface \times I \to \tilde{\surface} \times I$, and
$\iota'_0: \surface \times I \to \tilde{\surface} \times I$.
\begin{itemize}
\item
 The first one satisfies 
\begin{itemize}
\item For $p \in \tilde{\surface}$,
$\kappa' (p,1)=p$.
\item For $p \in M^{(1)}$,
$
\kappa' ( (p,1),t)=(p,1-\epsilon+\epsilon t), \ 
\kappa' ((p,0),t)=(p,\epsilon-\epsilon t).
$
\end{itemize}
 for enough small $\epsilon >0$. 
\item
We define the second one and the third one as 
\begin{align*}
&\iota'_1 (p,t) = ((p,1),t), \ 
\iota'_0 (p,t)=((p,0),1-t).
\end{align*}
\end{itemize}
 There exist isomorphisms 
$\diamondsuit_{(t_0,t_1)}^{(1,0)}:
\tskein' (\surface' \times I, (\star,t_0), (\star', t_1))
\to \tskein' (\surface',\star,\star')$
for any $t_0,t_1 \in I$.
 These embeddings and these isomorphisms induce the linear maps
\begin{align*}
&\kappa'_*:
\tskein' (\tilde{\surface})
\to \tskein' (\surface) \\
&\kappa'_*:
\tskein' (\tilde{\surface}, (\star_1, i), (\star_2,i))
\to \tskein' (\surface, \star_1,\star_2), \\
&\iota'_{i*}:
\tskein' (\surface) \to
\tskein' (\tilde{\surface}), \\
&\iota'_{i*}:
\tskein' (\surface, \star_1, \star_2) \to
\tskein' (\surface, (\star_1, i), (\star_2,i)).\\
\end{align*}

We prove the statement using 
\begin{align*}
&\kappa'_* (xy)=\kappa'_* (x \iota'_{1*} \circ \kappa'_* (y))
=\kappa'_* (x \iota'_{0*} \circ \kappa'_* (y)), \\
&\kappa'_* (yx)=\kappa'_* (y \iota'_{1*} \circ \kappa'_* (x))
=\kappa'_* (y \iota'_{0*} \circ \kappa'_* (x)), \\
&\kappa'_* (x) \kappa'_* (y)
=\kappa'_* (\iota'_{1*} \circ \kappa'_* (x)y)
=\kappa'_* (\iota'_{0*} \circ \kappa'_* (y) x), \\ 
&\kappa'_* (y) \kappa'_* (x)
=\kappa'_* (\iota'_{1*} \circ \kappa'_* (y)x)
=\kappa'_* (\iota'_{0*} \circ \kappa'_* (x) y)
\end{align*}
 for any $x \in \tskein' (\tilde{\surface})$
and $y \in \tskein' (\tilde{\surface}, (\star_1,i),(\star_2,i))$.
 We set 
\begin{align*}
&e_\mathrm{over}:\surface' \times I \to \surface' \times I,(p,t) \mapsto
(p, \frac{2+t}{4}), \\
&e_\mathrm{under}:\surface' \times I \to \surface' \times I, (p,t) \mapsto
(p,\frac{t}{4})
\end{align*}
 for any surface $\surface'$. Since the embeddings 
$\kappa' \circ (e_\mathrm{over} \sqcup e_\mathrm{under})$,
$\kappa' \circ (e_\mathrm{over} \sqcup (e_\mathrm{under}
\circ \iota'_1 \circ \kappa'))$,
and
$\kappa' \circ (e_\mathrm{over} \sqcup (e_\mathrm{under}
\circ \iota'_0 \circ \kappa'))$
are isotopic preserving 
$\shuugou{\star_1, \star_2} \times I$,
 we have the first one and the second one.  Furthermore, since the embeddings
$e_\mathrm{over} \circ \kappa' \sqcup
e_\mathrm{under} \circ \kappa'$,
$\kappa' \circ ((e_\mathrm{over} \circ \iota'_1 \circ \kappa') \sqcup
e_\mathrm{under})$,
and
$\kappa' \circ (e_\mathrm{under}  \sqcup
(e_\mathrm{over} \circ \iota'_0 \circ \kappa'))$ are isotopic preserving 
$\shuugou{\star_1, \star_2} \times I$, 
we also have the third one and the fourth one.

%We prove the statement using the formulas for any elements. We set two maps for any surface. Since the embeddings are isotopic preserving S, we have the first one and the second one.  Furthermore, since the embeddings are isotopic preserving S, we also have the third one and the fourth one.

%$x \in \tskein' (\tilde{\surface})$
%と$y \in \tskein' (\tilde{\surface}, (\star_1, i), (\star_2,i))$
%とし、
%\begin{align*}
%&h \kappa'_* (\sigma (x)(y)) \\
%&= \kappa'_* (x y-yx) \\
%&=\kappa'_* (x \iota'_{1*} \circ \kappa'_* (y)-
%y \iota'_{0*} \circ \kappa'_* (x)) \\
%&=(\kappa'_* (x \iota'_{1*} \circ \kappa'_* (y))-
%\kappa'_* (y) \kappa'_* (x))
%+(\kappa'_* (y) \kappa'_* (x)-
%\kappa'_* (y \iota'_{0*} \circ \kappa'_* (x))) \\
%&=\kappa'_* (x \iota'_{1*} \circ \kappa'_* (y)-
%\iota'_{1*} \circ \kappa'_* (y) x)
%+\kappa'_* (\iota'_{0*} \circ \kappa'_* (x)y
%-y \iota'_{0*} \circ \kappa'_* (x)) \\
%&=h(\kappa'_* (\sigma (x)(\iota'_{1*} \circ \kappa'_* (y)))
%+\kappa'_* (\sigma (\iota'_{0*} \circ \kappa'_* (x))(y)))
%\end{align*}
%が得られる。
%$\tskein' (\surface_\mathrm{st}, \star_1, \star_2)$は
%Theorem \ref{thm_psi_Uh_tskein'} より$\Q [h]$自由加群なので、
%\begin{equation*}
%\kappa'_* (\sigma (x)(y)) =
%\kappa'_* (\sigma (x)(\iota'_{1*} \circ \kappa'_* (y)))
%+\kappa (\sigma (\iota'_{0*} \circ\kappa'_* (x))(y))
%\end{equation*}
%が得られる。

Using the above formulas, we obtain 
\begin{align*}
&h \kappa'_* (\sigma (x)(y)) \\
&= \kappa'_* (x y-yx) \\
&=\kappa'_* (x \iota'_{1*} \circ \kappa'_* (y)-
y \iota'_{0*} \circ \kappa'_* (x)) \\
&=(\kappa'_* (x \iota'_{1*} \circ \kappa'_* (y))-
\kappa'_* (y) \kappa'_* (x))
+(\kappa'_* (y) \kappa'_* (x)-
\kappa'_* (y \iota'_{0*} \circ \kappa'_* (x))) \\
&=\kappa'_* (x \iota'_{1*} \circ \kappa'_* (y)-
\iota'_{1*} \circ \kappa'_* (y) x)
+\kappa'_* (\iota'_{0*} \circ \kappa'_* (x)y
-y \iota'_{0*} \circ \kappa'_* (x)) \\
&=h(\kappa'_* (\sigma (x)(\iota'_{1*} \circ \kappa'_* (y)))
+\kappa'_* (\sigma (\iota'_{0*} \circ \kappa'_* (x))(y)))
\end{align*}
 for any $x \in \tskein' (\tilde{\surface})$
 and $y \in \tskein' (\tilde{\surface}, (\star_1, i), (\star_2,i))$. By Theorem \ref{thm_psi_Uh_tskein'}, $\tskein' (\surface_\mathrm{st}, \star_1, \star_2)$ is a free 
$\Q[h]$-module. So we have 
\begin{equation*}
\kappa'_* (\sigma (x)(y)) =
\kappa'_* (\sigma (x)(\iota'_{1*} \circ \kappa'_* (y)))
+\kappa (\sigma (\iota'_{0*} \circ\kappa'_* (x))(y)).
\end{equation*}

%Using the above formulas, we obtain the computation for any X and Y. By the above theorem, V is a free module. So we have the equation.
%
%%一般のコンパクトで向きづけられた曲面$\surface$
%%と$\star,\star' \in \partial \surface$で、
%%$\GL (\surface)$が$\Uh (\surface)$の
%%部分加群、$\GLM (\surface, \star, \star')$が
%%$\Uh (\surface, \star, \star')$の
%%部分加群とみなせることに注意しておく。
%$x' \in \GLM (\tilde{\surface}_\mathrm{st})$
%と$y' \in \GLM (\tilde{\surface}_\mathrm{st}, (\star_1, i), (\star_2,i))$
%で$x\defeq \PPsi{\GL}{\tskein'} (x')$と
%$y \defeq \PPsi{\GLM}{\tskein'}(y')$として
%\begin{align*}
%&\kappa'_* (\sigma (x')(y')) \\
%&=
%\PPsi{\tskein'}{\GLM (i,i)} (\kappa'_* (\sigma (x)(y))) \\
%&=\PPsi{\tskein'}{\GLM (i,i)}(\kappa'_* (\sigma (x)(\iota'_{1*} \circ \kappa'_* (y)))
%+\kappa'_* (\sigma (\iota'_{0*} \circ\kappa'_* (x))(y))) \\
%&=\kappa'_* (\sigma (x')(\iota'_{1*} \circ \kappa'_* (y')))
%+\kappa'_* (\sigma (\iota'_{0*} \circ\kappa'_* (x'))(y'))
%\end{align*}
%となる。以上より(1)が示せた。

For $x' \in \GLM (\tilde{\surface}_\mathrm{st})$ and $y' \in \GLM (\tilde{\surface}_\mathrm{st}, (\star_1, i), (\star_2,i))$, we set $x$ and $y$ as 
\begin{align*}
&x\defeq \PPsi{\GL}{\tskein'} (x'), \
y \defeq \PPsi{\GLM}{\tskein'}(y').
\end{align*}
 We have 
\begin{align*}
&\kappa'_* (\sigma (x')(y')) \\
&=
\PPsi{\tskein'}{\GLM (i,i)} (\kappa'_* (\sigma (x)(y))) \\
&=\PPsi{\tskein'}{\GLM (i,i)}(\kappa'_* (\sigma (x)(\iota'_{1*} \circ \kappa'_* (y)))
+\kappa'_* (\sigma (\iota'_{0*} \circ\kappa'_* (x))(y))) \\
&=\kappa'_* (\sigma (x')(\iota'_{1*} \circ \kappa'_* (y')))
+\kappa'_* (\sigma (\iota'_{0*} \circ\kappa'_* (x'))(y'))
\end{align*}
 as desired.

%For X and Y, we set X' and Y' as the equations. We have the computation as desired.

%(2)帰納法で示す。$n=0$では$\kappa'_* \circ \iota'_{0*}=\kappa'_* \circ \iota'_{1*}$
%より成り立つ。
%\begin{equation*}
%\kappa'_* ((\sigma (x))^{n-1} (\iota'_{0*}(y)))=
%\kappa'_* ((\sigma (x))^{n-1} (\iota'_{1*}(y)))
%\end{equation*}
%と仮定する。(1)を使って
%$\kappa'_* (x)=0$より、
%\begin{align*}
%&\kappa'_* ((\sigma (x))^{n} (\iota'_{0*}(y))) \\
%&=\kappa'_* (\sigma (x) (\iota'_{1*} \circ 
%\kappa'_* ((\sigma (x))^{n-1} (\iota'_{0*}(y)))))
%+\kappa'_* (\sigma (\iota'_{0*} \circ \kappa'_* (x))
%(\sigma (x))^{n-1} (\iota'_{0*}(y))) \\
%&=\kappa'_* (\sigma (x) (\iota'_{1*} \circ 
%\kappa'_* ((\sigma (x))^{n-1} (\iota'_{0*}(y))))), \\
%\end{align*}
%\begin{align*}
%&\kappa'_* ((\sigma (x))^{n} (\iota'_{1*}(y))) \\
%&=\kappa'_* (\sigma (x) (\iota'_{1*} \circ 
%\kappa'_* ((\sigma (x))^{n-1} (\iota'_{1*}(y)))))
%+\kappa'_* (\sigma (\iota'_{0*} \circ \kappa'_* (x))
%(\sigma (x))^{n-1} (\iota'_{1*}(y))) \\
%&=\kappa'_* (\sigma (x) (\iota'_{1*} \circ 
%\kappa'_* ((\sigma (x))^{n-1} (\iota'_{1*}(y))))) \\
%\end{align*}
%とわかる。帰納法の仮定より、
%\begin{equation*}
%\kappa'_* ((\sigma (x))^{n} (\iota'_{0*}(y)))
%=
%\kappa'_* ((\sigma (x))^{n} (\iota'_{1*}(y)))
%\end{equation*}
%とわかる。

(2)We will prove the statement by induction on $n$. If $n=0$, 
$
\kappa'_* (\iota'_{0*}(y))=
\kappa'_* (\iota'_{1*}(y))$
 holds. We assume
\begin{equation*}
\kappa'_* ((\sigma (x))^{n-1} (\iota'_{0*}(y)))=
\kappa'_* ((\sigma (x))^{n-1} (\iota'_{1*}(y))).
\end{equation*} 
We have 
\begin{align*}
&\kappa'_* ((\sigma (x))^{n} (\iota'_{0*}(y))) \\
&=\kappa'_* (\sigma (x) (\iota'_{1*} \circ 
\kappa'_* ((\sigma (x))^{n-1} (\iota'_{0*}(y)))))
+\kappa'_* (\sigma (\iota'_{0*} \circ \kappa'_* (x))
(\sigma (x))^{n-1} (\iota'_{0*}(y))) \\
&=\kappa'_* (\sigma (x) (\iota'_{1*} \circ 
\kappa'_* ((\sigma (x))^{n-1} (\iota'_{0*}(y))))), \\
&\kappa'_* ((\sigma (x))^{n} (\iota'_{1*}(y))) \\
&=\kappa'_* (\sigma (x) (\iota'_{1*} \circ 
\kappa'_* ((\sigma (x))^{n-1} (\iota'_{1*}(y)))))
+\kappa'_* (\sigma (\iota'_{0*} \circ \kappa'_* (x))
(\sigma (x))^{n-1} (\iota'_{1*}(y))) \\
&=\kappa'_* (\sigma (x) (\iota'_{1*} \circ 
\kappa'_* ((\sigma (x))^{n-1} (\iota'_{1*}(y))))). \\
\end{align*}
 By this inductive assumption, we obtain 
\begin{equation*}
\kappa'_* ((\sigma (x))^{n} (\iota'_{0*}(y)))
=
\kappa'_* ((\sigma (x))^{n} (\iota'_{1*}(y)))
\end{equation*}
 as desired.

%We will prove the statement by induction on N. If N equals zero, the equation holds. We assume the assumption. We have computations. By this inductive assumption, we obtain the formula as desired.

%(3)簡単に$\zeta \defeq \zeta_{\GL} (\chi),
%v\defeq v (\zeta_{\GL} (\chi)), v' \defeq \bch (-\iota'_{1*} (v),\zeta)$
%と書く。
%仮定にあった式
%\begin{equation*}
%\chi_*=\exp (\sigma (\zeta )):
%\cGL (\tilde{\surface}, \star'_1, \star'_2)
%\end{equation*}
%を使って、
%\begin{align*}
%&\kappa'_* \circ \chi_*^{-1} \circ \iota'_{0*} (x)\\
%&=\kappa'_* (\exp (\sigma (-\zeta))(\iota'_{0*}(x)))\\
%&=\kappa'_* (\exp (\sigma (-\bch (\iota'_{1*}(v),v'))(\iota'_{0*}(x)))\\
%&=\kappa'_* (\exp (\sigma (-v')) \circ \exp(\sigma (-\iota'_{1*} (v)))(
%\iota'_{0*} (x)))\\
%&= \kappa'_* (\exp (\sigma (-v')) (\iota'_{0*} (x))),
%\end{align*}
%
%\begin{align*}
%&\kappa'_* \circ \chi_*^{-1} \circ \iota'_{1*} 
%(\exp (\sigma (v)(x)) \\
%&=\kappa'_* (\exp (\sigma (-\zeta))(\iota'_{1*} 
%(\exp (\sigma (v)(x)))) \\
%&=\kappa'_* (\exp (\sigma (-\bch (\iota'_{1*}(v),v'))(\iota'_{1*} 
%(\exp (\sigma (v)(x))))\\
%&=\kappa'_* (\exp (\sigma (-v')) \circ \exp(\sigma (-\iota'_{1*} (v)))(
%\iota'_{1*} (\exp (\sigma (v)(x))))\\
%&=\kappa'_* (\exp (\sigma (-v')) \circ \exp(\sigma (-\iota'_{1*} (v)))\circ
%\exp (\sigma (\iota'_{1*}(v))) (\iota'_{1*} (x))\\
%&=\kappa'_* (\exp (\sigma (-v'))  (\iota'_{1*} (x))\\
%\end{align*}
%が得られる。
%(2)より、
%\begin{equation*}
%\kappa'_* \circ \chi_*^{-1} \circ \iota'_{0*} (x)
%=\kappa'_* \circ \chi_*^{-1} \circ \iota'_{1*} 
%(\exp (\sigma (v(\zeta_{\GL} (\chi))))(x))
%\end{equation*}
%が得られた。
%以上よりこの命題が示せた。

(3)We write $\zeta$, $v$, and $v'$ for 
$ \zeta_{\GL} (\chi),$
$ v (\zeta_{\GL} (\chi))$, and $ \bch (-\iota'_{1*} (v),\zeta)$
 respectively. Using 
\begin{equation*}
\chi_*=\exp (\sigma (\zeta )):
\cGL (\tilde{\surface}, \star'_1, \star'_2),
\end{equation*}
 we have 
\begin{align*}
&\kappa'_* \circ \chi_*^{-1} \circ \iota'_{0*} (x)\\
&=\kappa'_* (\exp (\sigma (-\zeta))(\iota'_{0*}(x)))\\
&=\kappa'_* (\exp (\sigma (-\bch (\iota'_{1*}(v),v'))(\iota'_{0*}(x)))\\
&=\kappa'_* (\exp (\sigma (-v')) \circ \exp(\sigma (-\iota'_{1*} (v)))(
\iota'_{0*} (x)))\\
&= \kappa'_* (\exp (\sigma (-v')) (\iota'_{0*} (x))),
\end{align*}

\begin{align*}
&\kappa'_* \circ \chi_*^{-1} \circ \iota'_{1*} 
(\exp (\sigma (v)(x)) \\
&=\kappa'_* (\exp (\sigma (-\zeta))(\iota'_{1*} 
(\exp (\sigma (v)(x)))) \\
&=\kappa'_* (\exp (\sigma (-\bch (\iota'_{1*}(v),v'))(\iota'_{1*} 
(\exp (\sigma (v)(x))))\\
&=\kappa'_* (\exp (\sigma (-v')) \circ \exp(\sigma (-\iota'_{1*} (v)))(
\iota'_{1*} (\exp (\sigma (v)(x))))\\
&=\kappa'_* (\exp (\sigma (-v')) \circ \exp(\sigma (-\iota'_{1*} (v)))\circ
\exp (\sigma (\iota'_{1*}(v))) (\iota'_{1*} (x))\\
&=\kappa'_* (\exp (\sigma (-v'))  (\iota'_{1*} (x)).\\
\end{align*}
 By (2), we obtain 
\begin{equation*}
\kappa'_* \circ \chi_*^{-1} \circ \iota'_{0*} (x)
=\kappa'_* \circ \chi_*^{-1} \circ \iota'_{1*} 
(\exp (\sigma (v(\zeta_{\GL} (\chi))))(x)),
\end{equation*}
 as desired. 

The above statements prove the proposition.

%We write z, v, and v' for the three elements, respectively. Using the assumption, we have the computations. By the second statement, we obtain the formula, as desired. 
%The above statements prove the proposition.

\end{proof}

Let $e'_\mathrm{st}; \surface_\mathrm{st} \hookrightarrow
\surface$
 be an embedding such that 
$e_\mathrm{st}: \surface_\mathrm{st} \times I \hookrightarrow
\surface \times I,(p,t) \mapsto (e'_\mathrm{st},\frac{1+t}{3})$ is a standard embedding and 
$\tilde{\surface}_\mathrm{st}$ be the closure of $\partial (\surface_\mathrm{st} \times I) \backslash (e'^{-1} (\partial \surface)\times I)$. We choose a diffeomorphism $\chi$ representing an element of $\torelli' (\tilde{\surface}_\mathrm{st})$. 
We set $(\surface \times I)(e_\mathrm{st}, \chi)$ as the 3-manifold that is the quotient of the set 
$
\overline{(\surface \times I) \backslash e ({\surface}_\mathrm{st} \times I)} \sqcup
{\surface}_\mathrm{st} \times I$
by the relation
$
e_\mathrm{st} \circ \chi(p) \sim p
$ for any $p \in \tilde{\surface}_\mathrm{st}$.
 We consider the embedding maps.
\begin{itemize}
\item
We denote by $\kappa$ the natural embedding $\tilde{\surface}_\mathrm{st} \to \surface_\mathrm{st} \times I$.
\item
The identity map 
 $\surface_\mathrm{st} \times I \to \surface_\mathrm{st}
\times I$
induces an embedding map
\begin{equation*}
e_\mathrm{st}^\flat:\surface_\mathrm{st} \times I \to
(\surface \times I)(e_\mathrm{st}, \chi).
\end{equation*}
\item
We recall that we set the embeddings 
$\iota_1,\iota_0: \surface_\mathrm{st} \to
\tilde{\surface}_\mathrm{st}$ and 
$\alpha_1, \alpha_0: \surface \to 
(\surface \times I)(e_\mathrm{st}, \chi)$
as 
\begin{align*}
&\iota_1: \surface_\mathrm{st} \to
\tilde{\surface}_\mathrm{st},p \mapsto  (p,1) \\
&\iota_0: \surface_\mathrm{st} \to
\tilde{\surface}_\mathrm{st},p \mapsto (p,0), \\
&\alpha_1: \surface \to (\surface \times I)(e_\mathrm{st}, \chi),
p \mapsto (p,1), \\
&\alpha_0: \surface \to (\surface \times I)(e_\mathrm{st}, \chi),
p \mapsto (p,0). \\
\end{align*}
\end{itemize}
We recall that we set the diffeomorphism $\alpha^{e_\mathrm{st}}:\partial (\surface \times I) \to
\partial ((\surface \times I)(e_\mathrm{st}, \chi))$
as 
\begin{equation*}
\alpha^{e_\mathrm{st}} (p,t)=
\begin{cases}
(p,t) \mathrm{\ if \ }p \in \surface, t \in \shuugou{0,1} \\
(p,t) \mathrm{\ if \ }p\in \partial \surface, t \in [0, \frac{1}{3}] \cup
[\frac{2}{3},1] \\
e_\mathrm{st}^{-1} (p,3t-1) \mathrm{\ if \ }
p \in \partial \surface , t \in [\frac{1}{3}, \frac{2}{3}].
\end{cases}
\end{equation*} 
For any $\star_1, \star_2 \in \partial \surface$ and $t_1, t_2 \in I$, we define
\begin{align*} 
&\diamondsuit_{t_1}^{t_2}:\pi_1 ((\surface \times I)(e_\mathrm{st}, \chi), 
\alpha^{e_\mathrm{st}-1}(\star_1,t_1), \alpha^{e_\mathrm{st}-1}(\star_2, t_1))  \\
&\to \pi_1 ((\surface \times I)(e_\mathrm{st}, \chi), 
\alpha^{e_\mathrm{st}-1}(\star_1,t_2),
\alpha^{e_\mathrm{st}-1} (\star_2, t_2))
\end{align*}
 by $( \cdot ) \mapsto \gamma_{\star_1,t_2, t_1} (\cdot)
\gamma_{\star_2, t_1, t_2}$. Here, for any $\star \in \partial \surface$ and $t,t' \in I$, the continuous map $I \to (\surface \times I )(e, \chi),s \mapsto 
\alpha^{e_\mathrm{st}-1}(\star,ts+t'(1-s))$ represents the path
$\gamma_{\star,t,t'} \in \pi_1 ((\surface \times I)(e, \chi), \alpha^{e_\mathrm{st}-1}(\star,t),
\alpha^{e_\mathrm{st}-1}(\star,t'))$.
 To prove Theorem \ref{thm_qbtskein_main_standard_embedding},
 we need the following lemma.

%Let E' be an embedding such that E is a standard embedding and S be the closure of the subsurface. We choose a diffeomorphism representing an element of II. We set M as the 3-manifold that is the quotient of the set by the relation. We consider the embedding maps. 
%We denote by K the natural embedding.
%The identity map induces an embedding map.
%We recall that we set the embeddings as the equations.
%%We recall that we set the diffeomorphism as the equation. For any points and values, we define isomorphisms by the condition. Here, for any base point and values, the continuous map represents the path. To prove the theorem, we need the following lemma.

\begin{lemm}
\label{lemm_technical_homology_cylinder_GL}
\begin{enumerate}
\item 
%$\star_1, \star_2 \in \partial \surface$で、
%埋め込み$e'_\mathrm{st}:\surface_\mathrm{st}\to \surface$
%が誘導する$e'_{\mathrm{st}*}: \pi_1 (\surface_\mathrm{st}, {e'_\mathrm{st}}^{-1} (\star_1),
% {e'_\mathrm{st}}^{-1} (\star_2)) \to \pi_1 (\surface,\star_1, \star_2)$
%は全射になる。
For any $\star_1, \star_2 \in \partial \surface$, the homomorphism
\begin{equation*}
e'_{\mathrm{st}*}: \pi_1 (\surface_\mathrm{st}, {e'_\mathrm{st}}^{-1} (\star_1),
 {e'_\mathrm{st}}^{-1} (\star_2)) \to \pi_1 (\surface,\star_1, \star_2)
\end{equation*}
 induced by the embedding 
$e'_\mathrm{st}:\surface_\mathrm{st}\to \surface$
is surjective.
%For any base points, the homomorphism induced by the embedding is surjective.
\item 
%$\star_1, \star_2 \in \partial \surface_\mathrm{st}$をとる。
%$\alpha_1 \circ e_\mathrm{st}$と$\alpha_0 \circ e_\mathrm{st}$と
%$e_\mathrm{st}^\flat \circ \tilde{\kappa} \circ \chi^{-1} \circ \iota_1$と
%$e_\mathrm{st}^\flat \circ \tilde{\kappa} \circ \chi^{-1} \circ \iota_0$
%が誘導する
%\begin{align*}
%&(\alpha_1 \circ e_\mathrm{st})_*:\pi_1 (\surface_\mathrm{st}, \star_1, \star_2)
%\to \pi_1 ((\surface \times I)(e_\mathrm{st}, \chi), (e'_\mathrm{st} (\star_1),1),
%(e'_\mathrm{st} (\star_2),1)), \\
%&(\alpha_0 \circ e_\mathrm{st})_* :\pi_1 (\surface_\mathrm{st}, \star_1, \star_2)
%\to \pi_1 ((\surface \times I)(e_\mathrm{st}, \chi), (e'_\mathrm{st} (\star_1),0),
%(e'_\mathrm{st} (\star_2),0)), \\
%&(e_\mathrm{st}^\flat \circ \tilde{\kappa} \circ \chi^{-1} \circ \iota_1)_*
%:\pi_1 (\surface_\mathrm{st}, \star_1, \star_2)
%\to \pi_1 ((\surface \times I)(e_\mathrm{st}, \chi), (e'_\mathrm{st} (\star_1),\frac{2}{3}),
%(e'_\mathrm{st} (\star_2),\frac{2}{3})),  \\
%&(e_\mathrm{st}^\flat \circ \tilde{\kappa} \circ \chi^{-1} \circ \iota_0)_*
%:\pi_1 (\surface_\mathrm{st}, \star_1, \star_2)
%\to \pi_1 ((\surface \times I)(e_\mathrm{st}, \chi), (e'_\mathrm{st} (\star_1),\frac{1}{3}),
%(e'_\mathrm{st} (\star_2),\frac{1}{3})),  \\
%\end{align*}
%は
%\begin{align*}
%&(\alpha_1 \circ e_\mathrm{st})_*=\diamondsuit_{\frac{2}{3}}^{1} \circ
%(e_\mathrm{st}^\flat \circ \tilde{\kappa} \circ \chi^{-1} \circ \iota_1)_*, \\
%&(\alpha_0 \circ e_\mathrm{st})_*=\diamondsuit_{\frac{1}{3}}^{0} \circ
%(e_\mathrm{st}^\flat \circ \tilde{\kappa} \circ \chi^{-1} \circ \iota_0)_*
%\end{align*}
%となる。

We fix $\star_1, \star_2 \in \partial \surface_\mathrm{st}$. The homomorphisms
\begin{align*}
&(\alpha_1 \circ e_\mathrm{st})_*:\pi_1 (\surface_\mathrm{st}, \star_1, \star_2)
\to \pi_1 ((\surface \times I)(e_\mathrm{st}, \chi), (e'_\mathrm{st} (\star_1),1),
(e'_\mathrm{st} (\star_2),1)), \\
&(\alpha_0 \circ e_\mathrm{st})_* :\pi_1 (\surface_\mathrm{st}, \star_1, \star_2)
\to \pi_1 ((\surface \times I)(e_\mathrm{st}, \chi), (e'_\mathrm{st} (\star_1),0),
(e'_\mathrm{st} (\star_2),0)), \\
&(e_\mathrm{st}^\flat \circ \tilde{\kappa} \circ \chi^{-1} \circ \iota_1)_*
:\pi_1 (\surface_\mathrm{st}, \star_1, \star_2)
\to \pi_1 ((\surface \times I)(e_\mathrm{st}, \chi), (e'_\mathrm{st} (\star_1),\frac{2}{3}),
(e'_\mathrm{st} (\star_2),\frac{2}{3})),  \\
&(e_\mathrm{st}^\flat \circ \tilde{\kappa} \circ \chi^{-1} \circ \iota_0)_*
:\pi_1 (\surface_\mathrm{st}, \star_1, \star_2)
\to \pi_1 ((\surface \times I)(e_\mathrm{st}, \chi), (e'_\mathrm{st} (\star_1),\frac{1}{3}),
(e'_\mathrm{st} (\star_2),\frac{1}{3})),  \\
\end{align*}
 induced by the embeddings
$\alpha_1 \circ e_\mathrm{st}$,
$\alpha_0 \circ e_\mathrm{st}$,
$e_\mathrm{st}^\flat \circ \tilde{\kappa} \circ \chi^{-1} \circ \iota_1$,
and $e_\mathrm{st}^\flat \circ \tilde{\kappa} \circ \chi^{-1} \circ \iota_0$
 satisfy 
\begin{align*}
&(\alpha_1 \circ e_\mathrm{st})_*=\diamondsuit_{\frac{2}{3}}^{1} \circ
(e_\mathrm{st}^\flat \circ \tilde{\kappa} \circ \chi^{-1} \circ \iota_1)_*, \\
&(\alpha_0 \circ e_\mathrm{st})_*=\diamondsuit_{\frac{1}{3}}^{0} \circ
(e_\mathrm{st}^\flat \circ \tilde{\kappa} \circ \chi^{-1} \circ \iota_0)_*.
\end{align*}
%We fix base points. The homomorphisms induced by the embeddings satisfy the equations.
\item 
%$\zeta_{\GL} (\chi)\in \filt{3} \cGL (\tilde{\surface}_\mathrm{st})$
%を任意の$\star'_1, \star'_2 \in \partial \tilde{\surface}_\mathrm{st}$で
%\begin{equation*}
%\chi_*=\exp (\sigma (\zeta_{\GL} (\chi))):
%\cGLM (\tilde{\surface}_\mathrm{st}, \star'_1, \star'_2) \to
%\cGLM (\tilde{\surface}_\mathrm{st}, \star'_1, \star'_2)
%\end{equation*}
%を満たすようにとる。
%さらに、$v(\zeta_{\GL} (\chi)) \in \cGL (\surface_\mathrm{st})$
%を
%\begin{equation*}
%\tilde{\kappa}_* (\bch (-\iota_{1*} (v(\zeta_{\GL} (\chi))), 
%\zeta_{\GL} (\chi)))=0
%\end{equation*}
%と仮定する。このとき、
%任意の$x \in  \cGLM (\surface_\mathrm{st}, \star_1, \star_2)$
%で、
%\begin{equation*}
%\tilde{\kappa}_* \circ \chi_*^{-1} \circ \iota_{0*} (x)
%=\diamondsuit_1^0 \circ \tilde{\kappa}_* \circ \chi_*^{-1} \circ \iota_{1*} 
%(\exp (\sigma (v(\zeta_{\GL} (\chi))))(x))
%\end{equation*}
%となる。

Let $\zeta_{\GL} (\chi)$ and $v(\zeta_{\GL} (\chi))$ be elements in 
$\filt{3} \cGL (\tilde{\surface}_\mathrm{st})$ and $\filt{3}\cGL (\surface_\mathrm{st})$
 satisfying the two properties, respectively. 
\begin{itemize}
\item
For any $\star'_1, \star'_2 \in \partial \tilde{\surface}_\mathrm{st}$,
 we have 
\begin{equation*}
\chi_*=\exp (\sigma (\zeta_{\GL} (\chi))):
\cGLM (\tilde{\surface}_\mathrm{st}, \star'_1, \star'_2) \to
\cGLM (\tilde{\surface}_\mathrm{st}, \star'_1, \star'_2).
\end{equation*} 
\item
We have $
\tilde{\kappa}_* (\bch (-\iota_{1*} (v(\zeta_{\GL} (\chi))), 
\zeta_{\GL} (\chi)))=0$.
\end{itemize} 
Then we have 
\begin{equation*}
\tilde{\kappa}_* \circ \chi_*^{-1} \circ \iota_{0*} (x)
=\diamondsuit_1^0 \circ \tilde{\kappa}_* \circ \chi_*^{-1} \circ \iota_{1*} 
(\exp (\sigma (v(\zeta_{\GL} (\chi))))(x))
\end{equation*}
 for any $x \in  \cGLM (\surface_\mathrm{st}, \star_1, \star_2)$.

%Let Z and V be elements in G and G' satisfying the two properties, respectively. The first one is that, for any base points, we have the equation. The second is that we have the equation. Then we have the following for any X.
\item
%(3)の仮定において、任意の$x \in  \cGLM (\surface, \star_1, \star_2)$
%で、
%\begin{equation*}
%\alpha_{0*} (x)
%=\diamondsuit_1^0 \circ \alpha_{1*}
%(\exp (\sigma (e'_{\mathrm{st}*}(v(\zeta_{\GL} (\chi)))))(x))
%\end{equation*}
%となる。

Under the assumption in (3), we have 
\begin{equation*}
\alpha_{0*} (x)
=\diamondsuit_1^0 \circ \alpha_{1*}
(\exp (\sigma (e'_{\mathrm{st}*}(v(\zeta_{\GL} (\chi)))))(x))
\end{equation*}
 for any $x \in  \cGLM (\surface, \star_1, \star_2)$.
%Under the assumption in the third statement, we have the equation for any X.
\end{enumerate}
\end{lemm}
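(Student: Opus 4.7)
The plan is to reduce (4) to (3) by restricting to elements in the image of $e'_{\mathrm{st} *}$ and transferring the identity from $\surface_\mathrm{st}\times I$ to $(\surface\times I)(e_\mathrm{st},\chi)$ via $(e_\mathrm{st}^\flat)_*$. First I would use part (1) of the lemma to reduce the claim to elements of the form $x=e'_{\mathrm{st} *}(y)$: since both sides of (4) are continuous $\Q$-linear endomorphisms of $\cGLM(\surface,\star_1,\star_2)$ and every element of $\pi_1(\surface,\star_1,\star_2)$ arises as $e'_{\mathrm{st} *}$ of some $y\in\pi_1(\surface_\mathrm{st},(e'_\mathrm{st})^{-1}(\star_1),(e'_\mathrm{st})^{-1}(\star_2))$ by (1), it suffices to check equality on such $y$. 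Naturality of the Kawazumi--Kuno action under embeddings yields the intertwining relation $\exp(\sigma(e'_{\mathrm{st} *}(v)))\circ e'_{\mathrm{st} *} = e'_{\mathrm{st} *}\circ\exp(\sigma(v))$, reducing the right-hand side of (4) to $\diamondsuit_{1}^{0}\circ\alpha_{1*}\circ e'_{\mathrm{st} *}\bigl(\exp(\sigma(v(\zeta_{\GL}(\chi))))(y)\bigr)$.

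Next I would invoke part (2). Writing $\Psi_i \defeq (e_\mathrm{st}^\flat)_*\circ\tilde{\kappa}_*\circ\chi_*^{-1}\circ\iota_{i*}$, it gives $\alpha_{0*}\circ e'_{\mathrm{st} *}=\diamondsuit_{\frac{1}{3}}^{0}\circ\Psi_0$ and $\alpha_{1*}\circ e'_{\mathrm{st} *}=\diamondsuit_{\frac{2}{3}}^{1}\circ\Psi_1$. Using the composition law $\diamondsuit_{t_2}^{t_3}\circ\diamondsuit_{t_1}^{t_2}=\diamondsuit_{t_1}^{t_3}$ to merge the outer conjugations, the claim of (4) becomes
\[
\Psi_0(y) \;=\; \diamondsuit_{\frac{2}{3}}^{\frac{1}{3}}\circ\Psi_1\bigl(\exp(\sigma(v(\zeta_{\GL}(\chi))))(y)\bigr)
\]
inside $\cGLM\bigl((\surface\times I)(e_\mathrm{st},\chi),(\star_1,\tfrac{1}{3}),(\star_2,\tfrac{1}{3})\bigr)$.

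Then I would apply part (3), which supplies the $\surface_\mathrm{st}\times I$ identity $\tilde{\kappa}_*\circ\chi_*^{-1}\circ\iota_{0*}(y)=\diamondsuit_{1}^{0}\circ\tilde{\kappa}_*\circ\chi_*^{-1}\circ\iota_{1*}\bigl(\exp(\sigma(v))(y)\bigr)$. Pushing forward by $(e_\mathrm{st}^\flat)_*$ leaves the residual task of identifying the two $\diamondsuit$-operators, namely showing that
\[
(e_\mathrm{st}^\flat)_*\circ\diamondsuit_{1}^{0}=\diamondsuit_{\frac{2}{3}}^{\frac{1}{3}}\circ(e_\mathrm{st}^\flat)_*
\]
as maps from $\cGLM(\surface_\mathrm{st}\times I,(\star_1,1),(\star_2,1))$ to $\cGLM((\surface\times I)(e_\mathrm{st},\chi),(\star_1,\tfrac{1}{3}),(\star_2,\tfrac{1}{3}))$.

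The main obstacle will be this last intertwining identity. The key observation is that the vertical arc $s\mapsto(\star_i,1-s)$ in $\surface_\mathrm{st}\times I$ descends through the quotient map to a path in $(\surface\times I)(e_\mathrm{st},\chi)$ whose endpoints, by the definition $\alpha^{e_\mathrm{st}}(\star_i,t)=e_\mathrm{st}^{-1}(\star_i,3t-1)$ for $t\in[\tfrac{1}{3},\tfrac{2}{3}]$, are precisely $(\star_i,\tfrac{2}{3})$ and $(\star_i,\tfrac{1}{3})$; moreover, since $\chi$ fixes $\partial\tilde{\surface}_\mathrm{st}\supset\partial\surface\times\{0,1\}$ pointwise, the gluing introduces no twist along this arc. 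Thus the descended arc is homotopic in $(\surface\times I)(e_\mathrm{st},\chi)$ to the boundary arc used to define $\diamondsuit_{\frac{2}{3}}^{\frac{1}{3}}$, and the intertwining identity, hence (4), follows.
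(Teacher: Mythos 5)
Your proposal is correct and follows essentially the same route as the paper's own proof: reduce to the image of $e'_{\mathrm{st}*}$ via (1), push the exponential through $e'_{\mathrm{st}*}$ by naturality of $\sigma$, rewrite both sides with (2), apply (3), and close the argument by identifying $(e_\mathrm{st}^\flat)_*\circ\diamondsuit_1^0$ with $\diamondsuit_{\frac{2}{3}}^{\frac{1}{3}}\circ(e_\mathrm{st}^\flat)_*$ through the descended vertical boundary arc. The paper performs the same $\diamondsuit$-bookkeeping by first composing $\diamondsuit_1^0\circ\diamondsuit_{\frac{2}{3}}^{1}=\diamondsuit_{\frac{2}{3}}^{0}$, but the content is identical.
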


\begin{proof}

%(1)$e_\mathrm{st}$がスタンダードな埋め込みなので、
%$\surface \backslash e'_\mathrm{st} (\surface_\mathrm{st})$
%の閉包はいくつかの閉円盤である。よって、埋め込み$e'': \surface \to \surface_\mathrm{st}$
%が
%\begin{itemize}
%\item $ e'_\mathrm{st} \circ e'' (\star_1)=\star_1, 
% e'_\mathrm{st} \circ e'' (\star_2)=\star_2$.
%\item $ e'_\mathrm{st} \circ e'' $と$\id_{\surface}$が
%$\shuugou{\star_1, \star_2}$を保つイソトピー同値である。
%\end{itemize}
%を満たすようにとれる。
%このとき、$e'_{\mathrm{st}*} \circ e''_* =\id_{\cGLM (\surface, \star_1, \star_2)}$
%となり(1)を示せた。

(1)Since $e_\mathrm{st}$ is a standard embedding, the closure of $\surface \backslash e'_\mathrm{st} (\surface_\mathrm{st})$ is a connected sum of closed disks. So there exists an embedding $e'': \surface \to \surface_\mathrm{st}$ satisfying the two conditions. 
\begin{itemize}
\item We have $ e'_\mathrm{st} \circ e'' (\star_1)=\star_1, 
 e'_\mathrm{st} \circ e'' (\star_2)=\star_2$.
\item The composite $ e'_\mathrm{st} \circ e'' $ is isotopic to
the identity map preserving $\shuugou{\star_1, \star_2}$.
\end{itemize} Then we have $e'_{\mathrm{st}*} \circ e''_* =\id_{\cGLM (\surface, \star_1, \star_2)}$. So the homomorphism 
\begin{equation*}
e'_{\mathrm{st}*}: \pi_1 (\surface_\mathrm{st}, {e'_\mathrm{st}}^{-1} (\star_1),
 {e'_\mathrm{st}}^{-1} (\star_2)) \to \pi_1 (\surface,\star_1, \star_2)
\end{equation*}
is surjective as desired.

%Since E is a standard embedding, the closure of the surface is a connected sum of closed disks. So there exists an embedding satisfying the two conditions. The first one is that we have the equations. The second one is that the composite is isotopic to the identity map preserving the points. Then we have the formula. So the homomorphism is surjective as desired.

%(2)$\alpha_1 \circ e_\mathrm{st}$と
%$e_\mathrm{st}^\flat \circ \tilde{\kappa} \circ \chi^{-1} \circ \iota_1$
%は$\star_1 \times I$と$\star_2 \times I$を保った
%イソトピー同値であり、$\alpha_0 \circ e_\mathrm{st}$と
%$e_\mathrm{st}^\flat \circ \tilde{\kappa} \circ \chi^{-1} \circ \iota_0$
%は$\star_1 \times I  \cup \star_2 \times I$を保った
%イソトピー同値である。これより、(2)が示せた。

(2)The embeddings 
$\alpha_1 \circ e_\mathrm{st}$ and $\alpha_0 \circ e_\mathrm{st}$
are isotopic to 
$e_\mathrm{st}^\flat \circ \tilde{\kappa} \circ \chi^{-1} \circ \iota_1$
and
$e_\mathrm{st}^\flat \circ \tilde{\kappa} \circ \chi^{-1} \circ \iota_0$
 preserving 
$\star_1 \times I  \cup \star_2 \times I$,
 respectively. It proves (2).

%The embeddings are isotopic to the ones preserving the set, respectively. It proves the second statement.

%(3)$i=0,1$で、自然な同型
%$\GL (\surface \times I, (\star_1,i),(\star_2,i))
%\to \GL (\surface , \star_1, \star_2)$
%を$\mathrm{Proj}_i$とする。$\gamma_{\star_1,1,0}$
%と$\gamma_{\star_2,0,1}$は$\star_1 \times I$,$\star_2 \times I$内の
%パスであるために、
%$\mathrm{Proj}_0 \circ \diamondsuit_1^{0} =
%\mathrm{Proj}_1$とわかる。
%Proposition \ref{proposition_technical_homology_cylinder_GL}より、
%\begin{equation*}
%\mathrm{Proj}_0 \circ \tilde{\kappa}_* \circ \chi_*^{-1} \circ \iota_{0*} (x)
%=\mathrm{Proj}_1 \circ \tilde{\kappa}_* \circ \chi_*^{-1} \circ \iota_{1*} 
%(\exp (\sigma (v(\zeta_{\GL} (\chi))))(x))
%\end{equation*}
%とわかる。$\mathrm{Proj}_0 \circ \diamondsuit_1^{0} =
%\mathrm{Proj}_1$と$\mathrm{Proj}_0$は同型であることを使って、
%\begin{equation*}
%\tilde{\kappa}_* \circ \chi_*^{-1} \circ \iota_{0*} (x)
%=\diamondsuit_1^0 \circ \tilde{\kappa}_* \circ \chi_*^{-1} \circ \iota_{1*} 
%(\exp (\sigma (v(\zeta_{\GL} (\chi))))(x))
%\end{equation*}
%が得られる。これより(3)を示せた。

(3)For $i=0,1$, we denote by $\mathrm{Proj}_i$ the natural map 
$\GL (\surface \times I, (\star_1,i),(\star_2,i))
\to \GL (\surface , \star_1, \star_2)$
and have 
$\mathrm{Proj}_0 \circ \diamondsuit_1^{0} =
\mathrm{Proj}_1$. By Proposition \ref{proposition_technical_homology_cylinder_GL}, we obtain 
\begin{equation*}
\mathrm{Proj}_0 \circ \tilde{\kappa}_* \circ \chi_*^{-1} \circ \iota_{0*} (x)
=\mathrm{Proj}_1 \circ \tilde{\kappa}_* \circ \chi_*^{-1} \circ \iota_{1*} 
(\exp (\sigma (v(\zeta_{\GL} (\chi))))(x)).
\end{equation*} 
So we have 
\begin{equation*}
\mathrm{Proj}_0 \circ \tilde{\kappa}_* \circ \chi_*^{-1} \circ \iota_{0*} (x)
=\mathrm{Proj}_0 \circ \diamondsuit_1^{0}
 \circ \tilde{\kappa}_* \circ \chi_*^{-1} \circ \iota_{1*} 
(\exp (\sigma (v(\zeta_{\GL} (\chi))))(x)).
\end{equation*} 
 Since $\mathrm{Proj}_0$ is an isomorphism, we get 
\begin{equation*}
\tilde{\kappa}_* \circ \chi_*^{-1} \circ \iota_{0*} (x)
=\diamondsuit_1^0 \circ \tilde{\kappa}_* \circ \chi_*^{-1} \circ \iota_{1*} 
(\exp (\sigma (v(\zeta_{\GL} (\chi))))(x)),
\end{equation*}
 as desired.

(4)By (1), there exists an element $x' \in \GLM (\surface, {e'_\mathrm{st}}^{-1} (\star_1),
{e'_\mathrm{st}}^{-1} (\star_2))$ satisfying $e'_{\mathrm{st}*} (x')=x$. Using (2), we obtain 
\begin{align*}
&\diamondsuit_1^0 \circ \alpha_{1*}
(\exp (\sigma (e'_{\mathrm{st}*}(v(\zeta_{\GL} (\chi)))))(x)) \\
&=\diamondsuit_1^0 \circ \alpha_{1*}
(\exp (\sigma (e'_{\mathrm{st}*}(v(\zeta_{\GL} (\chi)))))(
e'_{\mathrm{st}*}(x'))) \\
&=\diamondsuit_1^0 \circ \alpha_{1*} \circ e'_{\mathrm{st}*}
(\exp (\sigma (v(\zeta_{\GL} (\chi))))(x')) \\
&=\diamondsuit_1^0 \circ \diamondsuit_{\frac{2}{3}}^1
\circ e^\flat_{\mathrm{st}*} \circ \tilde{\kappa}_* \circ
\chi^{-1}_* \circ \iota_{1*}
(\exp (\sigma (v(\zeta_{\GL} (\chi))))(x')) \\
&=\diamondsuit_{\frac{2}{3}}^0
\circ e^\flat_{\mathrm{st}*} \circ \tilde{\kappa}_* \circ
\chi^{-1}_* \circ \iota_{1*}
(\exp (\sigma (v(\zeta_{\GL} (\chi))))(x')) \\
&=\diamondsuit_{\frac{1}{3}}^0
\circ e^\flat_{\mathrm{st}*} (\diamondsuit_1^0 \circ\tilde{\kappa}_* \circ
\chi^{-1}_* \circ \iota_{1*}
(\exp (\sigma (v(\zeta_{\GL} (\chi))))(x'))),
\end{align*}

\begin{align*}
& \alpha_{0*}(x) \\
&=\alpha_{0*}(e'_\mathrm{st} (x')) \\
&=\diamondsuit_{\frac{1}{3}}^0
\circ e^\flat_{\mathrm{st}*} \circ \tilde{\kappa}_* \circ
\chi^{-1}_* \circ \iota_{0*}
(x') \\
&=\diamondsuit_{\frac{1}{3}}^0
\circ e^\flat_{\mathrm{st}*} (\tilde{\kappa}_* \circ
\chi^{-1}_* \circ \iota_{0*}
(x')).
\end{align*}
 By (3), we have 
\begin{equation*}
\alpha_{0*} (x)
=\diamondsuit_1^0 \circ \alpha_{1*}
(\exp (\sigma (e'_{\mathrm{st}*}(v(\zeta_{\GL} (\chi)))))(x)),
\end{equation*}
 as desired.

%By the first statement, there exists an element X' satisfying the condition. Using the second statement, we obtain the computations. By the third statement, we have the formula as desired.

%以上よりこの補題が示せた。

The above statements prove the lemma.

%The above statements prove the lemma.
\end{proof}

\begin{proof}[proof of 
Theorem \ref{thm_qbtskein_main_standard_embedding}]
%Lemma \ref{lemm_technical_homology_cylinder_GL}(3)
%の$\chi \in \Diff (\tilde{\surface}_\mathrm{st}, \partial \tilde{\surface}_\mathrm{st})$
%が$\xi \in\ \torelli' (\tilde{\surface}_\mathrm{st})$
%を代表すると仮定する。
%Lemma \ref{lemm_technical_homology_cylinder_GL}のノーテーション
%を使う。
%
%
%Proposition
%\ref{proposition_center_of_the_Goldman}
%より、
%Lemma \ref{lemm_technical_homology_cylinder_GL}(3)
%の$\zeta_{\GL} (\chi)$は$\zeta_{\GL} (\xi)$
%と一致する。 
%さらに、Proposition \ref{proposition_definition_v}
%より、Lemma \ref{lemm_technical_homology_cylinder_GL}(3)
%の$v(\zeta_{\GL} (\chi))$
%は$v(\zeta_{\GL} (\xi))$と一致する。
%Lemma \ref{lemm_technical_homology_cylinder_GL}(4)
%より、$x \in \cGLM (\surface, \star_1, \star_2)$
%\begin{equation*}
%\alpha_{1*}^{-1} \circ  \diamondsuit_0^1 \circ \alpha_{0*} (x)
%=\exp (\sigma (e'_{\mathrm{st}*}(v(\zeta_{\GL} (\xi)))))(x)
%\end{equation*}
%とわかる。つまり、
%\begin{equation*}
%(\surface \times I)(e_\mathrm{st}, \xi)_*
%(x)
%=\exp (\sigma (e'_{\mathrm{st}*}(v(\zeta_{\GL} (\xi)))))(x)
%\end{equation*}
%が成り立つ。これより
%Theorem \ref{thm_qbtskein_main_standard_embedding}
%が示せた。

We use the notations in Lemma \ref{lemm_technical_homology_cylinder_GL}(3).
 We assume the diffeomorphism $\chi$ represents an element $\xi$ of $\torelli' (\surface)$. By Proposition
\ref{proposition_center_of_the_Goldman}, $\zeta_{\GL} (\chi)$ equals $\zeta_{\GL} (\xi)$. Furthermore, by Proposition \ref{proposition_definition_v}, $v(\zeta_{\GL} (\chi))$ equals $v(\zeta_{\GL} (\xi))$. Lemma \ref{lemm_technical_homology_cylinder_GL}(4) says 
\begin{equation*}
\alpha_{1*}^{-1} \circ  \diamondsuit_0^1 \circ \alpha_{0*} (x)
=\exp (\sigma (e'_{\mathrm{st}*}(v(\zeta_{\GL} (\xi)))))(x),
\end{equation*} which means
\begin{equation*}
(\surface \times I)(e_\mathrm{st}, \xi)_*
(x)
=\exp (\sigma (e'_{\mathrm{st}*}(v(\zeta_{\GL} (\xi)))))(x).
\end{equation*}
 The equation proves the theorem.

%We use the notations in the third statement of the above lemma. We assume the diffeomorphism represents an element of II. By the first proposition, Z' equals Z. Furthermore, by the second lemma, V' equals V. The fourth statement says the equation, which means the formula. The equation proves the theorem.

\end{proof}

%次に、Theorem \ref{thm_qbtskein_main_boundary_link}の
%証明をする。
%まず、Theorem \ref{thm_qbtskein_main_boundary_link}
%を示すために次を示す。
%$\kappa : \tilde{\surface}_\mathrm{st} \times I \to \surface_\mathrm{st} \times I$
%は$\epsilon$を十分小さい正の実数として、
%次を満たす$\tilde{\surface}_\mathrm{st}$の管状近傍と
%したことを注意しておく。
%\begin{itemize}
%\item 任意の$p \in \tilde{\surface}_\mathrm{st}$で、
%$e_\partial (p,1)=p.$
%\item 任意の$p \in \tilde{\surface}_\mathrm{st} \cap
%e_\mathrm{st}^{-1} (\partial \surface \times I)$で、
%\begin{align*}
%&e_\mathrm{st} \circ e_\partial ((p,1),t)=
%(e'_\mathrm{st} (p), \frac{2-\epsilon +\epsilon t}{3}) \\
%&e_\mathrm{st} \circ e_\partial ((p,0),t)=
%(e'_\mathrm{st} (p), \frac{1 +\epsilon-\epsilon t}{3}). \\
%\end{align*}
%\end{itemize}

Next, we will prove Theorem \ref{thm_qbtskein_main_boundary_link}. To do it, we need the following lemma. We recall that $\kappa : \tilde{\surface}_\mathrm{st} \times I \to \surface_\mathrm{st} \times I$ is the tubular neighborhood satisfying the two conditions for enough small $\epsilon >0$.
\begin{itemize}
\item For any $p \in \tilde{\surface}_\mathrm{st}$,
$e_\partial (p,1)=p.$
\item For any $p \in \tilde{\surface}_\mathrm{st} \cap
e_\mathrm{st}^{-1} (\partial \surface \times I)$,
\begin{align*}
&e_\mathrm{st} \circ e_\partial ((p,1),t)=
(e'_\mathrm{st} (p), \frac{2-\epsilon +\epsilon t}{3}), \\
&e_\mathrm{st} \circ e_\partial ((p,0),t)=
(e'_\mathrm{st} (p), \frac{1 +\epsilon-\epsilon t}{3}). \\
\end{align*}
\end{itemize}

%Next, we will prove the theorem. To do it, we need the following lemma. We recall that K is the tubular neighborhood satisfying the two conditions for enough small E.

\begin{lemm}
\label{lemm_qbtskein_main_boundary_link}
%$e_\mathrm{st}: \surface_\mathrm{st} \times I \to
%\surface \times I$をスタンダードな埋め込みとし、
%$\xi \in \torelli (\tilde{\surface}_\mathrm{st})$について、
%\begin{equation*}
%\exp (\frac{1}{h} \PPsi{\GL}{\tskein'}(
%\tilde{\zeta}_{\GL} ((\surface \times I)(e_\mathrm{st},\xi))))
%=e_{\mathrm{st} *} \circ \kappa_*(
%\exp (\frac{1}{h} (\PPsi{\GL}{\tskein'}(\zeta_{\GL} (\xi)))))
%\end{equation*}
%である。

For a standard embedding 
$e_\mathrm{st}: \surface_\mathrm{st} \times I \to
\surface \times I$
and an element 
$\xi \in \torelli (\tilde{\surface}_\mathrm{st})$,
 we have 
\begin{equation*}
\exp (\frac{1}{h} \PPsi{\GL}{\tskein'}(
\tilde{\zeta}_{\GL} ((\surface \times I)(e_\mathrm{st},\xi))))
=e_{\mathrm{st} *} \circ \kappa_*(
\exp (\frac{1}{h} (\PPsi{\GL}{\tskein'}(\zeta_{\GL} (\xi))))).
\end{equation*}

%For a standard embedding and an element of II, we have the equation.

\end{lemm}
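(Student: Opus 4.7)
The plan is to combine Theorem \ref{thm_qbtskein_main_standard_embedding}, which identifies $\tilde{\zeta}_{\GL}((\surface \times I)(e_\mathrm{st},\xi))$ with $e'_{\mathrm{st}*}(v(\zeta_{\GL}(\xi)))$, with Proposition \ref{proposition_definition_v}(2), which gives the crucial exponential identity relating $v(\zeta_{\GL}(\xi))$ and $\zeta_{\GL}(\xi)$ inside $\cLoc \tskein'(\surface_\mathrm{st})$, and then transport everything into $\cLoc \tskein'(\surface)$ along the standard embedding.

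First I would rewrite the left-hand side. By Theorem \ref{thm_qbtskein_main_standard_embedding} the argument of $\PPsi{\GL}{\tskein'}$ equals $e'_{\mathrm{st}*}(v(\zeta_{\GL}(\xi)))$. Since the standard embedding $e_\mathrm{st}(p,t) = (e'_\mathrm{st}(p),\frac{1+t}{3})$ is simply the cylinder thickening of $e'_\mathrm{st}$, the map $\PPsi{\GL}{\tskein'}$ is natural in the sense that $\PPsi{\GL}{\tskein'} \circ e'_{\mathrm{st}*} = e_{\mathrm{st}*} \circ \PPsi{\GL}{\tskein'}$. Thus the left-hand side equals $\exp(\frac{1}{h}\, e_{\mathrm{st}*}(\PPsi{\GL}{\tskein'}(v(\zeta_{\GL}(\xi)))))$.

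Next I would pull $e_{\mathrm{st}*}$ outside the exponential. Because $e_\mathrm{st}$ embeds a sub-cylinder into the upper portion of $\surface\times I$ preserving the stacking direction, $e_{\mathrm{st}*}$ is an algebra homomorphism of skein modules; combined with the filtration-preservation statement proved at the end of Section~\ref{section_qbtskein}, it extends to a continuous algebra homomorphism $\cLoc \tskein'(\surface_\mathrm{st}) \to \cLoc \tskein'(\surface)$. Consequently it commutes with $\exp$, so the expression rewrites as $e_{\mathrm{st}*}\bigl(\exp(\tfrac{1}{h}\PPsi{\GL}{\tskein'}(v(\zeta_{\GL}(\xi))))\bigr)$. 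Finally, applying Proposition \ref{proposition_definition_v}(2) with $x = \zeta_{\GL}(\xi)$ and $y = v(\zeta_{\GL}(\xi))$ (the hypothesis $\kappa_*(\bch(-\iota_{1*}(y),x))=0$ is the defining property of $v$ by Proposition \ref{proposition_definition_v}(b)) replaces the inner exponential by $\kappa_*(\exp(\tfrac{1}{h}\PPsi{\GL}{\tskein'}(\zeta_{\GL}(\xi))))$, producing exactly the right-hand side.

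The main obstacle I anticipate is the verification that $e_{\mathrm{st}*}$ and $\kappa_*$ are both well-defined and continuous on the localised completion $\cLoc \tskein'$ rather than merely on $\ctskein'$; the powers of $\frac{1}{h}$ appearing in $\exp(\frac{1}{h}(\cdot))$ require that each map sends $\frac{1}{h^n} F^{3n}$ into itself, which follows from the filtration-preservation of embeddings and from the fact that $\zeta_{\GL}(\xi), v(\zeta_{\GL}(\xi)) \in F^3$. Once this technical point is checked, the remaining steps are purely formal manipulations of algebra homomorphisms and the naturality of $\PPsi{\GL}{\tskein'}$.
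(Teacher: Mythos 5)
Your proposal is correct and follows essentially the same route as the paper's proof: apply Theorem \ref{thm_qbtskein_main_standard_embedding} to rewrite $\tilde{\zeta}_{\GL}((\surface\times I)(e_\mathrm{st},\xi))$ as $e'_{\mathrm{st}*}(v(\zeta_{\GL}(\xi)))$, use the naturality $\PPsi{\GL}{\tskein'}\circ e'_{\mathrm{st}*}=e_{\mathrm{st}*}\circ\PPsi{\GL}{\tskein'}$ and the fact that $e_{\mathrm{st}*}$ is an algebra map to commute it past $\exp$, and then invoke Proposition \ref{proposition_definition_v}(2) with $y=v(\zeta_{\GL}(\xi))$. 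The technical point you flag about well-definedness on $\cLoc\tskein'$ is a reasonable extra check that the paper leaves implicit.
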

\begin{proof}
%埋め込み$e'_\mathrm{st}:\surface_\mathrm{st} \to \surface$
%を$e' \times \id_{\shuugou{\frac{1}{2}}} =e_{\surface \times 
%\shuugou{\frac{1}{2}}}$とする。
%Theorem \ref{thm_qbtskein_main_standard_embedding}
%より、
%\begin{align*}
%&\exp (\frac{1}{h} \PPsi{\GL}{\tskein'}(
%\tilde{\zeta}_{\GL} ((\surface \times I)(e_\mathrm{st},\xi)))) \\
%&=\exp (\frac{1}{h} \PPsi{\GL}{\tskein'}(
%e'_{\mathrm{st}*}(\PPsi{\GL}{\tskein'} (v(\zeta(\xi))))) 
%=e_{\mathrm{st}*} (\exp(\frac{1}{h} 
%\PPsi{\GL}{\tskein'} (v(\zeta(\xi)))))
%\end{align*}
%とわかる
%Proposition \ref{proposition_definition_v}
%より、
%\begin{align*}
%&e_{\mathrm{st}*} (\exp(\frac{1}{h} 
%\PPsi{\GL}{\tskein'} (v(\zeta(\xi))))) 
%=e_{\mathrm{st}*} \circ \kappa_* (\exp (\frac{1}{h}
%\PPsi{\GL}{\tskein'} (\zeta (\xi))))
%\end{align*}
%とわかる。
%以上よりこの補題が示せた。

Let $e'_\mathrm{st}:\surface_\mathrm{st} \to \surface$ be the embedding satisfying $e' \times \id_{\shuugou{\frac{1}{2}}} =e_{\surface \times 
\shuugou{\frac{1}{2}}}$. By Theorem \ref{thm_qbtskein_main_standard_embedding}, we have 
\begin{align*}
&\exp (\frac{1}{h} \PPsi{\GL}{\tskein'}(
\tilde{\zeta}_{\GL} ((\surface \times I)(e_\mathrm{st},\xi))))
=\exp (\frac{1}{h}(
\PPsi{\GL}{\tskein'}(e'_{\mathrm{st}*}( v(\zeta(\xi)))))  \\
&=\exp (\frac{1}{h}(
e_{\mathrm{st}*}(\PPsi{\GL}{\tskein'} (v(\zeta(\xi)))))) 
=e_{\mathrm{st}*} (\exp(\frac{1}{h} 
\PPsi{\GL}{\tskein'} (v(\zeta(\xi))))).
\end{align*}
 Using Proposition \ref{proposition_definition_v}, we have 
\begin{align*}
&e_{\mathrm{st}*} (\exp(\frac{1}{h} 
\PPsi{\GL}{\tskein'} (v(\zeta(\xi))))) 
=e_{\mathrm{st}*} \circ \kappa_* (\exp (\frac{1}{h}
\PPsi{\GL}{\tskein'} (\zeta (\xi)))).
\end{align*} 
The equation proves the lemma.

%Let E be the embedding satisfying the condition. By the theorem, we have the computation. Using the proposition, we have the formula. The equation proves the lemma.

\end{proof}

\begin{proof}[proof of Theorem 
\ref{thm_qbtskein_main_boundary_link}]

Let $L$ be a boundary link in $\surface \times I$ and $e_S:S \to \surface \times I$ a Seifert surface of $L$. By Lemma \ref{lemm_standard_embedding}, there exists a standard embedding 
$e_{\mathrm{st}} : \surface_\mathrm{st} \times I
\to \surface \times I$
and an embedding
$e':S \to \tilde{\surface}_\mathrm{st}$
 such that the composite $e_{\mathrm{st}} \circ e'$ is isotopic to $e_S$. Using Lemma \ref{lemm_Lickorish's_trick_standard}, for any label $\lambda: \pi_0 (\partial S) \to
\shuugou{\pm{1}}$, we have 
\begin{equation*}
(\surface \times I) (L (\lambda))=
(\surface \times I) (e_\mathrm{st},
\prod_{[\partial] \in \pi_0 (\partial S)}
t_{e' (c_\partial )}^{-\lambda ([\partial])}).
\end{equation*}
 By Lemma \ref{lemm_qbtskein_main_boundary_link}, we have 
\begin{align*}
&\exp (\frac{1}{h} \PPsi{\GL}{\tskein'}(
\tilde{\zeta}_{\GL} ((\surface \times I)(e_\mathrm{st},\xi)))) \\
&=e_{\mathrm{st} *} \circ \kappa_*(
\exp (\frac{1}{h} (\PPsi{\GL}{\tskein'}(\zeta_{\GL} (\xi))))) \\
&=e_{\mathrm{st} *} \circ \kappa_*(
\exp (\frac{1}{h} (\PPsi{\GL}{\tskein'}(\zeta_{\GL} (
\prod_{[\partial] \in \pi_0 (\partial S)}
t_{e' (c_\partial )}^{-\lambda ([\partial])})))))  \\
&=e_{\mathrm{st} *} \circ \kappa_*(
\exp (\frac{1}{h} (\PPsi{\GL}{\tskein'}(\sum_{[\partial]
\in \pi_0 (\partial S)} -\lambda ([\partial ])e'_* (L_{\GL} (c_\partial)))))) \\
&=e_{\mathrm{st} *} \circ \kappa_* \circ e'_*(
\exp (\frac{1}{h} (\PPsi{\GL}{\tskein'}(\sum_{[\partial]
\in \pi_0 (\partial S)} -\lambda ([\partial ])L_{\GL} (c_\partial))))) \\
&=e_{S*}(
\exp (\frac{1}{h} (\PPsi{\GL}{\tskein'}(\sum_{[\partial]
\in \pi_0 (\partial S)} -\lambda ([\partial ])L_{\GL} (c_\partial))))). \\
\end{align*}
 The computation proves the theorem.

%Let L be a boundary link in M and E a Seifert surface of it. By the first lemma, there exists a standard embedding and an embedding such that the composite is isotopic to E. Using the second lemma, for any label, we have the equation. By the third lemma, we have the equation. The computation proves the theorem.

\end{proof}

\section{A lemma of $\tskein'$}

%スケイン代数$\tskein'$では、フィルトレーションに関して
%他のスケイン代数のフィルトレーションとは異なる特殊な補題を得ることができる。
%この補題は、ホモロジー・シリンダーの作用でジョンソン準同型がホモトピー類による
%こと(久野氏、マシュヨ氏の定理)の別証明、拡張に使う。
%ここでは、この補題を導入し、証明する。

To give another proof of the formula \cite{KM2019} of Kuno and Massuyeau and to clarify it, we need a lemma of the filtration of the skein algebra $\tskein'$. In this section, we introduce the lemma and prove it.

%To give another proof of the formula of Kuno and Massuyeau and to clarify it, we need a lemma of the filtration of the skein algebra. In this section, we introduce the lemma and prove it.

%$\surface$をコンパクトな曲面とし、
%境界の基点$\star$をとる。
%次の$\surface$のゴールドマン・リー代数
%$\GL (\surface)$の特殊なフィルトレーションを導入する。
%増加しない非負整数の数列$\mathbf{b} \defeq \filtn{ b_n}$と
%非負整数$m \in \Zlarger{0}$と$\surface$から
%別のコンパクト曲面と単位区間の積多様体
%$\tilde{\surface} \times I$への埋め込み$e: \surface \hookrightarrow
%\tilde{\surface} \times I$へをとる。
%$e$は$e_* :\pi_1 (\surface, \star) \to \pi_1 (\tilde{\surface} \times I, e (\star))$
%を誘導する。
%部分加群
%$\filt{(e, \mathbf{b})m} \GL (\surface)\subset \GL (\surface)$
%を次を満たす
%\begin{equation*}
%\zettaiti{\eta_1 \eta_2 \cdots \eta_k}
%\end{equation*}
%から生成される部分加群とする・
%\begin{itemize}
%\item $\eta_1, \eta_2, \cdots, \eta_k \in \GLM (\surface).$
%\item ある$m_1, \cdots, m_k, n_1, \cdots, n_k \in \Zlarger{0}$が存在して、
%次を満たす。
%\begin{itemize}
%\item 任意の$i \in \shuugou{1, \cdots k}$で
%$\eta_i \in I_{\GLM (\surface, \star)}^{m_i}.$
%\item 任意の$i \in \shuugou{1, \cdots, k}$で
%$e_* (\eta_i) \in I_{\GLM (\tilde{\surface}\times I, e (\star))}^{m_i+n_i}$.
%\item 元$\eta_1 \dots \eta_k$が$I_{\GLM (\surface, \star)}^m$をみたす。
%言い換えると$\sum_{i=1}^k m_i \geq m.$
%\item 任意の部分集合$\varsigma \subset \shuugou{1, \cdots, k}$
%で$\sum_{i \in \varsigma } n_i \geq b_{k-\sharp \varsigma}$となる。
%\end{itemize}
%\end{itemize}

Let $\surface$ be a compact connected oriented surface and $\star$ a point in $\partial \surface$. We fix an embedding 
$e: \surface \hookrightarrow
\tilde{\surface} \times I$
and a non-increasing sequence
$\mathbf{b} \defeq \filtn{ b_n}$
 of non-negative integers. For any $m \in \Zlarger{0}$, we set a $\filt{(e, \mathbf{b})m} \GL (\surface)\subset \GL (\surface)$ generated by 
\begin{equation*}
\zettaiti{\eta_1 \eta_2 \cdots \eta_k}
\end{equation*}
 satisfying the properties. 
\begin{itemize}
\item $\eta_1, \eta_2, \cdots, \eta_k \in \GLM (\surface, \star)$.
\item There exist non-negative integers $m_1, \cdots, m_k, n_1, \cdots, c_k$
 satisfying the conditions.
\begin{itemize}
\item For any $i$, $\eta_i \in \filt{m_i} \GLM (\surface, \star)$.
\item For any $i$, $e_* (\eta_i) \in \filt{m_i+n_i} \GLM (\surface, \star)$.
\item We have $\sum_{i=1}^k m_i \geq m.$
\item For any subset $\varsigma \subset \shuugou{1, \cdots, k}$,
$\sum_{i \in \varsigma } n_i \geq b_{k-\sharp \varsigma}$.
\end{itemize}
\end{itemize}

%Let S be a compact connected surface and P a point in the boundary. We fix an embedding and a nonincreasing sequence of non-negative integers. For any N, we set a submodule generated by elements satisfying the properties. The first one is that they are elements of G. The second one is that there exist non-negative integers satisfying the conditions.

%数列$\mathbf{b} = \filtn{b_n}$について、
%$\mathbf{b} (n) \defeq b_n$とし、
%数列$\mathbf{b} = \filtn{b_n}$ と
%数列$\mathbf{b'} = \filtn{b'_n}$について、
%数列$\Lambda ( \mathbf{b}, \mathbf{b'})$
%を
%\begin{equation*}
%\Lambda (\mathbf{b}, \mathbf{b'})(n)
%\defeq \min \shuugou{b_i + b_j| i+j+2=n}
%\end{equation*}
%により定める。数列$\mathbf{b}=\filtn{b_n}$, $\mathbf{b'}=\filtn{b'_n}$,
%$\mathbf{b''}=\filtn{b''_n}$で
%\begin{equation*}
%\Lambda ( \mathbf{b}, \Lambda (\mathbf{b'}, \mathbf{b''})) (n)
%=
%\Lambda (\Lambda (\mathbf{b}, \mathbf{b'}), \mathbf{b''})
%= \min \shuugou{b_i+b'_j+b''_k|
%i+j+k=n-4}
%\end{equation*}
%となることを注意しておく。非負整数の非増加数列
%$\mathbf{b}_1, \cdots, \mathbf{b}_j$で
%\begin{equation*} 
%\Delta (\mathbf{b}_1 , \cdots, \mathbf{b}_j)
%=\Delta (\mathbf{b}_1, (\Delta (\mathbf{b}_2, \cdots,
%\Delta(\mathbf{b}_{j-1}, \mathbf{b}_j) \cdots )))
%\end{equation*}
%と書く。

In this paper, we denote by $\mathbf{b}(N)$ the $N$-th element of 
$\mathbf{b} (N)$ for any sequence $\mathbf{b}$. Let $\mathbf{b}_1, \mathbf{b}_2, \cdots$  be some sequences.  We set a new one $\Lambda ( \mathbf{b}_1, \mathbf{b}_2)$ by
\begin{equation*}
\Lambda (\mathbf{b}_1, \mathbf{b}_2)(n)
\defeq \min \shuugou{\mathbf{b}_1 (i)+\mathbf{b}_2(j)| i+j+2=n}.
\end{equation*}
 Then the operator has the associativity, which means 
\begin{equation*}
\Lambda ( \mathbf{b}_1, \Lambda (\mathbf{b}_2, \mathbf{b}_3)) (n)
=
\Lambda (\Lambda (\mathbf{b}_1, \mathbf{b}_2), \mathbf{b}_3)(n)
= \min \shuugou{b_i+b'_j+b''_k|
i+j+k=n-4}.
\end{equation*}
 We denote 
\begin{equation*} 
\Delta (\mathbf{b}_1 , \cdots, \mathbf{b}_j)
=\Delta (\mathbf{b}_1, (\Delta (\mathbf{b}_2, \cdots,
\Delta(\mathbf{b}_{j-1}, \mathbf{b}_j) \cdots ))).
\end{equation*}

%In this paper, we denote by b(N) the N-th element of b for any sequence. Let B  be some sequences.  We set a new one by the equation. Then the operator has the associativity, which means the formula holds. We denote BB for the element.

%このセクションでは次の仮定を一般の$N \in \Zlarger{1}$で示したい。
%$N \in \Zlarger{1}$でAssumption ($N$)を
%
%\begin{itemize}
%\item[] コンパクトで連結な曲面$\surface_1, \cdots, \surface_N, \surface$について、
%埋め込み$e= e_1 \sqcup \cdots \sqcup e_N:
%\surface_1\times I \sqcup \cdots \sqcup \surface_N
%\times I \hookrightarrow
%\surface \times I$をとる。この埋め込みは
%$e_*: \tskein' (\surface_1) \otimes \cdots \otimes \tskein' (\surface_N)
%\to \tskein' (\surface)$
%を誘導する。
%埋め込み$e_{i} :\surface_i \to \surface \times I,p \mapsto e_i (p,0)$
%も単に$e_i$と書く。
%この時、任意の$m_1, \cdots, m_N$と非負整数の非増加数列
%$\mathbf{b}_1, \cdots, \mathbf{b}_N$で
%\begin{equation*}
%e_* (\otimes_{i=1}^j\PPsi{\GL}{\tskein'}(\filt{(e_i , \mathbf{b}_i)m_i}\GL (\surface_i)))
%\subset \filt{(\sum_{i=1}^N m_i)+\Delta (\mathbf{b}_1, \cdots , \mathbf{b}_N)(0)}
%\tskein' (\surface)
%\end{equation*}
%である。
%\end{itemize}
%とする。

In this section, we will prove the following statement Statement $(N)$ for any $N\in \Zlarger{1}$. 
\begin{itemize}
\item[] 
Let $\surface_1, \cdots, \surface_N, \surface$ be compact connected oriented surfaces and 
$e= e_1 \sqcup \cdots \sqcup e_N:
\surface_1\times I \sqcup \cdots \sqcup \surface_N
\times I \hookrightarrow
\surface \times I$
 be an embedding that induces homomorphism
$e_*: \tskein' (\surface_1) \otimes \cdots \otimes \tskein' (\surface_N)
\to \tskein' (\surface)$. For any $m_1, \cdots, m_N \in \Zlarger{0}$ and non-increasing sequences $\mathbf{b}_1, \cdots, \mathbf{b}_N$ of non-negative integers, we have
\begin{equation*}
e_* (\otimes_{i=1}^j\PPsi{\GL}{\tskein'}(\filt{(e_i , \mathbf{b}_i)m_i}\GL (\surface_i)))
\subset \filt{(\sum_{i=1}^N m_i)+\Delta (\mathbf{b}_1, \cdots , \mathbf{b}_N)(0)}
\tskein' (\surface).
\end{equation*} 
\end{itemize}

%In this section, we will prove the following statement Statement (N) for any N. 
%Let S be compact connected oriented surfaces and E be an embedding that induces homomorphism. For any integers and non-increasing sequences of non-negative integers, we have the formula. 

\begin{lemm}
\label{lemm_qbtskein_filtration_N1}
%$N=1$でAssumption ($1$)は成り立つ。

If $N=1$, Statement $(1)$ holds.

%If N equals one, the statement holds.
\end{lemm}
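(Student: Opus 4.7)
The plan is to reduce the claim to a single generator of the refined filtration and then unwind the definitions. Take a typical generator $\zettaiti{\eta_1\eta_2\cdots\eta_k}$ of $\filt{(e_1,\mathbf{b}_1)m_1}\GL(\surface_1)$ with witnesses $a_1,\ldots,a_k,n_1,\ldots,n_k$ from the definition: each $\eta_i\in\filt{a_i}\GLM(\surface_1,\star)$, each $e_{1*}(\eta_i)\in\filt{a_i+n_i}\GLM(\surface,e_1(\star))$, with $\sum_i a_i\geq m_1$ and $\sum_{i\in\varsigma}n_i\geq \mathbf{b}_1(k-\#\varsigma)$ for every $\varsigma\subset\{1,\ldots,k\}$. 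The composition $e_{1*}\circ\PPsi{\GL}{\tskein'}$ sends this class to a knot in $\surface\times I$ whose free homotopy class equals the conjugacy class of $e_{1*}(\eta_1)e_{1*}(\eta_2)\cdots e_{1*}(\eta_k)$ in $\pi_1(\surface)$, so the result coincides with $\PPsi{\GL}{\tskein'}$ on $\surface$ applied to that conjugacy class.

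Next I would invoke multiplicativity of the augmentation-ideal filtration. Because $e_{1*}:\GLM(\surface_1,\star)\to\GLM(\surface,e_1(\star))$ is a ring homomorphism, $e_{1*}(\eta_1\cdots\eta_k)=e_{1*}(\eta_1)\cdots e_{1*}(\eta_k)$, and $\filt{a}\GLM\cdot\filt{b}\GLM\subset\filt{a+b}\GLM$ then yields $e_{1*}(\eta_1\cdots\eta_k)\in\filt{\sum_i(a_i+n_i)}\GLM(\surface,e_1(\star))$. Passing to the conjugacy class preserves the filtration index, and the compatibility $\PPsi{\GL}{\Uh}(\filt{n}\GL)\subset\filt{n}\Uh$ together with the definition $\filt{n}\tskein'(\surface)=\PPsi{\Uh}{\tskein'}(\filt{n}\Uh(\surface))$ shows
\[ e_{1*}\bigl(\PPsi{\GL}{\tskein'}(\zettaiti{\eta_1\cdots\eta_k})\bigr)\in\filt{\sum_i(a_i+n_i)}\tskein'(\surface). \]
Specializing the defining inequality to $\varsigma=\{1,\ldots,k\}$, where $k-\#\varsigma=0$, gives $\sum_i n_i\geq \mathbf{b}_1(0)$; combined with $\sum_i a_i\geq m_1$ this produces $\sum_i(a_i+n_i)\geq m_1+\mathbf{b}_1(0)$. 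Since $\Delta(\mathbf{b}_1)=\mathbf{b}_1$ by the base-case convention for $j=1$, this is exactly the bound claimed in Statement $(1)$.

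There is no real obstacle in this $N=1$ case: the refined filtration $\filt{(e_1,\mathbf{b}_1)m_1}$ has been engineered so that only the \emph{full-set} instance ($\varsigma=\{1,\ldots,k\}$) of the hypothesis on the $n_i$ is used, and the proof reduces to bookkeeping with the augmentation-ideal filtration combined with naturality of $\PPsi{\GL}{\tskein'}$ under embeddings. The role of the subset conditions with $\varsigma\subsetneq\{1,\ldots,k\}$ and of the operator $\Delta$ combining several sequences will only emerge in the inductive step $N\geq 2$, where the skein relation and the commutators in $\tskein'$ redistribute the $n_i$'s across different tensor factors and force all the partial subset bounds into play.
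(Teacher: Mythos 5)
Your argument is correct and follows the same route as the paper's (much terser) proof: reduce to a single generator $\zettaiti{\eta_1\cdots\eta_k}$, observe that $e_{1*}\circ\PPsi{\GL}{\tskein'}$ agrees with $\PPsi{\GL}{\tskein'}\circ\zettaiti{e_{1*}(\cdot)}$ because the image is a single knot and self-crossing changes are trivial in $\tskein'$, then apply multiplicativity of the augmentation-ideal filtration and the full-set instance $\varsigma=\{1,\ldots,k\}$ of the subset condition. Your version in fact fills in the bookkeeping (including the roles of $\PPsi{\GL}{\Uh}$ and the definition of $\filt{n}\tskein'$) that the paper compresses into one remark.
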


\begin{proof}
%$\eta_1, \cdots, \eta_k \in \GLM (\surface_1)$を
%$e_* (\eta_j) \in I_{\GLM (\surface)}^{n_j+m_j}$を満たすようにとる。
%ただし
%\begin{equation*}
%\sum_{j =1, \cdots, k} m_j \geq \mathbf{b}_1 (0)
%\end{equation*}
%とする。
%$\tskein'$では、自己交差が無視できることに注意して、
%\begin{equation*}
%e_* (\PPsi{\GL}{\tskein'}
%(\zettaiti{\eta_1 \cdots \eta_k}))
%\in F^{\sum_{j=1, \cdots, k} (m_j+n_j)} \tskein' (\surface)
%\subset 
%F^{\sum_{j=1, \cdots, k} m_j + \mathbf{b}_1 (0)}
%\tskein' (\surface)
%\end{equation*}
%が成り立つ。
%これより、補題が示せた。

Let $\eta_1, \cdots, \eta_k$ be elements of $\GLM (\surface_1)$ satisfying 
$e_* (\eta_j) \in I_{\GLM (\surface)}^{n_j+m_j}$, where 
\begin{equation*}
\sum_{j =1, \cdots, k} m_j \geq \mathbf{b}_1 (0).
\end{equation*} 
We remark that we can ignore the self-crossing in the skein algebra. We obtain 
\begin{equation*}
e_* (\PPsi{\GL}{\tskein'}
(\zettaiti{\eta_1 \cdots \eta_k}))
\in F^{\sum_{j=1, \cdots, k} (m_j+n_j)} \tskein' (\surface)
\subset 
F^{\sum_{j=1, \cdots, k} m_j + \mathbf{b}_1 (0)}
\tskein' (\surface)
\end{equation*}
 as desired.

%Let E be elements of the Goldman Lie algebra satisfying the property, where the equation holds. We remark that we can ignore the self-crossing in the skein algebra. We obtain the formula as desired.

\end{proof}

\begin{lemm}
\label{lemm_qbtskein_filtration_bracket}
%$\surface$ と$\tilde{\surface}$をコンパクトな曲面とし、
%$e: \surface \hookrightarrow \tilde {\surface} \times I$を
%曲面の埋め込みとする。
%任意の非負整数の増加しない数列$\mathbf{b}= \filtn{b_n}$
%と$\mathbf{b}' =\filtn{b'_n}$と非負整数の$m, m' \in \Zlarger{0}$で
%\begin{equation*}
%[\filt{(e,\mathbf{b})m} \GL (\surface),\filt{ (e, \mathbf{b}' )m'} \GL (\surface)]
%\subset \filt{ (e, \Lambda (\mathbf{b}, \mathbf{b'})) m+m'-2}
%\GL (\surface)
%\end{equation*}
%となる。

Let $\surface$ and $\tilde{\surface}$ be compact connected oriented surfaces and $e: \surface \hookrightarrow \tilde {\surface} \times I$ an embedding. For non-increasing sequences $\mathbf{b},\mathbf{b}'$ of non-negative integers and integers
$m,m' \in \Zlarger{0}$, we have 
\begin{equation*}
[\filt{(e,\mathbf{b})m} \GL (\surface),\filt{ (e, \mathbf{b}' )m'} \GL (\surface)]
\subset \filt{ (e, \Lambda (\mathbf{b}, \mathbf{b'})) m+m'-2}
\GL (\surface).
\end{equation*}

%Let S and S' be compact connected oriented surfaces and E an embedding. For non-increasing sequences of non-negative integers and integers, we have the formula.
\end{lemm}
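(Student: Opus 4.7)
The plan is to analyze the Goldman bracket of two typical generators $x = \zettaiti{\eta_1 \cdots \eta_k}$ of $\filt{(e, \mathbf{b})m}\GL(\surface)$ and $y = \zettaiti{\eta'_1 \cdots \eta'_{k'}}$ of $\filt{(e, \mathbf{b}')m'}\GL(\surface)$, with prescribed data $(m_i, n_i)_i$ and $(m'_j, n'_j)_j$, and to exhibit the result as a $\Q$-linear combination of generators of $\filt{(e, \Lambda(\mathbf{b}, \mathbf{b}'))(m+m'-2)}$; by bilinearity of the bracket this reduction is enough. First, I would apply the Leibniz property of the Lie action $\sigma(|\cdot|)$ on the second argument to write
\begin{equation*}
[x, y] = \sum_{j=1}^{k'} \zettaiti{\eta'_1 \cdots \eta'_{j-1} \cdot \sigma(|\eta_1 \cdots \eta_k|)(\eta'_j) \cdot \eta'_{j+1} \cdots \eta'_{k'}},
\end{equation*}
and then expand $\sigma(|\delta|)(\gamma) = \sum_p \epsilon(p)\,\gamma_{*,p}\,\delta_p\,\gamma_{p,*}$ against generic transverse representatives of $\delta = \eta_1 \cdots \eta_k$ and $\gamma = \eta'_j$. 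Each intersection $p$ lying in an $\eta_i$-segment yields, after cyclic rearrangement inside $\zettaiti{\cdot}$, a cyclic word $\zettaiti{A \cdot \eta_{i+1} \cdots \eta_{i-1} \cdot B \cdot \eta'_{j+1} \cdots \eta'_{j-1}}$ with junction elements $A = (\eta'_j)_{*,p}(\eta_i)_{p,\text{end}}$ and $B = (\eta_i)_{\text{start},p}(\eta'_j)_{p,*}$ in $\pi_1(\surface, \star)$.

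After a preliminary reduction WLOG $m_i = m'_j = 1$ at the intersecting factors — valid since $(\ker\varepsilon)^m$ is spanned by $m$-fold products of $(\ker\varepsilon)^1$, at the cost of passing to a finer decomposition — I would view this cyclic word as a $K$-factor product with $K = k + k'$, assigning $\tilde{m}$-data $m_l$ or $m'_l$ to the $\eta$- and $\eta'$-derived factors and $\tilde m_A = \tilde m_B = 0$ to the junction factors. This yields $\sum\tilde m_l = (k-1)+(k'-1) = m + m' - 2$, exactly matching the required filtration drop. The $\tilde n$-data on the $\eta$- and $\eta'$-derived factors are inherited directly; for $A, B$ one must assign values reflecting that together they encode both halves of $e_*(\eta_i)$ and $e_*(\eta'_j)$ in $\pi_1(\tilde\surface)$, in particular so that $\tilde n_A + \tilde n_B \geq n_i + n'_j$.

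The main obstacle is verifying the combinatorial condition
\begin{equation*}
\sum_{l \in \varsigma} \tilde n_l \geq \Lambda(\mathbf{b}, \mathbf{b}')(K - |\varsigma|) = \min\{b_a + b'_b : a + b = K - |\varsigma| - 2\}
\end{equation*}
for every subset $\varsigma$ of the $K$ factors. Splitting $\varsigma = \varsigma_1 \sqcup \varsigma_2 \sqcup (\varsigma \cap \{A, B\})$ by the factor's origin and applying the $\mathbf{b}$-condition of $x$ to $\varsigma_1$ and the $\mathbf{b}'$-condition of $y$ to $\varsigma_2$ gives the lower bound $b_{k - |\varsigma_1|} + b'_{k' - |\varsigma_2|}$; however, the index sum $(k - |\varsigma_1|) + (k' - |\varsigma_2|)$ overshoots the target index sum $K - |\varsigma| - 2 = k + k' - |\varsigma| - 2$ by $2 + |\varsigma \cap \{A,B\}|$, and closing this gap is the crux of the proof. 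I expect that the inequality $\tilde n_A + \tilde n_B \geq n_i + n'_j$ above, combined with a careful case analysis on whether $\varsigma$ contains $A$, $B$, both, or neither, and the non-increasing property of $\mathbf{b}, \mathbf{b}'$, exactly closes this two-index gap — thereby translating the Goldman bracket's augmentation "drop by $2$" into the combinatorial content of the operator $\Lambda$.
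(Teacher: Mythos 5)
Your overall strategy---expand the Goldman bracket termwise, exhibit each resulting cyclic word as a product of the unaffected factors together with two junction blocks $A,B$, count augmentation degrees to get the drop $m+m'-2$, and verify the subset condition against $\Lambda(\mathbf{b},\mathbf{b}')$---is exactly the paper's. But the step you yourself call ``the crux'' is left unproved, and the mechanism you propose for it does not work. You want to assign the junctions values with $\tilde n_A+\tilde n_B\geq n_i+n'_j$ on the grounds that $A$ and $B$ ``together encode both halves of $e_*(\eta_i)$ and $e_*(\eta'_j)$.'' This is not available: the extra drop $n_i$ is a property of the product $\eta_i=(\gamma_1-1)\cdots(\gamma_{m_i}-1)$ as a whole, and after cutting the loop at the intersection point $p$ and regluing with a piece of the other side, the junction elements are just paths in $\pi_1(\surface,\star)$ (times leftover $(\gamma-1)$ factors); their images under $e_*$ satisfy nothing beyond the trivial degree bound, i.e.\ $\tilde n_A=\tilde n_B=0$. (Already for $k=k'=1$ with $\eta_1=\gamma-1$ and $e_*(\gamma)-1$ deep in the filtration, the two arcs of $\gamma$ cut at $p$ carry no memory of that depth.) The paper accordingly assigns the junctions no extra drop at all.

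The two-index overshoot you identify is therefore not closed by extra data on $A,B$; it is absorbed by the definition of $\Lambda$ itself. Reading $\Lambda(\mathbf{b},\mathbf{b}')(n)$ as $\min\{b_i+b'_j\mid i+j= n+2\}$ --- which is how Theorem \ref{thm_qbtskein_filtration_assumption} restates it, and is the reading under which $\Lambda$ is non-increasing; the displayed ``$i+j+2=n$'' is not the version used --- the worst-case subset (both junctions included) requires exactly $\sum_{i\in\varsigma}n_i+\sum_{i'\in\varsigma'}n'_{i'}\geq\Lambda(\mathbf{b},\mathbf{b}')(k+k'-2-\sharp\varsigma-\sharp\varsigma')$, and the pair $(i,j)=(k-\sharp\varsigma,\,k'-\sharp\varsigma')$ is then a competitor in the defining minimum, so the two original subset conditions for $x$ and $y$ finish the argument with no case analysis. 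Two smaller points: your count $\sum\tilde m_l=(k-1)+(k'-1)$ presumes every $m_i=1$; in general the junctions absorb degree $m_j+m'_{j'}-2$ and the correct bookkeeping is $(m-m_j)+(m'-m'_{j'})+(m_j+m'_{j'}-2)=m+m'-2$. And the ``WLOG $m_i=m'_j=1$'' cannot be performed as a reduction of the hypothesis: refining $\eta_i\in\filt{m_i}\GLM(\surface,\star)$ into degree-one pieces destroys the $n_i$-data, and the refined word need not lie in $\filt{(e,\mathbf{b})m}\GL(\surface)$. One must instead, as the paper does, expand $\eta_j$ and $\eta'_{j'}$ only inside the bracket computation and fold the leftover pieces into $A$ and $B$.
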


\begin{proof}

Let $\eta_1, \cdots, \eta_k, \eta'_1, \cdots, \eta'_{k'}$ be elements of $\GLM (\surface,\star)$ satisfying the conditions.
\begin{itemize}
\item For any $j \in \shuugou{1, \cdots, k}$, there exists
a set $S_j$, 
exists $m_j \in \Zlarger{0}$, 
sxists $q_{\alpha_j} \in \Q$ for any $\alpha_j \in S_j$,
and  
 exists $\gamma^{(\alpha_j)}_i \in \pi_1 (\surface, \star)$
for any $\alpha_j \in S_j, i \in \shuugou{1, \cdots, m_j}$
satisfying
\begin{equation*}
\eta_j = \sum_{\alpha_j \in S_j} q_{\alpha_j} (\gamma^{(\alpha_j)}_1-1)
(\gamma^{(\alpha_j)}_2-1) \cdots
(\gamma^{(\alpha_j)}_{m_j}-1)
\in I_{\GLM (\surface, \star)}^{m_j}.
\end{equation*}
\item For any $j' \in \shuugou{1, \cdots, k'}$, there exists
a set $S'_{j'}$, 
exists $m'_{j'} \in \Zlarger{0}$, 
sxists $q'_{\alpha'_{j'}} \in \Q$ for any $\alpha'_{j'} \in S'_{j'}$,
and  
 exists $\gamma'^{(\alpha'_{j'})}_i \in \pi_1 (\surface, \star)$
for any $\alpha'_{j'} \in S'_{j'}, i \in \shuugou{1, \cdots, m'_{j'}}$
satisfying
\begin{equation*}
\eta'_{j'} = \sum_{\alpha'_{j'} \in S'_{j'}} q'_{\alpha'_{j'}} (\gamma'^{(\alpha'_{j'})}_1-1)
(\gamma'^{(\alpha'_{j'})}_2-1) \cdots
(\gamma'^{(\alpha'_{j'})}_{m'_{j'}}-1)
\in I_{\GLM (\surface, \star)}^{m'_{j'}}.
\end{equation*}
\item There exist integers $n_1, \cdots, n_k,n'_1, \cdots, n'_{k'} \in \Zlarger{0}$ such that
\begin{equation*}
e_* (\eta_k) \in I_{\GLM (\tilde{\surface} \times I)}^{m_k+n_k}
\mathrm{ \ and \ }
e_* (\eta'_{k'}) \in I_{\GLM (\tilde{\surface} \times I)}^{m'_{k'}+n'_{k'}}.
\end{equation*}
\item $\sum_{i \in \shuugou{1, \cdots, k}} m_i \geq m$.
\item $\sum_{i \in \shuugou{1, \cdots, k'}} m'_i \geq m'$.
\item For any subset $\varsigma \subset \shuugou{1, \cdots, k}$,
$\sum_{i \in \varsigma} n_i \geq b_{k-\sharp \varsigma}$.
\item For any subset $\varsigma' \subset \shuugou{1, \cdots, k'}$,
$\sum_{i \in \varsigma'} n'_i \geq b'_{k- \sharp \varsigma'}$.
\end{itemize}
Here we recall that $I_{\Q G} \subset \Q G$
is the augmentation ideal $\shuugou{\sum_{g\in G} q_g g|\sum_{g\in G} q_g=0}$.
for any group $G$.
 Then we have 
\begin{align*}
&[ \zettaiti{\eta_1 \eta_2 \cdots \eta_k}, 
\zettaiti{\eta'_1, \eta'_2, \cdots \eta'_{k'}}] \\
&=\sum_{j \in \shuugou{1, \cdots, k}} \sum_{j' \in \shuugou{1, \cdots, k'}}
\sum_{\alpha_j \in S_j} \sum_{\alpha'_{j'} \in S'_{j'}}
\sum_{i \in \shuugou{1, \cdots, m_k}} \sum_{i' \in \shuugou{1, \cdots, m'_{k'}}}
\sum_{p \in \gamma^{( \alpha_j)}_i \cap \gamma'^{( \alpha'_{j'})}_{i'}} \\
&q_{\alpha_j} q'_{\alpha'_{j'}}
\epsilon (p, \gamma^{(\alpha_j)}_i, \gamma'^{(\alpha'_{j'})}_{i'})
\zettaiti{
(\gamma^{(\alpha_j)}_1-1) \cdots (\gamma^{(\alpha_j)}_{i-1}-1)
(\gamma^{(\alpha_j)}_i)_{\star,p} (\gamma'^{(\alpha'_{j'})}_{i'})_{p, \star} \\
&(\gamma^{(\alpha'_{j'})}_{i'+1}-1) \cdots (\gamma^{(\alpha'_{j'})}_{m'_{j'}}-1)
\eta'_{j'+1} \cdots \eta'_{k'} \eta'_{1} \cdots \eta'_{j'-1}
(\gamma'^{(\alpha'_{j'})}_1-1) \cdots (\gamma'^{(\alpha'_{j'})}_{i-1}-1) \\
&(\gamma'^{(\alpha'_{j'})}_{i'})_{\star,p}(\gamma^{(\alpha_{j})}_{i})_{p, \star}
(\gamma^{(\alpha_{j})}_{i+1}-1) \cdots
(\gamma^{(\alpha_{j})}_{m_j}-1)
\eta_{j+1} \cdots \eta_{k} \eta_1 \cdots \eta_{j-1}}.
\end{align*}
 Now, for any subsets
$\varsigma \subset \shuugou{1, \cdots, j-1, j+1, \cdots, k}$
 and $\varsigma' \subset \shuugou{ 1, \cdots, j'-1, j'+1, \cdots, k'}$,
\begin{equation*}
\sum_{i \in \varsigma}n_i+ \sum_{i' \in \varsigma'} n'_{i'}
\geq b_{k- \sharp \varsigma} + b'_{k'-\sharp \varsigma'}
\geq \Delta (\mathbf{b}, \mathbf{b'})(k+k'-2-\sharp \varsigma -\sharp'
\varsigma')
\end{equation*}
holds. By definition, we obtain 
\begin{align*}
&\zettaiti{
(\gamma^{(\alpha_j)}_1-1) \cdots (\gamma^{(\alpha_j)}_{i-1}-1)
(\gamma^{(\alpha_j)}_i)_{\star,p} (\gamma'^{(\alpha'_{j'})}_{i'})_{p, \star} \\
&(\gamma^{(\alpha'_{j'})}_{i'+1}-1) \cdots (\gamma^{(\alpha'_{j'})}_{m'_{j'}}-1)
\eta'_{j'+1} \cdots \eta'_{k'} \eta'_{1} \cdots \eta'_{j'-1}
(\gamma'^{(\alpha'_{j'})}_1-1) \cdots (\gamma'^{(\alpha'_{j'})}_{i-1}-1) \\
&(\gamma'^{(\alpha'_{j'})}_{i'})_{\star,p}(\gamma^{(\alpha_{j})}_{i})_{p, \star}
(\gamma^{(\alpha_{j})}_{i+1}-1) \cdots
(\gamma^{(\alpha_{j})}_{m_j}-1) 
\eta_{j+1} \cdots \eta_{k} \eta_1 \cdots \eta_{j-1}} \\
&\in \filt{(e,\Delta (\mathbf{b}, \mathbf{b}'))m+m'-2}
\GL (\surface).
\end{align*}
 It proves the lemma.

%Let H be elements of the group ring satisfying the conditions. Here we recall that II is the augmentation ideal for any group. Then we have the computation. Now, for any subsets, the equation holds. By definition, we obtain the formula. It proves the lemma.
\end{proof}

\begin{lemm}
\label{lemm_qbtskein_filtration_embedding_1}
%コンパクトな曲面$\surface, \surface_1, \surface_2,
%\tilde{\surface}$で、
%埋め込み$e :\surface \times I  \to 
%\tilde{\surface} \times I$と
%$e_1 :\surface_1 \to \surface \times I$
%と$e_2: \surface_2 \to \surface \times I$をとる。
%$e$から定義できる埋め込み$\surface \to \tilde{\surface} \times I, p 
%\mapsto e(p,0)$も$e$と書く。このとき、
%$x \in \PPsi{\GL}{\tskein'} (\filt{(e\circ e_1, \mathbf{b}_1)m_1}
%\GL (\surface_1))$
%と$y \in \PPsi{\GL}{\tskein'} (\filt{(e \circ e_2, \mathbf{b}_2)m_2}
%\GL (\surface_2))$で
%\begin{equation*}
%e_{1*} (x) e_{2*} (y)-e_{2*} (y) e_{1*}(x)
%\in h \PPsi{\Uh}{\tskein'} (\filt{(e, \Delta (\mathbf{b}_1, \mathbf{b}_2))m_1+m_2-2}
%\GL (\surface))
%\end{equation*}
%となる。

Let $\surface, \surface_1, \surface_2,$ and
$\tilde{\surface}$ be compact connected oriented surfaces and 
$e :\surface \times I  \to 
\tilde{\surface} \times I$,
$e_1 :\surface_1 \to \surface \times I$,
and $e_2: \surface_2 \to \surface \times I$
 embeddings. We also denote by $e$ the restriction
$\surface \to \tilde{\surface} \times I, p 
\mapsto e(p,0)$. For any elements
$x \in \PPsi{\GL}{\tskein'} (\filt{(e\circ e_1, \mathbf{b}_1)m_1}
\GL (\surface_1))$
and
$y \in \PPsi{\GL}{\tskein'} (\filt{(e \circ e_2, \mathbf{b}_2)m_2}
\GL (\surface_2))$, we obtain 
\begin{equation*}
e_{1*} (x) e_{2*} (y)-e_{2*} (y) e_{1*}(x)
\in h \PPsi{\GL}{\tskein'} (\filt{(e, \Delta (\mathbf{b}_1, \mathbf{b}_2))m_1+m_2-2}
\GL (\surface)).
\end{equation*}
%Let S be compact connected oriented surfaces and E embeddings. We also denote by e the restriction. For any elements, we obtain the equation.
\end{lemm}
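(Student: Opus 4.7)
The strategy is to translate the $\tskein'$-commutator into a Goldman bracket and then invoke Lemma~\ref{lemm_qbtskein_filtration_bracket}. The key algebraic identity I will use is that for any $a,b \in \GL(\surface)$,
\begin{equation*}
\PPsi{\GL}{\tskein'}(a)\,\PPsi{\GL}{\tskein'}(b) - \PPsi{\GL}{\tskein'}(b)\,\PPsi{\GL}{\tskein'}(a) = h\,\PPsi{\GL}{\tskein'}([a,b])
\end{equation*}
in $\tskein'(\surface)$, where $[\cdot,\cdot]$ denotes the Goldman bracket. This falls out of the defining relation $u \otimes v - v \otimes u = h[u,v]$ in $\Uh(\surface)$ together with the algebra isomorphism $\PPsi{\Uh}{\tskein'}$ of Theorem~\ref{thm_psi_Uh_tskein'}.

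Writing $x = \PPsi{\GL}{\tskein'}(x')$ and $y = \PPsi{\GL}{\tskein'}(y')$ with $x' \in \filt{(e\circ e_1,\mathbf{b}_1)m_1}\GL(\surface_1)$ and $y' \in \filt{(e\circ e_2,\mathbf{b}_2)m_2}\GL(\surface_2)$, I would first verify that the Goldman pushforwards lie in the expected filtrations on $\surface$, namely $(e_1)_*(x') \in \filt{(e,\mathbf{b}_1)m_1}\GL(\surface)$ and $(e_2)_*(y') \in \filt{(e,\mathbf{b}_2)m_2}\GL(\surface)$. This is a direct unpacking of the definition: the witnessing data $\{m_j,n_j\}$ for $x'$ transport verbatim to witnessing data for $(e_1)_*(x')$ using the identity $(e\circ e_1)_* = e_* \circ (e_1)_*$ on the augmentation filtrations of $\GLM$. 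Combined with the naturality $e_{i*} \circ \PPsi{\GL}{\tskein'} = \PPsi{\GL}{\tskein'} \circ (e_i)_*$, the key identity will yield
\begin{equation*}
e_{1*}(x)\,e_{2*}(y) - e_{2*}(y)\,e_{1*}(x) = h\,\PPsi{\GL}{\tskein'}\bigl([(e_1)_*(x'),\,(e_2)_*(y')]\bigr),
\end{equation*}
and Lemma~\ref{lemm_qbtskein_filtration_bracket} will then place the Goldman bracket inside $\filt{(e,\Delta(\mathbf{b}_1,\mathbf{b}_2))m_1+m_2-2}\GL(\surface)$, giving the desired inclusion.

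The hard part will be the naturality step $e_{i*} \circ \PPsi{\GL}{\tskein'} = \PPsi{\GL}{\tskein'} \circ (e_i)_*$. When the embedded surfaces $e_1(\surface_1)$ and $e_2(\surface_2)$ are not pairwise disjoint or do not sit as a single slice of $\surface \times I$, the embedded knot $e_{i*}(K_{\zettaiti{\gamma}})$ will pick up crossings that the skein relation resolves into the principal term $\PPsi{\GL}{\tskein'}((e_i)_*(\zettaiti{\gamma}))$ plus correction terms carrying extra factors of $h$. I expect these corrections to land in strictly higher $\filt{\cdot}\,\tskein'(\surface)$ and, after multiplication by the outer factor of $h$ produced by the commutator, to be absorbed by the target filtration; the sequence $\mathbf{b}$ in the definition of $\filt{(e,\mathbf{b})m}$ is designed precisely to track these kinds of corrections, so the estimate should close cleanly.
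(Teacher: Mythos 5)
Your proposal matches the paper's proof step for step: first record that $e_{i*}(\filt{(e\circ e_i,\mathbf{b}_i)m_i}\GL(\surface_i))\subset\filt{(e,\mathbf{b}_i)m_i}\GL(\surface)$ by unpacking the definition, then rewrite the commutator as $h$ times the Goldman bracket via the defining relation of $\Uh(\surface)$ and the isomorphism $\PPsi{\Uh}{\tskein'}$, and finally apply Lemma \ref{lemm_qbtskein_filtration_bracket}. The only place you diverge is the final paragraph of anticipated difficulty, which is in fact a non-issue: in $\tskein'$ a self-crossing change is the relation $T_{(C+)}-T_{(C-)}=0$, so a knot depends only on its free homotopy class and $e_{i*}\circ\PPsi{\GL}{\tskein'}=\PPsi{\GL}{\tskein'}\circ e_{i*}$ holds exactly, with no $h$-correction terms to absorb.
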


\begin{proof}
%$\filt{(e,\mathbf{b})m}$の定義より、$i=1,2$で
%\begin{align*}
%e_{i*} (\filt{(e \circ e_i, \mathbf{b}_i)m_i} \GL (\surface_i))
%\subset \filt{ (e, \mathbf{b}_i)m_i} \GL (\surface)
%\end{align*}
%であることに注意しておく。
%$\Uh (\surface)$の定義より、
%\begin{align*}
%\PPsi{\tskein'}{\GL} (e_{1*} (x) e_{2*} (y)-e_{2*} (y) e_{1*}(x))
%=h [ \PPsi{\tskein'}{\GL} (e_{1*} (x)), \PPsi{\tskein'}{\Uh}
%(e_{2*} (y))]
%\end{align*}
%とわかる。
%Lemma \ref{lemm_qbtskein_filtration_bracket}より、
%\begin{align*}
%h [ \PPsi{\tskein'}{\GL} (e_{1*} (x)), \PPsi{\tskein'}{\GL}
%(e_{2*} (y))]
%\in
%h  \filt{(e, \Delta (\mathbf{b}_1, \mathbf{b}_2))m_1+m_2-2}
%\GL (\surface)
%\end{align*}
%とわかる。

We remark that, by definition, we have 
\begin{align*}
e_{i*} (\filt{(e \circ e_i, \mathbf{b}_i)m_i} \GL (\surface_i))
\subset \filt{ (e, \mathbf{b}_i)m_i} \GL (\surface)
\end{align*}
 for $i=1,2$. By the relation of $\Uh (\surface)$, we have 
\begin{align*}
\PPsi{\tskein'}{\GL} (e_{1*} (x) e_{2*} (y)-e_{2*} (y) e_{1*}(x))
=h [ \PPsi{\tskein'}{\GL} (e_{1*} (x)), \PPsi{\tskein'}{\Uh}
(e_{2*} (y))].
\end{align*} Using 
Lemma \ref{lemm_qbtskein_filtration_bracket},
 we obtain 
\begin{align*}
h [ \PPsi{\tskein'}{\GL} (e_{1*} (x)), \PPsi{\tskein'}{\GL}
(e_{2*} (y))]
\in
h  \filt{(e, \Delta (\mathbf{b}_1, \mathbf{b}_2))m_1+m_2-2}
\GL (\surface).
\end{align*}
 It proves the lemma.

%We remark that, by definition, we have the equation for II. By the relation of U, we have the computation. Using the lemma, we obtain the formula. It proves the lemma.
\end{proof}

\begin{lemm}
\label{lemm_qbtskein_filtration_embedding_2}

Let $\surface_0, \cdots, \surface_{N+1}, \surface$ be compact connected oriented surfaces and 
\begin{align*}
 e^+=e^+_1 \sqcup \cdots \sqcup e^+_{N+1}
&: \surface_1 \times I \sqcup \cdots \sqcup
\surface_{N-1}\times I 
\sqcup \surface_N \times I \sqcup \surface_{N+1}
\times I
\hookrightarrow \surface \times I \\
 e^-=e^-_1 \sqcup \cdots \sqcup e^-_{N+1}
&: \surface_1 \times I \sqcup \cdots \sqcup
\surface_{N-1}\times I 
\sqcup \surface_N \times I \sqcup \surface_{N+1}
\times I
\hookrightarrow \surface \times I \\
e^0 =e^0_1 \sqcup \cdots \sqcup e^0_{N-1} \sqcup e^0_0
&: \surface_1 \times I \sqcup \cdots \sqcup
\surface_{N-1} \times I \sqcup \surface_0 \times I
\hookrightarrow
\surface \times I
\end{align*}
 be embeddings satisfying the three conditions.
\begin{itemize}
\item  The images of $e^+,e^-,e^0$ equal to each other except for a closed ball $D^3$.
\item In $D^3$, the images of $e^+,e^-,e^0$ are submanifolds shown in the figure.

\begin{picture}(240,90)
\put(0,10){%WinTpicVersion4.32a
{\unitlength 0.1in%
\begin{picture}(10.4000,10.4000)(1.2000,-11.2000)%
% CIRCLE 1 0 3 0 Black White  
% 4 640 600 1160 600 1160 600 1160 600
% 
\special{pn 13}%
\special{ar 640 600 520 520 0.0000000 6.2831853}%
% LINE 2 0 3 0 Black White  
% 8 440 120 440 120 440 120 440 1080 840 1080 840 120 420 130 420 1076
% 
\special{pn 8}%
\special{pa 440 120}%
\special{pa 440 120}%
\special{fp}%
\special{pa 440 120}%
\special{pa 440 1080}%
\special{fp}%
\special{pa 840 1080}%
\special{pa 840 120}%
\special{fp}%
\special{pa 420 130}%
\special{pa 420 1076}%
\special{fp}%
% LINE 2 0 3 0 Black White  
% 14 160 800 160 800 160 800 240 800 240 800 320 880 320 880 420 880 840 880 920 880 920 880 1000 800 1000 800 1120 800
% 
\special{pn 8}%
\special{pa 160 800}%
\special{pa 160 800}%
\special{fp}%
\special{pa 160 800}%
\special{pa 240 800}%
\special{fp}%
\special{pa 240 800}%
\special{pa 320 880}%
\special{fp}%
\special{pa 320 880}%
\special{pa 420 880}%
\special{fp}%
\special{pa 840 880}%
\special{pa 920 880}%
\special{fp}%
\special{pa 920 880}%
\special{pa 1000 800}%
\special{fp}%
\special{pa 1000 800}%
\special{pa 1120 800}%
\special{fp}%
% LINE 2 0 3 0 Black White  
% 14 160 400 160 400 160 400 240 400 240 400 320 480 320 480 420 480 840 480 920 480 920 480 1000 400 1000 400 1120 400
% 
\special{pn 8}%
\special{pa 160 400}%
\special{pa 160 400}%
\special{fp}%
\special{pa 160 400}%
\special{pa 240 400}%
\special{fp}%
\special{pa 240 400}%
\special{pa 320 480}%
\special{fp}%
\special{pa 320 480}%
\special{pa 420 480}%
\special{fp}%
\special{pa 840 480}%
\special{pa 920 480}%
\special{fp}%
\special{pa 920 480}%
\special{pa 1000 400}%
\special{fp}%
\special{pa 1000 400}%
\special{pa 1120 400}%
\special{fp}%
% LINE 2 0 3 0 Black White  
% 12 166 820 240 820 240 820 320 900 320 900 420 900 840 900 920 900 920 900 1000 820 1000 820 1110 820
% 
\special{pn 8}%
\special{pa 166 820}%
\special{pa 240 820}%
\special{fp}%
\special{pa 240 820}%
\special{pa 320 900}%
\special{fp}%
\special{pa 320 900}%
\special{pa 420 900}%
\special{fp}%
\special{pa 840 900}%
\special{pa 920 900}%
\special{fp}%
\special{pa 920 900}%
\special{pa 1000 820}%
\special{fp}%
\special{pa 1000 820}%
\special{pa 1110 820}%
\special{fp}%
\end{picture}}%}
\put(40,0){$e^+$}
\put(80,10){%WinTpicVersion4.32a
{\unitlength 0.1in%
\begin{picture}(10.4000,10.4000)(1.2000,-11.2000)%
% CIRCLE 1 0 3 0 Black White  
% 4 640 600 1160 600 1160 600 1160 600
% 
\special{pn 13}%
\special{ar 640 600 520 520 0.0000000 6.2831853}%
% LINE 2 0 3 0 Black White  
% 22 160 800 160 800 160 800 240 800 240 800 320 720 320 720 920 720 920 720 1000 800 1000 800 1120 800 1120 400 1000 400 1000 400 920 320 920 320 320 320 320 320 240 400 240 400 160 400
% 
\special{pn 8}%
\special{pa 160 800}%
\special{pa 160 800}%
\special{fp}%
\special{pa 160 800}%
\special{pa 240 800}%
\special{fp}%
\special{pa 240 800}%
\special{pa 320 720}%
\special{fp}%
\special{pa 320 720}%
\special{pa 920 720}%
\special{fp}%
\special{pa 920 720}%
\special{pa 1000 800}%
\special{fp}%
\special{pa 1000 800}%
\special{pa 1120 800}%
\special{fp}%
\special{pa 1120 400}%
\special{pa 1000 400}%
\special{fp}%
\special{pa 1000 400}%
\special{pa 920 320}%
\special{fp}%
\special{pa 920 320}%
\special{pa 320 320}%
\special{fp}%
\special{pa 320 320}%
\special{pa 240 400}%
\special{fp}%
\special{pa 240 400}%
\special{pa 160 400}%
\special{fp}%
% LINE 2 0 3 0 Black White  
% 12 168 820 168 820 168 820 240 820 240 820 320 740 320 740 920 740 920 740 1000 820 1000 820 1114 820
% 
\special{pn 8}%
\special{pa 168 820}%
\special{pa 168 820}%
\special{fp}%
\special{pa 168 820}%
\special{pa 240 820}%
\special{fp}%
\special{pa 240 820}%
\special{pa 320 740}%
\special{fp}%
\special{pa 320 740}%
\special{pa 920 740}%
\special{fp}%
\special{pa 920 740}%
\special{pa 1000 820}%
\special{fp}%
\special{pa 1000 820}%
\special{pa 1114 820}%
\special{fp}%
% LINE 2 0 3 0 Black White  
% 10 440 120 440 120 440 120 440 320 840 120 840 320 840 740 840 1080 440 1080 440 740
% 
\special{pn 8}%
\special{pa 440 120}%
\special{pa 440 120}%
\special{fp}%
\special{pa 440 120}%
\special{pa 440 320}%
\special{fp}%
\special{pa 840 120}%
\special{pa 840 320}%
\special{fp}%
\special{pa 840 740}%
\special{pa 840 1080}%
\special{fp}%
\special{pa 440 1080}%
\special{pa 440 740}%
\special{fp}%
% LINE 2 0 3 0 Black White  
% 6 420 128 420 128 420 128 420 320 420 740 420 1068
% 
\special{pn 8}%
\special{pa 420 128}%
\special{pa 420 128}%
\special{fp}%
\special{pa 420 128}%
\special{pa 420 320}%
\special{fp}%
\special{pa 420 740}%
\special{pa 420 1068}%
\special{fp}%
\end{picture}}%}
\put(120,0){$e^-$}
\put(160,10){%WinTpicVersion4.32a
{\unitlength 0.1in%
\begin{picture}(10.4000,10.4000)(1.2000,-11.2000)%
% CIRCLE 1 0 3 0 Black White  
% 4 640 600 1160 600 1160 600 1160 600
% 
\special{pn 13}%
\special{ar 640 600 520 520 0.0000000 6.2831853}%
% LINE 2 0 3 0 Black White  
% 16 440 1080 440 800 440 800 160 800 160 400 440 400 440 120 440 400 840 400 840 120 840 400 1120 400 840 800 1120 800 840 800 840 1080
% 
\special{pn 8}%
\special{pa 440 1080}%
\special{pa 440 800}%
\special{fp}%
\special{pa 440 800}%
\special{pa 160 800}%
\special{fp}%
\special{pa 160 400}%
\special{pa 440 400}%
\special{fp}%
\special{pa 440 120}%
\special{pa 440 400}%
\special{fp}%
\special{pa 840 400}%
\special{pa 840 120}%
\special{fp}%
\special{pa 840 400}%
\special{pa 1120 400}%
\special{fp}%
\special{pa 840 800}%
\special{pa 1120 800}%
\special{fp}%
\special{pa 840 800}%
\special{pa 840 1080}%
\special{fp}%
% LINE 2 0 3 0 Black White  
% 6 170 820 170 820 170 820 420 820 420 820 420 1070
% 
\special{pn 8}%
\special{pa 170 820}%
\special{pa 170 820}%
\special{fp}%
\special{pa 170 820}%
\special{pa 420 820}%
\special{fp}%
\special{pa 420 820}%
\special{pa 420 1070}%
\special{fp}%
% LINE 2 0 3 0 Black White  
% 4 420 400 420 400 420 400 420 132
% 
\special{pn 8}%
\special{pa 420 400}%
\special{pa 420 400}%
\special{fp}%
\special{pa 420 400}%
\special{pa 420 132}%
\special{fp}%
% LINE 2 0 3 0 Black White  
% 4 860 820 860 820 1112 820 840 820
% 
\special{pn 8}%
\special{pa 860 820}%
\special{pa 860 820}%
\special{fp}%
\special{pa 1112 820}%
\special{pa 840 820}%
\special{fp}%
\end{picture}}%}
\put(200,0){$e^0$}
\put(25,20){$\Sigma_{N+1}$}
\put(5,40){$\Sigma_{N}$}
\put(105,20){$\Sigma_{N+1}$}
\put(85,40){$\Sigma_{N}$}
\put(190,20){$\Sigma_{0}$}
\end{picture}

\item We have
\begin{align*}
&e_i^+=e_i^-=e_i^0 \mathrm{ \ for \ } i=1, \cdots, N-1, \\
&(\surface_i \times I) \backslash (e^{+})^{-1} (D^3)
=(\surface_i \times I) \backslash (e^{-})^{-1} (D^3)
\mathrm{ \ for \ } i=N,N+1, \\
&e^+_{i|(\surface_i \times I) \backslash (e^{+})^{-1} (D^3)}=
e^-_{i|(\surface_i \times I) \backslash (e^{-})^{-1} (D^3)}
\mathrm{ \ for \ } i=N,N+1, \\
\end{align*}
\end{itemize}

For non-increasing sequences 
$\mathbf{b}_1, \cdots \mathbf{b}_{N+1}$
of non-negative integers, integers
$m_1$, $\cdots$, $m_{N+1} \in \Zlarger{0}$,
 and 
\begin{align*}
&x_1 \in \filt{(e_1, \mathbf{b}_1)m_1} \GL (\surface_1),\cdots,
x_{N+1} \in \filt{(e_{N+1}, \mathbf{b}_{N+1})m_{N+1}}\GL (\surface_{N+1}),\\
\end{align*}
 the two statements hold. 
\begin{enumerate}
\item There exists $y \in \filt{(e_0, \Delta (\mathbf{b}_N, \mathbf{b}_{N+1}))+m_N+m_{N+1}-2}
\GL (\surface_0)$ satisfying 
\begin{align*}
&e^+_* (\PPsi{\GL}{\tskein'} (x_1) \otimes \cdots\otimes \PPsi{\GL}{\tskein'} (x_{N+1}))
-
e^-_* (\PPsi{\GL}{\tskein'} (x_1) \otimes \cdots \otimes \PPsi{\GL}{\tskein'} (x_{N+1})) \\
&= h
e^0_* (\PPsi{\GL}{\tskein'} (x_1) \otimes \cdots \PPsi{\Uh}{\tskein'} (x_{N-1})
\otimes \PPsi{\GL}{\tskein'} (y)).
\end{align*}
\item  If Statement$(N)$ holds, we have 
\begin{align*}
&e^+_* (\PPsi{\GL}{\tskein'} (x_1) \otimes \cdots\otimes \PPsi{\GL}{\tskein'} (x_{N+1}))
-
e^-_* (\PPsi{\GL}{\tskein'} (x_1) \otimes \cdots \otimes \PPsi{\GL}{\tskein'} (x_{N+1})) \\
&\in \filt{\sum_{i=1}^k m_i +\Delta (\mathbf{b}_1, \cdots, \mathbf{b}_{N+1})(0)}
\tskein' (\surface).
\end{align*}
\end{enumerate}

%Let S be compact connected oriented surfaces and E be embeddings satisfying the three conditions. The first is that the images of them equal to each other except for a closed ball. The second is that, here, they are submanifolds shown in the figure. The third is the equations.
%For non-increasing sequences of non-negative integers, integers, and elements, the two statements hold. The first one is that there exists Y satisfying the property. The second one is that, if the Statement holds, we have the equation.
\end{lemm}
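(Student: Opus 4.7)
The plan is to prove (1) by a local skein--theoretic computation inside the distinguished ball $D^3$, and then to deduce (2) from (1) by invoking Statement$(N)$ together with the associativity of $\Delta$ and the fact that multiplication by $h$ shifts the filtration by $2$.

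For (1), I would fix tangle representatives $T_i \in \mathcal{T}'(\surface_i \times I)$ of $\PPsi{\GL}{\tskein'}(x_i)$ for each $i$. Since $e^+$, $e^-$, and $e^0$ all coincide outside $D^3$, and since $e^+_i = e^-_i = e^0_i$ for $i \leq N-1$, the difference
\begin{equation*}
e^+_*(T_1 \otimes \cdots \otimes T_{N+1}) - e^-_*(T_1 \otimes \cdots \otimes T_{N+1})
\end{equation*}
is supported exactly at the crossings inside $D^3$ between strands of $e^\pm_N(T_N)$ and $e^\pm_{N+1}(T_{N+1})$. Each such crossing forms a Conway triple with $e^0$, whose smoothing $T_{(C0)}$ is realized by joining the relevant strands through the band of $\surface_0$. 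Applying the skein relation $T_{(C+)} - T_{(C-)} = h\, T_{(C0)}$ and telescoping over the affected crossings yields
\begin{equation*}
e^+_*(T_1 \otimes \cdots \otimes T_{N+1}) - e^-_*(T_1 \otimes \cdots \otimes T_{N+1}) = h\, e^0_*(T_1 \otimes \cdots \otimes T_{N-1} \otimes S),
\end{equation*}
where $S \in \tskein'(\surface_0)$ is the sum of the smoothed tangles. Setting $y \defeq \PPsi{\tskein'}{\GL}(S) \in \GL(\surface_0)$, one recognizes $y$ as a Goldman--bracket--type combination of the generators of $x_N$ and $x_{N+1}$: each crossing contributes an $\sigma$-action insertion analogous to the formula for $[\cdot, \cdot]$ from \S 2. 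Applying Lemma \ref{lemm_qbtskein_filtration_bracket} componentwise to the generators $\zettaiti{\eta_1 \cdots \eta_k}$ of $\filt{(e_N, \mathbf{b}_N)m_N}\GL(\surface_N)$ and $\filt{(e_{N+1}, \mathbf{b}_{N+1})m_{N+1}}\GL(\surface_{N+1})$ then gives $y \in \filt{(e_0, \Delta(\mathbf{b}_N, \mathbf{b}_{N+1}))m_N + m_{N+1} - 2}\GL(\surface_0)$, as required.

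For (2), given (1) and the assumption that Statement$(N)$ holds, apply Statement$(N)$ to the $N$-tuple $(x_1, \ldots, x_{N-1}, y)$ with filtration indices $(m_1, \ldots, m_{N-1}, m_N + m_{N+1} - 2)$ and associated sequences $(\mathbf{b}_1, \ldots, \mathbf{b}_{N-1}, \Delta(\mathbf{b}_N, \mathbf{b}_{N+1}))$. By the associativity
\begin{equation*}
\Delta(\mathbf{b}_1, \ldots, \mathbf{b}_{N-1}, \Delta(\mathbf{b}_N, \mathbf{b}_{N+1})) = \Delta(\mathbf{b}_1, \ldots, \mathbf{b}_{N+1})
\end{equation*}
recorded just before the lemma, Statement$(N)$ yields
\begin{equation*}
e^0_*\bigl(\PPsi{\GL}{\tskein'}(x_1) \otimes \cdots \otimes \PPsi{\GL}{\tskein'}(y)\bigr) \in \filt{\sum_{i=1}^{N+1} m_i - 2 + \Delta(\mathbf{b}_1, \ldots, \mathbf{b}_{N+1})(0)} \tskein'(\surface).
\end{equation*}
Since $h^{i_0}$ contributes $2i_0$ in the definition of the filtration on $\Uh(\surface)$, multiplication by $h$ shifts the filtration index by $2$. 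Combining this with the identity from (1) gives the desired bound.

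The main obstacle is step (1): namely, verifying that the telescoped sum of smoothed tangles in $D^3$ really is the image under $e^0_*$ of $T_1 \otimes \cdots \otimes T_{N-1} \otimes S$ for a specific $S$, and then showing that the corresponding element $y = \PPsi{\tskein'}{\GL}(S) \in \GL(\surface_0)$ lands in the claimed filtration. This requires carefully tracking how each generator $\zettaiti{\eta_1 \cdots \eta_k}$ of $\filt{(e_i, \mathbf{b}_i)m_i}\GL(\surface_i)$ behaves under the Goldman--style action produced by each smoothing, reducing the estimate on $y$ to a bracket--type filtration estimate analogous to Lemma \ref{lemm_qbtskein_filtration_bracket}.
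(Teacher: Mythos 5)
Your overall strategy matches the paper's: part (1) reduces the difference $e^+_* - e^-_*$ to $h$ times the pushforward of a bracket of $x_N$ and $x_{N+1}$ into $\GL (\surface_0)$, estimated by Lemma \ref{lemm_qbtskein_filtration_bracket}, and part (2) then follows from Statement $(N)$, the associativity of $\Delta$, and the degree-$2$ shift coming from the factor $h$. Your treatment of (2) is exactly the paper's.

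The one place where you diverge --- and where you yourself flag ``the main obstacle'' --- is the mechanism for (1). You propose resolving the crossings in $D^3$ one at a time by the Conway relation and telescoping, and then you must separately argue that the resulting sum of smoothings is $e^0_*(T_1 \otimes \cdots \otimes T_{N-1} \otimes S)$ with $S$ the commutator. The paper sidesteps this bookkeeping entirely: it observes that, by the hypotheses on $D^3$, the embedding $e^+$ is isotopic to $e^+_1 \sqcup \cdots \sqcup e^+_{N-1} \sqcup (e^0_0 \circ e_{\mathrm{under}} \circ e_{N,0}) \sqcup (e^0_0 \circ e_{\mathrm{over}} \circ e_{N+1,0})$ and $e^-$ to the same expression with $e_{\mathrm{over}}$ and $e_{\mathrm{under}}$ exchanged, where $e_{N,0}, e_{N+1,0}$ are embeddings into $\surface_0 \times I$. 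Hence the difference is literally $e^0_*\bigl(\cdots \otimes (e_{N+1,0*}(\PPsi{\GL}{\tskein'}(x_{N+1}))\,e_{N,0*}(\PPsi{\GL}{\tskein'}(x_N)) - e_{N,0*}(\PPsi{\GL}{\tskein'}(x_N))\,e_{N+1,0*}(\PPsi{\GL}{\tskein'}(x_{N+1})))\bigr)$, i.e.\ a commutator inside $\tskein' (\surface_0)$, and Lemma \ref{lemm_qbtskein_filtration_embedding_1} already identifies this commutator as $h\,\PPsi{\GL}{\tskein'}(y)$ with $y = [e_{N+1*}(x_{N+1}), e_{N*}(x_N)]$ in the claimed filtration. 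If you want to complete your version, replace the crossing-by-crossing telescoping with this isotopy factorization; otherwise the step you acknowledge as an obstacle remains a genuine gap, since the identification of the smoothed sum with a single element of $\tskein' (\surface_0)$ is exactly what needs proof.
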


\begin{proof}

(1)We set embeddings
$e_{\mathrm{over}},e_{\mathrm{under}} : \surface_0 \times I \to \surface_0 \times I$
 as $ e_{\mathrm{over}}(p,t)= (p, \frac{t+1}{2})$
and
$e_{\mathrm{under}} (p,t)= (p, \frac{t}{2})$. Let 
$e_{N,0} : \surface_N \times I \to \surface_0 \times I$
 and  $e_{N+1,0}: \surface_{N+1} \times I \to \surface_0 \times I$
 be embeddings satisfying the condition. 
\begin{itemize}
\item The embedding $e^+=e^+_1 \sqcup \cdots \sqcup e^+_{N+1}$ is 
isotopic to
$e^+_1 \sqcup \cdots \sqcup e^+_{N-1} \sqcup e^0_0 \circ e_{\mathrm{under}} \circ e_{N,0}
\sqcup e^0_0 \circ e_{\mathrm{over}} \circ e_{N+1,0}$.
\item The embedding $e^-=e^-_1 \sqcup \cdots \sqcup e^-_{N+1}$
is isotopic to
$e^-_1 \sqcup \cdots \sqcup e^-_{N-1} \sqcup e^0_0 \circ e_{\mathrm{over}} \circ e_{N,0}
\sqcup e^0_0 \circ e_{\mathrm{under}} \circ e_{N+1,0}$.
\end{itemize}
Then we obtain 
\begin{align*}
&e^+_* (\PPsi{\GL}{\tskein'} (x_1) \otimes \cdots \otimes
\PPsi{\GL}{\tskein'} (x_{N+1}))-
e^-_* (\PPsi{\GL}{\tskein'} (x_1) \otimes \cdots \otimes\PPsi{\GL}{\tskein'} (x_{N+1})) \\
&=e^0_*(\PPsi{\GL}{\tskein'} (x_1) \otimes  \cdots \otimes\PPsi{\GL}{\tskein'} (x_{N-1}) \otimes \\
&(e_{N+1,0*} (\PPsi{\GL}{\tskein'} (x_N+1)) e_{N,0*} (\PPsi{\GL}{\tskein'} (x_{N})) 
-e_{N,0*} (\PPsi{\GL}{\tskein'} (x_{N}))e_{N+1,0*} (\PPsi{\GL}{\tskein'} (x_{N+1}))) ).\\
\end{align*} 
By Lemma \ref{lemm_qbtskein_filtration_embedding_1}, we have 
\begin{align*}
&e^+_* (\PPsi{\GL}{\tskein'} (x_1) \otimes \cdots\otimes \PPsi{\GL}{\tskein'} (x_{N+1}))
-
e^-_* (\PPsi{\GL}{\tskein'} (x_1) \otimes \cdots \otimes \PPsi{\GL}{\tskein'} (x_{N+1})) \\
&= h
e^0_* (\PPsi{\GL}{\tskein'} (x_1) \otimes \cdots \PPsi{\GL}{\tskein'} (x_{N-1})
\otimes \PPsi{\GL}{\tskein'} (y))
\end{align*}
 where we set 
\begin{equation*}
y \defeq [e_{N+1*} (x_{N+1}), e_{N*} (x_N)]
\in \filt{(e_0, \Delta (\mathbf{b}_N, \mathbf{b}_{N+1}))(0)+m_N+m_{N+1}-2}
\GL (\surface_{N+1}).
\end{equation*} 
It proves the statement.

%We set embeddings as the equations. Let E be embeddings satisfying the condition. Then we obtain the formula. By the above lemma, we have the computation where we set Y as the following. It proves the statement.

%(2) (1)でAssumption ($N$)を使って、
%\begin{align*}
%&e^0_* (\PPsi{\GL}{\tskein'} (x_1) \otimes \cdots \PPsi{\GL}{\tskein'} (x_{N-1})
%\otimes \PPsi{\GL}{\tskein'} (y)) \\
%&\in \filt{\Delta (\mathbf{b}_1, \cdots, \mathbf{b}_{N-1}, \Delta (\mathbf{b}_{N}, \mathbf{b}_{N+1}))(0)+
%(\sum_{i=1}^{N+1} m_i) -2} \tskein' (\surface)
%\end{align*}
%が得られる。
%いま、
%\begin{equation*}
%\Delta (\mathbf{b}_1, \cdots, \mathbf{b}_{N-1}, \Delta (\mathbf{b}_{N}, \mathbf{b}_{N+1}))
%=\Delta (\mathbf{b}_1, \cdots, \mathbf{b}_{N+1})
%\end{equation*}
%と任意の$n$で
%\begin{equation*}
%h \filt{n} \tskein' (\surface) \subset \filt{n+2} \tskein' (\surface)
%\end{equation*}
%であることを使って、
%\begin{align*}
%&e^+_* (\PPsi{\GL}{\tskein'} (x_1) \otimes \cdots\otimes \PPsi{\GL}{\tskein'} (x_{N+1}))
%-
%e^-_* (\PPsi{\GL}{\tskein'} (x_1) \otimes \cdots \otimes \PPsi{\GL}{\tskein'} (x_{N+1})) \\
%&= h
%e^0_* (\PPsi{\GL}{\tskein'} (x_1) \otimes \cdots\otimes \PPsi{\GL}{\tskein'} (x_{N-1})
%\otimes \PPsi{\GL}{\tskein'} (y)) \\
%&\in \filt{\Delta (\mathbf{b}_1, \cdots, \mathbf{b}_{N+1})(0)+
%(\sum_{i=1}^{N+1} m_i) } \tskein' (\surface)
%\end{align*}
%が得られる。これより、(2)の主張が示せた。

(2)Using the statement (1) and Statement $(N)$, we obtain 
\begin{align*}
&e^0_* (\PPsi{\GL}{\tskein'} (x_1) \otimes \cdots \PPsi{\GL}{\tskein'} (x_{N-1})
\otimes \PPsi{\GL}{\tskein'} (y)) \\
&\in \filt{\Delta (\mathbf{b}_1, \cdots, \mathbf{b}_{N-1}, \Delta (\mathbf{b}_{N}, \mathbf{b}_{N+1}))(0)+
(\sum_{i=1}^{N+1} m_i) -2} \tskein' (\surface).
\end{align*} 
By definition, we have 
\begin{equation*}
\Delta (\mathbf{b}_1, \cdots, \mathbf{b}_{N-1}, \Delta (\mathbf{b}_{N}, \mathbf{b}_{N+1}))
=\Delta (\mathbf{b}_1, \cdots, \mathbf{b}_{N+1}).
\end{equation*} 
So we have 
\begin{align*}
&e^+_* (\PPsi{\GL}{\tskein'} (x_1) \otimes \cdots\otimes \PPsi{\GL}{\tskein'} (x_{N+1}))
-
e^-_* (\PPsi{\GL}{\tskein'} (x_1) \otimes \cdots \otimes \PPsi{\GL}{\tskein'} (x_{N+1})) \\
&= h
e^0_* (\PPsi{\GL}{\tskein'} (x_1) \otimes \cdots\otimes \PPsi{\GL}{\tskein'} (x_{N-1})
\otimes \PPsi{\GL}{\tskein'} (y)) \\
&\in \filt{\Delta (\mathbf{b}_1, \cdots, \mathbf{b}_{N+1})(0)+
(\sum_{i=1}^{N+1} m_i) } \tskein' (\surface).
\end{align*}
 It proves the statement.

%Using the first statement and Statement (N), we obtain the equation. By definition, we have the following. So we have the formula. It proves the statement.
\end{proof}

\begin{lemm}
\label{lemm_qbtskein_filtration_NN1}
%Assumption ($N$)を仮定する。
%コンパクトで連結な曲面$\surface_1, \cdots, \surface_{N+1}, \surface$
%をとり、埋め込み
%\begin{align*}
% e=e_1 \sqcup \cdots \sqcup e_{N+1}
%&: \surface_1 \times I \sqcup \cdots 
%\sqcup \surface_{N+1}
%\times I
%\hookrightarrow \surface \times I \\
%\end{align*}
%をとる。
%非負整数の増加しない数列$\mathbf{b}_1 , \cdots, \mathbf{b}_{N+1}$
%と非負整数$m_1, \cdots, m_{N+1}$をとる。
%任意の
%\begin{align*}
%&x_1 \in \filt{(e_1, \mathbf{b}_1)m_1} \GL (\surface_1),\cdots,
%x_{N+1} \in \filt{(e_{N+1}, \mathbf{b}_{N+1})m_{N+1}}\GL (\surface_{N+1})\\
%\end{align*}
%で、次を得る。
%\begin{enumerate}
%\item 次が成り立つ
%\begin{align*}
%&e_*(\PPsi{\GL}{\tskein'} (x_1)  \otimes \cdots \otimes
%\PPsi{\GL}{\tskein'} (x_{N+1})) \\
%&-e_{1*} (\PPsi{\GL}{\tskein'} (x_1)) \cdots
%e_{N+1*} (\PPsi{\GL}{\tskein'}(x_{N+1})) \\
%&\in \filt{\Delta (\mathbf{b}_1, \cdots, \mathbf{b}_{N+1})(0)+ \sum_{i=1}^{N+1} m_i}
%\tskein' (\surface).
%\end{align*}
%\item 次が成り立つ
%\begin{align*}
%&e_{1*} (\PPsi{\GL}{\tskein'} (x_1)) \cdots
%e_{N+1*} (\PPsi{\GL}{\tskein'}(x_{N+1})) \\
%&\in \filt{\Delta (\mathbf{b}_1, \cdots, \mathbf{b}_{N+1})(0)+ \sum_{i=1}^{N+1} m_i}
%\tskein' (\surface).
%\end{align*}
%\item 次が成り立つ
%\begin{align*}
%&e_*(\PPsi{\GL}{\tskein'} (x_1)  \otimes \cdots \otimes
%\PPsi{\GL}{\tskein'} (x_{N+1})) \\
%&\in \filt{\Delta (\mathbf{b}_1, \cdots, \mathbf{b}_{N+1})(0)+ \sum_{i=1}^{N+1} m_i}
%\tskein' (\surface).
%\end{align*}
%\end{enumerate}

We assume Statement $(N)$. Let 
\begin{align*}
 e=e_1 \sqcup \cdots \sqcup e_{N+1}
&: \surface_1 \times I \sqcup \cdots 
\sqcup \surface_{N+1}
\times I
\hookrightarrow \surface \times I \\
\end{align*}
 be an embedding. For non-increasing sequences 
$\mathbf{b}_1 , \cdots, \mathbf{b}_{N+1}$
of non-negative integers, integers
$m_1, \cdots, m_{N+1}$,
 and elements
\begin{align*}
&x_1 \in \filt{(e_1, \mathbf{b}_1)m_1} \GL (\surface_1),\cdots,
x_{N+1} \in \filt{(e_{N+1}, \mathbf{b}_{N+1})m_{N+1}}\GL (\surface_{N+1}),\\
\end{align*}
we have 
\begin{align}
&e_*(\PPsi{\GL}{\tskein'} (x_1)  \otimes \cdots \otimes
\PPsi{\GL}{\tskein'} (x_{N+1})) \\
&\notag -e_{1*} (\PPsi{\GL}{\tskein'} (x_1)) \cdots
e_{N+1*} (\PPsi{\GL}{\tskein'}(x_{N+1})) \\
&\notag \in \filt{\Delta (\mathbf{b}_1, \cdots, \mathbf{b}_{N+1})(0)+ \sum_{i=1}^{N+1} m_i}
\tskein' (\surface), \\
&e_{1*} (\PPsi{\GL}{\tskein'} (x_1)) \cdots
e_{N+1*} (\PPsi{\GL}{\tskein'}(x_{N+1})) \\
&\notag \in \filt{\Delta (\mathbf{b}_1, \cdots, \mathbf{b}_{N+1})(0)+ \sum_{i=1}^{N+1} m_i}
\tskein' (\surface), \\
&e_*(\PPsi{\GL}{\tskein'} (x_1)  \otimes \cdots \otimes
\PPsi{\GL}{\tskein'} (x_{N+1})) \\
&\notag \in \filt{\Delta (\mathbf{b}_1, \cdots, \mathbf{b}_{N+1})(0)+ \sum_{i=1}^{N+1} m_i}
\tskein' (\surface).
\end{align}

%We assume Statement (N). Let E be an embedding. For non-increasing sequences of non-negative integers, integers, and elements, we have the three equations.
\end{lemm}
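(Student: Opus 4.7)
The third assertion is an immediate consequence of the first two, so the genuine content is (1) and (2). My plan is to prove (1) by comparing $e$ with a stacking embedding and absorbing the discrepancy via crossing changes, and to prove (2) by induction on the number of factors together with multiplicativity of the filtration.

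For (1), fix an auxiliary \emph{stacking embedding} $e^{\mathrm{stack}} = e_1^{\mathrm{stack}} \sqcup \cdots \sqcup e_{N+1}^{\mathrm{stack}}$ obtained by composing each $e_i$ with a vertical rescaling that places the image $e_i^{\mathrm{stack}}(\surface_i \times I)$ inside $\surface \times [\frac{i-1}{N+1}, \frac{i}{N+1}] \subset \surface \times I$. Since multiplication in $\tskein'(\surface)$ corresponds to vertical stacking along the $I$-factor,
\[
e^{\mathrm{stack}}_*\bigl(\PPsi{\GL}{\tskein'}(x_1) \otimes \cdots \otimes \PPsi{\GL}{\tskein'}(x_{N+1})\bigr) = e_{1*}(\PPsi{\GL}{\tskein'}(x_1)) \cdots e_{(N+1)*}(\PPsi{\GL}{\tskein'}(x_{N+1})).
\]
By general position, $e$ and $e^{\mathrm{stack}}$ are related by an ambient isotopy that decomposes into a finite sequence of local modifications inside small balls, each swapping the relative heights of two adjacent strands. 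Each such modification is precisely the $e^+ \leftrightarrow e^-$ crossing change of Lemma \ref{lemm_qbtskein_filtration_embedding_2}, whose part (2), available because we are assuming Statement $(N)$, shows that it changes the value $e_*(\otimes_i \PPsi{\GL}{\tskein'}(x_i))$ only by an element in $\filt{\Delta(\mathbf{b}_1, \ldots, \mathbf{b}_{N+1})(0) + \sum_i m_i} \tskein'(\surface)$. Summing the errors over the finite isotopy decomposition yields (1).

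For (2), I would induct on the number of factors $N+1$. The base case is Lemma \ref{lemm_qbtskein_filtration_N1} applied to each $e_i$ separately. For the inductive step, set $y := e_{1*}(\PPsi{\GL}{\tskein'}(x_1)) \cdots e_{N*}(\PPsi{\GL}{\tskein'}(x_N))$; the inductive hypothesis places $y$ in $\filt{\sum_{i=1}^N m_i + \Delta(\mathbf{b}_1, \ldots, \mathbf{b}_N)(0)} \tskein'(\surface)$, while Lemma \ref{lemm_qbtskein_filtration_N1} places $e_{(N+1)*}(\PPsi{\GL}{\tskein'}(x_{N+1}))$ in $\filt{m_{N+1} + \mathbf{b}_{N+1}(0)} \tskein'(\surface)$. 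The filtration on $\tskein'(\surface)$ is multiplicative, being inherited via the algebra isomorphism $\PPsi{\Uh}{\tskein'}$ from the filtration on $\Uh(\surface)$ whose defining tensor-power structure respects products. Combining this multiplicativity with the associativity identity $\Delta(\mathbf{b}_1, \ldots, \mathbf{b}_{N+1}) = \Delta(\Delta(\mathbf{b}_1, \ldots, \mathbf{b}_N), \mathbf{b}_{N+1})$ yields (2).

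The main technical obstacle will be the bookkeeping of the $\Delta$ recursion. One must arrange the decomposition of the isotopy in (1) so that each crossing change contributes an error exactly corresponding to one application of $\Lambda$ to a pair of sequences among $\mathbf{b}_1, \ldots, \mathbf{b}_{N+1}$, and then verify that these contributions telescope to match $\Delta(\mathbf{b}_1, \ldots, \mathbf{b}_{N+1})(0)$ in the target filtration, rather than a strictly larger index. Lemma \ref{lemm_qbtskein_filtration_embedding_2}(1) provides the single-crossing version of this identity, in which $\Lambda$ appears explicitly, so the remaining task reduces to a combinatorial check that the iterated local contributions align with the recursive definition of $\Delta$.
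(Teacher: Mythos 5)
Your proposal is correct and follows essentially the same route as the paper: part (1) is exactly the paper's argument of applying Lemma \ref{lemm_qbtskein_filtration_embedding_2}(2) repeatedly across the crossing changes relating $e$ to a stacked embedding (and note that each single application already lands in the full target filtration $\filt{\sum_i m_i+\Delta(\mathbf{b}_1,\dots,\mathbf{b}_{N+1})(0)}\tskein'(\surface)$, so no telescoping check is needed), and part (3) is the sum of (1) and (2). For part (2) the paper avoids induction by simply using $e_{i*}(\PPsi{\GL}{\tskein'}(x_i))\in \filt{m_i+\mathbf{b}_i(0)}\tskein'(\surface)$, multiplicativity, and $\sum_i\mathbf{b}_i(0)\ge \Delta(\mathbf{b}_1,\dots,\mathbf{b}_{N+1})(0)$; your inductive variant works too, since $\Delta(\mathbf{b}_1,\dots,\mathbf{b}_N)(0)+\mathbf{b}_{N+1}(0)\ge \Delta(\mathbf{b}_1,\dots,\mathbf{b}_{N+1})(0)$ by monotonicity of the sequences.
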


\begin{proof}
%(1) Lemma \ref{lemm_qbtskein_filtration_embedding_2} (2)
%を繰り返し使って得られる。

(1)Using Lemma \ref{lemm_qbtskein_filtration_embedding_2} (2) repeatedly,
 we obtain the equation (1).

%Using the lemma repeatedly, we obtain the first equation.

%(2) 任意の$i \in \shuugou{1, \cdots, N+1}$で、
%\begin{equation*}
%e_{i*} (\PPsi{\GL}{\tskein'}(x_i)) \in 
%\filt{m_i+\mathbf{b}_i (0)} \tskein' (\surface)
%\end{equation*}
%であるため、
%\begin{align*}
%&e_{1*} (\PPsi{\GL}{\tskein'} (x_1))\cdots e_{N+1*} (\PPsi{\GL}{\tskein'} (x_{N+1})) \\
%&\in \filt{\sum_{i=1}^{N+1} (m_i +\mathbf{b}_i (0))}
%\tskein' (\surface)
%\subset \filt{\sum_{i=1}^{N+1} m_i * + \Delta (\mathbf{b}_1, \cdots, \mathbf{b}_{N+1})(0)}
%\tskein' (\surface)
%\end{align*}
%が得られる。

(2)By definition, we have 
\begin{equation*}
e_{i*} (\PPsi{\GL}{\tskein'}(x_i)) \in 
\filt{m_i+\mathbf{b}_i (0)} \tskein' (\surface).
\end{equation*} So we obtain 
\begin{align*}
&e_{1*} (\PPsi{\GL}{\tskein'} (x_1))\cdots e_{N+1*} (\PPsi{\GL}{\tskein'} (x_{N+1})) \\
&\in \filt{\sum_{i=1}^{N+1} (m_i +\mathbf{b}_i (0))}
\tskein' (\surface)
\subset \filt{\sum_{i=1}^{N+1} m_i * + \Delta (\mathbf{b}_1, \cdots, \mathbf{b}_{N+1})(0)}
\tskein' (\surface).
\end{align*}

%By definition, we have the equation. So we obtain the formula.
%
%(3) (1)と(2)を使い得られる。

(3)Using the equations (1) and (2), we obtain the equation(3).

%Using the first and second equations, we obtain the third equation.

\end{proof}

\begin{thm}
\label{thm_qbtskein_filtration_assumption}
%任意の$N$でAssumption ($N$)が成り立つ。すなわち、任意の$N \in \Zlarger{0}$
%で次が成り立つ。
%コンパクトで連結な曲面$\surface_1, \cdots, \surface_N, \surface$について、
%埋め込み$e= e_1 \sqcup \cdots \sqcup e_N:
%\surface_1\times I \sqcup \cdots \sqcup \surface_N
%\times I \hookrightarrow
%\surface \times I$をとる。この埋め込みは
%$e_*: \tskein' (\surface_1) \otimes \cdots \otimes \tskein' (\surface_N)
%\to \tskein' (\surface)$
%を誘導する。
%埋め込み$e_{i} :\surface_i \to \surface \times I,p \mapsto e_i (p,0)$
%も単に$e_i$と書く。
%この時、任意の$m_1, \cdots, m_N$と非負整数の非増加数列
%$\mathbf{b}_1=\filtn{b_{(1)n}}, \cdots, \mathbf{b}_N=
%\filtn{b_{(N)i}}$で
%\begin{align*}
%&e_* (\otimes_{i=1}^j\PPsi{\GL}{\tskein'}(\filt{(e_i , \mathbf{b}_i)m_i}\GL (\surface_i)))
%\subset \filt{(\sum_{i=1}^j m_i)+\Delta (\mathbf{b}_1, \cdots , \mathbf{b}_N)(0)}
%\tskein' (\surface) \\
%&= \filt{(\sum_{i=1}^N m_i) + \min \shuugou{b_{(1)k_1} +\cdots
%b_{(N)k_N}|k_1 + \cdots + k_N \geq 2N-2}}
%\tskein' (\surface)
%\end{align*}
%が成り立つ。

For any $N \in \Zlarger{1}$, Statement $(N)$ holds. In other words, for any $N \in \Zlarger{1}$, we have the following. Let $\surface_1, \cdots, \surface_N, \surface$ be compact connected oriented surfaces and $e= e_1 \sqcup \cdots \sqcup e_N:
\surface_1\times I \sqcup \cdots \sqcup \surface_N
\times I \hookrightarrow
\surface \times I$ be an embedding that induces homomorphism
$e_*: \tskein' (\surface_1) \otimes \cdots \otimes \tskein' (\surface_N)
\to \tskein' (\surface)$. For any integers $m_1, \cdots, m_N$ and non-increasing sequences 
$\mathbf{b}_1=\filtn{b_{(1)n}}, \cdots, \mathbf{b}_N=
\filtn{b_{(N)i}}$
of non-negative integers, we have 
\begin{align*}
&e_* (\otimes_{i=1}^j\PPsi{\GL}{\tskein'}(\filt{(e_i , \mathbf{b}_i)m_i}\GL (\surface_i)))
\subset \filt{(\sum_{i=1}^j m_i)+\Delta (\mathbf{b}_1, \cdots , \mathbf{b}_N)(0)}
\tskein' (\surface) \\
&= \filt{(\sum_{i=1}^N m_i) + \min \shuugou{b_{(1)k_1} +\cdots
b_{(N)k_N}|k_1 + \cdots + k_N \geq 2N-2}}
\tskein' (\surface).
\end{align*}

%For any N, Statement (N) holds. In other words, for any N, we have the following. Let S be compact connected oriented surfaces and E be an embedding that induces homomorphism. For any integers and non-increasing sequences of non-negative integers, we have the formula.
\end{thm}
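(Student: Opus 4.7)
The plan is to prove Theorem \ref{thm_qbtskein_filtration_assumption} by induction on $N$. All of the essential work has already been done in the preceding lemmas; the theorem is essentially their assembly.

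First, I would dispose of the base case $N=1$: this is exactly Lemma \ref{lemm_qbtskein_filtration_N1}. Then for the inductive step, I would assume Statement $(N)$ holds and invoke Lemma \ref{lemm_qbtskein_filtration_NN1}, whose equation (3) is precisely Statement $(N+1)$. (Internally, Lemma \ref{lemm_qbtskein_filtration_NN1} combines a product-of-individual-images estimate (its equation (2), which uses only the trivial filtration bound $e_{i*}(\PPsi{\GL}{\tskein'}(x_i)) \in \filt{m_i + \mathbf{b}_i(0)}\tskein'(\surface)$) with a comparison between the true multiple-image $e_*(x_1 \otimes \cdots \otimes x_{N+1})$ and the naive product $e_{1*}(x_1)\cdots e_{(N+1)*}(x_{N+1})$. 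That comparison is obtained by iteratively applying the Conway-style resolution of Lemma \ref{lemm_qbtskein_filtration_embedding_2}(2), which is where Statement $(N)$ gets consumed: each swap of two adjacent factors produces, modulo a term with one fewer factor and Seifert data $\Delta(\mathbf{b}_j, \mathbf{b}_{j+1})$, an error controlled by $h$ times an $N$-fold image.)

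The only remaining point is the equality of the two expressions for the filtration index asserted in the statement, namely
\[
\Delta(\mathbf{b}_1,\ldots,\mathbf{b}_N)(0) \;=\; \min\{b_{(1)k_1}+\cdots+b_{(N)k_N} \mid k_1+\cdots+k_N \geq 2N-2\}.
\]
This is a purely combinatorial unpacking of $\Lambda$: by the associativity of $\Lambda$ already recorded in the excerpt, $\Delta$ is symmetric and unambiguous, and unfolding the recursion $\Lambda(\mathbf{b},\mathbf{b}')(n) = \min\{\mathbf{b}(i)+\mathbf{b}'(j) \mid i+j+2 = n\}$ exactly $N-1$ times replaces the shift by $2$ with a cumulative shift by $2(N-1)$; non-increasingness of each $\mathbf{b}_i$ then lets one pass freely from the equality $\sum k_i = -2(N-1) + n$ (with the convention that indices below $0$ are treated by monotonicity) to the inequality $\sum k_i \geq 2N-2$ when $n = 0$, since increasing any $k_i$ can only decrease the corresponding $b_{(i)k_i}$.

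The main obstacle is essentially bookkeeping rather than mathematics: the nested definition of $\filt{(e,\mathbf{b})m}\GL(\surface)$ (a filtration indexed by a sequence, not a single integer) is what forces the $\Delta$-recursion, and one has to verify that the filtration index tracked through Lemma \ref{lemm_qbtskein_filtration_embedding_2}(1)—where the Conway resolution produces a bracket term with parameters $(\Delta(\mathbf{b}_N,\mathbf{b}_{N+1}), m_N+m_{N+1}-2)$—aligns with the final closed-form $\min$ after the induction closes. With the associativity identity in hand this verification is direct, but it is the point at which the several indexing conventions must all be shown to agree. I would therefore close the proof by remarking that the induction and the combinatorial identity together yield both forms of the bound simultaneously.
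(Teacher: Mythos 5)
Your proposal matches the paper's proof: the base case is Lemma \ref{lemm_qbtskein_filtration_N1}, the inductive step is exactly Lemma \ref{lemm_qbtskein_filtration_NN1}(3) applied under Statement $(N)$, and the remaining work is the combinatorial identity $\Delta (\mathbf{b}_1, \cdots , \mathbf{b}_N)(k)=\min \shuugou{b_{(1)k_1} +\cdots + b_{(N)k_N}|k_1 + \cdots + k_N \geq 2N-2+k}$, which the paper likewise dispatches by unfolding the definition of $\Lambda$ and using non-increasingness. Your additional commentary on how Lemma \ref{lemm_qbtskein_filtration_NN1} consumes Statement $(N)$ via Lemma \ref{lemm_qbtskein_filtration_embedding_2} is consistent with the paper's internal structure, so no further changes are needed.
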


\begin{proof}
%Lemma \ref{lemm_qbtskein_filtration_N1}より
%Assumption($1$)が成り立つ。
%Lemma \ref{lemm_qbtskein_filtration_NN1}(3)より
%帰納法を使って任意の$N$で
%Assumption($N$)が成り立つ。
%$\Delta (\mathbf{b}_1, \cdots , \mathbf{b}_N)(k)
%=\min \shuugou{b_{(1)k_1} +\cdots
%b_{(N)k_N}|k_1 + \cdots + k_N \geq 2N-2+k}$
%であることは定義から確かめることができる。

By Lemma \ref{lemm_qbtskein_filtration_N1}, we have Statement $(1)$. Using Lemma \ref{lemm_qbtskein_filtration_NN1}(3), by induction, Statement $(N)$ holds for any $N \in \Zlarger{1}$. By definition, we can check 
$\Delta (\mathbf{b}_1, \cdots , \mathbf{b}_N)(k)
=\min \shuugou{b_{(1)k_1} +\cdots
b_{(N)k_N}|k_1 + \cdots + k_N \geq 2N-2+k}$.

%By the first lemma, we have Statement(1). Using the second lemma, by induction, Statement(N) holds for any integer. By definition, we can check the formula.
\end{proof}
%
%この補題を使って次を示せる。

Using the lemma, we can prove the corollary.

%Using the lemma, we can prove the corollary.

\begin{cor}
\label{cor_qbtskein_filtration}
%コンパクトで連結な曲面$\surface' , \surface$をとり、
%埋め込み$e: \surface' \times I \to \surface \times I$
%をとる。
%$\gamma \in [\pi_1 (\surface'), \pi_1 (\surface')]$が
%$e_* (\gamma) \in 1+ I_{\GLM (\surface \times I)}^{m+2}$のとき、
%\begin{equation*}
%e_* (\prod_{i=1}^k \PPsi{\GL}{\tskein'} (\zettaiti{(\gamma-1)^{m_i}}))
%\in \filt{2(\sum_{i=1}^k m_i)+m(\sum_{i=1}^k m_i-(2k-2))}
%\tskein' (\surface)
%\end{equation*}
%となる。

Let $\surface'$ and $\surface$ be compact connected oriented surfaces and $e: \surface' \times I \to \surface \times I$  an embedding. For an element $\gamma \in [\pi_1 (\surface'), \pi_1 (\surface')]$
satisfying $e_* (\gamma) \in 1+ I_{\GLM (\surface \times I)}^{m+2}$, we have 
\begin{equation*}
e_* (\prod_{i=1}^k \PPsi{\GL}{\tskein'} (\zettaiti{(\gamma-1)^{m_i}}))
\in \filt{2(\sum_{i=1}^k m_i)+m(\sum_{i=1}^k m_i-(2k-2))}
\tskein' (\surface).
\end{equation*}

%Let S and S' be compact connected oriented surfaces and E  an embedding. For an element satisfying the condition, we have the formula.
\end{cor}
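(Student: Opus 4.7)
The plan is to realize the product $\prod_{i=1}^{k} e_*(\PPsi{\GL}{\tskein'}(\zettaiti{(\gamma-1)^{m_i}}))$ as the pushforward under a disjoint stacking embedding of a tensor product, and then apply Theorem \ref{thm_qbtskein_filtration_assumption} with sequences $\mathbf{b}_i$ tailored to the expression $(\gamma-1)^{m_i}$. Concretely, I would build $\tilde e = e_1 \sqcup \cdots \sqcup e_k : \coprod_{i=1}^{k} \surface' \times I \hookrightarrow \surface \times I$ out of $k$ vertically rescaled and stacked parallel copies of $e$, so that, by the stacking definition of the algebra on $\tskein'(\surface)$,
\[
\tilde e_* \bigl( \textstyle\bigotimes_{i=1}^{k} \PPsi{\GL}{\tskein'}(\zettaiti{(\gamma-1)^{m_i}}) \bigr) \;=\; \prod_{i=1}^{k} e_*\bigl(\PPsi{\GL}{\tskein'}(\zettaiti{(\gamma-1)^{m_i}})\bigr).
\]

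Next, I would exhibit $\zettaiti{(\gamma-1)^{m_i}}$ as an element of $\filt{(e_i, \mathbf{b}_i)\,2m_i} \GL(\surface')$ for the sequence $\mathbf{b}_i(l) \defeq \max(0, m(m_i - l))$. The decomposition of $(\gamma-1)^{m_i}$ into $m_i$ copies of the commutator factor $\gamma - 1$ does the job: since $\gamma \in [\pi_1(\surface'), \pi_1(\surface')]$ implies $\gamma - 1 \in \filt{2}\GLM(\surface')$, each factor contributes local index $m_l = 2$ (total $2m_i$); the hypothesis $e_*(\gamma) - 1 \in \filt{m+2}\GLM(\surface \times I)$ gives each factor excess $n_l = m$. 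The subset condition $\sum_{l \in \varsigma} n_l \geq \mathbf{b}_i(m_i - |\varsigma|)$ then reads $m|\varsigma| \geq \max(0, m|\varsigma|)$, which holds trivially, so $\mathbf{b}_i$ is admissible.

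Applying Theorem \ref{thm_qbtskein_filtration_assumption} yields the inclusion of the product in $\filt{2\sum m_i + \Delta(\mathbf{b}_1, \ldots, \mathbf{b}_k)(0)} \tskein'(\surface)$, and the proof reduces to computing $\Delta(\mathbf{b}_1, \ldots, \mathbf{b}_k)(0)$. Writing $c_i \defeq \min(k_i, m_i)$, one has $\sum_i \mathbf{b}_i(k_i) = m\bigl(\sum m_i - \sum c_i\bigr)$, and the relevant constraint in $\Delta$ forces $\sum c_i \leq 2k - 2$. Thus when $\sum m_i \geq 2k - 2$ the minimum equals $m(\sum m_i - (2k-2))$, giving the desired filtration index $2\sum m_i + m(\sum m_i - (2k-2))$; when $\sum m_i < 2k - 2$, the minimum is $0$, and $\filt{2\sum m_i} \subset \filt{2\sum m_i + m(\sum m_i - (2k-2))}$ because the latter has a non-positive extra term, so the inclusion holds a fortiori. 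The main technical step is this combinatorial minimization of $\Delta$; the stacking construction and the admissibility check for the $\mathbf{b}_i$ are routine once the definitions are unpacked.
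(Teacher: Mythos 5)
Your proposal is correct and follows essentially the same route as the paper: stack $k$ vertically rescaled parallel copies of $e$ to convert the product into a pushforward of a tensor product, place $\zettaiti{(\gamma-1)^{m_i}}$ in $\filt{(e_i,\mathbf{b}_i)2m_i}\GL(\surface')$ with the same sequences $\mathbf{b}_i(n)=\max(0,m(m_i-n))$, and invoke Theorem \ref{thm_qbtskein_filtration_assumption}. Your verification of admissibility of the $\mathbf{b}_i$ and your exact evaluation of $\Delta(\mathbf{b}_1,\ldots,\mathbf{b}_k)(0)$ (including the degenerate case $\sum m_i<2k-2$) are slightly more detailed than the paper, which only records the one-sided bound $m(\sum_i m_i-2k+2)\leq\Delta(\mathbf{b}_1,\ldots,\mathbf{b}_k)(0)$, but the argument is the same.
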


\begin{proof}
%$i=1, \cdots, k$で埋め込み$e_i: \surface' \times I \hookrightarrow 
%\surface \times I$を$e_i (p,t)=e(p,\frac{2k-2i+t}{2k})$とし、
%$\tilde{e} =e_1 \sqcup \cdots \sqcup e_k: \coprod \surface' \times I
%\hookrightarrow \surface \times I$
%の埋め込みを考える。
%このとき、
%\begin{equation*}
%e_* ( \prod_{i=1}^k \PPsi{\GL}{\tskein'} (\zettaiti{(\gamma-1)^{m_i}}))
%=\tilde{e}_* (\otimes_{i=1}^k \PPsi{\GL}{\tskein'} (\zettaiti{(\gamma-1)^{m_i}}))
%\end{equation*}
%となる。
%また$i=1, \cdots, k$で数列$\mathbf{b}^{(i)} = \filtn{b^{(i)}_n}$
%を
%\begin{equation*}
%b^{(i)}_n
%=\max (m (m_i-n), 0)
%\end{equation*}
%として、
%\begin{equation*}
%\zettaiti{(\gamma-1)^{m_i}}
%\in \filt{(e_i, \mathbf{b}^{(i)})2m_i}
%\GL (\surface)
%\end{equation*}
%となる。
%Lemma \ref{thm_qbtskein_filtration_assumption}
%を使い、
%\begin{align*}
%&\tilde{e}_* (\otimes_{i=1}^k \PPsi{\GL}{\tskein'} (\zettaiti{(\gamma-1)^{m_i}})) 
%\in \filt{2(\sum_{i=1}^k m_i) + \min \shuugou{b_{(1)j_1} +\cdots
%b_{(k)j_k}|j_1 + \cdots + j_k \geq 2k-2}}
%\tskein' (\surface)
%\end{align*}
%とわかる。
%いま、
%\begin{equation*}
%m (\sum_{i=1}^k m_i-2k+2) \leq
%\min \shuugou{b_{(1)j_1} +\cdots
%b_{(k)j_k}|j_1 + \cdots + j_k \geq 2k-2}
%\end{equation*}
%より、
%\begin{align*}
%&\tilde{e}_* (\otimes_{i=1}^k \PPsi{\GL}{\tskein'} (\zettaiti{(\gamma-1)^{m_i}})) 
%\in \filt{2(\sum_{i=1}^k m_i) + m (\sum_{i=1}^k m_i-2k+2)}
%\tskein' (\surface)
%\end{align*}
%とわかる。これより、この系を示せた。

For $i=1, \cdots, k$, we set an embedding
$e_i: \surface' \times I \hookrightarrow 
\surface \times I$
 as $e_i (p,t)=e(p,\frac{2k-2i+t}{2k})$ and consider the embedding
$\tilde{e} =e_1 \sqcup \cdots \sqcup e_k: \coprod \surface' \times I
\hookrightarrow \surface \times I$. Then we have 
\begin{equation*}
e_* ( \prod_{i=1}^k \PPsi{\GL}{\tskein'} (\zettaiti{(\gamma-1)^{m_i}}))
=\tilde{e}_* (\otimes_{i=1}^k \PPsi{\GL}{\tskein'} (\zettaiti{(\gamma-1)^{m_i}})).
\end{equation*}
 Setting a sequence
$\mathbf{b}^{(i)} = \filtn{b^{(i)}_n}$
 as 
\begin{equation*}
b^{(i)}_n
=\max (m (m_i-n), 0)
\end{equation*}
for any $i =1, \cdots, k$, 
we obtain
\begin{equation*}
\zettaiti{(\gamma-1)^{m_i}}
\in \filt{(e_i, \mathbf{b}^{(i)})2m_i}
\GL (\surface).
\end{equation*} 
Using Lemma \ref{thm_qbtskein_filtration_assumption}, we have 
\begin{align*}
&\tilde{e}_* (\otimes_{i=1}^k \PPsi{\GL}{\tskein'} (\zettaiti{(\gamma-1)^{m_i}})) 
\in \filt{2(\sum_{i=1}^k m_i) + \min \shuugou{b_{(1)j_1} +\cdots
b_{(k)j_k}|j_1 + \cdots + j_k \geq 2k-2}}
\tskein' (\surface).
\end{align*} 
Since 
\begin{equation*}
m (\sum_{i=1}^k m_i-2k+2) \leq
\min \shuugou{b_{(1)j_1} +\cdots
b_{(k)j_k}|j_1 + \cdots + j_k \geq 2k-2}
\end{equation*}
 we have 
\begin{align*}
&\tilde{e}_* (\otimes_{i=1}^k \PPsi{\GL}{\tskein'} (\zettaiti{(\gamma-1)^{m_i}})) 
\in \filt{2(\sum_{i=1}^k m_i) + m (\sum_{i=1}^k m_i-2k+2)}
\tskein' (\surface).
\end{align*}
 It proves the corollary.

%For II, we set an embedding as the equation and consider the embedding. Then we have the equation. Setting a sequence as follows for any N, we obtain the equation. Using the above lemma, we have the formula. Since the equation holds, we have the equation. It proves the corollary.

\end{proof}

\section{Some formulas of the action of some homology cylinders}

%このセクションでは、Kuno-Massuyeauの公式\cite{KM2019}に別証明を
%与える。さらに精密化となる定理も得られた。
%
%久野氏、Massuyeau氏はSeifert surfaceの近傍で基本群がどのように
%振舞うかを考察して次の定理を得た。
%ここではスケイン代数$\tskein'$を使って別証明を与える。

In this section, we will give another proof of the theorem (Theorem \ref{thm_KM} in this paper) of Kuno and Massuyeau \cite{KM2019} and clarify it. Kuno and Massuyeau obtain this theorem by observing how paths behave in the neighborhood of a Seifert surface. On the other hand, we will prove it by using Theorem \ref{thm_qbtskein_main_boundary_link} in this paper.

%In this section, we will give another proof of the theorem of Kuno and Massuyeau and clarify it. Kuno and Massuyeau obtain this theorem by observing how paths behave in the neighborhood of a Seifert surface. On the other hand, we will prove it by using the main theorem in this paper.

%スケイン代数$\tskein'$の定理
%Theorem \ref{thm_qbtskein_main_boundary_link}を
%Kuno-Massuyeauの公式における計算に応用する。
%$\surface$を境界が空でないコンパクトで連結な曲面とすし、
%$K$を$\surface \times I$の中のunoriented framed knotとしたとき、
%次の二つは
%必要十分条件であることを注意しておく。
%\begin{enumerate}
%\item $K$のホモロジークラスは$0$であり、$K$のwrithe numeber $w(K)=0$
%である。
%\item $K$がboundary link(knot)である。
%\end{enumerate}

We apply our knowledge of the skein algebra $\tskein'$ to compute the map $\tilde{\zeta}:\mathcal{IC} (\surface) \to (\filt{3}\cGL (\surface), \bch)$ in the situation of the formula of Kuno and Massuyeau. Let $\surface$ be a compact connected oriented surface. For a knot $K$ in $\surface \times I$, the two properties are equivalent. 
\begin{enumerate}
\item The homology class of $K$ equals 0, and the framing number $w(K)$ of $K$ $0$. 
\item The knot $K$ is a boundary link, which means a boundary knot.
\end{enumerate}

%We apply our knowledge of the skein algebra to compute the map in the situation of the formula of Kuno and Massuyeau. Let S be a compact connected surface having an orientation. For a knot in M, the two properties are equivalent. The first one is that the homology class of it equals 0, and the framing number of it 0. The second one is that it is a boundary link, which means a boundary knot.

\begin{thm}[\cite{KM2019}]
\label{thm_KM}
%$\surface$をコンパクトな曲面とし、
%境界に基点$\star \in \partial \surface$をとる。
%また、$\surface \times I$の中の
%コンポーネント数が$1$の（枠のついた向きのついていない）
%boundary link (boundary knot) $K$
%をとる。
%$\epsilon \in \shuugou{\pm 1}$として
%境界のラベル$\lambda_\epsilon, \pi_0 (K) \to \shuugou{\pm 1},
%[K] \mapsto \epsilon$とし、
%$K (\lambda_\epsilon)$を単に$K(\epsilon)$と書く。
%ある$\gamma \in \pi_1 (\surface) \cap (1+I_{\GLM (\surface)}^m)$の
%free homotopy class $\zettaiti{\gamma}$が
%$K$のhomotopy classと一致する
%とき、
%\begin{align*}
%&\tilde{\zeta}_{\GL} ((\surface \times I) (K (-\epsilon)))= \epsilon
%\zettaiti{\frac{1}{2} (\log (\gamma))^2}
%\mod \filt{2m+2} \cGL (\surface) \\
%&\epsilon \zettaiti{\frac{1}{2} (\log (\gamma))^2} \in \filt{2m} \cGL (\surface)
%\end{align*}
%である。
%言い換えると、$\star_1, \star_2 \in \partial \surface$で
%\begin{align*}
%&((\surface \times I) (K(-\epsilon))_* =
%\id \\
%&:\GLM (\surface, \star_1, \star_2)/\filt{2m-1} \GLM (\surface, \star_1
%,\star_2)
%\to \GLM (\surface, \star)/\filt{2m-1} \GLM (\surface, \star_1, 
%\star_2), \\
%&((\surface \times I) (K(-\epsilon))_* =
%\exp (\sigma (\epsilon \frac{1}{2} \zettaiti{(\log \gamma)^2})) \\
%&:\GLM (\surface, \star1, \star_2)/\filt{2m+1} \GLM (\surface, \star_1,
% \star_2)
%\to \GLM (\surface, \star)/\filt{2m+1} \GLM (\surface, \star_1, 
%\star_2) \\
%\end{align*}
%となる。

Let $\surface$ be a compact connected oriented surface, $\star$ a base point in $\partial \surface$, and $K$ a boundary knot in $\surface \times I$. We denote by $\lambda_\epsilon, \pi_0 (K) \to \shuugou{\pm 1},
[K] \mapsto \epsilon$ the label of a Seifert surface of $K$ and by $(\surface \times I) (K (\epsilon))$ the element $(\surface \times I)(K(\lambda_\epsilon))$ of $\mathcal{IC} (\surface)$. If there exists a path $\gamma \in \pi_1 (\surface) \cap (1+I_{\GLM (\surface)}^m)$ whose conjugacy class equals the homotopy class of $K$, we have 
\begin{align*}
&\tilde{\zeta}_{\GL} ((\surface \times I) (K (-\epsilon)))= \epsilon
\zettaiti{\frac{1}{2} (\log (\gamma))^2}
\mod \filt{2m+2} \cGL (\surface) \\
&\epsilon \zettaiti{\frac{1}{2} (\log (\gamma))^2} \in \filt{2m} \cGL (\surface).
\end{align*}
 In other words, for $\star_1, \star_2 \in \partial \surface$, we have 
\begin{align*}
&((\surface \times I) (K(-\epsilon))_* =
\id \\
&:\GLM (\surface, \star_1, \star_2)/\filt{2m-1} \GLM (\surface, \star_1
,\star_2)
\to \GLM (\surface, \star)/\filt{2m-1} \GLM (\surface, \star_1, 
\star_2), \\
&((\surface \times I) (K(-\epsilon))_* =
\exp (\sigma (\epsilon \frac{1}{2} \zettaiti{(\log \gamma)^2})) \\
&:\GLM (\surface, \star1, \star_2)/\filt{2m+1} \GLM (\surface, \star_1,
 \star_2)
\to \GLM (\surface, \star)/\filt{2m+1} \GLM (\surface, \star_1, 
\star_2). \\
\end{align*}

%Let S be a compact connected oriented surface, P a base point, and KK a boundary knot in M. We denote by LL the label of a Seifert surface of KK and by ME the element of IC. If there exists a path in G whose conjugacy class equals the homotopy class of KK, we have the equations. In other words, for points, we have the formulas.
\end{thm}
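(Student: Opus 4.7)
The plan is to apply Theorem \ref{thm_qbtskein_main_boundary_link} to $K$, viewed as a boundary link with one component, and then to regulate the resulting exponential expansion in $\cLoc\tskein'(\surface)$ via the filtration estimate in Corollary \ref{cor_qbtskein_filtration}. Let $S$ be a Seifert surface for $K$, with the single boundary component $\partial S$ isotopic to $K$. Pick $\gamma_{c_\partial}\in\pi_1(S)$ representing a curve $c_\partial\subset S$ parallel to $\partial S$; since $c_\partial$ bounds in $S$, $\gamma_{c_\partial}\in[\pi_1(S),\pi_1(S)]$, and naturality of $L_{\GL}$ gives $e_{S*}L_{\GL}(c_\partial)=\zettaiti{\tfrac{1}{2}(\log\gamma)^2}$.

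The label $\lambda_{-\epsilon}$ sends $[\partial]$ to $-\epsilon$, so $L_{\tskein'}(S,\lambda_{-\epsilon})=\epsilon L_{\tskein'}(c_\partial)$, and Theorem \ref{thm_qbtskein_main_boundary_link} yields in $\cLoc\tskein'(\surface)$
\begin{equation*}
\exp\!\Bigl(\tfrac{1}{h}\PPsi{\GL}{\tskein'}(\tilde{\zeta}_{\GL}(M))\Bigr) \;=\; \sum_{k\geq 0}\frac{\epsilon^k}{k!\,h^k}\,e_{S*}\!\bigl(L_{\tskein'}(c_\partial)^k\bigr),
\end{equation*}
where $M=(\surface\times I)(K(-\epsilon))$. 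The $k=1$ term is exactly $\tfrac{\epsilon}{h}\PPsi{\GL}{\tskein'}(\zettaiti{\tfrac{1}{2}(\log\gamma)^2})$, so the task is to show that the $k\geq 2$ corrections, measured against $\exp(\tfrac{\epsilon}{h}\PPsi{\GL}{\tskein'}(\zettaiti{\tfrac{1}{2}(\log\gamma)^2}))$, lie in $\filt{2m+2}\cLoc\tskein'(\surface)$.

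For the filtration estimate I would expand $\zettaiti{\tfrac{1}{2}(\log\gamma_{c_\partial})^2}$ as a $\Q$-linear combination of $\zettaiti{(\gamma_{c_\partial}-1)^a}$ with $a\geq 2$, so that $L_{\tskein'}(c_\partial)^k$ becomes a sum of $k$-fold products $\prod_{i=1}^k\PPsi{\GL}{\tskein'}(\zettaiti{(\gamma_{c_\partial}-1)^{a_i}})$, and apply Corollary \ref{cor_qbtskein_filtration}. Since $K$ is nullhomologous we have $m\geq 2$, so the corollary (with parameter $m-2$) gives $e_{S*}(L_{\tskein'}(c_\partial)^k)\in\filt{4k+2m-4}\tskein'(\surface)$. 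For $k\geq 3$ this is strong enough that $\tfrac{1}{h^k}\filt{4k+2m-4}\subset\filt{2m+2}\cLoc\tskein'(\surface)$ directly from the definition of the localized filtration. The $k=2$ term must be combined with the BCH contribution $-\tfrac{1}{2h}e_{S*}(L_{\tskein'}(c_\partial))^2$ coming from the logarithm: the difference $e_{S*}(L^2)-e_{S*}(L)^2$, which records the failure of $e_{S*}$ to be an algebra homomorphism, is divisible by an extra power of $h$ coming from the skein relation $T_+-T_-=hT_0$, producing just enough cancellation to land the $k=2$ correction in $\filt{2m+2}\cLoc\tskein'(\surface)$ as well. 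Taking $h\log$ of both sides and projecting back through $\PPsi{\tskein'}{\GL}$ (which preserves the filtration) yields $\tilde{\zeta}_{\GL}(M)\equiv\epsilon\zettaiti{\tfrac{1}{2}(\log\gamma)^2}\pmod{\filt{2m+2}\cGL(\surface)}$; the containment $\epsilon\zettaiti{\tfrac{1}{2}(\log\gamma)^2}\in\filt{2m}\cGL(\surface)$ is immediate from $\log\gamma\in I^m$.

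The equivalent action statements then follow by substituting this description of $\tilde{\zeta}_{\GL}(M)$ into $\Xi_*=\exp(\sigma(\tilde{\zeta}_{\GL}(\Xi)))$ and invoking the filtration shift $\sigma(\filt{n}\cGL)(\filt{k}\cGLM)\subset\filt{n+k-2}\cGLM$ recorded in \S\ref{subsection_notation_for_the_Goldman}. The main obstacle will be the $k=2$ bookkeeping described above: the Corollary bound is one filtration level short for $k=2$, and one must show that the missing filtration is precisely supplied by the extra $h$ from the skein commutator, uniformly across all summands in the expansion of $L_{\tskein'}(c_\partial)^2$.
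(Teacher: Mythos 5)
Your proposal is correct and follows the same route as the paper: apply Theorem \ref{thm_qbtskein_main_boundary_link} to $K$ viewed as a one-component boundary link, bound the terms of the exponential with Corollary \ref{cor_qbtskein_filtration}, and take $h\log$. The only real difference is in the disposal of the $i=2$ term of the exponential, which you flag as the main obstacle: the paper does not need the cancellation in $e_{S*}(L_{\tskein'}(c_\partial)^2)-e_{S*}(L_{\tskein'}(c_\partial))^2$ at all. It simply observes that every term with $i\geq 2$ already lies in $\filt{2m}\cLoc\tskein'(\surface)$ (for $i=N$ this is the bound $4N+(m-2)\cdot 2=4(N-1)+2m$ from the corollary), so that $e_{S*}(\exp(\tfrac{\epsilon}{h}L_{\tskein'}(c_\partial)))=1+\tfrac{\epsilon}{h}e_{S*}(L_{\tskein'}(c_\partial))$ modulo $\filt{2m}\cLoc\tskein'(\surface)$, and then the overall factor of $h$ in $h\log(\,\cdot\,)$, together with $h\,\filt{2m}\cLoc\tskein'(\surface)\subset\filt{2m+2}\cLoc\tskein'(\surface)$ and the fact that the higher powers of $e_{S*}(\exp(\cdots))-1\in\filt{2m-2}$ contribute in degree $\geq 4m-2\geq 2m+2$, lifts all corrections into $\filt{2m+2}\cLoc\tskein'(\surface)$; the finer analysis of $\tfrac{1}{2h}\bigl(e_{S*}(L_{\tskein'}(c_\partial)^2)-e_{S*}(L_{\tskein'}(c_\partial))^2\bigr)$ that you describe is exactly the content of the sharper refinement in Theorem \ref{thm_KMT}, not of this theorem.
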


\begin{rem}
In the above theorem,
Kuno and Massuyeau proved that 
$\tilde{\zeta}_{\GL} ((\surface \times I) (K (-\epsilon)))
 \mod \filt{2m+2} \cGL (\surface)$
depends only on the homotopy type of the knot $K$.
In this paper \S \ref{section_an_application_KMT}, we will verify 
that the number $2m+2$ is the best possible.

%In the above theorem, Kuno and Massuyeau proved that T depends only on the homotopy type of the knot. In this paper, we will verify that the number is the best possible.
\end{rem}

\begin{proof}
%あるコンパクトな曲面$\surface'$と
%埋め込み$e :\surface' \times I \to \surface \times I$
%を$e (\partial \surface' \times I)$が
%$K$とイソトピックになるようにとる。
%Theorem \ref{thm_qbtskein_main_boundary_link}を使って、
%\begin{equation*}
%\PPsi{\GL}{\Uh} (\tilde{\zeta}_{\GL} ((\surface \times I)(K(-\epsilon))))
%=h\PPsi{\tskein'}{\Uh} (\log (e_*( \exp(\frac{\epsilon}{h}\PPsi{\GL}{\tskein'} (\frac{1}{2} (\zettaiti{
%(\log (\gamma_\partial))^2}))))))
%\end{equation*}
%とわかる。
%ただし、$\gamma_\partial$は境界に沿った$\pi_1 (\surface')$の元である。
%$e_* (\zettaiti{\gamma_\partial^n})=\zettaiti{\gamma^n}$
%であることと
%$e_* (\gamma_\partial) \in 1+I_{\GLM (\surface)}^m$
%であることに注意しておく。
%$\epsilon \zettaiti{\frac{1}{2} (\log (\gamma))^2} \in \filt{2m} \cGL (\surface)$
%であることは、
%は$\gamma-1 \in I_{\GL (\surface)}^m$よりわかる。
%%この定理は言い換えると
%%\begin{align*}
%%&\tilde{\zeta}_{\GL} ((\surface \times I) (K (-\epsilon)))= \epsilon
%%\zettaiti{\frac{1}{2} (\log (\gamma))^2}
%%\mod \filt{2m+2} \cGL (\surface) \\
%%&\epsilon \zettaiti{\frac{1}{2} (\log (\gamma))^2} \in \filt{2m} \cGL (\surface)
%%\end{align*}
%%である。
%%二つ目の式は$\gamma-1 \in I_{\GL (\surface)}^m$よりわかる。
%%一つ目の式を示す。

Let $\surface'$ be a compact connected oriented surface and $e :\surface' \times I \to \surface \times I$ an embedding such that the image $e (\partial \surface' \times I)$ represents $K$. Using Theorem \ref{thm_qbtskein_main_boundary_link}, we have 
\begin{equation*}
\PPsi{\GL}{\Uh} (\tilde{\zeta}_{\GL} ((\surface \times I)(K(-\epsilon))))
=h\PPsi{\tskein'}{\Uh} (\log (e_*( \exp(\frac{\epsilon}{h}\PPsi{\GL}{\tskein'} (\frac{1}{2} (\zettaiti{
(\log (\gamma_\partial))^2})))))),
\end{equation*} 
where $\gamma_\partial$ is an element of $\pi_1 (\surface')$ whose conjugacy class equals the boundary $\partial \surface'$. We remark $e_* (\zettaiti{\gamma_\partial^n})=\zettaiti{\gamma^n}$ and
$e_* (\gamma_\partial) \in 1+I_{\GLM (\surface)}^m$. So we have $\epsilon \zettaiti{\frac{1}{2} (\log (\gamma))^2} \in \filt{2m} \cGL (\surface)$.

By Corollary \ref{cor_qbtskein_filtration}, since $4N+(m-2)(2N-2(N-1))=4(N-1)+2m$, we obtain 
\begin{align*}
&e_*( \exp(\frac{\epsilon}{h} \PPsi{\GL}{\tskein'} (\frac{1}{2} (\zettaiti{
(\log (\gamma_\partial))^2})))) \\
&=\sum_{i=0}^\infty \frac{\epsilon^i}{i! h^i} 
e_* (\PPsi{\GL}{\tskein'} ((\frac{1}{2} \zettaiti{(\log (\gamma_\partial ))^2})^i)) \\
&=1+\frac{\epsilon}{h}e_* (\PPsi{\GL}{\tskein'} ((\frac{1}{2} \zettaiti{(\log (\gamma_\partial ))^2})))
\mod \filt{2m} \cLoc \tskein' (\surface). \\
\end{align*}
 Using 
\begin{align*}
&e_*( \exp(\frac{\epsilon}{h} \PPsi{\GL}{\tskein'} (\frac{1}{2} (\zettaiti{
(\log (\gamma_\partial))^2}))))-1
\in \filt{2m-2} \cLoc \tskein' (\surface), \\
\end{align*} 
we have 
\begin{align*}
&h(\log (e_*( \exp(\frac{\epsilon}{h} \PPsi{\GL}{\tskein'} (\frac{1}{2} (\zettaiti{
(\log (\gamma_\partial))^2})))))) \\
&=\sum_{i=1}^\infty \frac{(-1)^{i-1}}{i}
(e_*( \exp(\frac{\epsilon}{h} \PPsi{\GL}{\tskein'} (\frac{1}{2} (\zettaiti{
(\log (\gamma_\partial))^2}))))-1)^i \\
&=\epsilon e_* (\PPsi{\GL}{\tskein'} ((\frac{1}{2} \zettaiti{(\log (\gamma_\partial ))^2})))
\mod \filt{2m+2} \cLoc \tskein' (\surface).
\end{align*} It proves
\begin{align*}
&\tilde{\zeta}_{\GL} ((\surface \times I) (K(- \epsilon))) \\
&=\PPsi{\tskein'}{\GL} 
(\epsilon e_* (\PPsi{\GL}{\tskein'} ((\frac{1}{2} \zettaiti{(\log (\gamma_\partial ))^2})))) 
\mod \filt{2m+2} \cGL (\surface) \\
&=\epsilon e_* (\frac{1}{2} \zettaiti{(\log (\gamma_\partial ))^2}) 
=\epsilon \frac{1}{2} \zettaiti{(\log (\gamma))^2} \\
\end{align*}
 as desired.

%By the corollary, since the computation holds, we obtain the equation. Using the property, we have the formula. It proves the statement as desired.

\end{proof}

Using the computation in the above proof, we obtain a more accurate formula. Specifically, we have 
\begin{align*}
&e_*( \exp(\frac{\epsilon}{h} \PPsi{\GL}{\tskein'} (\frac{1}{2} (\zettaiti{
(\log (\gamma_\partial))^2})))) \\
&=\sum_{i=0}^\infty \frac{\epsilon^i}{i! h^i} 
e_* (\PPsi{\GL}{\tskein'} ((\frac{1}{2} \zettaiti{(\log (\gamma_\partial ))^2})^i)) \\
&=1+\frac{\epsilon}{h}e_* (\PPsi{\GL}{\tskein'} ((\frac{1}{2} \zettaiti{(\log (\gamma_\partial ))^2}))) 
+\frac{\epsilon^2}{2h^2}e_* ((\PPsi{\GL}{\tskein'} ((\frac{1}{2} \zettaiti{(\log (\gamma_\partial ))^2})))^2) \\
&+\frac{\epsilon^3}{6h^3}e_* ((\PPsi{\GL}{\tskein'} ((\frac{1}{2} \zettaiti{(\log (\gamma_\partial ))^2})))^3)
\mod \filt{2m+4} \cLoc \tskein' (\surface). \\
%&=1+\frac{\epsilon}{h}L^{(1)}+\frac{\epsilon^2}{2h^2}L^{(2)}+\frac{\epsilon^3}{6h^3}L^{(3)}.
\end{align*} 
Using the notation
\begin{equation*}
L^{(i)}\defeq e_* ((\PPsi{\GL}{\tskein'} ((\frac{1}{2} \zettaiti{(\log (\gamma_\partial ))^2})))^i),
\end{equation*} 
we get 
\begin{align*}
&\PPsi{\GL}{\Uh} (\tilde{\zeta}_{\GL} ((\surface \times I)(K(-\epsilon)))) \\
&=\PPsi{\tskein'}{\Uh} (h\log(1+\frac{\epsilon}{h}L^{(1)}+\frac{\epsilon^2}{2h^2}L^{(2)}+\frac{\epsilon^3}{6h^3}L^{(3)}))
\mod \filt{2m+6}\cGL (\surface) \\
&=\PPsi{\tskein'}{\Uh} (\epsilon L^{(1)} +\frac{1}{2h} (L^{(2)}-L^{(1)2}) 
+\frac{\epsilon}{12h^2}(2L^{(3)}-3L^{(2)}L^{(1)}-3L^{(1)}L^{(2)}+4L^{(1)3})) \\
&\mod \PPsi{\tskein'}{\Uh} (\filt{2m+6} \cLoc \tskein' (\surface)). \\
\end{align*}
 Here we can prove 
$L^{(2)}-L^{(1)2} \in h \ctskein' (\surface) \subset \ctskein' (\surface)$
and
$2L^{(3)}-3L^{(2)}L^{(1)}-3L^{(1)}L^{(2)}+4L^{(1)3}
\in h^2 \ctskein' (\surface) \subset \ctskein' (\surface)$ 
using the following lemma, where 
$x=y=z=\PPsi{\GL}{\tskein'} (\frac{1}{2} 
\zettaiti{\log (\gamma_\partial)^2})$.

%Using the computation in the above proof, we obtain a more accurate formula. Specifically, we have the equation. Using the notation, we get the property. Here we can prove the statements using the following lemma, where X, Y, and Z equal the element.

\begin{lemm}
%埋め込み$e: \surface' \times I \hookrightarrow
%\surface \times I$をとる。
%$x,y,z \in \tskein' (\surface')$について、
%\begin{align*}
%&e_* (xy)-e_*(x)e_*(y) \in
%h \tskein' (\surface), \\
%&e_* (xy)-e_*(y)e_*(x) \in
%h \tskein' (\surface), \\
%&2e_* (xyz)- (e_* (x)e_* (yz)+2e_*(y)e_* (xz))-(e_*(yz)e_*(x)+
%2e_* (xy)e_* (z) ) \\
%&+e_*(x)e_*(y)e_*(z)+e_* (y) e_* (z)e_* (x)+2 e_* (y)e_*(x)e_*(z)
%\in h^2 \tskein' (\surface)
%\end{align*}
%となる。

Let $e: \surface' \times I \hookrightarrow
\surface \times I$ be an embedding. Then we have 
\begin{align*}
&e_* (xy)-e_*(x)e_*(y) \in
h \tskein' (\surface), \\
&e_* (xy)-e_*(y)e_*(x) \in
h \tskein' (\surface), \\
&2e_* (xyz)- (e_* (x)e_* (yz)+2e_*(y)e_* (xz))-(e_*(yz)e_*(x)+
2e_* (xy)e_* (z) ) \\
&+e_*(x)e_*(y)e_*(z)+e_* (y) e_* (z)e_* (x)+2 e_* (y)e_*(x)e_*(z)
\in h^2 \tskein' (\surface)
\end{align*}
 for any $x,y,z \in \tskein' (\surface')$.

%Let E be an embedding. Then we have the equations for any X, Y, Z.
\end{lemm}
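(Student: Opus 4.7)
The plan is to exploit the fact that modulo $h$ the skein algebra $\tskein'$ becomes commutative, so that $e_*$ is a ring homomorphism at the classical level, and then to bootstrap this to the third relation by a careful second-order expansion.

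For the first two relations I would work modulo $h$. Both skein relations in the definition of $\tskein'$ degenerate to $T_{(C+)}=T_{(C-)}$ when $h=0$, so over/under distinctions disappear and $\tskein'(\surface)/h\tskein'(\surface)$ becomes commutative. Under the isomorphism $\PPsi{\Uh}{\tskein'}$ this quotient is the symmetric algebra on $\GL$, and on it the stacking multiplication reduces to the disjoint-union product, which is manifestly preserved by the embedding $e$. Hence $e_*$ is a ring homomorphism mod~$h$, giving $e_*(xy)\equiv e_*(x)e_*(y)\equiv e_*(y)e_*(x)\pmod{h\tskein'(\surface)}$.

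For the third relation, write $f=e_*$ and introduce the bilinear correction $\alpha(a,b)\in \tskein'(\surface)$ defined by $f(ab)=f(a)f(b)+h\alpha(a,b)$ (well-defined by the first relation). Comparing the two expansions $f(xyz)=f((xy)z)=f(x(yz))$ yields the \emph{pentagon identity} $\alpha(x,y)f(z)+\alpha(xy,z)=f(x)\alpha(y,z)+\alpha(x,yz)$, and comparing with the analogous expansion using the second relation also controls $f(ab)-f(b)f(a)=h\alpha(a,b)+[f(a),f(b)]$. Substituting both expansions into the LHS of the third relation, the symmetrisation coefficients $(2,1,2,1,2,1,1,2)$ are arranged so that every pure $f(x)f(y)f(z)$-type monomial cancels exactly; what remains is $h$ times a combination of $\alpha$'s together with commutators $[f(\cdot),f(\cdot)]$, and the commutators already lie in $h\tskein'(\surface)$ by the first relation.

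The hard part will be showing that the surviving $\alpha$-combination also lies in $h\tskein'(\surface)$. After using the pentagon identity to replace $\alpha(x,yz)$ in terms of $\alpha(xy,z)$, $\alpha(x,y)$ and $\alpha(y,z)$, and after rewriting $\alpha_2(x,y)\defeq (f(xy)-f(y)f(x))/h$ as $\alpha(x,y)+[f(x),f(y)]_{\mathrm{Lie}}$ where $[\cdot,\cdot]_{\mathrm{Lie}}=[\cdot,\cdot]/h$, the surviving obstruction modulo $h$ reduces to a Leibniz-type expression
\[
[f(x),f(yz)]_{\mathrm{Lie}}-[f(x),f(y)]_{\mathrm{Lie}}f(z)-f(y)[f(x),f(z)]_{\mathrm{Lie}},
\]
which vanishes modulo $h$ because $[\cdot,\cdot]_{\mathrm{Lie}}$ descends to the Poisson bracket on the commutative quotient $\tskein'(\surface)/h\tskein'(\surface)$ and satisfies the Leibniz rule. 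Multiplying back through by the global factor of $h$ then yields membership in $h^2\tskein'(\surface)$, which is the content of the third relation. I expect the bookkeeping in this final step to be the main obstacle, since it requires simultaneously tracking the two non-commutativity corrections $\alpha$ and $[\cdot,\cdot]_{\mathrm{Lie}}$ and using both the antisymmetry and the Leibniz rule of the Poisson bracket to achieve the cancellation at order $h$.
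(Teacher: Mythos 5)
Your argument for the first two relations is fine: modulo $h$ the two skein relations collapse to $T_{(C+)}=T_{(C-)}$, so the stacking product becomes the unordered product and $e_*$ is multiplicative modulo $h$. This is essentially what the paper establishes (it routes the argument through a tubular neighborhood $e_\partial$ and the maps $\kappa_{0*},\kappa_{1*}$, but the content is the same).

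The proof of the third relation has a genuine gap. Set $f=e_*$ and $\alpha(a,b)\defeq\frac{1}{h}\bigl(f(ab)-f(a)f(b)\bigr)$. Expanding one copy of $f(xyz)$ as $f(x\cdot yz)$ and the other as $f(xy\cdot z)$, the pure triple products cancel and the left-hand side equals
\begin{equation*}
h\bigl[\alpha(x,yz)+\alpha(xy,z)-\alpha(x,y)f(z)-2f(y)\alpha(x,z)-\alpha(y,z)f(x)\bigr].
\end{equation*}
After substituting your pentagon identity $\alpha(x,yz)=\alpha(x,y)f(z)+\alpha(xy,z)-f(x)\alpha(y,z)$ and using commutativity modulo $h$, what survives is
\begin{equation*}
2h\bigl[\alpha(xy,z)-f(x)\alpha(y,z)-f(y)\alpha(x,z)\bigr]\mod h^2\tskein'(\surface),
\end{equation*}
i.e.\ the required statement is the \emph{Leibniz rule modulo $h$ for the embedding defect $\alpha$ in its first slot}. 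This is not the expression you claim to be left with: the Leibniz defect $[f(x),f(yz)]-[f(x),f(y)]f(z)-f(y)[f(x),f(z)]$ of the intrinsic bracket vanishes identically in any associative algebra with $[u,v]=\frac{1}{h}(uv-vu)$ and carries no information about $e$. The quantity $\alpha$ is a different object (it is zero for the identity embedding, nonzero in general), and its Leibniz rule cannot be derived formally from relations (1), (2) and associativity: every pentagon-type identity only relates $\alpha(xy,z)$, $\alpha(x,yz)$, $\alpha(x,y)$, $\alpha(y,z)$, because associativity only regroups adjacent factors, whereas the needed identity contains $\alpha(x,z)$, which ``skips over'' $y$. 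Supplying this identity is exactly the geometric content of the paper's proof: it reduces a general $e$ to a tubular neighborhood $e_\partial$ of $\partial(\surface\times I)$ (via Lemma \ref{lemm_standard_embedding}), where the auxiliary maps $\kappa_{0*},\kappa_{1*}$ and the identities $e_{\partial*}(z_1z_2)=e_{\partial*}(z_1\,\kappa_{i*}e_{\partial*}(z_2))$, together with the commutation of $\kappa_{0*}(\cdot)$ with $\kappa_{1*}(\cdot)$, yield the three relations (a), (b), (c) whose combination is the third equation. Your proposal needs an analogous geometric input at this point; without it the final cancellation does not go through.
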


\begin{proof}
%次のstepsで示す。
%
%\begin{itemize}
%\item[(Step 1)] $e_\partial: \partial (\surface \times I) \times I \to \surface \times I$
%を$e_\partial (\partial (\surface \times I) \times \shuugou{1})
%= \partial (\surface \times I)$となる
%境界の管状近傍として、埋め込み$e_\partial$でこれらの式が成り立つことを示す。
%\item[(Step2)] $e_\mathrm{st}: \surface_\mathrm{st} \times I \to
%\surface \times I$をスタンダードな埋め込み
%(Definition \ref{df_standard_embedding})とし、
%$\tilde{\surface_\mathrm{st}}$
%を$\partial (\surface_\mathrm{st} \times I) \backslash
%e_\mathrm{st}^{-1} (\partial \surface \times I)$
%の閉包とする。
%$e_{\mathrm{st}\partial}
%(\tilde{\surface_\mathrm{st}} \times \shuugou{1})
%= \tilde{\surface_\mathrm{st}} $となる
%管状近傍$e_{\mathrm{st}\partial}
%:\tilde{\surface_\mathrm{st}} \times I
%\to \surface_\mathrm{st} \times I$として、埋め込み
%$e_\mathrm{st}\circ e_{\mathrm{st}\partial}$
%でこれらの式が成り立つことを示す。
%\item[(Step 3)] 一般の埋め込み$e:\surface' \times I \to \surface \times I$で示す。
%\end{itemize}

We will prove the equations by the three steps. We assume $e$ is a tubular neighborhood $e_\partial : \partial (\surface \times I) \times I \to \surface \times I$, an embedding $e_\mathrm{st} \circ e_{\mathrm{st}\partial}: \partial ( \surface_\mathrm{st} \times I) \times I \to \surface \times I$ defined by a standard embedding $e_\mathrm{st}:\surface_\mathrm{st} \times I \to \surface \times I$, or a general embedding in (Step 1),  (Step 2), or (Step 3), respectively.

(Step 1)
For enough small $\epsilon >0$,
we set embeddings $\kappa_0$ and $\kappa_1$ as 
\begin{align*}
&\kappa_0: \surface \times I \to \partial (\surface \times I) \times I,
(p,t) \mapsto e_\partial^{-1} (p, \epsilon t), \\
&\kappa_1: \surface \times I \to \partial (\surface \times I) \times I,
(p,t) \mapsto e_\partial^{-1} (p, 1-\epsilon +\epsilon t), \\
\end{align*}
 which induce 
\begin{align*}
&\kappa_{0*} : \tskein' (\surface) \to \tskein' (\partial (\surface \times I)), \\
&\kappa_{1*}: \tskein' (\surface) \to \tskein' (\partial (\surface \times I)).
\end{align*}
 Since 
\begin{align*}
&\id \simeq e_\partial \circ \kappa_0 \simeq e_\partial \circ \kappa_1, \\
&\kappa_1 \circ e^{\surface}_\mathrm{over} 
\sqcup \kappa_1 \circ e^{\surface}_\mathrm{under}
\simeq
e^{\partial (\surface \times I)}_\mathrm{over}  \circ \kappa_1
\sqcup
e^{\partial (\surface \times I)}_\mathrm{under}  \circ \kappa_1,
 \\
&\kappa_0 \circ e^{\surface}_\mathrm{over} 
\sqcup \kappa_0 \circ e^{\surface}_\mathrm{under}
\simeq
e^{\partial (\surface \times I)}_\mathrm{under}  \circ \kappa_0
\sqcup
e^{\partial (\surface \times I)}_\mathrm{over}  \circ \kappa_0,
 \\
&e_\partial \circ e_\mathrm{over}^{\partial (\surface \times I)}
\sqcup 
e_\partial \circ e_\mathrm{under}^{\partial (\surface \times I)} \\
&\simeq
e_\partial \circ e_\mathrm{over}^{\partial (\surface \times I)}
\sqcup 
e_\partial \circ e_\mathrm{under}^{\partial (\surface \times I)}
\circ \kappa_1 \circ e_\partial \\
&\simeq
e_\partial \circ e_\mathrm{over}^{\partial (\surface \times I)}
\sqcup 
e_\partial \circ e_\mathrm{under}^{\partial (\surface \times I)}
\circ \kappa_0 \circ e_\partial, \\
& e_\mathrm{over}^{\surface}\circ e_\partial 
\sqcup 
e_\mathrm{under}^{\surface} \circ e_\partial \\
&\simeq
e_\partial \circ e_\mathrm{over}^{\partial (\surface \times I)}
\circ \kappa_1 \circ e_\partial
\sqcup 
e_\partial \circ e_\mathrm{under}^{\partial (\surface \times I)} \\
&\simeq
e_\partial \circ e_\mathrm{under}^{\partial (\surface \times I)}
\sqcup 
e_\partial \circ e_\mathrm{over}^{\partial (\surface \times I)}
\circ \kappa_0 \circ e_\partial, \\
\end{align*}
 we have 
\begin{align*}
&\id = e_{\partial * } \circ \kappa_{0*} =e_{\partial *} \circ \kappa_{1*}, \\
& \kappa_{1*} (z'_1 z'_2)=\kappa_{1*} (z'_1) \kappa_{1*} (z'_2), \\
&  \kappa_{0*} (z'_1 z'_2)= \kappa_{0*} (z'_2)\kappa_{0*} (z'_1), \\
&e_{\partial *} (z_1 z_2) =e_{\partial *} (z_1 \kappa_{0*}  \circ e_{\partial *} (z_2))
= e_{\partial *} (z_1 \kappa_{1*}  \circ e_{\partial *} (z_2)) \\
&e_{\partial *} (z_1)e_{\partial *} (z_2)
 =e_{\partial *} ( \kappa_{1*}  \circ e_{\partial *} (z_1)z_2)
= e_{\partial *} ( \kappa_{0*}  \circ e_{\partial *} (z_2)z_1) \\
\end{align*}
 for $z_1 , z_2 \in \tskein' (\partial (\surface \times I)),
z'_1, z'_2 \in \tskein' (\surface)$. We obtain 
\begin{align*}
&e_{\partial *} (x y)-
e_{\partial *} (x) e_{\partial *}(y)
=e_{\partial *} (x  \kappa_{0*} \circ e_{\partial *} (y)-
\kappa_{0*} \circ e_{\partial *} (y) x) \in h \tskein' (\surface), \\
&e_{\partial *} (x y)-
e_{\partial *} (y) e_{\partial *}(x) \\
&=
(e_{\partial *} (x y)-
e_{\partial *} (x) e_{\partial *}(y))
+(e_{\partial *} (x) e_{\partial *}(y)-
e_{\partial *} (y) e_{\partial *}(x))
\in h \tskein' (\surface),
\end{align*}
 which proves the first equation and the second equation.

%For enough small E, we set embeddings as follows, which induce linear maps. Since the properties hold, we have the equations for elements. We obtain the computations, which proves the first equation and the second equation.

%一つ目の式と二つ目の式を使って、
%\begin{align*}
%& \tag{a} e_{\partial *} (xyz) -e_{\partial *} (x \kappa_{0*} (e_{\partial *} (y) e_{\partial *}
%(z))) \\
%& \notag -e_{\partial *}(x) e_{\partial *} (yz)
%+e_{\partial *} (x) e_{\partial *} (y) e_{\partial *} (z) 
%\in h^2 \tskein' (\surface),  \\
%& \tag{b} e_{\partial *} (xyz) -e_{\partial *} (x \kappa_{0*} (e_{\partial *} (y) e_{\partial *}
%(z))) \\
%& \notag -e_{\partial *} (yz)e_{\partial *}(x) 
%+e_{\partial *} (y) e_{\partial *} (z) e_{\partial *} (x) 
%\in h^2 \tskein' (\surface), \\
%& \tag{c} e_{\partial *} (x \kappa_{0*} (e_{\partial *} (y) e_{\partial *}
%(z)))-e_{\partial *} (y) e_{\partial *} (xz) \\
%&\notag -e_{\partial *} (xy) e_{\partial *}(z)
%+e_{\partial *} (y) e_{\partial *} (x) e_{\partial *}(z)
%\in h^2 \tskein' (\surface)
%\end{align*}
%を示す。

Using the first equation and the second equation, we will prove 
\begin{align*}
& \tag{a} e_{\partial *} (xyz) -e_{\partial *} (x \kappa_{0*} (e_{\partial *} (y) e_{\partial *}
(z))) \\
& \notag -e_{\partial *}(x) e_{\partial *} (yz)
+e_{\partial *} (x) e_{\partial *} (y) e_{\partial *} (z) 
\in h^2 \tskein' (\surface),  \\
& \tag{b} e_{\partial *} (xyz) -e_{\partial *} (x \kappa_{0*} (e_{\partial *} (y) e_{\partial *}
(z))) \\
& \notag -e_{\partial *} (yz)e_{\partial *}(x) 
+e_{\partial *} (y) e_{\partial *} (z) e_{\partial *} (x) 
\in h^2 \tskein' (\surface), \\
& \tag{c} e_{\partial *} (x \kappa_{0*} (e_{\partial *} (y) e_{\partial *}
(z)))-e_{\partial *} (y) e_{\partial *} (xz) \\
&\notag -e_{\partial *} (xy) e_{\partial *}(z)
+e_{\partial *} (y) e_{\partial *} (x) e_{\partial *}(z)
\in h^2 \tskein' (\surface)
\end{align*}
for $x,y,z \in \tskein' (\partial (\surface \times I))$.

%Using the first equation and the second equation, we will prove the formulas for elements.

%式(a)と式(b)を示す。
%一つ目の式と二つ目の式を使って、
%\begin{align*}
%&e_{\partial *} (xyz) -e_{\partial *} (x \kappa_{0*} (e_{\partial *} (y) e_{\partial *}
%(z)))-e_{\partial *}(x) e_{\partial *} (yz)
%+e_{\partial *} (x) e_{\partial *} (y) e_{\partial *} (z) \\
%&=e_{\partial *} (x \kappa_{0*} (
%e_{\partial *} (yz) -e_{\partial *} (y) e_{\partial *}(z)))
%-e_{\partial *} (x) e_{\partial *}( \kappa_{0*} (
%e_{\partial *} (yz) -e_{\partial *} (y) e_{\partial *}(z))) \\
%& \in h^2 \tskein' (\surface) \\
%&e_{\partial *} (xyz) -e_{\partial *} (x \kappa_{0*} (e_{\partial *} (y) e_{\partial *}
%(z)))-e_{\partial *} (yz)e_{\partial *}(x) 
%+e_{\partial *} (y) e_{\partial *} (z) e_{\partial *} (x) \\
%&=e_{\partial *} (xyz) -e_{\partial *} (x \kappa_{1*} (e_{\partial *} (y) e_{\partial *}
%(z)))-e_{\partial *} (yz)e_{\partial *}(x) 
%+e_{\partial *} (y) e_{\partial *} (z) e_{\partial *} (x) \\
%&=e_{\partial *} (x \kappa_{1*} (
%e_{\partial *} (yz) -e_{\partial *} (y) e_{\partial *}(z)))
%-e_{\partial *}( \kappa_{1*} (e_{\partial *} (yz) -e_{\partial *} (y) e_{\partial *}(z)))
% e_{\partial *} (x) \\
%& \in h^2 \tskein' (\surface) \\
%\end{align*}
%が得られる。式(a)と式(b)が示せた。

We will prove the equations (a) and (b). Using the first and the second equation, we obtain 
\begin{align*}
&e_{\partial *} (xyz) -e_{\partial *} (x \kappa_{0*} (e_{\partial *} (y) e_{\partial *}
(z)))-e_{\partial *}(x) e_{\partial *} (yz)
+e_{\partial *} (x) e_{\partial *} (y) e_{\partial *} (z) \\
&=e_{\partial *} (x \kappa_{0*} (
e_{\partial *} (yz) -e_{\partial *} (y) e_{\partial *}(z)))
-e_{\partial *} (x) e_{\partial *}( \kappa_{0*} (
e_{\partial *} (yz) -e_{\partial *} (y) e_{\partial *}(z))) \\
& \in h^2 \tskein' (\surface) \\
&e_{\partial *} (xyz) -e_{\partial *} (x \kappa_{0*} (e_{\partial *} (y) e_{\partial *}
(z)))-e_{\partial *} (yz)e_{\partial *}(x) 
+e_{\partial *} (y) e_{\partial *} (z) e_{\partial *} (x) \\
&=e_{\partial *} (xyz) -e_{\partial *} (x \kappa_{1*} (e_{\partial *} (y) e_{\partial *}
(z)))-e_{\partial *} (yz)e_{\partial *}(x) 
+e_{\partial *} (y) e_{\partial *} (z) e_{\partial *} (x) \\
&=e_{\partial *} (x \kappa_{1*} (
e_{\partial *} (yz) -e_{\partial *} (y) e_{\partial *}(z)))
-e_{\partial *}( \kappa_{1*} (e_{\partial *} (yz) -e_{\partial *} (y) e_{\partial *}(z)))
 e_{\partial *} (x) \\
& \in h^2 \tskein' (\surface), \\
\end{align*} 
which verify the equations (a) and (b).

We will prove the equation (c). Since 
\begin{align*}
z'_1 z'_2 = e_{\partial*} ( \kappa_{1*} (z'_1 z'_2))
=e_{\partial *} (\kappa_{1*} (z'_1) \kappa_{1*} ( z'_2))
=e_{\partial *} (\kappa_{1*} (z'_1) \kappa_{0*} ( z'_2))
\end{align*}
 for $z'_1, z'_2 \in \tskein' (\surface)$,
 we obtain 
\begin{align*}
&e_{\partial *} (x \kappa_{0*} (e_{\partial *} (y) e_{\partial *}
(z))) \\
&=e_{\partial *} (x \kappa_{1*} (e_{\partial *} (y) e_{\partial *}
(z))) \\
&=e_{\partial *} (x \kappa_1 \circ e_{\partial *} \circ
\kappa_{1*} (e_{\partial *} (y) e_{\partial *}
(z))) \\
&=e_{\partial *} (x \kappa_1 \circ e_{\partial *} (
(\kappa_{1*} \circ e_{\partial *} (y)) (\kappa_{0*} \circ e_{\partial *}
(z)))) \\
&=e_{\partial *} (x (
(\kappa_{1*} \circ e_{\partial *} (y)) (\kappa_{0*} \circ e_{\partial *}
(z)))). \\
\end{align*} 
By definition, we have 
\begin{align*}
&e_{\partial *} (y) e_{\partial *} (xz)
=e_{\partial *} ((\kappa_{1 *} \circ e_{\partial *} (y))x(\kappa_{0*} \circ e_{\partial *}
(z))), \\
&e_{\partial *} (xy) e_{\partial *} (z)
=e_{\partial *} ((\kappa_{0 *} \circ e_{\partial *} (z))x(\kappa_{1*} \circ e_{\partial *}
(y))). \\
\end{align*} We remark that these elements 
$\kappa_{0*} (z_1)$ and $\kappa_{1*} (z_2)$
are commutative for $z_1, z_2 \in \tskein' (\surface)$. Using it, we have 
\begin{align*}
&e_{\partial *} (y) e_{\partial *} (x) e_{\partial *}(z) 
=e_{\partial *} ((\kappa_{0 *} \circ e_{\partial *} (z))(\kappa_{1*} \circ e_{\partial *}
(y))x).
\end{align*}
 Hence, we obtain 
\begin{align*}
&e_{\partial *} (x \kappa_{0*} (e_{\partial *} (y) e_{\partial *}
(z)))-e_{\partial *} (y) e_{\partial *} (xz) -e_{\partial *} (xy) e_{\partial *}(z)
+e_{\partial *} (y) e_{\partial *} (x) e_{\partial *}(z) \\
&=e_{\partial *} (x (\kappa_{1*} \circ e_{\partial *}(y))(\kappa_{1*} \circ_{\partial *}(z))
- (\kappa_{1*} \circ e_{\partial *}(y))x(\kappa_{1*} \circ e_{\partial *}(z)) \\
&- (\kappa_{1*} \circ e_{\partial *}(z))x(\kappa_{1*} \circ e_{\partial *}(y))
+ (\kappa_{1*} \circ e_{\partial *}(y))(\kappa_{1*} \circ e_{\partial *}(z))x
) \\
&=e_{\partial *} (\kappa_{1*} \circ e_{\partial *}(y)(
(\kappa_{1*} \circ e_{\partial *}(z))x-x(\kappa_{1*} \circ e_{\partial *}(z))) \\
&-(
(\kappa_{1*} \circ e_{\partial *}(z))x-x(\kappa_{1*} \circ e_{\partial *}(z))
\kappa_{1*} \circ e_{\partial *}(y))) \\
& \in h^2 \tskein' (\surface),
\end{align*} 
which verifies the equation (c).

%We will prove the equation (c). Since the equation holds for elements, we obtain the property. By definition, we have the statements. We remark that these elements are commutative for X and Y. Using it, we have the condition. Hence, we obtain the computation, which verifies the equation (c).

%式(a)(b)(c)を使って、
%\begin{align*}
%&e_{\partial *} (xyz) -e_{\partial *} (x \kappa_{0*} (e_{\partial *} (y) e_{\partial *}
%(z)))-e_{\partial *}(x) e_{\partial *} (yz)
%+e_{\partial *} (x) e_{\partial *} (y) e_{\partial *} (z) \\
%&+e_{\partial *} (xyz) -e_{\partial *} (x \kappa_{0*} (e_{\partial *} (y) e_{\partial *}
%(z)))-e_{\partial *} (yz)e_{\partial *}(x) 
%+e_{\partial *} (y) e_{\partial *} (z) e_{\partial *} (x) \\
%&+2(e_{\partial *} (x \kappa_{0*} (e_{\partial *} (y) e_{\partial *}
%(z)))-e_{\partial *} (y) e_{\partial *} (xz) -e_{\partial *} (xy) e_{\partial *}(z)
%+e_{\partial *} (y) e_{\partial *} (x) e_{\partial *}(z)) \\
%&=2e_{\partial *} (xyz)- (e_{\partial *} (x)e_{\partial *} (yz)+2e_{\partial *}(y)e_{\partial *} (xz))-
%(e_{\partial *}(yz)e_{\partial *}(x)+
%2e_{\partial *} (xy)e_{\partial *} (z) ) \\
%&+e_{\partial *}(x)e_{\partial *}(y)e_{\partial *}(z)+e_{\partial *} (y) e_{\partial *} (z)e_{\partial *} (x)+
%2 e_{\partial *} (y)e_{\partial *}(x)e_{\partial *}(z)
%\in h^2 \tskein' (\surface)
%\end{align*}
%とわかり、(Step 1)の仮定の時に３つ目の式を示せた。

Using the equations (a), (b), and (c), we have 
\begin{align*}
&e_{\partial *} (xyz) -e_{\partial *} (x \kappa_{0*} (e_{\partial *} (y) e_{\partial *}
(z)))-e_{\partial *}(x) e_{\partial *} (yz)
+e_{\partial *} (x) e_{\partial *} (y) e_{\partial *} (z) \\
&+e_{\partial *} (xyz) -e_{\partial *} (x \kappa_{0*} (e_{\partial *} (y) e_{\partial *}
(z)))-e_{\partial *} (yz)e_{\partial *}(x) 
+e_{\partial *} (y) e_{\partial *} (z) e_{\partial *} (x) \\
&+2(e_{\partial *} (x \kappa_{0*} (e_{\partial *} (y) e_{\partial *}
(z)))-e_{\partial *} (y) e_{\partial *} (xz) -e_{\partial *} (xy) e_{\partial *}(z)
+e_{\partial *} (y) e_{\partial *} (x) e_{\partial *}(z)) \\
&=2e_{\partial *} (xyz)- (e_{\partial *} (x)e_{\partial *} (yz)+2e_{\partial *}(y)e_{\partial *} (xz))-
(e_{\partial *}(yz)e_{\partial *}(x)+
2e_{\partial *} (xy)e_{\partial *} (z) ) \\
&+e_{\partial *}(x)e_{\partial *}(y)e_{\partial *}(z)+e_{\partial *} (y) e_{\partial *} (z)e_{\partial *} (x)+
2 e_{\partial *} (y)e_{\partial *}(x)e_{\partial *}(z)
\in h^2 \tskein' (\surface).
\end{align*}
 In other words, we obtain the third equation under the assumption of (Step 1).

(Step 2)
Let $e_\mathrm{st}:\surface_\mathrm{st} \times I \to \surface \times I$ be a standard embedding, $e_{\mathrm{st} \partial}: \partial (\surface_\mathrm{st} \times I) \to \surface_\mathrm{st} \times I$ a tubular neighborhood. Using (Step 1), we obtain
\begin{align*}
&e_{\mathrm{st} \partial *} (xy)-e_{\mathrm{st} \partial *}(x)e_{\mathrm{st} \partial *}(y) \in
h \tskein' (\surface_\mathrm{st}), \\
&e_{\mathrm{st} \partial *} (xy)-e_{\mathrm{st} \partial *}(y)e_{\mathrm{st} \partial *}(x) \in
h \tskein' (\surface_\mathrm{st}), \\
&2e_{\mathrm{st} \partial *} (xyz)- (e_{\mathrm{st} \partial *} (x)e_{\mathrm{st} \partial *} (yz)+
2e_{\mathrm{st} \partial *}(y)e_{\mathrm{st} \partial *} (xz)) \\
&-(e_{\mathrm{st} \partial *}(yz)e_{\mathrm{st} \partial *}(x)
+2e_{\mathrm{st} \partial *} (xy)e_{\mathrm{st} \partial *} (z) ) \\
&+e_{\mathrm{st} \partial *}(x)e_{\mathrm{st} \partial *}(y)e_{\mathrm{st} \partial *}(z)
+e_{\mathrm{st} \partial *} (y) e_{\mathrm{st} \partial *} (z)e_{\mathrm{st} \partial *} (x) \\
&+2 e_{\mathrm{st} \partial *} (y)e_{\mathrm{st} \partial *}(x)e_{\mathrm{st} \partial *}(z)
\in h^2 \tskein' (\surface_\mathrm{st}).
\end{align*}
 We can prove the equations under the assumption $e=e_\mathrm{st} \circ e_{\mathrm{st} \partial}$ of (Step 2) using that $e_{\mathrm{st}*}: \tskein' (\surface_\mathrm{st}) \to \tskein' (\surface)$ is an algebra isomorphism.

%Let E be a standard embedding, E' a tubular neighborhood. We set E'' as the equation. Using the first step, we obtain the formula. We can prove the equations under the assumption of the second step using that E is an algebra isomorphism.

%(Step 3)Lemma \ref{lemm_standard_embedding}を使って、
%$e_\mathrm{st} \circ e'$と埋め込み$\surface' \to \surface \times I, p
%\mapsto e(p,1)$がイソトピックになるような
%あるスタンダードな埋め込み$e_\mathrm{st}: \surface_\mathrm{st} \times I 
%\to \surface \times I$
%と埋め込み$e': \surface' \to \tilde{\surface_\mathrm{st}}
%\defeq \overline{\partial (\surface_\mathrm{st} \times I) \backslash
%e_\mathrm{st}^{-1} (\partial \surface \times I)}$が存在する。
%これより、$e'$を$e'' : \surface' \times I \to \tilde{\surface_\mathrm{st}} \times I,
%(p,t) \mapsto (e'(p),t)$と拡張して、
%$e_\mathrm{st} \circ e_{\mathrm{st} \partial} \circ e''$と$e$がイソトピックになる。
%$e''$が誘導する$e''_* :\tskein' (\surface') \to
%\tskein' (\tilde{\surface_\mathrm{st}})$は代数準同型であることと
%(Step 2)を使い$e_* = e_{\mathrm{st}} \circ e_{\mathrm{st} \partial *} \circ e''_*$
%でこの補題を示すことができた。

(Step 3)
By Lemma \ref{lemm_standard_embedding}, there exists a standard embedding $e_\mathrm{st}:\surface_\mathrm{st} \times I \to \surface \times I$ and exists an embedding $e': \surface' \to \partial (\surface_\mathrm{st} \times I)$ satisfying $e_\mathrm{st} \circ e' \simeq e(\cdot,1)$. We can extend $e': \surface' \to \partial (\surface_\mathrm{st})$ to $e''=e' \times \id_I: \surface' \times I \to \partial (\surface_\mathrm{st}) \times I$. By definition, $e_\mathrm{st} \circ e_{\partial \mathrm{st}} \circ e''$ is also isotopic to $e$. By the second step, we obtain 
\begin{align*}
&e_{\mathrm{st}*}\circ e_{\mathrm{st} \partial *} (x'y')-e_{\mathrm{st}*}\circ 
e_{\mathrm{st} \partial *}(x')e_{\mathrm{st}*}\circ e_{\mathrm{st} \partial *}(y') \in
h \tskein' (\surface_\mathrm{st}), \\
&e_{\mathrm{st}*}\circ e_{\mathrm{st} \partial *} (x'y')-e_{\mathrm{st}*}\circ 
e_{\mathrm{st} \partial *}(y')e_{\mathrm{st}*}\circ e_{\mathrm{st} \partial *}(x') \in
h \tskein' (\surface_\mathrm{st}), \\
&2e_{\mathrm{st}*}\circ e_{\mathrm{st} \partial *} (x'y'z') \\
&- (e_{\mathrm{st}*}\circ 
e_{\mathrm{st} \partial *} (x')e_{\mathrm{st}*}\circ e_{\mathrm{st} \partial *} (y'z')+
2e_{\mathrm{st}*}\circ e_{\mathrm{st} \partial *}(y')e_{\mathrm{st}*}\circ e_{\mathrm{st} \partial *} (x'z')) \\
&-(e_{\mathrm{st}*}\circ e_{\mathrm{st} \partial *}(y'z')
e_{\mathrm{st}*}\circ e_{\mathrm{st} \partial *}(x')+
2e_{\mathrm{st}*}\circ e_{\mathrm{st} \partial *} (x'y')e_{\mathrm{st}*}\circ e_{\mathrm{st} \partial *} (z') ) \\
&+e_{\mathrm{st}*}\circ e_{\mathrm{st} \partial *}(x')
e_{\mathrm{st}*}\circ e_{\mathrm{st} \partial *}(y')
e_{\mathrm{st}*}\circ e_{\mathrm{st} \partial *}(z') \\
&+e_{\mathrm{st}*}\circ e_{\mathrm{st} \partial *} (y') 
e_{\mathrm{st}*}\circ e_{\mathrm{st} \partial *} (z')
e_{\mathrm{st}*}\circ e_{\mathrm{st} \partial *} (x') \\
&+
2 e_{\mathrm{st}*}\circ e_{\mathrm{st} \partial *} (y')
e_{\mathrm{st}*}\circ e_{\mathrm{st} \partial *}(x')
e_{\mathrm{st}*}\circ e_{\mathrm{st} \partial *}(z')
\in h^2 \tskein' (\surface_\mathrm{st})
\end{align*}
for $x',y',z' \in \tskein' (\tilde{\surface}_\mathrm{st})$.
 We can prove the equations under the assumption of the (Step 3) using that 
$e''_* :\tskein' (\surface') \to \tskein' (\tilde{\surface}_\mathrm{st})$
 is an algebra isomorphism.

%By the lemma, there exists a standard embedding and exists an embedding satisfying the condition. We can extend it to the other embedding. By definition, the other one is also isotopic to the embedding. By the second step, we obtain the formula for any elements. We can prove the equations under the assumption of the third step using that E is an algebra isomorphism.

\end{proof}

%Kuno-Massuyeauの公式を拡張するために、
%次の定義を用意する。

To state an accurate formula, we use the following notation.

%To state an accurate formula, we use the following notation.

\begin{df}
%$K$を$\surface \times I$の中のnull homologous unoriented framed knotとする。
%$K$のSeifert surface の埋め込みを$e_K : S_K \to\surface \times I$とする。
%また、境界に沿った基本群の元を$\gamma_\partial$として、
%$S_K$の境界に平行な単純閉曲線を$c_\partial$ とする。
%$e_K (S_K)$の管状近傍$ S_K \times I \hookrightarrow \surface \times I$
%も同様に$e_K$と書き、$e_K$は$e_{K*} :\cqbtskein (S_K) \to 
%\cqbtskein (\surface)$を誘導する。
%このとき、$L_{\tskein'} (c_\partial ) \defeq  \PPsi{\GL}{\tskein'} (
%L_{\GL} (c_\partial ))
%=\PPsi{\GL}{\tskein'} (
%\zettaiti{\frac{1}{2} (\log \gamma_\partial )^2})$として、
%$L_1(K), L_2(K), L_3(K) \in \cGL (\surface)$を
%\begin{align*}
%L_1 (K) \defeq & \PPsi{\tskein'}{\GL}
%(e_{K*} (L_{\tskein'} (c_\partial ))), \\
%L_2 (K) \defeq & \PPsi{\tskein'}{\GL} (\frac{1}{2h}
%(e_{K*} ((L_{\tskein'} (c_\partial))^2)-
%(e_{K*} (L_{\tskein'} (c_\partial )))^2)), \\
%L_3 (K) \defeq & \PPsi{\tskein'}{\GL} (
%\frac{1}{12h^2} (2 e_{K*} ((L_{\tskein'} (c_\partial))^3)
%-3e_{K*} ((L_{\tskein'} (c_\partial))^2)e_{K*} (L_{\tskein'} (c_\partial)) \\
%&-3e_{K*} (L_{\tskein'} (c_\partial))e_{K*} ((L_{\tskein'} (c_\partial))^2)
%+4(e_{K*} ((L_{\tskein'} (c_\partial))))^3) )\\
%\end{align*}
%とする。

Let $K$ be a null homologous unoriented framed knot in $\surface \times I$ satisfying $w(K)=0$  and $e'_K : S_K \to\surface \times I$ a Seifert surface of $K$. We choose an element $\gamma_\partial \in \pi_1 (S_K)$ whose conjugacy class is $\partial S_K$. We can extend $e'_K$ to $e_K:S_K \times I \to \surface \times I$ such that $e_K( \cdot, 0)=e'_K (\cdot)$. We use the notation
$L_{\tskein'} (c_\partial ) \defeq  \PPsi{\GL}{\tskein'} (
L_{\GL} (c_\partial ))
=\PPsi{\GL}{\tskein'} (
\zettaiti{\frac{1}{2} (\log \gamma_\partial )^2})$.
 Then we set $L_1(K), L_2(K), L_3(K) \in \cGL (\surface)$ as 
\begin{align*}
L_1 (K) \defeq & \PPsi{\tskein'}{\GL}
(e_{K*} (L_{\tskein'} (c_\partial ))), \\
L_2 (K) \defeq & \PPsi{\tskein'}{\GL} (\frac{1}{2h}
(e_{K*} ((L_{\tskein'} (c_\partial))^2)-
(e_{K*} (L_{\tskein'} (c_\partial )))^2)), \\
L_3 (K) \defeq & \PPsi{\tskein'}{\GL} (
\frac{1}{12h^2} (2 e_{K*} ((L_{\tskein'} (c_\partial))^3)
-3e_{K*} ((L_{\tskein'} (c_\partial))^2)e_{K*} (L_{\tskein'} (c_\partial)) \\
&-3e_{K*} (L_{\tskein'} (c_\partial))e_{K*} ((L_{\tskein'} (c_\partial))^2)
+4(e_{K*} ((L_{\tskein'} (c_\partial))))^3) ).\\
\end{align*}

%Let K be a null homologous unoriented framed knot in M satisfying W is zero and E a Seifert surface of the knot. We choose an element of the fundamental group whose conjugacy class is the boundary. We can extend E to E' such that the equation holds. We use the notation. Then we set L(1), L(2), and L(3) of the Goldman Lie algebra as the conditions.
\end{df}

%この定義で次の定理が記述される。

Using the notation, we state the theorem.

%Using the notation, we state the theorem.

\begin{thm}
\label{thm_KMT}

In the situation of Theorem \ref{thm_KM}, we have 
\begin{align*}
&\tilde{\zeta}_{\GL} ((\surface \times I)(K(-\epsilon))) \\
&=\epsilon L_1 (K) + L_2 (K) +\epsilon L_3 (k) \mod 
\filt{2m+6} \cGL (\surface),
\end{align*}
 where $L_1 (K)$, $L_2 (K)$, and $L_3 (K)$ are elements of $\filt{2m} \cGL (\surface)$, $\filt{2m+2} \cGL (\surface)$, and $\filt{2m+4} \cGL (\surface)$, respectively. In other words, for $\star_1, \star_2 \in \partial \surface$, we have 
\begin{align*}
&((\surface \times I) (K(-\epsilon))_*
= \id  \\
&: \GLM (\surface,\star_1, \star_2)/
\filt{2m-1} \GLM (\surface, \star_1, \star_2)
\to
\GLM (\surface, \star_1, \star_2)/
\filt{2m-1} \GLM (\surface, \star_1, \star_2), \\
&((\surface \times I) (K(-\epsilon))_*
= \exp (\sigma (\epsilon L_1 (K)))  \\
&: \GLM (\surface, \star_1, \star_2)/
\filt{2m+1} \GLM (\surface,\star_1, \star_2)
\to
\GLM (\surface, \star_1, \star_2)/
\filt{2m+1} \GLM (\surface, \star_1, \star_2),  \\
&((\surface \times I) (K(-\epsilon))_*
= \exp (\sigma (\epsilon L_1 (K)+L_2(K))) \\
&: \GLM (\surface, \star_1, \star_2)/
\filt{2m+3} \GLM (\surface, \star_1, \star_2)
\to
\GLM (\surface, \star_1, \star_2)/
\filt{2m+3} \GLM (\surface, \star_1, \star_2), \\
&((\surface \times I) (K(-\epsilon))_*
= \exp (\sigma (\epsilon L_1 (K) +L_2(K)+ \epsilon L_3 (K)))  \\
&: \GLM (\surface, \star_1, \star_2)/
\filt{2m+5} \GLM (\surface, \star_1, \star_2)
\to
\GLM (\surface, \star_1, \star_2)/
\filt{2m+5} \GLM (\surface, \star_1, \star_2). \\
\end{align*}

%Theorem \ref{thm_KM}
%In the situation of the above theorem, we have the equation, where L(1), L(2), and L(3) are elements of F(1), F(2), and F(3), respectively. In other words, for points, we have the formulas.

\end{thm}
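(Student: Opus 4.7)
The plan is to extract everything from Theorem~\ref{thm_qbtskein_main_boundary_link} applied to the boundary knot $K$ with label $-\epsilon$. Since $K$ has a single component, the label contribution collapses to $L_{\tskein'}(S_K,\lambda_{-\epsilon})=\epsilon L_{\tskein'}(c_\partial)$, so that theorem gives
\begin{equation*}
\exp\bigl(\tfrac{1}{h}\PPsi{\GL}{\tskein'}(\tilde{\zeta}_{\GL}((\surface\times I)(K(-\epsilon))))\bigr)
= \sum_{i=0}^{\infty}\tfrac{\epsilon^i}{i!\,h^i}\,L^{(i)},
\end{equation*}
where $L^{(i)}\defeq e_{K*}\bigl(L_{\tskein'}(c_\partial)^i\bigr)$. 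The derivation of the desired three-term expansion by expanding $h\log(1+X)$, collecting powers of $\epsilon$, and comparing with the definitions of $L_1(K),L_2(K),L_3(K)$ is exactly the computation already displayed in the paragraphs preceding the theorem statement. What remains is to (i) bound the filtration of $L^{(i)}$ to justify truncating beyond $\epsilon^3$, and (ii) verify that $L_2(K)$ and $L_3(K)$ genuinely live in $\cGL(\surface)$ of the claimed orders, not merely in the localized algebra.

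For~(i), I will apply Corollary~\ref{cor_qbtskein_filtration} with $\gamma=\gamma_\partial$. Since $\partial S_K$ bounds $S_K$, the class $\gamma_\partial$ lies in $[\pi_1(S_K),\pi_1(S_K)]$, while the hypothesis $\zettaiti{\gamma}\in 1+I^m$ on the conjugacy class of $K$ gives $e_{K*}(\gamma_\partial)\in 1+I^m_{\GLM(\surface)}$. Expressing the leading part of $L_{\tskein'}(c_\partial)$ as $\frac{1}{2}\PPsi{\GL}{\tskein'}(\zettaiti{(\gamma_\partial-1)^2})$ (higher-order corrections from $\log$ only improve the estimate), the corollary yields
\begin{equation*}
L^{(i)} \in F^{4i+2m-4}\tskein'(\surface),\qquad \tfrac{1}{h^i}L^{(i)}\in F^{2i+2m-4}\,\cLoc\tskein'(\surface).
\end{equation*}
In particular every term with $i\geq 4$ already lies in $F^{2m+6}\cLoc\tskein'(\surface)$ and can be discarded modulo the target filtration degree, which reduces the computation to the cubic truncation displayed before the theorem statement.

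For~(ii), I will invoke the lemma proved immediately before the statement. Applied with $x=y=z=\PPsi{\GL}{\tskein'}(\frac{1}{2}\zettaiti{(\log\gamma_\partial)^2})$, its quadratic part gives $L^{(2)}-(L^{(1)})^2\in h\tskein'(\surface)$, so dividing by $h$ is legitimate and, since multiplication by $h$ shifts filtration up by two, yields $L_2(K)\in F^{2m+2}\cGL(\surface)$. Specializing the cubic identity likewise gives
\begin{equation*}
2L^{(3)}-3L^{(1)}L^{(2)}-3L^{(2)}L^{(1)}+4(L^{(1)})^3 \in h^2\,\tskein'(\surface),
\end{equation*}
whence $L_3(K)\in F^{2m+4}\cGL(\surface)$; and $L_1(K)\in F^{2m}\cGL(\surface)$ is immediate from $L^{(1)}\in F^{2m}$. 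Collecting these three terms assembles the formula for $\tilde{\zeta}_{\GL}$ modulo $F^{2m+6}\cGL(\surface)$, and the four corresponding statements about $((\surface\times I)(K(-\epsilon)))_*$ on quotients $\cGLM(\surface,\star_1,\star_2)/F^{2m-1+2j}$ follow at once from Theorem~\ref{thm_main_hom_Johnson}, since $\exp(\sigma(\cdot))$ acting modulo $F^{N}$ only depends on $\tilde{\zeta}$ modulo $F^{N+1}$.

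The main obstacle is the delicate bookkeeping in $\cLoc\tskein'(\surface)$: one must track how the $\frac{1}{h}$ and $\frac{1}{h^2}$ prefactors in the $\epsilon^2$ and $\epsilon^3$ terms of $\log$ are cancelled precisely by the $h$- and $h^2$-divisibility supplied by the lemma, while simultaneously confirming that the filtration shifts from Corollary~\ref{cor_qbtskein_filtration} stay sharp enough to yield the claimed degrees $2m$, $2m+2$, $2m+4$. In particular, the cubic case of the lemma was formulated exactly so that its seven-term combination matches the coefficient pattern $(2,-3,-3,4)$ produced by $h\log(1+X)$ at order $\epsilon^3$; once this match is observed the rest of the argument is routine expansion.
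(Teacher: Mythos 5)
Your proposal matches the paper's proof essentially step for step: the surgery formula of Theorem \ref{thm_qbtskein_main_boundary_link} specialized to the single-component label $-\epsilon$, the filtration bounds from Corollary \ref{cor_qbtskein_filtration}, the $h$- and $h^2$-divisibility obtained from the lemma immediately preceding the theorem with $x=y=z=\PPsi{\GL}{\tskein'}(\frac{1}{2}\zettaiti{(\log \gamma_\partial)^2})$, and the passage from $\tilde{\zeta}_{\GL}$ to the four statements about the action via $\sigma(\filt{N}\cGL(\surface))(\cGLM(\surface,\star_1,\star_2))\subset \filt{N-1}\cGLM(\surface,\star_1,\star_2)$. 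The only imprecision is that the $i=4$ term of the exponential lies in $\filt{2m+4}\cLoc\tskein'(\surface)$ rather than $\filt{2m+6}$ by your own estimate $2i+2m-4$, but since the final $h\log$ raises the filtration by two this term still vanishes from $\tilde{\zeta}_{\GL}$ modulo $\filt{2m+6}\cGL(\surface)$, exactly as in the paper's truncation.
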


\begin{proof}

Using the notation
$L^{(N)} \defeq e_{K*} ((L_{\tskein'} (c_\partial))^N)$,
 by the above computation, we obtain 
\begin{align*}
&\PPsi{\GL}{\Uh} (\tilde{\zeta}_{\GL} ((\surface \times I)(K(-\epsilon)))) \\
&=\PPsi{\tskein'}{\Uh} (h\log(1+\frac{\epsilon}{h}L^{(1)}+\frac{\epsilon^2}{2h^2}L^{(2)}+\frac{\epsilon^3}{6h^3}L^{(3)}))
\mod \filt{2m+6}\cGL (\surface) \\
&=\PPsi{\tskein'}{\Uh} (\epsilon L^{(1)} +\frac{1}{2h} (L^{(2)}-L^{(1)2}) 
+\frac{\epsilon}{12h^2}(2L^{(3)}-3L^{(2)}L^{(1)}-3L^{(1)}L^{(2)}+4L^{(1)3})) \\
&\mod \PPsi{\tskein'}{\Uh} (\filt{2m+6} \cLoc \tskein' (\surface)) \\
&=\PPsi{\GL}{\Uh} (\epsilon L_1 (K) + L_2 (K) +\epsilon L_3 (k)) \mod 
\filt{2m+6} \cGL (\surface).
\end{align*} 
By Corollary \ref{cor_qbtskein_filtration}, we have 
$L_1 (K ) \in \filt{2m} \cGL (\surface)$,
$L_2 (K) \in \filt{2m+2} \cGL (\surface)$,
and $L_3 (K) \in \filt{2m+4} \cGL (\surface)$. It proves the first formula. Since $\sigma (\filt{N} \cGL (\surface))(\GLM (\surface, \star))
\subset \filt{N-1} \cGLM (\surface,\star)$ for any $N \in \Zlarger{0}$, we have 
\begin{align*}
&((\surface \times I) (K(-\epsilon))_*
= \id  \\
&: \GLM (\surface,\star_1, \star_2)/
\filt{2m-1} \GLM (\surface, \star_1, \star_2)
\to
\GLM (\surface, \star_1, \star_2)/
\filt{2m-1} \GLM (\surface, \star_1, \star_2), \\
&((\surface \times I) (K(-\epsilon))_*
= \exp (\sigma (\epsilon L_1 (K)))  \\
&: \GLM (\surface, \star_1, \star_2)/
\filt{2m+1} \GLM (\surface,\star_1, \star_2)
\to
\GLM (\surface, \star_1, \star_2)/
\filt{2m+1} \GLM (\surface, \star_1, \star_2),  \\
&((\surface \times I) (K(-\epsilon))_*
= \exp (\sigma (\epsilon L_1 (K)+L_2(K))) \\
&: \GLM (\surface, \star_1, \star_2)/
\filt{2m+3} \GLM (\surface, \star_1, \star_2)
\to
\GLM (\surface, \star_1, \star_2)/
\filt{2m+3} \GLM (\surface, \star_1, \star_2), \\
&((\surface \times I) (K(-\epsilon))_*
= \exp (\sigma (\epsilon L_1 (K) +L_2(K)+ \epsilon L_3 (K)))  \\
&: \GLM (\surface, \star_1, \star_2)/
\filt{2m+5} \GLM (\surface, \star_1, \star_2)
\to
\GLM (\surface, \star_1, \star_2)/
\filt{2m+5} \GLM (\surface, \star_1, \star_2) \\
\end{align*}
 The above equations prove the theorem.

%Using the notation, by the above computation, we obtain the equation. By the corollary, we have the conditions. It proves the first formula. Since the property holds for any N, we have the other ones. The above ones prove the theorem.
\end{proof}

\section{An application}
\label{section_an_application_KMT}
%
%Theorem \ref{thm_KMT} を使い、具体例において計算したい。
%特に、Kuno-Massuyeauの公式のアナロジーとなる公式を示す。
%
%$\surface_{1,1}$の次の基本群の元
%$\gamma_\alpha, \gamma_\beta$をとる。
%境界に沿った基本群の元$\gamma_\alpha \gamma_\beta 
%\gamma_\alpha^{-1} \gamma_\beta^{-1}$を
%$\gamma_\partial$とする。
%
%\begin{picture}(200,60)
%\put(0,0){\input{fig_fundamental_group_sigma_1_1}}
%\put(20,45){$\gamma_\alpha$}
%\put(45,45){$\gamma_\beta$}
%\put(120,45){$\gamma_\partial$}
%\end{picture}
%
%次の埋め込み$e^{(1)}_{\surface_{1,1}}: \surface_{1,1} \times I 
%\to \surface_{1, 1} \times I$
%を考える。ただし、
%$e^{(1)}_{\surface_{1,1}}$が誘導する$\GL (\surface) \to \GL (\surface)$
%は同型であるように$e^{(1)}_{\surface_{1,1}}$をとる。
%
%\input{fig_embedding_sigma_1_1_sigma_1_1}

In this section, we will use Theorem \ref{thm_KMT} in a concrete example. Let $\surface_{1,1}$ be a compact connected oriented surface of genus one having a connected boundary. We fix a generating set $\shuugou{\gamma_\alpha, \gamma_\beta}$ of $\pi_1 (\surface_{1,1})$ as the figure and denote the element $\gamma_\alpha \gamma_\beta 
\gamma_\alpha^{-1} \gamma_\beta^{-1}$ by $\gamma_\partial$.

\begin{picture}(200,60)
\put(0,0){%WinTpicVersion4.32a
{\unitlength 0.1in%
\begin{picture}(22.0200,5.6200)(0.8000,-7.6200)%
% CIRCLE 2 0 3 0 Black White  
% 4 440 560 440 200 840 560 80 560
% 
\special{pn 8}%
\special{ar 440 560 360 360 3.1415927 6.2831853}%
% CIRCLE 2 0 3 0 Black White  
% 4 440 560 640 560 640 560 240 560
% 
\special{pn 8}%
\special{ar 440 560 200 200 3.1415927 6.2831853}%
% CIRCLE 2 0 3 0 Black White  
% 4 720 560 520 560 960 560 760 160
% 
\special{pn 8}%
\special{ar 720 560 200 200 4.8120576 6.2831853}%
% CIRCLE 2 0 3 0 Black White  
% 4 720 560 520 560 440 280 480 560
% 
\special{pn 8}%
\special{ar 720 560 200 200 3.1415927 3.9269908}%
% CIRCLE 2 0 3 0 Black White  
% 4 720 560 1080 560 1080 560 520 80
% 
\special{pn 8}%
\special{ar 720 560 360 360 4.3175979 6.2831853}%
% CIRCLE 2 0 3 0 Black White  
% 4 720 560 360 560 360 320 360 560
% 
\special{pn 8}%
\special{ar 720 560 360 360 3.1415927 3.7295953}%
% LINE 2 0 3 0 Black White  
% 16 360 560 360 560 240 560 360 560 520 560 640 560 800 560 920 560 80 560 80 760 80 760 1080 760 1080 760 1080 560 1080 560 1080 560
% 
\special{pn 8}%
\special{pa 360 560}%
\special{pa 360 560}%
\special{fp}%
\special{pa 240 560}%
\special{pa 360 560}%
\special{fp}%
\special{pa 520 560}%
\special{pa 640 560}%
\special{fp}%
\special{pa 800 560}%
\special{pa 920 560}%
\special{fp}%
\special{pa 80 560}%
\special{pa 80 760}%
\special{fp}%
\special{pa 80 760}%
\special{pa 1080 760}%
\special{fp}%
\special{pa 1080 760}%
\special{pa 1080 560}%
\special{fp}%
\special{pa 1080 560}%
\special{pa 1080 560}%
\special{fp}%
% CIRCLE 3 0 3 0 Black White  
% 4 440 560 760 560 760 560 120 560
% 
\special{pn 4}%
\special{ar 440 560 320 320 3.1415927 6.2831853}%
% LINE 3 0 3 0 Black White  
% 10 560 760 560 760 560 760 520 720 520 720 120 720 120 720 120 560 560 760 760 560
% 
\special{pn 4}%
\special{pa 560 760}%
\special{pa 560 760}%
\special{fp}%
\special{pa 560 760}%
\special{pa 520 720}%
\special{fp}%
\special{pa 520 720}%
\special{pa 120 720}%
\special{fp}%
\special{pa 120 720}%
\special{pa 120 560}%
\special{fp}%
\special{pa 560 760}%
\special{pa 760 560}%
\special{fp}%
% CIRCLE 3 0 3 0 Black White  
% 4 720 560 960 560 960 560 680 0
% 
\special{pn 4}%
\special{ar 720 560 240 240 4.6410815 6.2831853}%
% CIRCLE 3 0 3 0 Black White  
% 4 720 560 480 560 440 280 440 560
% 
\special{pn 4}%
\special{ar 720 560 240 240 3.1415927 3.9269908}%
% LINE 3 0 3 0 Black White  
% 16 560 760 560 760 560 760 680 720 680 720 960 720 960 720 960 560 560 760 560 640 560 640 480 560 440 240 400 200 440 240 400 280
% 
\special{pn 4}%
\special{pa 560 760}%
\special{pa 560 760}%
\special{fp}%
\special{pa 560 760}%
\special{pa 680 720}%
\special{fp}%
\special{pa 680 720}%
\special{pa 960 720}%
\special{fp}%
\special{pa 960 720}%
\special{pa 960 560}%
\special{fp}%
\special{pa 560 760}%
\special{pa 560 640}%
\special{fp}%
\special{pa 560 640}%
\special{pa 480 560}%
\special{fp}%
\special{pa 440 240}%
\special{pa 400 200}%
\special{fp}%
\special{pa 440 240}%
\special{pa 400 280}%
\special{fp}%
% LINE 3 0 3 0 Black White  
% 18 960 560 960 560 960 560 920 600 960 560 1000 600 520 600 520 560 520 600 480 600 120 600 80 640 120 600 160 640 680 640 680 600 680 640 720 640
% 
\special{pn 4}%
\special{pa 960 560}%
\special{pa 960 560}%
\special{fp}%
\special{pa 960 560}%
\special{pa 920 600}%
\special{fp}%
\special{pa 960 560}%
\special{pa 1000 600}%
\special{fp}%
\special{pa 520 600}%
\special{pa 520 560}%
\special{fp}%
\special{pa 520 600}%
\special{pa 480 600}%
\special{fp}%
\special{pa 120 600}%
\special{pa 80 640}%
\special{fp}%
\special{pa 120 600}%
\special{pa 160 640}%
\special{fp}%
\special{pa 680 640}%
\special{pa 680 600}%
\special{fp}%
\special{pa 680 640}%
\special{pa 720 640}%
\special{fp}%
% CIRCLE 2 0 3 0 Black White  
% 4 1642 562 1642 202 2042 562 1282 562
% 
\special{pn 8}%
\special{ar 1642 562 360 360 3.1415927 6.2831853}%
% CIRCLE 2 0 3 0 Black White  
% 4 1642 562 1842 562 1842 562 1442 562
% 
\special{pn 8}%
\special{ar 1642 562 200 200 3.1415927 6.2831853}%
% CIRCLE 2 0 3 0 Black White  
% 4 1922 562 1722 562 2162 562 1962 162
% 
\special{pn 8}%
\special{ar 1922 562 200 200 4.8120576 6.2831853}%
% CIRCLE 2 0 3 0 Black White  
% 4 1922 562 1722 562 1642 282 1682 562
% 
\special{pn 8}%
\special{ar 1922 562 200 200 3.1415927 3.9269908}%
% CIRCLE 2 0 3 0 Black White  
% 4 1922 562 2282 562 2282 562 1722 82
% 
\special{pn 8}%
\special{ar 1922 562 360 360 4.3175979 6.2831853}%
% CIRCLE 2 0 3 0 Black White  
% 4 1922 562 1562 562 1562 322 1562 562
% 
\special{pn 8}%
\special{ar 1922 562 360 360 3.1415927 3.7295953}%
% LINE 2 0 3 0 Black White  
% 16 1562 562 1562 562 1442 562 1562 562 1722 562 1842 562 2002 562 2122 562 1282 562 1282 762 1282 762 2282 762 2282 762 2282 562 2282 562 2282 562
% 
\special{pn 8}%
\special{pa 1562 562}%
\special{pa 1562 562}%
\special{fp}%
\special{pa 1442 562}%
\special{pa 1562 562}%
\special{fp}%
\special{pa 1722 562}%
\special{pa 1842 562}%
\special{fp}%
\special{pa 2002 562}%
\special{pa 2122 562}%
\special{fp}%
\special{pa 1282 562}%
\special{pa 1282 762}%
\special{fp}%
\special{pa 1282 762}%
\special{pa 2282 762}%
\special{fp}%
\special{pa 2282 762}%
\special{pa 2282 562}%
\special{fp}%
\special{pa 2282 562}%
\special{pa 2282 562}%
\special{fp}%
% CIRCLE 3 0 3 0 Black White  
% 4 1922 562 2162 562 2162 562 1882 2
% 
\special{pn 4}%
\special{ar 1922 562 240 240 4.6410815 6.2831853}%
% CIRCLE 3 0 3 0 Black White  
% 4 1922 562 1682 562 1642 282 1642 562
% 
\special{pn 4}%
\special{ar 1922 562 240 240 3.1415927 3.9269908}%
% CIRCLE 3 0 3 0 Black White  
% 4 1642 562 1962 562 1962 562 1322 562
% 
\special{pn 4}%
\special{ar 1642 562 320 320 3.1415927 6.2831853}%
% CIRCLE 3 0 3 0 Black White  
% 4 1642 562 1882 562 1882 562 1402 562
% 
\special{pn 4}%
\special{ar 1642 562 240 240 3.1415927 6.2831853}%
% CIRCLE 3 0 3 0 Black White  
% 4 1922 562 2242 562 2322 562 1762 2
% 
\special{pn 4}%
\special{ar 1922 562 320 320 4.4340893 6.2831853}%
% CIRCLE 3 0 3 0 Black White  
% 4 1922 562 1602 562 1522 242 1482 562
% 
\special{pn 4}%
\special{ar 1922 562 320 320 3.1415927 3.8163336}%
% LINE 3 0 3 0 Black White  
% 32 1762 762 1762 762 1762 762 1722 722 1722 722 1322 722 1322 722 1322 562 1402 562 1442 602 1442 602 1562 602 1562 602 1602 562 1682 562 1722 602 1722 602 1842 602 1842 602 1882 562 1962 562 2002 602 2002 602 2122 602 2122 602 2162 562 2242 562 2242 722 2242 722 1802 722 1802 722 1762 762
% 
\special{pn 4}%
\special{pa 1762 762}%
\special{pa 1762 762}%
\special{fp}%
\special{pa 1762 762}%
\special{pa 1722 722}%
\special{fp}%
\special{pa 1722 722}%
\special{pa 1322 722}%
\special{fp}%
\special{pa 1322 722}%
\special{pa 1322 562}%
\special{fp}%
\special{pa 1402 562}%
\special{pa 1442 602}%
\special{fp}%
\special{pa 1442 602}%
\special{pa 1562 602}%
\special{fp}%
\special{pa 1562 602}%
\special{pa 1602 562}%
\special{fp}%
\special{pa 1682 562}%
\special{pa 1722 602}%
\special{fp}%
\special{pa 1722 602}%
\special{pa 1842 602}%
\special{fp}%
\special{pa 1842 602}%
\special{pa 1882 562}%
\special{fp}%
\special{pa 1962 562}%
\special{pa 2002 602}%
\special{fp}%
\special{pa 2002 602}%
\special{pa 2122 602}%
\special{fp}%
\special{pa 2122 602}%
\special{pa 2162 562}%
\special{fp}%
\special{pa 2242 562}%
\special{pa 2242 722}%
\special{fp}%
\special{pa 2242 722}%
\special{pa 1802 722}%
\special{fp}%
\special{pa 1802 722}%
\special{pa 1762 762}%
\special{fp}%
% LINE 3 0 3 0 Black White  
% 24 1322 642 1322 642 1322 642 1282 682 1322 642 1362 682 2042 602 2002 562 2042 602 2002 642 1762 602 1722 562 1762 602 1722 642 1482 602 1442 562 1482 602 1442 642 2242 642 2242 642 2242 642 2202 602 2242 642 2282 602
% 
\special{pn 4}%
\special{pa 1322 642}%
\special{pa 1322 642}%
\special{fp}%
\special{pa 1322 642}%
\special{pa 1282 682}%
\special{fp}%
\special{pa 1322 642}%
\special{pa 1362 682}%
\special{fp}%
\special{pa 2042 602}%
\special{pa 2002 562}%
\special{fp}%
\special{pa 2042 602}%
\special{pa 2002 642}%
\special{fp}%
\special{pa 1762 602}%
\special{pa 1722 562}%
\special{fp}%
\special{pa 1762 602}%
\special{pa 1722 642}%
\special{fp}%
\special{pa 1482 602}%
\special{pa 1442 562}%
\special{fp}%
\special{pa 1482 602}%
\special{pa 1442 642}%
\special{fp}%
\special{pa 2242 642}%
\special{pa 2242 642}%
\special{fp}%
\special{pa 2242 642}%
\special{pa 2202 602}%
\special{fp}%
\special{pa 2242 642}%
\special{pa 2282 602}%
\special{fp}%
\end{picture}}%}
\put(20,45){$\gamma_\alpha$}
\put(45,45){$\gamma_\beta$}
\put(120,45){$\gamma_\partial$}
\end{picture}

 We choose an embedding $e^{(1)}_{\surface_{1,1}}: \surface_{1,1} \times I 
\to \surface_{1, 1} \times I$ satisfying the two conditions. 
\begin{itemize}
\item
It is an embedding shown in the figure. 
\item
The induced map $\GL (\surface) \to \GL (\surface)$
is an isomorphism.
\end{itemize}

%WinTpicVersion4.32a
{\unitlength 0.1in%
\begin{picture}(24.4000,11.2000)(0.8000,-13.2000)%
% CIRCLE 2 0 3 0 Black White  
% 4 800 920 800 200 1600 920 80 920
% 
\special{pn 8}%
\special{ar 800 920 720 720 3.1415927 6.2831853}%
% CIRCLE 2 0 3 0 Black White  
% 4 800 920 1200 920 1200 920 400 920
% 
\special{pn 8}%
\special{ar 800 920 400 400 3.1415927 6.2831853}%
% CIRCLE 2 0 3 0 Black White  
% 4 1360 920 960 920 1840 920 1440 120
% 
\special{pn 8}%
\special{ar 1360 920 400 400 4.8120576 6.2831853}%
% CIRCLE 2 0 3 0 Black White  
% 4 1360 920 960 920 800 360 880 920
% 
\special{pn 8}%
\special{ar 1360 920 400 400 3.1415927 3.9269908}%
% CIRCLE 2 0 3 0 Black White  
% 4 1360 920 2080 920 2080 920 960 -40
% 
\special{pn 8}%
\special{ar 1360 920 720 720 4.3175979 6.2831853}%
% CIRCLE 2 0 3 0 Black White  
% 4 1360 920 640 920 640 440 640 920
% 
\special{pn 8}%
\special{ar 1360 920 720 720 3.1415927 3.7295953}%
% LINE 2 0 3 0 Black White  
% 16 640 920 640 920 400 920 640 920 960 920 1200 920 1520 920 1760 920 80 920 80 1320 80 1320 2080 1320 2080 1320 2080 920 2080 920 2080 920
% 
\special{pn 8}%
\special{pa 640 920}%
\special{pa 640 920}%
\special{fp}%
\special{pa 400 920}%
\special{pa 640 920}%
\special{fp}%
\special{pa 960 920}%
\special{pa 1200 920}%
\special{fp}%
\special{pa 1520 920}%
\special{pa 1760 920}%
\special{fp}%
\special{pa 80 920}%
\special{pa 80 1320}%
\special{fp}%
\special{pa 80 1320}%
\special{pa 2080 1320}%
\special{fp}%
\special{pa 2080 1320}%
\special{pa 2080 920}%
\special{fp}%
\special{pa 2080 920}%
\special{pa 2080 920}%
\special{fp}%
% CIRCLE 3 0 3 0 Black White  
% 4 800 920 1400 920 1400 920 200 920
% 
\special{pn 4}%
\special{ar 800 920 600 600 3.1415927 6.2831853}%
% CIRCLE 3 0 3 0 Black White  
% 4 1360 920 1960 920 1960 920 1160 40
% 
\special{pn 4}%
\special{ar 1360 920 600 600 4.4889124 6.2831853}%
% CIRCLE 3 0 3 0 Black White  
% 4 1360 920 760 920 600 280 720 920
% 
\special{pn 4}%
\special{ar 1360 920 600 600 3.1415927 3.8414855}%
% CIRCLE 3 0 3 0 Black White  
% 4 1360 920 1880 920 1880 920 1280 200
% 
\special{pn 4}%
\special{ar 1360 920 520 520 4.6017318 6.2831853}%
% CIRCLE 3 0 3 0 Black White  
% 4 1360 920 840 920 680 280 640 920
% 
\special{pn 4}%
\special{ar 1360 920 520 520 3.1415927 3.8966971}%
% LINE 3 0 3 0 Black White  
% 14 1880 920 1880 920 1880 920 1880 1160 1960 920 1960 1160 1400 1160 1400 920 1320 920 1320 1160 760 920 760 1160 840 920 840 1160
% 
\special{pn 4}%
\special{pa 1880 920}%
\special{pa 1880 920}%
\special{fp}%
\special{pa 1880 920}%
\special{pa 1880 1160}%
\special{fp}%
\special{pa 1960 920}%
\special{pa 1960 1160}%
\special{fp}%
\special{pa 1400 1160}%
\special{pa 1400 920}%
\special{fp}%
\special{pa 1320 920}%
\special{pa 1320 1160}%
\special{fp}%
\special{pa 760 920}%
\special{pa 760 1160}%
\special{fp}%
\special{pa 840 920}%
\special{pa 840 1160}%
\special{fp}%
% CIRCLE 3 0 3 0 Black White  
% 4 800 920 1320 920 1320 920 280 920
% 
\special{pn 4}%
\special{ar 800 920 520 520 3.1415927 6.2831853}%
% LINE 3 0 3 0 Black White  
% 34 160 960 160 960 120 960 360 960 360 960 360 1120 360 1120 120 1120 120 1120 120 960 200 920 200 960 280 920 280 960 200 1120 200 1160 280 1120 280 1120 280 1160 280 1160 280 1160 280 1120 280 1160 760 1160 840 1160 1320 1160 1400 1160 1880 1160 1960 1160 1960 1240 200 1240 200 1160 200 1240 1960 1240
% 
\special{pn 4}%
\special{pa 160 960}%
\special{pa 160 960}%
\special{fp}%
\special{pa 120 960}%
\special{pa 360 960}%
\special{fp}%
\special{pa 360 960}%
\special{pa 360 1120}%
\special{fp}%
\special{pa 360 1120}%
\special{pa 120 1120}%
\special{fp}%
\special{pa 120 1120}%
\special{pa 120 960}%
\special{fp}%
\special{pa 200 920}%
\special{pa 200 960}%
\special{fp}%
\special{pa 280 920}%
\special{pa 280 960}%
\special{fp}%
\special{pa 200 1120}%
\special{pa 200 1160}%
\special{fp}%
\special{pa 280 1120}%
\special{pa 280 1120}%
\special{fp}%
\special{pa 280 1160}%
\special{pa 280 1160}%
\special{fp}%
\special{pa 280 1160}%
\special{pa 280 1120}%
\special{fp}%
\special{pa 280 1160}%
\special{pa 760 1160}%
\special{fp}%
\special{pa 840 1160}%
\special{pa 1320 1160}%
\special{fp}%
\special{pa 1400 1160}%
\special{pa 1880 1160}%
\special{fp}%
\special{pa 1960 1160}%
\special{pa 1960 1240}%
\special{fp}%
\special{pa 200 1240}%
\special{pa 200 1160}%
\special{fp}%
\special{pa 200 1240}%
\special{pa 1960 1240}%
\special{fp}%
% STR 2 0 3 0 Black White  
% 4 160 1060 160 1080 2 0 0 0
% +1
\put(1.6000,-10.8000){\makebox(0,0)[lb]{+1}}%
% LINE 3 0 3 0 Black White  
% 18 2160 480 2160 480 2280 480 2280 320 2280 320 2520 320 2520 320 2520 480 2520 480 2280 480 2360 480 2360 600 2440 600 2440 480 2360 320 2360 200 2440 200 2440 320
% 
\special{pn 4}%
\special{pa 2160 480}%
\special{pa 2160 480}%
\special{fp}%
\special{pa 2280 480}%
\special{pa 2280 320}%
\special{fp}%
\special{pa 2280 320}%
\special{pa 2520 320}%
\special{fp}%
\special{pa 2520 320}%
\special{pa 2520 480}%
\special{fp}%
\special{pa 2520 480}%
\special{pa 2280 480}%
\special{fp}%
\special{pa 2360 480}%
\special{pa 2360 600}%
\special{fp}%
\special{pa 2440 600}%
\special{pa 2440 480}%
\special{fp}%
\special{pa 2360 320}%
\special{pa 2360 200}%
\special{fp}%
\special{pa 2440 200}%
\special{pa 2440 320}%
\special{fp}%
% STR 2 0 3 0 Black White  
% 4 2320 420 2320 440 2 0 0 0
% +1
\put(23.2000,-4.4000){\makebox(0,0)[lb]{+1}}%
% LINE 2 0 3 0 Black White  
% 20 2400 640 2400 640 2400 640 2440 680 2440 680 2400 720 2400 720 2440 760 2440 760 2400 800 2400 800 2440 840 2420 860 2440 840 2420 860 2420 900 2420 900 2380 860 2420 900 2460 860
% 
\special{pn 8}%
\special{pa 2400 640}%
\special{pa 2400 640}%
\special{fp}%
\special{pa 2400 640}%
\special{pa 2440 680}%
\special{fp}%
\special{pa 2440 680}%
\special{pa 2400 720}%
\special{fp}%
\special{pa 2400 720}%
\special{pa 2440 760}%
\special{fp}%
\special{pa 2440 760}%
\special{pa 2400 800}%
\special{fp}%
\special{pa 2400 800}%
\special{pa 2440 840}%
\special{fp}%
\special{pa 2420 860}%
\special{pa 2440 840}%
\special{fp}%
\special{pa 2420 860}%
\special{pa 2420 900}%
\special{fp}%
\special{pa 2420 900}%
\special{pa 2380 860}%
\special{fp}%
\special{pa 2420 900}%
\special{pa 2460 860}%
\special{fp}%
% LINE 3 0 3 0 Black White  
% 10 2360 920 2360 920 2360 920 2360 1040 2440 1040 2440 920 2360 1200 2360 1320 2440 1200 2440 1320
% 
\special{pn 4}%
\special{pa 2360 920}%
\special{pa 2360 920}%
\special{fp}%
\special{pa 2360 920}%
\special{pa 2360 1040}%
\special{fp}%
\special{pa 2440 1040}%
\special{pa 2440 920}%
\special{fp}%
\special{pa 2360 1200}%
\special{pa 2360 1320}%
\special{fp}%
\special{pa 2440 1200}%
\special{pa 2440 1320}%
\special{fp}%
% LINE 3 0 3 0 Black White  
% 14 2440 1200 2440 1200 2360 1200 2440 1120 2360 1120 2440 1040 2440 1200 2420 1180 2380 1140 2360 1120 2440 1120 2420 1100 2380 1060 2360 1040
% 
\special{pn 4}%
\special{pa 2440 1200}%
\special{pa 2440 1200}%
\special{fp}%
\special{pa 2360 1200}%
\special{pa 2440 1120}%
\special{fp}%
\special{pa 2360 1120}%
\special{pa 2440 1040}%
\special{fp}%
\special{pa 2440 1200}%
\special{pa 2420 1180}%
\special{fp}%
\special{pa 2380 1140}%
\special{pa 2360 1120}%
\special{fp}%
\special{pa 2440 1120}%
\special{pa 2420 1100}%
\special{fp}%
\special{pa 2380 1060}%
\special{pa 2360 1040}%
\special{fp}%
\end{picture}}%

%In this section, we will use the theorem in a concrete example. Let S be a compact connected surface of genus one having an orientation and a connected boundary. We fix a generating set of the group as the figure and denote the element by RR. We choose an embedding satisfying the two conditions. The first is that it is an embedding shown in F. The second is that the induced map is an isomorphism.

%準備として、次の補題を用意する。

To apply Theorem \ref{thm_KMT}, we need the lemma.

%Theorem \ref{thm_KMT}
%To apply the theorem, we need the lemma.

\begin{lemm}
\label{lemm_compute_KMT}
%上記の$\gamma_\alpha, \gamma_\beta, \gamma_\partial \in \pi_1 (\surface_{1,1})$
%と埋め込み埋め込み$e^{(1)}_{\surface_{1,1}}: \surface_{1,1} \times I 
%\to \surface_{1, 1} \times I$で、
%\begin{align*}
%&\gamma_\alpha \gamma_\beta \gamma_\alpha^{-1}={\gamma'_\beta}
%\end{align*}
%として、
%\begin{align*}
%&e^{(1)}_{\surface_{1,1}}((\PPsi{\GL}{\tskein'}(\zettaiti{\gamma_\partial-2+\gamma_\partial^{-1}}))^2)
%-
%(\PPsi{\GL}{\tskein'}(\zettaiti{\gamma_\partial-2+\gamma_\partial^{-1}}))^2\\
%&=2h \PPsi{\GL}{\tskein'} (\zettaiti{
%(\gamma_\partial-1)(\gamma_\partial^{-1}-1)
%(\gamma_\partial -1)({\gamma'_\beta}^{-1}-1) {\gamma'_\beta} \\
%&-
%(\gamma_\partial-1)(\gamma_\partial^{-1}-1)
%({\gamma'_\beta}^{-1}-1)(\gamma_\partial-1) {\gamma'_\beta} \\
%&-
%(\gamma_\partial-1)(\gamma_\partial^{-1}-1){\gamma'_\beta}^{-1} 
%(\gamma_\partial^{-1}-1)( {\gamma'_\beta}-1) \\
%&+
%(\gamma_\partial-1)(\gamma_\partial^{-1}-1){\gamma'_\beta}^{-1}
%({\gamma'_\beta} -1)(\gamma_\partial^{-1}-1)) )
%}) \\
%&+4h \PPsi{\GL}{\tskein'} (\zettaiti{
%(\gamma_\partial-1)(\gamma_\partial -1)({\gamma'_\beta}^{-1}-1)({\gamma'_\beta}-1)  \\
%&- 
%(\gamma_\partial-1)({\gamma'_\beta}^{-1}-1)( \gamma_\partial-1)({\gamma'_\beta}-1) \\
%&- 
%(\gamma_\partial^{-1}-1)({\gamma'_\beta}^{-1}-1)(\gamma_\partial^{-1}-1)( {\gamma'_\beta}-1) \\
%&+
%(\gamma_\partial^{-1}-1)({\gamma'_\beta}^{-1}-1)({\gamma'_\beta} -1)(\gamma_\partial^{-1}-1)})
% \\
%\end{align*}
%となる。

Using the above notation, we have 
\begin{align*}
&e^{(1)}_{\surface_{1,1}}((\PPsi{\GL}{\tskein'}(\zettaiti{\gamma_\partial-2+\gamma_\partial^{-1}}))^2)
-
(\PPsi{\GL}{\tskein'}(\zettaiti{\gamma_\partial-2+\gamma_\partial^{-1}}))^2\\
&=2h \PPsi{\GL}{\tskein'} (\zettaiti{
(\gamma_\partial-1)(\gamma_\partial^{-1}-1)
(\gamma_\partial -1)({\gamma'_\beta}^{-1}-1) {\gamma'_\beta} \\
&-
(\gamma_\partial-1)(\gamma_\partial^{-1}-1)
({\gamma'_\beta}^{-1}-1)(\gamma_\partial-1) {\gamma'_\beta} \\
&-
(\gamma_\partial-1)(\gamma_\partial^{-1}-1){\gamma'_\beta}^{-1} 
(\gamma_\partial^{-1}-1)( {\gamma'_\beta}-1) \\
&+
(\gamma_\partial-1)(\gamma_\partial^{-1}-1){\gamma'_\beta}^{-1}
({\gamma'_\beta} -1)(\gamma_\partial^{-1}-1)) )
}) \\
&+4h \PPsi{\GL}{\tskein'} (\zettaiti{
(\gamma_\partial-1)(\gamma_\partial -1)({\gamma'_\beta}^{-1}-1)({\gamma'_\beta}-1)  \\
&- 
(\gamma_\partial-1)({\gamma'_\beta}^{-1}-1)( \gamma_\partial-1)({\gamma'_\beta}-1) \\
&- 
(\gamma_\partial^{-1}-1)({\gamma'_\beta}^{-1}-1)(\gamma_\partial^{-1}-1)( {\gamma'_\beta}-1) \\
&+
(\gamma_\partial^{-1}-1)({\gamma'_\beta}^{-1}-1)({\gamma'_\beta} -1)(\gamma_\partial^{-1}-1)}).
 \\
\end{align*}

%Using the above notation, we have the equation.
\end{lemm}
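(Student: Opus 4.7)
The plan is to compute $e^{(1)}_{\surface_{1,1}*}(L^2) - L^2$ directly at the level of framed tangles, where $L \defeq \PPsi{\GL}{\tskein'}(\zettaiti{\gamma_\partial - 2 + \gamma_\partial^{-1}})$, and then push the resulting difference back through $\PPsi{\tskein'}{\GL}$. Because of the identity
\begin{equation*}
\gamma_\partial - 2 + \gamma_\partial^{-1} = -(\gamma_\partial - 1)(\gamma_\partial^{-1} - 1),
\end{equation*}
I would first replace $L$ by an explicit $\Q[h]$-combination of three framed knot diagrams (one labelled by $\gamma_\partial$, one by $\gamma_\partial^{-1}$, and the empty/constant diagram) pushed into a collar of $\partial \surface_{1,1}$, so that the support of $L$ sits in a controlled region.

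Next I would compute $L^2 \in \tskein'(\surface_{1,1})$ as the stacking product using $e_{\mathrm{over}} \sqcup e_{\mathrm{under}}$, and then apply $e^{(1)}_{\surface_{1,1}*}$, reading off the image as a tangle diagram in which one copy of $L$ has been pushed along the prescribed embedding (with the $+1$ framings recorded). The difference $e^{(1)}_{\surface_{1,1}*}(L^2) - L^2$ is supported where the embedding creates new crossings between the two strands. Resolving these crossings by the first skein relation $T_{(C+)} - T_{(C-)} = h\, T_{(C0)}$ (note the second skein relation does not apply here, since the strands and resolutions have different numbers of components), I would extract a sum of the form $h \sum_j \epsilon_j T_j$, where each $T_j$ is obtained from the product diagram by a single smoothing at a crossing.

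Each smoothed tangle $T_j$ is isotopic to a knot of the form described in the statement, and its image under $\PPsi{\tskein'}{\GL}$ is a conjugacy class of the form $\zettaiti{(\gamma_\partial - 1)(\gamma_\partial^{-1} - 1)\cdots}$ with some $\gamma'_\beta$-factor recording the passage across the $\beta$-handle forced by $e^{(1)}_{\surface_{1,1}}$. The two groups of terms (coefficient $2h$ and coefficient $4h$ in the claim) correspond respectively to the crossings where a $\gamma_\partial^{\pm 1}$ strand crosses a $\gamma_\partial^{\mp 1}$ strand and those where it crosses a like-oriented strand; the multiplicity $2$ versus $4$ reflects how many times such crossings occur in the prescribed standard diagram for $e^{(1)}_{\surface_{1,1}}$.

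The main obstacle is the combinatorial bookkeeping: one must enumerate all crossings produced by the embedding between the over-strand and the under-strand copies of each of the three summands of $L$, record the local sign $\epsilon (p, \cdot, \cdot)$, track through which portion of the basepoint product $(\gamma_\partial - 1)(\gamma_\partial^{-1}-1)\cdots$ the smoothing occurs, and verify that the accumulated contributions collect exactly into the eight specified cyclic words with their signs. I expect that drawing the diagram of $e^{(1)}_{\surface_{1,1}}$ explicitly and labelling every crossing by the pair of letters $(\gamma_\partial^{\pm 1}, {\gamma'_\beta}^{\pm 1})$ being exchanged makes the check mechanical, but requires care with the convention for $\diamondsuit$ to align the basepoints when identifying the smoothings back in $\pi_1 (\surface_{1,1})$.
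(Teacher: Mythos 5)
Your overall strategy — realize the square as a stacking product of diagrams, apply the embedding, resolve the newly created crossings with the relation $T_{(C+)}-T_{(C-)}=h\,T_{(C0)}$, and read the smoothings back as conjugacy classes of words in $\pi_1(\surface_{1,1})$ — is exactly the strategy of the paper, which expands $(l+l'+2)^2$ with $l=\PPsi{\GL}{\tskein'}(\zettaiti{\gamma_\partial})$, $l'=\PPsi{\GL}{\tskein'}(\zettaiti{\gamma_\partial^{-1}})$, treats the three products $l^2$, $ll'$, $l'^2$ separately via explicit link diagrams, and then collects the result by group-ring algebra. So there is no divergence of method.

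The problem is that for this lemma the computation \emph{is} the proof, and you have not performed it: you defer the enumeration of crossings, their signs, the identification of each smoothing with a specific word, and the final rearrangement into the eight stated terms, all to ``mechanical checking.'' In the paper each of the three differences resolves to a sum of four smoothed diagrams, which after identification collapses to expressions such as $2h\,\PPsi{\GL}{\tskein'}(\zettaiti{-(xyx^{-1})^2y^{-2}+xyx^{-1}y^{-1}xyx^{-1}y^{-1}})$ (with $x=\gamma_\alpha$, $y=\gamma_\beta$), and the passage from the combined expression $2h\,\PPsi{\GL}{\tskein'}(\zettaiti{(y'y^{-1}-yy'^{-1})(y'y^{-1}-y^{-1}y'+y'^{-1}y-yy'^{-1})})$ to the claimed form requires nontrivial identities in the group ring, e.g.
\begin{align*}
y'y^{-1}-yy'^{-1}&=(y'y^{-1}-1)(yy'^{-1}-1)+2y'y^{-1}-2
=-(y'y^{-1}-1)(yy'^{-1}-1)-2yy'^{-1}+2 .
\end{align*}
Relatedly, your one concrete structural assertion — that the $2h$ versus $4h$ coefficients record two different types of crossings in the diagram — is not how the split arises: every skein resolution enters with the same coefficient $2h$, and the factor $4$ appears only afterwards, from the $\pm 2$ terms in the identity displayed above when the ``linear'' and ``quadratic'' parts of $y'y^{-1}-yy'^{-1}$ are separated. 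Until the diagrammatic enumeration and the subsequent algebra are actually written out, the claimed equality has not been established.
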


\begin{proof}
%$l \defeq \PPsi{\GL}{\tskein'} (\zettaiti{\gamma_\partial} )$,
%$l' \defeq \PPsi{\GL}{\tskein'}(\zettaiti{ \gamma_\partial^{-1}})$
%とする。自己交差は無視できるので、$e^{(1)}_{\surface_{1,1}} (l)=l$,
%$e^{(1)}_{\surface_{1,1}} (l')=l'$
%である。この式を使い
%\begin{align*}
%&e^{(1)}_{\surface_{1,1}} ((l+l'+2)^2)
%-((l+l'+2)^2) \\
%&=(e^{(1)}_{\surface_{1,1}} (l^2)
%-l^2)+2(e^{(1)}_{\surface_{1,1}} (ll')
%-ll')+(e^{(1)}_{\surface_{1,1}} (l'^2)
%-l'2)
%\end{align*}
%とわかる。
%以下、$(e^{(1)}_{\surface_{1,1}} (l^2)
%-l^2)$, $(e^{(1)}_{\surface_{1,1}} (ll')
%-ll')$, $(e^{(1)}_{\surface_{1,1}} (l'^2)
%-l'2)$を計算する。
%
%この証明の中では、以下のタングルダイアグラム
%が表すリンクを
%を$L(X)$と書く。
%
%\begin{picture}(200,85)
%\put(0,0){\input{fig_lemm_compute_KMT_1}}
%\put(8,17){$X$}
%\end{picture}
%
%ただし$X$は下の図の
%$d(i,j)$のうちどれかとする。
%
%\input{fig_lemm_compute_KMT_2}
%\bigskip

Using the notation
$l \defeq \PPsi{\GL}{\tskein'} (\zettaiti{\gamma_\partial} )$,
$l' \defeq \PPsi{\GL}{\tskein'}(\zettaiti{ \gamma_\partial^{-1}})$,
 we have 
\begin{align*}
&e^{(1)}_{\surface_{1,1}} ((l+l'+2)^2)
-((l+l'+2)^2) \\
&=(e^{(1)}_{\surface_{1,1}} (l^2)
-l^2)+2(e^{(1)}_{\surface_{1,1}} (ll')
-ll')+(e^{(1)}_{\surface_{1,1}} (l'^2)
-l'2).
\end{align*} 
We will compute the three elements
$(e^{(1)}_{\surface_{1,1}} (l^2)
-l^2)$, $(e^{(1)}_{\surface_{1,1}} (ll')
-ll')$, $(e^{(1)}_{\surface_{1,1}} (l'^2)
-l'2)$. In the proof, we denote by $L(X)$ the link presented by the link diagram. 

\begin{picture}(200,85)
\put(0,0){%WinTpicVersion4.32a
{\unitlength 0.1in%
\begin{picture}(20.0000,11.2000)(0.8000,-13.2000)%
% CIRCLE 2 0 3 0 Black White  
% 4 800 920 800 200 1600 920 80 920
% 
\special{pn 8}%
\special{ar 800 920 720 720 3.1415927 6.2831853}%
% CIRCLE 2 0 3 0 Black White  
% 4 800 920 1200 920 1200 920 400 920
% 
\special{pn 8}%
\special{ar 800 920 400 400 3.1415927 6.2831853}%
% CIRCLE 2 0 3 0 Black White  
% 4 1360 920 960 920 1840 920 1440 120
% 
\special{pn 8}%
\special{ar 1360 920 400 400 4.8120576 6.2831853}%
% CIRCLE 2 0 3 0 Black White  
% 4 1360 920 960 920 800 360 880 920
% 
\special{pn 8}%
\special{ar 1360 920 400 400 3.1415927 3.9269908}%
% CIRCLE 2 0 3 0 Black White  
% 4 1360 920 2080 920 2080 920 960 -40
% 
\special{pn 8}%
\special{ar 1360 920 720 720 4.3175979 6.2831853}%
% CIRCLE 2 0 3 0 Black White  
% 4 1360 920 640 920 640 440 640 920
% 
\special{pn 8}%
\special{ar 1360 920 720 720 3.1415927 3.7295953}%
% LINE 2 0 3 0 Black White  
% 16 640 920 640 920 400 920 640 920 960 920 1200 920 1520 920 1760 920 80 920 80 1320 80 1320 2080 1320 2080 1320 2080 920 2080 920 2080 920
% 
\special{pn 8}%
\special{pa 640 920}%
\special{pa 640 920}%
\special{fp}%
\special{pa 400 920}%
\special{pa 640 920}%
\special{fp}%
\special{pa 960 920}%
\special{pa 1200 920}%
\special{fp}%
\special{pa 1520 920}%
\special{pa 1760 920}%
\special{fp}%
\special{pa 80 920}%
\special{pa 80 1320}%
\special{fp}%
\special{pa 80 1320}%
\special{pa 2080 1320}%
\special{fp}%
\special{pa 2080 1320}%
\special{pa 2080 920}%
\special{fp}%
\special{pa 2080 920}%
\special{pa 2080 920}%
\special{fp}%
% CIRCLE 3 0 3 0 Black White  
% 4 800 920 1400 920 1400 920 200 920
% 
\special{pn 4}%
\special{ar 800 920 600 600 3.1415927 6.2831853}%
% CIRCLE 3 0 3 0 Black White  
% 4 1360 920 1960 920 1960 920 1160 40
% 
\special{pn 4}%
\special{ar 1360 920 600 600 4.4889124 6.2831853}%
% CIRCLE 3 0 3 0 Black White  
% 4 1360 920 760 920 600 280 720 920
% 
\special{pn 4}%
\special{ar 1360 920 600 600 3.1415927 3.8414855}%
% CIRCLE 3 0 3 0 Black White  
% 4 1360 920 1880 920 1880 920 1280 200
% 
\special{pn 4}%
\special{ar 1360 920 520 520 4.6017318 6.2831853}%
% CIRCLE 3 0 3 0 Black White  
% 4 1360 920 840 920 680 280 640 920
% 
\special{pn 4}%
\special{ar 1360 920 520 520 3.1415927 3.8966971}%
% CIRCLE 3 0 3 0 Black White  
% 4 800 920 1320 920 1320 920 280 920
% 
\special{pn 4}%
\special{ar 800 920 520 520 3.1415927 6.2831853}%
% CIRCLE 3 0 3 0 Black White  
% 4 800 920 1440 920 1440 920 160 920
% 
\special{pn 4}%
\special{ar 800 920 640 640 3.1415927 6.2831853}%
% CIRCLE 3 0 3 0 Black White  
% 4 800 920 1280 920 1320 920 320 920
% 
\special{pn 4}%
\special{ar 800 920 480 480 3.1415927 6.2831853}%
% CIRCLE 3 0 3 0 Black White  
% 4 1360 920 2000 920 2000 920 1120 120
% 
\special{pn 4}%
\special{ar 1360 920 640 640 4.4209322 6.2831853}%
% CIRCLE 3 0 3 0 Black White  
% 4 1360 920 1360 280 440 200 440 920
% 
\special{pn 4}%
\special{ar 1360 920 640 640 3.1415927 3.8056388}%
% CIRCLE 3 0 3 0 Black White  
% 4 1360 920 1840 920 1840 920 1320 0
% 
\special{pn 4}%
\special{ar 1360 920 480 480 4.6689381 6.2831853}%
% CIRCLE 3 0 3 0 Black White  
% 4 1360 920 1840 920 440 0 840 920
% 
\special{pn 4}%
\special{ar 1360 920 480 480 3.1415927 3.9269908}%
% LINE 3 0 3 0 Black White  
% 10 120 960 120 960 120 960 360 960 360 960 360 1120 360 1120 120 1120 120 1120 120 960
% 
\special{pn 4}%
\special{pa 120 960}%
\special{pa 120 960}%
\special{fp}%
\special{pa 120 960}%
\special{pa 360 960}%
\special{fp}%
\special{pa 360 960}%
\special{pa 360 1120}%
\special{fp}%
\special{pa 360 1120}%
\special{pa 120 1120}%
\special{fp}%
\special{pa 120 1120}%
\special{pa 120 960}%
\special{fp}%
% LINE 3 0 3 0 Black White  
% 64 160 960 160 960 160 920 160 960 200 920 200 960 280 920 280 960 320 960 320 920 320 1120 320 1160 280 1120 280 1200 280 1200 760 1200 760 1200 760 920 720 920 720 1160 320 1160 720 1160 840 920 840 1200 840 1200 1320 1200 1320 1200 1320 920 1280 920 1280 920 1280 920 1280 1160 1280 1160 880 1160 880 1160 880 920 1400 920 1400 1200 1400 1200 1880 1200 1880 1200 1880 920 1840 920 1840 920 1840 920 1840 1160 1840 1160 1440 1160 1440 1160 1440 920 1960 920 1960 920 1960 920 1960 1240 2000 920 2000 1280 2000 1280 160 1280 160 1280 160 1120 200 1120 200 1240 200 1240 1960 1240
% 
\special{pn 4}%
\special{pa 160 960}%
\special{pa 160 960}%
\special{fp}%
\special{pa 160 920}%
\special{pa 160 960}%
\special{fp}%
\special{pa 200 920}%
\special{pa 200 960}%
\special{fp}%
\special{pa 280 920}%
\special{pa 280 960}%
\special{fp}%
\special{pa 320 960}%
\special{pa 320 920}%
\special{fp}%
\special{pa 320 1120}%
\special{pa 320 1160}%
\special{fp}%
\special{pa 280 1120}%
\special{pa 280 1200}%
\special{fp}%
\special{pa 280 1200}%
\special{pa 760 1200}%
\special{fp}%
\special{pa 760 1200}%
\special{pa 760 920}%
\special{fp}%
\special{pa 720 920}%
\special{pa 720 1160}%
\special{fp}%
\special{pa 320 1160}%
\special{pa 720 1160}%
\special{fp}%
\special{pa 840 920}%
\special{pa 840 1200}%
\special{fp}%
\special{pa 840 1200}%
\special{pa 1320 1200}%
\special{fp}%
\special{pa 1320 1200}%
\special{pa 1320 920}%
\special{fp}%
\special{pa 1280 920}%
\special{pa 1280 920}%
\special{fp}%
\special{pa 1280 920}%
\special{pa 1280 1160}%
\special{fp}%
\special{pa 1280 1160}%
\special{pa 880 1160}%
\special{fp}%
\special{pa 880 1160}%
\special{pa 880 920}%
\special{fp}%
\special{pa 1400 920}%
\special{pa 1400 1200}%
\special{fp}%
\special{pa 1400 1200}%
\special{pa 1880 1200}%
\special{fp}%
\special{pa 1880 1200}%
\special{pa 1880 920}%
\special{fp}%
\special{pa 1840 920}%
\special{pa 1840 920}%
\special{fp}%
\special{pa 1840 920}%
\special{pa 1840 1160}%
\special{fp}%
\special{pa 1840 1160}%
\special{pa 1440 1160}%
\special{fp}%
\special{pa 1440 1160}%
\special{pa 1440 920}%
\special{fp}%
\special{pa 1960 920}%
\special{pa 1960 920}%
\special{fp}%
\special{pa 1960 920}%
\special{pa 1960 1240}%
\special{fp}%
\special{pa 2000 920}%
\special{pa 2000 1280}%
\special{fp}%
\special{pa 2000 1280}%
\special{pa 160 1280}%
\special{fp}%
\special{pa 160 1280}%
\special{pa 160 1120}%
\special{fp}%
\special{pa 200 1120}%
\special{pa 200 1240}%
\special{fp}%
\special{pa 200 1240}%
\special{pa 1960 1240}%
\special{fp}%
\end{picture}}%}
\put(8,17){$X$}
\end{picture}

Here $X$ is one of the following.

\input{fig_lemm_compute_KMT_2}
\bigskip

Using the notation
$x=\gamma_\alpha, y=\gamma_\beta$,
 we have 
\begin{align*}
&(e^{(1)}_{\surface_{1,1}} (l^2)
-l^2) \\
&=L(d (1,1))-L(d(1,2))
=h(-L(d(1,3))+L(d(1,4))-L(d(1,5))+L(d(1,6))) \\
&=h\PPsi{\GL}{\tskein'} (\zettaiti{-(xyx^{-1})^2y^{-2}+
xyx^{-1}y^{-1}xyx^{-1}y^{-1}-(xyx^{-1})^2y^{-2}+
xyx^{-1}y^{-1}xyx^{-1}y^{-1}}) \\
&=2h\PPsi{\GL}{\tskein'} (\zettaiti{-(xyx^{-1})^2y^{-2}+
xyx^{-1}y^{-1}xyx^{-1}y^{-1}}),
\end{align*}

\begin{align*}
&(e^{(1)}_{\surface_{1,1}} (ll')
-ll') \\
&=L(d (2,1))-L(d(2,2))
=h(L(d(2,3))-L(d(2,4))+L(d(2,5))-L(d(2,6))) \\
&=h\PPsi{\GL}{\tskein'} (\zettaiti{
xyx^{-1}y^{-1}xy^{-1}x^{-1} y-1+
xyx^{-1}y^{-1}xy^{-1}x^{-1} y-1})
 \\
&=2h\PPsi{\GL}{\tskein'} (\zettaiti{xyx^{-1}y^{-1}xy^{-1}x^{-1} y-1}),
\end{align*}

\begin{align*}
&(e^{(1)}_{\surface_{1,1}} (l'^2)
-l'^2) \\
&=L(d (3,1))-L(d(3,2))
=h(-L(d(3,3))+L(d(3,4))-L(d(3,5))+L(d(3,6))) \\
&=h\PPsi{\GL}{\tskein'} (\zettaiti{-(xyx^{-1})^{-2}y^{2}+
yxy^{-1}x^{-1}yxy^{-1}x^{-1}-(xyx^{-1})^{-2}y^{2}+
yxy^{-1}x^{-1}yxy^{-1}x^{-1}}) \\
&=2h\PPsi{\GL}{\tskein'} (\zettaiti{-(xyx^{-1})^{-2}y^{2}+
yxy^{-1}x^{-1}yxy^{-1}x^{-1}}).
\end{align*}
 So we obtain 
\begin{align*}
&e^{(1)}_{\surface_{1,1}} ((l+l'+2)^2)
-((l+l'+2)^2) \\
&=(e^{(1)}_{\surface_{1,1}} (l^2)
-l^2)+2(e^{(1)}_{\surface_{1,1}} (ll')
-ll')+(e^{(1)}_{\surface_{1,1}} (l'^2)
-l'2) \\
&=2h \PPsi{\GL}{\tskein'} (\zettaiti{
y'y^{-1} (y' y^{-1}-y^{-1} y'+y'^{-1} y-y y'^{-1}) \\
&+y y'^{-1} (y y'^{-1}-y'^{-1} y+y^{-1} y'-y' y^{-1})}) \\
&=2h \PPsi{\GL}{\tskein'} (\zettaiti{
(y'y^{-1}-y y'^{-1}) (y' y^{-1}-y^{-1} y'+y'^{-1} y-y y'^{-1})}), \\
\end{align*} 
where $y' =xyx^{-1}$.
Furthermore, using 
\begin{align*}
&y' y^{-1}-y^{-1} y'+y'^{-1} y-y y'^{-1} \\
&=(y' y^{-1} y'^{-1} -y'^{-1} y' y^{-1}) y'
+y'^{-1} (y y'^{-1} y'-y' y y'^{-1}) \\
&=((y'y^{-1} -1)(y'^{-1}-1) -(y'^{-1}-1)( y' y^{-1}-1)) y' \\
&+y'^{-1} ((y y'^{-1}-1)( y'-1)-(y' -1)(y y'^{-1}-1)), \\
\end{align*}
\begin{align*}
&y'y^{-1}-y y'^{-1} \\
&=(y' y^{-1}-1)(y y'^{-1}-1)+2y'y^{-1}-2 \\
&=-(y' y^{-1}-1)(y y'^{-1}-1)-2y y'^{-1}+2,
\end{align*} 
we obtain 
\begin{align*}
&2h \PPsi{\GL}{\tskein'} (\zettaiti{
(y'y^{-1}-y y'^{-1}) (y' y^{-1}-y^{-1} y'+y'^{-1} y-y y'^{-1})}) \\
&=2h \PPsi{\GL}{\tskein'} (\zettaiti{
(y'y^{-1}-y y'^{-1}) (((y'y^{-1} -1)(y'^{-1}-1) -(y'^{-1}-1)( y' y^{-1}-1)) y' \\
&+y'^{-1} ((y y'^{-1}-1)( y'-1)-(y' -1)(y y'^{-1}-1)) )}) \\
&=2h \PPsi{\GL}{\tskein'} (\zettaiti{
(y'y^{-1}-1)(y y'^{-1}-1) (((y'y^{-1} -1)(y'^{-1}-1) -(y'^{-1}-1)( y' y^{-1}-1)) y' \\
&-y'^{-1} ((y y'^{-1}-1)( y'-1)-(y' -1)(y y'^{-1}-1)) )}) \\
&+4h \PPsi{\GL}{\tskein'} (\zettaiti{
(y'y^{-1}-1) ((y'y^{-1} -1)(y'^{-1}-1) -(y'^{-1}-1)( y' y^{-1}-1)) y' \\
&-(y y'^{-1}-1)y'^{-1} ((y y'^{-1}-1)( y'-1)-(y' -1)(y y'^{-1}-1)) }) \\
&=2h \PPsi{\GL}{\tskein'} (\zettaiti{
(y'y^{-1}-1)(y y'^{-1}-1) (((y'y^{-1} -1)(y'^{-1}-1) -(y'^{-1}-1)( y' y^{-1}-1)) y' \\
&-y'^{-1} ((y y'^{-1}-1)( y'-1)-(y' -1)(y y'^{-1}-1)) )}) \\
&+4h \PPsi{\GL}{\tskein'} (\zettaiti{
(y'y^{-1}-1) ((y'y^{-1} -1)(y'^{-1}-1) -(y'^{-1}-1)( y' y^{-1}-1))  \\
&-(y y'^{-1}-1)((y y'^{-1}-1)( y'-1)-(y' -1)(y y'^{-1}-1)) }) \\
&+4h \PPsi{\GL}{\tskein'} (\zettaiti{
(y'y^{-1}-1) ((y'y^{-1} -1)(y'^{-1}-1) -(y'^{-1}-1)( y' y^{-1}-1)) (y'-1) \\
&-(y y'^{-1}-1)(y'^{-1}-1) ((y y'^{-1}-1)( y'-1)-(y' -1)(y y'^{-1}-1)) }) \\
\end{align*}
 Using
$\zettaiti{z_1 z_2 } =\zettaiti{z_2 z_1}$
for $z_1, z_2 \in \GLM (\surface)$, we have 
\begin{align*}
&e^{(1)}_{\surface_{1,1}}((\PPsi{\GL}{\tskein'}(\zettaiti{\gamma_\partial-2+\gamma_\partial^{-1}}))^2)
-
(\PPsi{\GL}{\tskein'}(\zettaiti{\gamma_\partial-2+\gamma_\partial^{-1}}))^2\\
&=2h \PPsi{\GL}{\tskein'} (\zettaiti{
(y'y^{-1}-1)(y y'^{-1}-1) (((y'y^{-1} -1)(y'^{-1}-1) -(y'^{-1}-1)( y' y^{-1}-1)) y' \\
&-y'^{-1} ((y y'^{-1}-1)( y'-1)-(y' -1)(y y'^{-1}-1)) )}) \\
&+4h \PPsi{\GL}{\tskein'} (\zettaiti{
(y'y^{-1}-1) ((y'y^{-1} -1)(y'^{-1}-1) -(y'^{-1}-1)( y' y^{-1}-1)) (y'-1) \\
&-(y y'^{-1}-1)(y'^{-1}-1) ((y y'^{-1}-1)( y'-1)-(y' -1)(y y'^{-1}-1)) }). \\
\end{align*}
 Recalling the notations
\begin{align*}
&y' y^{-1} =\gamma_\alpha \gamma_\beta \gamma_\alpha^{-1}
\gamma_\beta^{-1}= \gamma_\partial, \\
&y y'^{-1}= \gamma_\partial^{-1}, \\
&y=\gamma_\beta, \\
&y'=\gamma_\alpha \gamma_\beta \gamma_\alpha^{-1}={\gamma'_\beta},
\end{align*}
we get 
\begin{align*}
&e^{(1)}_{\surface_{1,1}}((\PPsi{\GL}{\tskein'}(\zettaiti{\gamma_\partial-2+\gamma_\partial^{-1}}))^2)
-
(\PPsi{\GL}{\tskein'}(\zettaiti{\gamma_\partial-2+\gamma_\partial^{-1}}))^2\\
&=2h \PPsi{\GL}{\tskein'} (\zettaiti{
(\gamma_\partial-1)(\gamma_\partial^{-1}-1) (((\gamma_\partial
 -1)({\gamma'_\beta}^{-1}-1) -({\gamma'_\beta}^{-1}-1)(\gamma_\partial-1)) {\gamma'_\beta} \\
&-{\gamma'_\beta}^{-1} ((\gamma_\partial^{-1}-1)( {\gamma'_\beta}-1)-({\gamma'_\beta} -1)(\gamma_\partial^{-1}-1)) )}) \\
&+4h \PPsi{\GL}{\tskein'} (\zettaiti{
(\gamma_\partial-1) ((\gamma_\partial -1)({\gamma'_\beta}^{-1}-1) -({\gamma'_\beta}^{-1}-1)
( \gamma_\partial-1)) ({\gamma'_\beta}-1) \\
&-(\gamma_\partial^{-1}-1)({\gamma'_\beta}^{-1}-1) 
((\gamma_\partial^{-1}-1)( {\gamma'_\beta}-1)-({\gamma'_\beta} -1)(\gamma_\partial^{-1}-1)) }) \\
\end{align*}
 Finally, we can expand it into 
\begin{align*}
&e^{(1)}_{\surface_{1,1}}((\PPsi{\GL}{\tskein'}(\zettaiti{\gamma_\partial-2+\gamma_\partial^{-1}}))^2)
-
(\PPsi{\GL}{\tskein'}(\zettaiti{\gamma_\partial-2+\gamma_\partial^{-1}}))^2\\
&=2h \PPsi{\GL}{\tskein'} (\zettaiti{
(\gamma_\partial-1)(\gamma_\partial^{-1}-1)
(\gamma_\partial -1)({\gamma'_\beta}^{-1}-1) {\gamma'_\beta} \\
&-
(\gamma_\partial-1)(\gamma_\partial^{-1}-1)
({\gamma'_\beta}^{-1}-1)(\gamma_\partial-1) {\gamma'_\beta} \\
&-
(\gamma_\partial-1)(\gamma_\partial^{-1}-1){\gamma'_\beta}^{-1} 
(\gamma_\partial^{-1}-1)( {\gamma'_\beta}-1) \\
&+
(\gamma_\partial-1)(\gamma_\partial^{-1}-1){\gamma'_\beta}^{-1}
({\gamma'_\beta} -1)(\gamma_\partial^{-1}-1)) )
}) \\
&+4h \PPsi{\GL}{\tskein'} (\zettaiti{
(\gamma_\partial-1)(\gamma_\partial -1)({\gamma'_\beta}^{-1}-1)({\gamma'_\beta}-1)  \\
&- 
(\gamma_\partial-1)({\gamma'_\beta}^{-1}-1)( \gamma_\partial-1)({\gamma'_\beta}-1) \\
&- 
(\gamma_\partial^{-1}-1)({\gamma'_\beta}^{-1}-1)(\gamma_\partial^{-1}-1)( {\gamma'_\beta}-1) \\
&+
(\gamma_\partial^{-1}-1)({\gamma'_\beta}^{-1}-1)({\gamma'_\beta} -1)(\gamma_\partial^{-1}-1)}),
 \\
\end{align*}
 as desired.

%Using the notation, we have the computations. So we obtain the equation, where Y' equals the element.
% Furthermore, using the property, we obtain the equation. Using the conditions, we have the computations. Recalling the notations, we get the formula. Finally, we can expand it into the equation as desired.
\end{proof}

%この補題を使い次を示す。

Using the lemma, we will state the theorem.

%Using the lemma, we will state the theorem.

\begin{thm}

We use the notation in Theorem \ref{thm_KM}. Let $e: \surface_{1,1} \times I \to \surface \times I$ be an embedding satisfying $e_* (\gamma_\alpha \gamma_\beta \gamma_\alpha^{-1} \gamma_\beta^{-1})
\in 1+ I_{\GLM (\surface \times I)}^N$, and $K_\partial$ the boundary knot in $\surface_{1,1} \times I$ whose Seifert surface is $\surface_{1,1} \times \shuugou{\frac{1}{2}}$, respectively. Then we have 
\begin{align*}
&\zeta_\GL ((\surface \times I) (e \circ e^{(1)}_{\surface_1,1}(K_\partial)(-\epsilon)))
=\zeta_\GL ((\surface \times I) (e (K_\partial)(-\epsilon))) \\
&= \epsilon \zettaiti{\frac{1}{2} (\log (e_* (\gamma_\partial))^2)}
\mod \filt{2N+2} \cGL (\surface), \\
&\zeta_\GL ((\surface \times I) (e \circ e^{(1)}_{\surface_1,1}(K_\partial)(-\epsilon)))
-\zeta_\GL ((\surface \times I) (e (K_\partial)(-\epsilon))) \\
&= \zettaiti{(e_* (\gamma_\partial)-1)(e_* (\gamma_\beta) -1)
(e_* (\gamma_\partial)-1)(e_* (\gamma_\beta) -1)
-(e_* (\gamma_\partial)-1)^2(e_* (\gamma_\beta) -1)^2
} \\
&\mod \filt{2N+3} \cGL (\surface). \\
\end{align*}
 In other words, for $\star_1, \star_2 \in \partial \surface$, we have 
\begin{align*}
& (\surface \times I) (e \circ e^{(1)}_{\surface_1,1}(K_\partial)(-\epsilon))_*
=(\surface \times I) (e(K_\partial)(-\epsilon))_*
=\exp (\sigma (\epsilon \zettaiti{\frac{1}{2} (\log (e_* (\gamma_\partial)))^2})) : \\
&\cGLM (\surface, \star_1, \star_2 )/\filt{2N+1} \cGLM (\surface, \star_1, \star_2)
 \to \cGLM (\surface, \star_1, \star_2) /\filt{2N+1} \cGLM (\surface, \star_1, \star_2) ,\\
& (\surface \times I) (e \circ e^{(1)}_{\surface_1,1}(K_\partial)(-\epsilon))_*
\circ((\surface \times I) (e(K_\partial)(-\epsilon))_*)^{-1} \\
&=
\exp (\sigma(\zettaiti{(e_* (\gamma_\partial)-1)(e_* (\gamma_\beta) -1)
(e_* (\gamma_\partial)-1)(e_* (\gamma_\beta) -1)
-(e_* (\gamma_\partial)-1)^2(e_* (\gamma_\beta) -1)^2
})): \\
&\cGLM (\surface, \star )/\filt{2N+2} \cGLM (\surface, \star)
 \to \cGLM (\surface, \star_1, \star_2) /\filt{2N+2} \cGLM (\surface, \star_1, \star_2). \\
\end{align*}

%Theorem \ref{thm_KM}
%We use the notation in the above theorem. Let E be an embedding satisfying the condition, and KK the boundary knot in M whose Seifert surface is E, respectively. Then we have the equations. In other words, for any points, we have the formulas.
\end{thm}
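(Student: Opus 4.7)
The plan is to apply Theorem \ref{thm_KMT} with $m = N$ to both homology cylinders $(\surface\times I)(K_i(-\epsilon))$, where I set $K_1 \defeq e\circ e^{(1)}_{\surface_{1,1}}(K_\partial)$ and $K_2 \defeq e(K_\partial)$. Because $e^{(1)}_{\surface_{1,1}}$ is required to induce the identity on $\GL(\surface_{1,1})$, both knots have the same free homotopy class $e_*|\gamma_\partial|$, so
\[
L_1(K_i) = e_*(L_{\GL}(c_\partial)) = \left|\tfrac12 (\log e_*(\gamma_\partial))^2\right| \in F^{2N}\cGL(\surface)
\]
coincides for $i = 1, 2$. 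Together with $L_2(K_i)\in F^{2N+2}$ and $L_3(K_i)\in F^{2N+4}$ from Theorem \ref{thm_KMT}, this immediately yields the first equation modulo $F^{2N+2}\cGL(\surface)$.

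For the refined equation modulo $F^{2N+3}$, the $L_3(K_i)\in F^{2N+4}$ contributions lie in $F^{2N+3}$ and the $L_1(K_i)$'s cancel, leaving
\[
\tilde{\zeta}_{\GL}((\surface\times I)(K_1(-\epsilon))) - \tilde{\zeta}_{\GL}((\surface\times I)(K_2(-\epsilon))) \equiv L_2(K_1) - L_2(K_2) \pmod{F^{2N+3}}.
\]
The commutation $\PPsi{\GL}{\tskein'}\circ e^{(1)}_{\surface_{1,1}*} = e^{(1)}_{\surface_{1,1}*}\circ \PPsi{\GL}{\tskein'}$ combined with the triviality of $e^{(1)}_{\surface_{1,1}*}$ on $\GL$ gives $e^{(1)}_{\surface_{1,1}*}(L_{\tskein'}(c_\partial)) = L_{\tskein'}(c_\partial)$, so the $(e_{K_i*}(L_{\tskein'}(c_\partial)))^2$ contributions in $L_2$ cancel between $K_1$ and $K_2$, reducing the problem to
\[
L_2(K_1) - L_2(K_2) = \PPsi{\tskein'}{\GL}\!\left(\tfrac{1}{2h}\, e_*\bigl(e^{(1)}_{\surface_{1,1}*}(L_{\tskein'}(c_\partial)^2) - L_{\tskein'}(c_\partial)^2\bigr)\right).
\]

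The main obstacle is the explicit algebraic simplification of this. Use the identity $\gamma_\partial - 2 + \gamma_\partial^{-1} = -\gamma_\partial^{-1}(\gamma_\partial-1)^2$, which shows that $L_{\tskein'}(c_\partial)$ equals $-\tfrac12 \PPsi{\GL}{\tskein'}|\gamma_\partial-2+\gamma_\partial^{-1}|$ modulo a skein element whose square, after push-forward by $e_*$, lies in $F^{2N+3}\tskein'(\surface)$. Replacing $L_{\tskein'}(c_\partial)^2$ by $\tfrac{1}{4}(\PPsi{\GL}{\tskein'}|\gamma_\partial-2+\gamma_\partial^{-1}|)^2$ inside the bracket and invoking Lemma \ref{lemm_compute_KMT} produces an explicit sum of $2h$- and $4h$-terms in $\tskein'(\surface_{1,1})$. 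Pushing through $e_*$, substituting $\gamma'_\beta = \gamma_\alpha\gamma_\beta\gamma_\alpha^{-1}$ and using the cyclic invariance of $|\cdot|$ to shift $\gamma_\alpha^{\pm 1}$ factors past other terms (thereby replacing every $\gamma'_\beta$ by $\gamma_\beta$), then discarding every summand of total augmentation-filtration at least $2N+3$ (using $e_*(\gamma_\partial-1)\in I^N$ and $e_*(\gamma_\beta-1)\in I$), the Lemma's four $2h$-terms and four $4h$-terms collapse via identities of the form $|xyz|=|yzx|$ to the single element
\[
\bigl|(e_*(\gamma_\partial)-1)(e_*(\gamma_\beta)-1)(e_*(\gamma_\partial)-1)(e_*(\gamma_\beta)-1) - (e_*(\gamma_\partial)-1)^2(e_*(\gamma_\beta)-1)^2\bigr|,
\]
which is the claimed formula.

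The automorphism reformulations on quotients of $\cGLM(\surface,\star_1,\star_2)$ follow directly by applying $\exp\circ\sigma$ to the $\tilde{\zeta}_{\GL}$-equations, composing the second cylinder's inverse with the first via the Baker--Campbell--Hausdorff series, and invoking the filtration behavior $\sigma(F^n\cGL)(F^m\cGLM) \subset F^{n+m-2}\cGLM$ to truncate the exponential series on each quotient.
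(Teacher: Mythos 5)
Your proposal follows essentially the same route as the paper's proof: deduce the first (and third) formulas from Theorems \ref{thm_KM}/\ref{thm_KMT}, reduce the refined difference modulo $\filt{2N+3}$ to $L_2(K_1)-L_2(K_2)$ by discarding $L_3$ and cancelling the squared single-push-forward terms, evaluate the remaining term $\frac{1}{2h}e_*\bigl(e^{(1)}_{\surface_{1,1}*}(L_{\tskein'}(c_\partial)^2)-L_{\tskein'}(c_\partial)^2\bigr)$ via Lemma \ref{lemm_compute_KMT} together with the filtration estimates $e_*(\gamma_\partial-1)\in I^N$, $e_*(\gamma_\beta-1)\in I$, and finish the automorphism statements with Baker--Campbell--Hausdorff. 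The only slips are harmless: the identity should read $\gamma_\partial-2+\gamma_\partial^{-1}=+\gamma_\partial^{-1}(\gamma_\partial-1)^2$ (your sign enters only squared, so the $\tfrac14$ coefficient and the conclusion are unaffected), and the replacement of $\gamma'_\beta$ by $\gamma_\beta$ is justified not by cyclic invariance alone but by the fact that they agree modulo $\filt{2}$ (being conjugate), which is absorbed by the same discarding-of-high-filtration step you invoke.
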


\begin{proof}

The first formula and the third formula are special ones in Theorem \ref{thm_KM}. We will prove the second equation using Theorem \ref{thm_KMT}.

By Theorem \ref{thm_KMT}, we have 
\begin{align*}
&\zeta_\GL ((\surface \times I) (e \circ e^{(1)}_{\surface_1,1}(K_\partial)(-\epsilon)))
-\zeta_\GL ((\surface \times I) (e (K_\partial)(-\epsilon))) \\
&=(\epsilon L_1 (e \circ e^{(1)}_{\surface_1,1}(K_\partial))
+L_2 (e \circ e^{(1)}_{\surface_1,1}(K_\partial))) \\
&-(\epsilon L_1 (e (K_\partial))
+L_2 (e (K_\partial))) \mod \filt{2N+3} \cGL (\surface) \\
&=L_2 (e \circ e^{(1)}_{\surface_1,1}(K_\partial))
-L_2 (e (K_\partial)) \\
&= \PPsi{\tskein'}{\GL}(\frac{1}{2h}(
(e_* \circ e^{(1)}_{\surface_1,1 *} ((
\PPsi{\GL}{\tskein'} (
\zettaiti{\frac{1}{2} (\log \gamma_\partial )^2}))^2)) \\
&-
(e_* \circ e^{(1)}_{\surface_1,1 *} (
\PPsi{\GL}{\tskein'} (
\zettaiti{\frac{1}{2} (\log \gamma_\partial )^2})))^2 )\\
&-\PPsi{\tskein'}{\GL}(\frac{1}{2h}(
(e_*  ((
\PPsi{\GL}{\tskein'} (
\zettaiti{\frac{1}{2} (\log \gamma_\partial )^2}))^2))
-
(e_* (
\PPsi{\GL}{\tskein'} (
\zettaiti{\frac{1}{2} (\log \gamma_\partial )^2})))^2)). \\
\end{align*} 
Using 
\begin{equation*}
e_* \circ e^{(1)}_{\surface_1,1 *} (
\PPsi{\GL}{\tskein'} (
\zettaiti{\frac{1}{2} (\log \gamma_\partial )^2}))
=
e_*  (
\PPsi{\GL}{\tskein'} (
\zettaiti{\frac{1}{2} (\log \gamma_\partial )^2})),
\end{equation*}
 we have 
\begin{align*}
&\zeta_\GL ((\surface \times I) (e \circ e^{(1)}_{\surface_1,1}(K_\partial)(-\epsilon)))
-\zeta_\GL ((\surface \times I) (e (K_\partial)(-\epsilon))) \\
&= \PPsi{\tskein'}{\GL}(\frac{1}{2h}(
(e_* \circ e^{(1)}_{\surface_1,1 *} (
(\PPsi{\GL}{\tskein'} (
\zettaiti{\frac{1}{2} (\log \gamma_\partial )^2}))^2)))  \\
&-(e_* (
(\PPsi{\GL}{\tskein'} (
\zettaiti{\frac{1}{2} (\log \gamma_\partial )^2}))^2))) \mod
\filt{2N+3} \cGL (\surface). \\
\end{align*}
 Since $\frac{1}{2} (\log (X))^2 =\frac{1}{2} (X-2+X^{-1})$,
 by Corollary \ref{cor_qbtskein_filtration},
 we obtain
\begin{align*}
&\zeta_\GL ((\surface \times I) (e \circ e^{(1)}_{\surface_1,1}(K_\partial)(-\epsilon)))
-\zeta_\GL ((\surface \times I) (e (K_\partial)(-\epsilon))) \\
&=\PPsi{\tskein'}{\GL} (\frac{1}{8h}e_*(e^{(1)}_{\surface_{1,1}}((\PPsi{\GL}{\tskein'}(\zettaiti{\gamma_\partial-2+\gamma_\partial^{-1}}))^2)
-
(\PPsi{\GL}{\tskein'}(\zettaiti{\gamma_\partial-2+\gamma_\partial^{-1}}))^2)) \\
&\mod \filt{2N+3} \cGL (\surface).
\end{align*}

Using Lemma \ref{lemm_compute_KMT}, we continue the computation. By the lemma, we have 
\begin{align*}
&\zeta_\GL ((\surface \times I) (e \circ e^{(1)}_{\surface_1,1}(K_\partial)(-\epsilon)))
-\zeta_\GL ((\surface \times I) (e (K_\partial)(-\epsilon))) \\
&=\PPsi{\tskein'}{\GL} (\frac{1}{8}e_*( \\
&2h \PPsi{\GL}{\tskein'} (\zettaiti{
(\gamma_\partial-1)(\gamma_\partial^{-1}-1)
(\gamma_\partial -1)({\gamma'_\beta}^{-1}-1) {\gamma'_\beta} \\
&-
(\gamma_\partial-1)(\gamma_\partial^{-1}-1)
({\gamma'_\beta}^{-1}-1)(\gamma_\partial-1) {\gamma'_\beta} \\
&-
(\gamma_\partial-1)(\gamma_\partial^{-1}-1){\gamma'_\beta}^{-1} 
(\gamma_\partial^{-1}-1)( {\gamma'_\beta}-1) \\
&+
(\gamma_\partial-1)(\gamma_\partial^{-1}-1){\gamma'_\beta}^{-1}
({\gamma'_\beta} -1)(\gamma_\partial^{-1}-1)) )
}) \\
&+4h \PPsi{\GL}{\tskein'} (\zettaiti{
(\gamma_\partial-1)(\gamma_\partial -1)({\gamma'_\beta}^{-1}-1)({\gamma'_\beta}-1)  \\
&- 
(\gamma_\partial-1)({\gamma'_\beta}^{-1}-1)( \gamma_\partial-1)({\gamma'_\beta}-1) \\
&- 
(\gamma_\partial^{-1}-1)({\gamma'_\beta}^{-1}-1)(\gamma_\partial^{-1}-1)( {\gamma'_\beta}-1) \\
&+
(\gamma_\partial^{-1}-1)({\gamma'_\beta}^{-1}-1)({\gamma'_\beta} -1)(\gamma_\partial^{-1}-1)})
 ) \mod \filt{2N+3} \cGL (\surface) \\
&=\frac{1}{4} e_*(\zettaiti{
(\gamma_\partial-1)(\gamma_\partial^{-1}-1)
(\gamma_\partial -1)({\gamma'_\beta}^{-1}-1) {\gamma'_\beta} \\
&-
(\gamma_\partial-1)(\gamma_\partial^{-1}-1)
({\gamma'_\beta}^{-1}-1)(\gamma_\partial-1) {\gamma'_\beta} \\
&-
(\gamma_\partial-1)(\gamma_\partial^{-1}-1){\gamma'_\beta}^{-1} 
(\gamma_\partial^{-1}-1)( {\gamma'_\beta}-1) \\
&+
(\gamma_\partial-1)(\gamma_\partial^{-1}-1){\gamma'_\beta}^{-1}
({\gamma'_\beta} -1)(\gamma_\partial^{-1}-1)) )
}) \\
&-\frac{1}{2} e_*(\zettaiti{
(\gamma_\partial-1)(\gamma_\partial -1)({\gamma'_\beta}^{-1}-1)({\gamma'_\beta}-1)  \\
&- 
(\gamma_\partial-1)({\gamma'_\beta}^{-1}-1)( \gamma_\partial-1)({\gamma'_\beta}-1) \\
&- 
(\gamma_\partial^{-1}-1)({\gamma'_\beta}^{-1}-1)(\gamma_\partial^{-1}-1)( {\gamma'_\beta}-1) \\
&+
(\gamma_\partial^{-1}-1)({\gamma'_\beta}^{-1}-1)({\gamma'_\beta} -1)(\gamma_\partial^{-1}-1)})
 ) \mod \filt{2N+3} \cGL (\surface)
\end{align*}
where $\gamma'_\beta \defeq \gamma_\alpha \gamma_\beta \gamma_\alpha^{-1}$. Furthermore, since 
\begin{align*}
&e_* (\gamma_\partial -1) \in \filt{N} \GLM (\surface), \\
&e_* (\gamma_\partial-1) =e_* (\gamma_\partial^{-1}-1)
\mod \filt{2N} \GLM (\surface), \\
&e_* ({\gamma'_\beta}-1)=-e_*({\gamma'_\beta}^{-1}-1)
=e_* (\gamma_\beta-1)=-e_*(\gamma_\beta^{-1}-1)
\mod \filt{2} \GLM (\surface),
\end{align*}
 we obtain 
\begin{align*}
&\zeta_\GL ((\surface \times I) (e \circ e^{(1)}_{\surface_1,1}(K_\partial)(-\epsilon)))
-\zeta_\GL ((\surface \times I) (e (K_\partial)(-\epsilon))) \\
&=\frac{1}{2} e_*(\zettaiti{-
(\gamma_\partial-1)(\gamma_\partial -1)({\gamma_\beta}-1)({\gamma_\beta}-1)  \\
&+
(\gamma_\partial-1)({\gamma_\beta}-1)( \gamma_\partial-1)({\gamma_\beta}-1) \\
&+
(\gamma_\partial-1)({\gamma_\beta}-1)(\gamma_\partial-1)( {\gamma_\beta}-1) \\
&-
(\gamma_\partial -1)({\gamma_\beta}-1)({\gamma_\beta} -1)(\gamma_\partial-1)})
 ) \mod \filt{2N+3} \cGL (\surface) \\
&= \zettaiti{(e_* (\gamma_\partial)-1)(e_* (\gamma_\beta) -1)
(e_* (\gamma_\partial)-1)(e_* (\gamma_\beta) -1)
-(e_* (\gamma_\partial)-1)^2(e_* (\gamma_\beta) -1)^2
}. \\
\end{align*}
 It proves the second formula.

Finally, using the second formula, we verify the fourth formula. By definition of the Baker-Campbell-Hausdorff series, we obtain 
\begin{align*}
& (\surface \times I) (e \circ e^{(1)}_{\surface_1,1}(K_\partial)(-\epsilon))_*
\circ((\surface \times I) (e(K_\partial)(-\epsilon))_*)^{-1} \\
&=\exp (\sigma (\tilde{\zeta}_{\GL} (
(\surface \times I) (e \circ e^{(1)}_{\surface_1,1}(K_\partial)(-\epsilon)))))
\circ
\exp(\sigma(-\tilde{\zeta}_{\GL} (
(\surface \times I) (e(K_\partial)(-\epsilon))))) \\
&=\exp (\sigma (\tilde{\zeta}_{\GL} (
(\surface \times I) (e \circ e^{(1)}_{\surface_1,1}(K_\partial)(-\epsilon)))))
\circ
\exp(\sigma(-\tilde{\zeta}_{\GL} (
(\surface \times I) (e(K_\partial)(-\epsilon))))) \\
&=\exp (\sigma (\bch (\tilde{\zeta}_{\GL} ( (
(\surface \times I) (e \circ e^{(1)}_{\surface_1,1}(K_\partial)(-\epsilon)))),
-\tilde{\zeta}_{\GL} (
(\surface \times I) (e(K_\partial)(-\epsilon)))))).\\
\end{align*}
 By the second formula, we have 
\begin{align*}
&\bch (\tilde{\zeta}_{\GL} ( (
(\surface \times I) (e \circ e^{(1)}_{\surface_1,1}(K_\partial)(-\epsilon)))),
-\tilde{\zeta}_{\GL} (
(\surface \times I) (e(K_\partial)(-\epsilon)))) \\
&=\tilde{\zeta}_{\GL} ( (
(\surface \times I) (e \circ e^{(1)}_{\surface_1,1}(K_\partial)(-\epsilon))))
-\tilde{\zeta}_{\GL} (
(\surface \times I) (e(K_\partial)(-\epsilon)))
\mod \filt{4N-2} \cGL (\surface) \\
&= \zettaiti{(e_* (\gamma_\partial)-1)(e_* (\gamma_\beta) -1)
(e_* (\gamma_\partial)-1)(e_* (\gamma_\beta) -1)
-(e_* (\gamma_\partial)-1)^2(e_* (\gamma_\beta) -1)^2
} \\
&\mod \filt{2N+3} \cGL (\surface).
\end{align*}
 Since $\sigma (\filt{2N+3} \cGL (\surface)) \subset \filt{2N+2} \cGLM (\surface)$ for any $N \in \Zlarger{0}$, we get 
\begin{align*}
& (\surface \times I) (e \circ e^{(1)}_{\surface_1,1}(K_\partial)(-\epsilon))_*
\circ((\surface \times I) (e(K_\partial)(-\epsilon))_*)^{-1} \\
&=
\exp (\sigma(\zettaiti{(e_* (\gamma_\partial)-1)(e_* (\gamma_\beta) -1)
(e_* (\gamma_\partial)-1)(e_* (\gamma_\beta) -1)
-(e_* (\gamma_\partial)-1)^2(e_* (\gamma_\beta) -1)^2
})): \\
&\cGLM (\surface, \star )/\filt{2N+2} \cGLM (\surface, \star)
 \to \cGLM (\surface, \star_1, \star_2) /\filt{2N+2} \cGLM (\surface, \star_1, \star_2) \\
\end{align*}
 as desired.

%The first formula and the third formula are special ones in the first theorem. We will prove the second equation using the second theorem.
%By the second theorem, we have the computation. Using the condition, we have the equation. Since the property holds, by the corollary, we obtain the formula.
%Using the lemma, we continue the computation. By the lemma, we have the equation where R equals R'. Furthermore, since the conditions hold, we obtain the formula. It proves the second formula.
%Finally, using the second formula, we verify the fourth formula. By definition of the Baker-Campbell-Hausdorff series, we obtain the computation. By the second formula, we have the equation. Since the condition holds for any N, we get the conclusion as desired.
%The above formulas prove the theorem.
\end{proof}

\begin{rem}
The theorem verifies that the number $2m+2$ in Theorem \ref{thm_KM} is the best possible. In other words, there exist two boundary knots $K,K'$ having the same homotopy type in $\zettaiti{\pi_1} (\surface)
\cap \zettaiti{1+I_{\GLM (\surface)}^m}$ such that
\begin{equation*}
\tilde{\zeta}_{\GL} ((\surface \times I) (K (-\epsilon))
- \tilde{\zeta}_{\GL} ((\surface \times I) (K' (-\epsilon))  \neq 0 
\mod \filt{2m+3} \cGLM (\surface).
\end{equation*}

%Theorem \ref{thm_KM}
%The theorem verifies that the number is the best possible. In other words, there exist two knots having the same homotopy type such that the equation not holds.
\end{rem}


\begin{thebibliography}{99}
\large
%\bibitem{Birman}
%J. S. Birman,
%\textit{Braids, Links, and Mapping Class Groups},
%Annals of Mathematics Studies(1975).
%\bibitem{Bourbaki}
%N. Bourbaki,
%\textit{Groupes et algebres de Lie}, chapitre 2,
%Hermann, Paris(1972).
%\bibitem{BuZi85}
%G. Burde, H. Zieschang,
%\textit{Knots},
%Studies in math. 5(1985) Walter de Gruyter.
%\bibitem{CHM2008}
%D. Cheptea, K. Habiro, G. Massuyeau,
%\textit{A functorial LMO invariant for Lagrangian cobordisms},
%Geometry and Topology 12 (2008) 1091–1170.
%\bibitem{Gadgil}
%S. Gadgil
%\textit{The Goldman bracket characterizes homeomorphisms},
%preprint, arXiv:1109.1395v2(2011).
\bibitem{GL}
S. Garoufalidis and J. Levine, 
\textit{Tree-level invariants of three-manifolds, Massey products and the Johnson homomorphism}, In: Graphs and patterns in mathematics and
theoretical physics, volume 73 of Proc. Sympos. Pure Math., Amer. Math. Soc., Providence 2005, 173–203.

\bibitem{Goldman}
W. M. Goldman,
\textit{Invariant functions on Lie groups and Hamiltonian flows of surface
groups representations}, 
Invent. Math. 85, 263-302(1986).
\bibitem{Habegger2000}
N. Habegger,
\textit{Milnor, Johnson, and tree level perturbative invariants},
preprint.
%\bibitem{Habiro2000}
%K. Habiro,
%\textit{On the colored Jones polynomials of some simple links},
%In: Recent progress towards the volume conjecture (Kyoto, 2000). RIMS Kokyuroku, No. %1172, (2000), 34-43.
%\bibitem{Hain1997}
%R. Hain,
%\textit{Infinitesimal presentations of the Torelli groups},
%Journal of AMS 10 (1997) pp.597-651.
%\bibitem{JohnsonKernel}
%D. Johnson
%\textit{The structure of the Torelli Group II: A characterization of the group generated by twists onbounding curves},
%Topology Vol. 24(1985), No. 2, pp. 113-126.
\bibitem{Johnson80}
D. Johnson,
\textit{An abelian quotient of the mapping class group $\mathcal{I}_g$},
Math. Ann. 249, 225-242(1980).
\bibitem{Kawazumi}
N. Kawazumi and Y. Kuno,
\textit{The logarithms of Dehn twists},
Quantum Topology, Vol. 5(2014), Issue 3, pp. 347–423.
\bibitem{KK}
N. Kawazumi and Y. Kuno,
\textit{Groupoid-theoretical methods in the mapping class groups of surfaces},
arXiv: 1109.6479 (2011), UTMS preprint: 2011–28.
\bibitem{KM2019}
Y.\, Kuno and G.\, Massuyeau,
\emph{Generalized Dehn twists on surfaces and homology cylinders}.
Preprint, arXiv:1902.02592.

%\bibitem{Le1996}
%T. Q. T. Le,
%\textit{An invariant of integral homology 3-spheres which is universal for all finite types invariants}
%IAmer. Math. Soc. Transl. Ser. 2, 179 (1997) 75–100.
%\bibitem{Le2000}
%T. Q. T. Le,
%\textit{On perturbative PSU(n) invariants of rational homology 3-spheres},
%Topology 39(2000), no. 4, 813-849.
\bibitem{Lickorish1962}
W. B. R. Lickorish, 
\textit{A representation of orientable combinatorial $3$-manifolds},
Ann. of Math (2) 76 (9152), 531-540.
%\bibitem{Lickorish}
%W. B. R. Lickorish, 
%\textit{Homeomorphisms of non-orientable two-manifolds},
%Proc. Cambridge Philos.
%Soc. 59 (1963), 307-317.
%\bibitem{skeins_and_handlebodies}
%W. B. R. Lickorish,
%\textit{Skeins and handlebodies},
%Pacific Journal of Mathematics,
%Vol. 159, No 2 (1993), 337-349.
%\bibitem{MKS}
%W. Magnus, A. Karrass, D. Solitar,
%\textit{Combinatorial Group Theory}, revised ed.,
%Diver Publication Inc., New York(1976), presentations of groups
%in terms of generator and relations.
\bibitem{MT}
G. Massuyeau and V. Turaev,
\textit{Fox pairings and generalized Dehn twists},
Ann. Inst. Fourier 63 (2013) 2403-2456.
\bibitem{Morita1989}
S. Morita, 
\textit{Casson's invariant for homology 3-spheres and characteristic classes of surface bundles. I}, Topology 28
(1989) 305–323.
\bibitem{Morita_Casson_core}
S. Morita,
\textit{On the structure of the Torelli group and the Casson invariant},
Topology, Volume 30(1991), 603-621.
\bibitem{Morita1999}
S. Morita,
\textit{Structures of the mapping class group of surface: a survey and a prospect},
Proceedings of the Kirbyfest Geom. Topol.Monogr., 2 (1998) 349-406.
%\bibitem{MSS2019}
%S. Morita, T. Sakasai and M. Suzuki,
%\textit{
%Torelli group, Johnson kernel and invariants of homology spheres
%},
%arXiv:math.GT/1711.07855.
%\bibitem{Mor02}
%H. R. Morton, \textit{Skein theory and the Murphy operators},
%J. knot Theory Ramifications 11(2002), no. 4,
%475-492, Knot 2000 Korea, vol. 2 (Yongpyong). MR
%1915490 (2003f:20013).
%\bibitem{Mu2012}
%G. Muller, 
%\textit{Skein algebra and cluster algebras of marked surfaces},
%arXiv: 1104.0020(2012).
%\bibitem{Omori2016}
%G. Omori, Private Communication, January 2015.
%\bibitem{Ohtsuki1995}
%T. Ohtsuki, 
%\textit{A polynomial invariant of integral homology 3-spheres}, 
%Proc. Cambridge Philos. Soc(1995). 117, 83-112.
%\bibitem{skeinmodule}
%J.H.Przytycki, 
%\textit{Skein modules of 3-manifolds}, Bull. Ac. Pol. Mat; 39(1-20), 1991, 91-100.
%\bibitem{Pitsch2008}
%W. Pitsch, 
%\textit{Integral Homology 3-spheres and the Johnson Filtration},
%Transactions of the American Mathematical Society
%Vol. 360, No. 6 (Jun., 2008), pp. 2825-2847.
%\bibitem{Pitsch2009}
%W. Pitsch, \textit{Trivial cocycles and invariants of homology 3-spheres},
%Advances in Mathematics 220(2009) 278-302.
%\bibitem{Pu2008}
%A. Putman, \textit{An infinite presentation of the Torelli group},
%Geom. Funct. Anal. 19 (2009), no. 2, 591-643.
\bibitem{Stallings}
J. Stallings,
\textit{Homology and central series of groups},
J. Algebra 2 (1965), 170-181.
%\bibitem{Tsuji}
%S. Tsuji,
%\textit{The logarithms of Dehn twists on non-orientable surfaces},
%preprint, arXiv:1405.2161(2014)
%Osaka Journal of Mathematics, Vol. 53(4), 2016, 1125-1132.
%\bibitem{TsujiCSAI}
%S. Tsuji,
%\textit{Dehn twists on Kauffman bracket skein algebras},
%Kodai Math. J., 41(2018), 16-41.
%\bibitem{Tsujipurebraid}
%S. Tsuji,
%\textit{Finite type invariants for links in handle body
%by skein algebra},
%Journal of Knot and its Ramifications, Vol. 26, No. 5(2017), 1750030, 1-34.
%\bibitem{TsujiTorelli}
%S. Tsuji,
%\textit{An action of the Torelli group on the Kauffman bracket skein module},
%Math. Proc. Camb. Soc., doi:10.1017/S030500411700366(2017), 1-16.
%\bibitem{Tsujihom3}
%S. Tsuji,
%\textit{An invariant for integral homology 3-spheres via
%completed Kauffman bracket skein algebras},
%preprint, arXiv:1607.01580.
\bibitem{TsujiHOMFLY-PTskein}
S. Tsuji,
\textit{
A formula for the action of Dehn twists on HOMFLY-PT
 skein modules and its applications},
preprint, arXiv:1801.00580.
%\bibitem{Tsuji_wpolyn_skein}
%S. Tsuji,
%\textit{
%A quantization of writhe polynomials for
%virtual knots 
%}, in preparation.
%\bibitem{Tsuji_graph_skein}
%S. Tsuji,
%\textit{
%A graph computation formula
%of the Kauffman bracket skein algebra
%}, in preparation.
%\bibitem{Tsuji_nonori_Johnson}
%S. Tsuji,
%\textit{
%The Johnson homomorphism
%on a non-orientable surface
%}, in preparation.
%\bibitem{Tsuji_homocyl_Johnson}
%S. Tsuji,
%\textit{
%The generalized Dehn
%twist, the homology cylinder and
%the bracket-quantization
%HOMFLY-PT skein algebra
%}, in preparation.
%\bibitem{Tsuji_4thJ_2ndO}
%S. Tsuji,
%\textit{
%The 4th Johnson homomorphism
%and the 2nd term of the Ohtsuki series
%}, in preparation.
%\bibitem{Tsuji_quantuminv_skein}
%S. Tsuji,
%\textit{
%A construction of 
%the HOMFLY-PT invariant for 
%integral homology $3$-spheres
%via the HOMFLY-PT skein algebra
%}, in preparation.
\bibitem{Turaev}
V. G. Turaev, 
\textit{Skein quantization of Poisson algebras of loops on surfaces},
 Ann. Sci. Ecole Norm. Sup. (4) 24 (1991), no. 6, 635-704.



\end{thebibliography}
\end{document}